\DeclareSymbolFontAlphabet{\amsmathbb}{AMSb}%
\newcommand{\nc}{\newcommand}
\nc{\dmo}{\DeclareMathOperator}
\dmo{\ra}{\rightarrow}
\dmo{\Prob}{\mathbb{P}}
\dmo{\E}{\mathbb{E}}
\dmo{\N}{\mathbb{N}}
\dmo{\Z}{\mathbb{Z}}
\dmo{\Q}{\mathbb{Q}}
\dmo{\R}{\mathbb{R}}
\dmo{\C}{\mathcal{C}}
\dmo{\X}{\mathcal{X}}
\dmo{\U}{\mathcal{U}}
\dmo{\T}{\mathcal{T}}
\dmo{\F}{\mathcal{F}}
\dmo{\AC}{\mathcal{AC}}
\dmo{\w}{\omega}
\dmo{\MIN}{\mathcal{MIN}}
\dmo{\Mod}{Mod}
\dmo{\PMod}{PMod}
\dmo{\PMF}{\mathcal{PMF}}
\dmo{\Mat}{Mat}
\dmo{\supp}{supp}
\dmo{\UE}{\mathcal{UE}}
\dmo{\vol}{vol}
\dmo{\B}{B}
\dmo{\PB}{PB}
\dmo{\PR}{PSL(2,\mathbb{R})}
\dmo{\GL}{GL(k, \mathbb{C})}
\dmo{\SL}{SL(2, \mathbb{Z})}
\dmo{\Isom}{Isom}
\dmo{\RP}{\mathbb{R} \mathrm{P}}
\dmo{\I}{\mathcal{I}}
\dmo{\el}{\ell_{\C}}
\dmo{\NN}{\mathcal{N}}
\dmo{\rk}{rank}
\dmo{\tr}{tr}
\dmo{\llangle}{\langle\langle}
\dmo{\rrangle}{\rangle\rangle}
\dmo{\Unif}{Unif}
\dmo{\Out}{Out}
\dmo{\diam}{\operatorname{diam}}
\dmo{\Aut}{\operatorname{Aut}}
\dmo{\sumRho}{\mathscr{B}}
\dmo{\stopping}{\vartheta}
\dmo{\diffPivot}{\mathcal{P}}
\dmo{\diffEvenPivot}{\mathcal{Q}}
\dmo{\varGam}{\Upsilon}
\dmo{\prodSeq}{\Pi}
\dmo{\NSupp}{N_{supp}}
\dmo{\KSleep}{\mathnormal{K_{sleep}}}
\dmo{\Devi}{\upsilon}
\dmo{\DeviStr}{\iota}
\dmo{\DeviDisc}{\varrho}
\dmo{\sphere}{\mathcal{S}}
\dmo{\DeviUni}{\varsigma}
\dmo{\wVar}{\check{\w}}
\dmo{\muVar}{\eta}
\dmo{\sublinear}{\Delta}
\dmo{\axes}{\mathbf{Y}}
\tikzset{->-/.style={decoration={
  markings,
  mark=at position #1 with {\arrow{>}}},postaction={decorate}}}
\nc{\nt}{\newtheorem}
\newtheorem{thm}{{\bf Theorem}}[section]
\newtheorem{conv}[thm]{{\bf Convention}}
\newtheorem{lem}[thm]{{\bf Lemma}}
\newtheorem{cor}[thm]{{\bf Corollary}}
\newtheorem{prop}[thm]{{\bf Proposition}}
\newtheorem{remark}[thm]{Remark}
\newtheorem{definition}[thm]{Definition}
\numberwithin{equation}{section}
\newtheorem{obs}[thm]{Observation}
\title[Random walks and contracting elements III]{Random walks and contracting elements III: Outer space and outer automorphism group}
\date{\today}
\author{Inhyeok Choi}
\address{%
		Cornell University \\
		310 Malott Hall, Ithaca, NY, USA, 14850\newline
		June E Huh Center for Mathematical Challenges, KIAS\\
		85 Hoegiro Dongdaemun-gu, Seoul 02455, Republic of Korea
}
\email{%
	inhyeokchoi48@gmaill.com
        }
\begin{document}

\begin{abstract}
Continuing from \cite{choi2022random1}, we study random walks on (possibly asymmetric) metric spaces using the bounded geodesic image property (BGIP) of certain isometries. As an application, we show that a generic outer automorphism of the free group of rank at least 3 has different forward and backward expansion factors. This answers a question of Handel and Mosher in \cite{handel2007parageometric}. Together with this, we also revisit limit laws on Outer space including SLLN, CLT, LDP and the genericity of a triangular fully irreducible outer automorphisms.

\noindent{\bf Keywords.} Random walk, Outer space, Outer automorphism group, Laws of large numbers, Central limit theorem, Expansion factor, Fully irreducible automorphisms

\noindent{\bf MSC classes:} 20F67, 30F60, 57M60, 60G50
\end{abstract}

\maketitle

\section{Introduction}\label{section:intro}

This is the third in a series of articles concerning random walks on metric spaces with contracting elements. This series is a reformulation of the previous preprint \cite{choi2022limit} announced by the author. In this article, we adopt the following convention:

Let $X$ be a space with possibly an asymmetric metric. We say that a subset $A$ of $X$ exhibits \emph{bounded geodesic image property} (\emph{BGIP} in short) if the closest point projection of a geodesic $\gamma$ onto $A$ is uniformly bounded whenever $\gamma$ is far away from $A$. An isometry $g$ of $X$ has \emph{BGIP} if the orbit $\{g^{n} o\}_{n \in \Z}$ is a bi-quasigeodesic with BGIP (see Definition \ref{dfn:boundedGeod}). Two BGIP isometries $g$ and $h$ of $X$ are \emph{independent} if there orbits have unbounded Hausdorff distance.

\begin{conv}\label{conv:main}
Throughout, we assume that: \begin{itemize}
\item $(X, d)$ is a geodesic metric space, possibly with an asymmetric metric;
\item $G$ is a countable group of isometries of $X$, and
\item $G$ contains two  independent BGIP isometries.
\end{itemize}
We also fix a basepoint $o \in X$.
\end{conv}

The main purpose of this article is to generalize the random walk theory in \cite{choi2022random1} to asymmetric metric spaces. Our main result concerns the mismatch of the forward translation length $\tau(g)$ and the backward translation length $\tau(g^{-1})$ of a generic isometry $g$ of $X$.

\begin{theorem}[Asymmetry of a generic translation length]\label{thm:mismatch}
Let $(X, G)$ be as in Convention \ref{conv:main}. Let $(Z_{n})_{n\ge 0}$ be the random walk generated by a non-elementary, asymptotically asymmetric measure $\mu$ on $G$. Then for any $K>0$, we have \[
\lim_{n\rightarrow \infty} \Prob\Big( \big|\tau(Z_{n}) - \tau(Z_{n}^{-1})\big| < K\Big) =0.
\]
\end{theorem}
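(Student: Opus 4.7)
The plan is to reduce this probabilistic statement to a comparison of asymptotic drifts. Under Convention \ref{conv:main}, the general random walk theory developed in the earlier parts of the series extends to the asymmetric setting to yield a strong law of large numbers for the displacement in both directions: almost surely
\[
\frac{d(o, Z_{n} o)}{n} \to \lambda_{+}, \qquad \frac{d(Z_{n} o, o)}{n} \to \lambda_{-},
\]
where $\lambda_{+}, \lambda_{-} \geq 0$ are the forward and backward drifts of $\mu$. The hypothesis that $\mu$ is asymptotically asymmetric should mean precisely that $\lambda_{+} \neq \lambda_{-}$, so that $\delta := |\lambda_{+} - \lambda_{-}| > 0$.

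The heart of the proof is to replace these displacements by the translation lengths $\tau(Z_{n})$ and $\tau(Z_{n}^{-1}) = \lim_{k} d(Z_{n}^{k} o, o)/k$. One direction, $\tau(Z_{n}) \leq d(o, Z_{n} o)$ and $\tau(Z_{n}^{-1}) \leq d(Z_{n} o, o)$, is immediate from the triangle inequality. For the reverse bound, I would invoke the pivoting machinery for BGIP isometries developed in this series: with probability tending to one (in fact exponentially fast), the trajectory $o, Z_{1} o, \dots, Z_{n} o$ admits a long sequence of BGIP pivots that moreover include pivots near both endpoints. A standard consequence of having pivots close to $o$ and to $Z_{n} o$ is that $Z_{n}$ itself is BGIP and that its quasi-axis passes within bounded distance of $o$, which yields a uniform bound
\[
d(o, Z_{n} o) - C \leq \tau(Z_{n}), \qquad d(Z_{n} o, o) - C \leq \tau(Z_{n}^{-1})
\]
off an event of exponentially small probability.

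Combining the two steps, $\tau(Z_{n})/n \to \lambda_{+}$ and $\tau(Z_{n}^{-1})/n \to \lambda_{-}$ in probability. Hence
\[
\frac{|\tau(Z_{n}) - \tau(Z_{n}^{-1})|}{n} \to \delta > 0,
\]
which forces $|\tau(Z_{n}) - \tau(Z_{n}^{-1})| \to \infty$ in probability. In particular, for any fixed $K > 0$, the probability that $|\tau(Z_{n}) - \tau(Z_{n}^{-1})| < K$ tends to zero, which is the desired conclusion.

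The main obstacle will be the pivoting step. The pivoting arguments of Gou\"ezel and the symmetric version in the earlier parts of the series must be adapted to an asymmetric metric, and one must simultaneously control $\tau(Z_{n})$ and $\tau(Z_{n}^{-1})$ in terms of the two distinct orbit distances $d(o, Z_{n} o)$ and $d(Z_{n} o, o)$. This requires comparing forward and backward closest-point projections onto BGIP orbits and checking that both the random walk $(Z_{n})$ and its reversal fit the pivot framework. This is precisely where the bounded geodesic image hypothesis, rather than a mere contracting projection condition, is used to pass robustly between the two asymmetric sides.
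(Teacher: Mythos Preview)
Your argument rests on a misreading of ``asymptotically asymmetric''. In the paper this condition means that there exist $g,h\in\supp\mu^{\ast N}$ with $\tau(g)-\tau(g^{-1})\neq\tau(h)-\tau(h^{-1})$; it does \emph{not} assert that the forward and backward drifts $\lambda_{+},\lambda_{-}$ differ. Indeed, take $\mu$ symmetric (so $\check\mu=\mu$ and hence $\lambda_{+}=\lambda_{-}$) whose support contains some $g$ with $\tau(g)\neq\tau(g^{-1})$; then $g$ and $g^{-1}$ witness asymptotic asymmetry while your quantity $\delta=|\lambda_{+}-\lambda_{-}|$ vanishes, and your proof yields nothing. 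More generally, even when $\lambda_{+}=\lambda_{-}$ the theorem asserts that $|\tau(Z_{n})-\tau(Z_{n}^{-1})|$ escapes every bounded window, so a first-order drift comparison cannot suffice. (There is a second, smaller issue: the theorem carries no moment hypothesis, so the SLLN you invoke may give $\lambda_{\pm}=+\infty$.)

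The paper's proof is an anti-concentration argument, not a drift comparison. One fixes a large parameter $n'$, chooses $\phi,\varphi$ in some $\supp\mu^{\ast M'}$ realizing the asymmetry, and uses the pivoting machinery (Corollary~\ref{cor:pivotCorCombined}) with the auxiliary measure $\mu'=\tfrac12(\delta_{\phi_{new}}+\delta_{\varphi_{new}})$ to produce, on a set of probability $\to1$, a decomposition of $Z_{n}$ with $m(n)\in[n',2n']$ independent slots each carrying $\phi_{new}$ or $\varphi_{new}$. Conditioned on an equivalence class, Lemma~\ref{lem:alignedGromov} and its reversal show that $\tau(Z_{n})-\tau(Z_{n}^{-1})$ lies in an interval $I_{N}$ of bounded width depending only on $n'$, where $N$ is the (binomial) number of $\phi_{new}$-slots and the centers $c_{N}$ are separated by more than the width of $I_{N}$. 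Since a $\mathrm{Bin}(m(n),\tfrac12)$ variable hits any fixed value with probability $O(1/\sqrt{n'})$, the probability that $\tau(Z_{n})-\tau(Z_{n}^{-1})$ falls in $[-n',n']$ is $O(1/\sqrt{n'})$, and letting $n'\to\infty$ finishes the proof. The essential idea you are missing is this randomization over two slot-types with distinct asymmetries, which forces $\tau(Z_{n})-\tau(Z_{n}^{-1})$ to spread out regardless of whether the global drifts agree.
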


As an application, we obtain the following corollary:

\begin{cor}\label{cor:mismatchOuter}
Let $X$ be Culler-Vogtmann Outer space of rank $N \ge 3$ and $G$ be the corresponding outer automorphism group of the free group $F_{N}$. Let $(Z_{n})_{n \ge 0}$ be an admissible random walk on $G$. Then for any $K>0$, we have  \[
\lim_{n\rightarrow \infty} \Prob\Big( \w : \big|\tau(Z_{n}) - \tau(Z_{n}^{-1})\big| < K\Big) =0.
\]
\end{cor}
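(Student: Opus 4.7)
The plan is to deduce Corollary~\ref{cor:mismatchOuter} as a direct specialization of Theorem~\ref{thm:mismatch}, by verifying that the pair $(CV_N, \Out(F_N))$ falls under Convention~\ref{conv:main} and that an admissible random walk on $\Out(F_N)$ gives rise to a non-elementary, asymptotically asymmetric measure.

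First I would check the geometric hypotheses. Outer space $CV_N$ with the asymmetric Lipschitz metric is a geodesic asymmetric metric space, and $\Out(F_N)$ acts on it by isometries. By Algom-Kfir's strong contraction theorem, the axis of any fully irreducible outer automorphism is strongly contracting in the Lipschitz metric; translated into the framework of this paper, this yields the BGIP. Two fully irreducibles with disjoint attracting/repelling fixed currents have orbits at unbounded Hausdorff distance and therefore give independent BGIP isometries. Such pairs are abundant in $\Out(F_N)$ for $N\ge 3$, so Convention~\ref{conv:main} is satisfied. Admissibility of the random walk then ensures that the semigroup generated by $\supp\mu$ contains such a pair (up to passing to a convolution power), so $\mu$ is non-elementary.

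The crucial step is to verify asymptotic asymmetry of $\mu$. This is precisely where the hypothesis $N \ge 3$ enters: by the work of Handel--Mosher, $\Out(F_N)$ contains parageometric fully irreducible elements $\phi$ whose forward and backward stretch factors, and hence forward and backward translation lengths on $CV_N$, are distinct. Admissibility guarantees that the random walk can reach conjugates of such elements. I expect the main obstacle to be the precise matchup between this global asymmetry of $\Out(F_N)$ and the technical definition of asymptotic asymmetry used in Theorem~\ref{thm:mismatch}: one must argue that the driving measure, after sufficiently many convolutions, places enough mass on group elements displaying a definite mismatch between $\tau$ and $\tau(\cdot^{-1})$ along a BGIP axis. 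Once this reduction is carried out, Theorem~\ref{thm:mismatch} applies verbatim and yields the stated vanishing probability.
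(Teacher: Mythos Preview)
Your overall strategy---reduce to Theorem~\ref{thm:mismatch} by verifying Convention~\ref{conv:main} and checking that an admissible measure on $\Out(F_{N})$ is non-elementary and asymptotically asymmetric---is exactly the route the paper takes. However, there is a genuine gap in how you propose to verify Convention~\ref{conv:main}.

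You invoke Algom-Kfir's theorem that folding axes of fully irreducibles are strongly contracting, and claim this ``translated into the framework of this paper \ldots\ yields the BGIP.'' This step fails. The paper is explicit (immediately after Observation~\ref{obs:BGIPReversal}) that the strongly contracting property and BGIP are \emph{not} equivalent on asymmetric metric spaces, and again after Theorem~\ref{thm:algomKfir} that ``we need BGIP instead of the strongly contracting property in our setting, and the author does not know a way to promote the latter to the former.'' The BGIP of fully irreducibles is instead supplied by Theorem~\ref{thm:iwipBGIP}, whose proof goes through Dowdall--Taylor's work and the Kapovich--Maher--Pfaff--Taylor contraction along greedy folding lines, not through Algom-Kfir's result. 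Without Theorem~\ref{thm:iwipBGIP} you do not have Convention~\ref{conv:main}, and Theorem~\ref{thm:mismatch} cannot be applied.

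On the asymptotic asymmetry side, the ``obstacle'' you anticipate is not really one: the definition only asks for $g,h\in\supp\mu^{\ast N}$ with $\tau(g)-\tau(g^{-1})\neq\tau(h)-\tau(h^{-1})$. The paper simply takes $h=id$ (which lies in some $\supp\mu^{\ast m}$ by admissibility) and $g$ a power of a parageometric fully irreducible (which exists for $N\ge 3$ by \cite{jager2008free} and lies in some $\supp\mu^{\ast n}$ by admissibility); then $\tau(g)-\tau(g^{-1})>0=\tau(id)-\tau(id^{-1})$ in $\supp\mu^{\ast mn}$. No ``matchup along a BGIP axis'' is needed for this part.
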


Corollary \ref{cor:mismatchOuter} asserts that a generic outer automorphism has different expansion factor than its inverse. This was suggested by Handel and Mosher in \cite{handel2007parageometric}. There, they proved the asymmetry for a large class of automorphisms, namely, the class of parageometric fully irreducibles (\cite[Theorem 1]{handel2007parageometric}). Nonetheless, Corollary \ref{cor:mismatchOuter} does not follow from Handel and Mosher's result, as a generic outer automorphism is ageometric and not parageometric (\cite[Corollary C]{kapovich2022random}). Meanwhile, a companion result in \cite[Theorem 1.1]{handel2007the-expansion} provides a constant $C$ depending on the rank $N$ of the free group such that, for any fully irreducible element $\phi \in \Out(F_{N})$, the expansion factor $\lambda$ of $\phi$ and $\lambda'$ of $\phi^{-1}$ satisfy $1/C \le \log (\lambda/\lambda') \le C$. In other words, the translation lengths of $\phi$ and $\phi^{-1}$ are within bounded ratio (see also \cite[Theorem 23]{algom-kfir2012asymmetry}).

Together with this, we also recover Horbez's SLLN \cite[Theorem 5.7]{horbez2016horo} and CLT  \cite[Theorem 0.2]{horbez2018clt} for displacement, and Dahmani-Horbez's SLLN for translation length \cite[Theorem 1.4]{dahmani2018spectral} on Outer space, the latter with a weaker and optimal moment condition. We also present a CLT for translation length and its converse, which seems new for Outer space. Moreover, we obtain optimal deviation inequalities on Outer space; see \cite{horbez2018clt} for previously known deviation inequalities. Using them, we also establish the geodesic tracking of random walks. Finally, we also discuss the exponential genericity of (atoroidal) fully irreducible automorphisms, which is a recurring theme in \cite{maher2018random}, \cite{taylor2016random} and \cite{kapovich2022random}; note that we do not require moment conditions here.

In order to apply our general theory to Outer space, we crucially utilize the BGIP of fully irreducible outer automorphisms. Namely, we modify Kapovich-Maher-Pfaff-Taylor's observation (\cite[Theorem 7.8]{kapovich2022random}) into the following form: 
 
\begin{restatable}{thm}{iwipBGIP}\label{thm:iwipBGIP}
Let $N\ge 3$. Then every fully irreducible outer automorphism in $\Out(F_{N})$ has BGIP with respect to the closest point projection.
\end{restatable}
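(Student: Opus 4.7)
My plan is to bootstrap from the Kapovich-Maher-Pfaff-Taylor observation \cite[Theorem 7.8]{kapovich2022random}, which establishes a projection/contraction property for axes of fully irreducible outer automorphisms using a combinatorial projection tailored to folding paths or train-track structures, and to upgrade that statement to BGIP for the \emph{closest point projection} in the asymmetric Lipschitz metric.

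As a first step, I would verify that for a fully irreducible $\phi \in \Out(F_N)$, the orbit $\{\phi^{n} o\}_{n \in \Z}$ is a bi-quasigeodesic in the (asymmetric) Lipschitz metric on Outer space. This uses that both the forward and backward translation lengths of $\phi$ are positive (being $\log \lambda(\phi)$ and $\log \lambda(\phi^{-1})$) together with a train-track representative that provides a well-behaved axis $A_{\phi}$ along which the orbit is tracked. This step is needed so that the hypothesis of the BGIP definition in Definition \ref{dfn:boundedGeod} literally applies to $\phi$.

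The core step is to compare the KMPT projection $\pi_{\mathrm{KMPT}}$, for which the contracting behavior is known, with the closest point projection $\pi_{cp}$ onto $A_{\phi}$. I would show that any two coarsely Lipschitz projections of $X$ onto the uniformly quasiconvex bi-quasigeodesic $A_{\phi}$ differ by a universal additive constant: given $x \in X$, both $\pi_{\mathrm{KMPT}}(x)$ and $\pi_{cp}(x)$ must lie within bounded distance of any point of $A_{\phi}$ that (approximately) realizes the nearest distance to $x$, hence they lie within bounded distance of each other. Once this comparison is in place, BGIP for $\pi_{cp}$ follows directly from the KMPT contracting property: a geodesic $\gamma$ far from $A_{\phi}$ has a bounded-diameter image under $\pi_{\mathrm{KMPT}}$, hence also under $\pi_{cp}$, with the bound depending only on the KMPT constants and the quasi-geodesic constants of $A_{\phi}$.

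The main obstacle I foresee is managing the asymmetry of the Lipschitz metric. In an asymmetric setting, the closest point projection depends on whether one minimizes $d(x, A_{\phi})$ or $d(A_{\phi}, x)$, while the KMPT estimates are typically phrased in terms of quantities derived from train-track comparisons that behave more symmetrically. To handle this, I would run the comparison argument separately for each of the two closest point projections and check that the BGIP bound—which concerns projections of geodesics, not one-sided neighborhoods—comes out uniform in both. If a direct translation runs into issues, the fallback is to invoke the strong contraction of axes of fully irreducibles from \cite{algom-kfir2012asymmetry} applied simultaneously to $\phi$ and $\phi^{-1}$; together these pin down the behavior of near-axis geodesics in both directions of the asymmetric metric and supply the missing estimate.
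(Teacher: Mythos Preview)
Your proposal has the right instinct---compare projections and handle asymmetry by treating both directions---but it misses two concrete obstacles that the paper's proof is specifically built to overcome.

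First, the KMPT result (Theorem~\ref{thm:kapovich}) is stated for a bi-infinite $\phi$-\emph{periodic} greedy folding line, and the paper explicitly notes just before the theorem that it is not known whether every fully irreducible admits such a line. So you cannot simply bootstrap from KMPT for an arbitrary fully irreducible. Second, and more seriously, KMPT is \emph{uni-directional}: it only guarantees that $[x,y]$ passes near $\gamma$ when the projections satisfy $t_{1} < t_{2}$, i.e.\ when the projection of the geodesic moves forward along $\gamma$. A geodesic whose projections move backward is not covered, so no amount of projection-comparison will upgrade KMPT alone to full BGIP.

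The paper's fix is to take \emph{two} (non-periodic) greedy folding lines $\gamma^{+}$ and $\gamma^{-}$ limiting on $T^{\pm}$ in opposite senses (via \cite{bestvina2015boundary}), both Hausdorff-close to the orbit $\{\varphi^{i}o\}$ by \cite[Theorem~4.1]{dowdall2018hyperbolic}. Using the free factor complex and the Dahmani--Horbez $(B,D)$-contraction, one shows that the closest point projections onto $\gamma^{+}$, $\gamma^{-}$, and the orbit are all coarsely equivalent. Then for a geodesic $\eta$ with large projection onto the orbit, one invokes the contracting property of $\gamma^{+}$ when the projection progresses forward and of $\gamma^{-}$ when it progresses backward; this two-line trick is exactly what closes the uni-directional gap. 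Your fallback to Algom-Kfir's strong contraction is also explicitly flagged in the paper as insufficient: in asymmetric metrics, strong contraction and BGIP are not equivalent, and the author states that no promotion from the former to the latter is known.
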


\subsection{Structure of the article} \label{subsect:str}

In Section \ref{section:prelim}, we recall the notion of bounded geodesic image property (BGIP) and prove relevant lemmata. These lemmata were used in \cite{choi2022random1} to establish the alignment of sample paths of a random walk, which we rephrase in the language of BGIP. In Section \ref{section:limit}, we summarize and generalize the limit laws discussed in \cite{choi2022random1} and \cite{choi2022random2} while pointing out a subtle difference. In Section \ref{section:outer}, we review facts about the outer automorphism group and Outer space. The main result of this section is Theorem \ref{thm:iwipBGIP}, the BGIP of fully irreducible outer automorphisms. Combining the results in Section \ref{section:limit} and Section \ref{section:outer}, we obtain limit laws for $\Out(F_{N})$ in Section \ref{section:limitOuter}.

\subsection*{Acknowledgments}
The author thanks Hyungryul Baik, Talia Fern{\'o}s, Ilya Gekhtman, Thomas Haettel,  Joseph Maher, Hidetoshi Masai, Catherine Pfaff, Yulan Qing, Kasra Rafi,  Samuel Taylor and Giulio Tiozzo for helpful discussions. In particular, the author is grateful to Samuel Taylor for providing the author with a simpler proof of Theorem \ref{thm:iwipBGIP}. The author is also grateful to the American Institute of Mathematics and the organizers and the participants of the workshop ``Random walks beyond hyperbolic groups'' in April 2022 for helpful and inspiring discussions.

The author was partially supported by Samsung Science \& Technology Foundation grant No. SSTF-BA1702-01. This work constitutes a part of the author's PhD thesis.

\section{Preliminaries} \label{section:prelim}
In this section, we gather necessary notions and facts about geodesics, paths, and bounded geodesic image property (BGIP).

\subsection{Asymmetric metric spaces}\label{subsect:metric}

\begin{definition}[Metric space]\label{dfn:metric}
An \emph{(asymmetric) metric space} $(X, d)$ is a set $X$ equipped with a function $d : X \times X \rightarrow \mathbb{R}_{\ge 0}$ that satisfies the following: \begin{itemize}
\item (non-degeneracy) for each $x, y \in X$, $d(x, y) = 0$ if and only if $x = y$;
\item (triangle inequality) for each $x, y, z \in X$, $d(x, z) \le d(x, y) + d(y, z)$;
\item (local symmetry) for each $x \in X$, there exist $\epsilon, K>0$ such that $d(y, z) \le K d(z, y)$ holds for $y, z \in \left\{a \in X : \min\left(d(x, a), d(a, x)\right)<\epsilon\right\}$.
\end{itemize}
In this situation, we say that $d$ is a \emph{metric} on $X$. $d$ is said to be \emph{symmetric} if $d(x, y) = d(y, x)$ holds for all $x, y \in X$. We define a symmetric metric called the \emph{symmetrization} of $d$ by \[
d^{sym}(x, y) := d(x, y) + d(y, x).
\]
We endow $(X, d)$ with the topology induced by $d^{sym}$.
\end{definition}

We define the Gromov product between $x$ and $y$ based at $z$ by \[
(x, y)_{z} := \frac{1}{2} [d(x, z) + d(z, y) - d(x, y)].
\]

From now on, we fix an (asymmetric) metric space $(X, d)$. The \emph{diameter} of a set $A \subseteq X$ is defined by \[
\diam(A) := \sup \{d(x, y) : x, y \in A\},
\] and the \emph{(directed) distances} between sets $A, B \subseteq X$ are defined by \[\begin{aligned}
d(A, B) &:= \inf \{d(x, y) : x \in A, y \in B\}, \\
d^{sym}(A, B) &:=  \inf\{d^{sym}(x, y) : x \in A, y \in B\}.
\end{aligned}
\]
For $R>0$, the \emph{$R$-neighborhood} of a set $A\subseteq X$ is defined by \[
\mathcal{N}_{R}(A) :=\{x : d^{sym}(x, A) < R\}.
\] The \emph{Hausdorff distance} between $A, B \subseteq X$ is defined by \[
d_{H}(A, B):=\inf\{R >0 : A \subseteq \mathcal{N}_{R}(B) \,\,\textrm{and}\,\, B \subseteq \mathcal{N}_{R}(A)\}.
\]

Given $A, B \subseteq X$, we say that $A$ is \emph{$K$-coarsely contained by} $B$ if $A \subseteq \mathcal{N}_{K}(B)$, and that $A$ and $B$ are \emph{$K$-coarsely equivalent} if $d_{H}(A, B) < K$. We say that $A$ is \emph{$K$-coarsely connected} if for each $x, y \in A$ there exists a a chain $x = a_{0}, a_{1} \ldots, a_{n} = y$ of points in $A$ such that $d^{sym}(a_{i}, a_{i+1}) < K$. 

A \emph{path} is a map from an interval $I$ or a set $I$ of consecutive integers into $X$. Given a path $\gamma : I \rightarrow X$ and $a < b$ in $\mathbb{R}$, we denote its restriction to $I \cap [a, b]$ by $\gamma|_{[a, b]}$. Also, we define the \emph{reversal} $\bar{\gamma}$ of $\gamma$ by reversing the orientation, i.e., by precomposing the map $t \mapsto -t$ from $-I$ to $I$.

We say that two paths $\gamma : I \rightarrow X$, $\eta : J \rightarrow X$ are \emph{$K$-fellow traveling} if there exist a non-decreasing surjective map $\varphi : I \rightarrow J$ such that $d^{sym}(\gamma(t), (\eta \circ \varphi) (t)) < K$ for each $t \in I$.

\begin{definition}[Quasigeodesics]\label{dfn:quasigeod}
A path $\gamma : I \rightarrow X$ from an interval or a set of consecutive integers $I$ is called a \emph{$K$-quasigeodesic} if \begin{equation}\label{eqn:quasiGeod}
\frac{1}{K} |t-s| - K \le d(\gamma(s), \gamma(t)) \le K|t-s| + K
\end{equation}
holds for all $s, t \in I$ such that $s<t$. If Inequality \ref{eqn:quasiGeod} holds for all $s, t \in I$, we say that $\gamma$ is a \emph{$K$-bi-quasigeodesic}. An $1$-quasigeodesic from an interval to $X$ is called a \emph{geodesic}.

A metric space $X$ is said to be \emph{geodesic} if every ordered pair of points can be connected by a geodesic, i.e.,  for every $x, y \in X$ there exists a geodesic $\gamma : [a, b] \rightarrow X$ such that $\gamma(a) = x$ and $\gamma(b) = y$. We denote $\gamma$ by $[x, y]$.
\end{definition}

\begin{remark}\label{rem:asymGeod}
In many asymmetric metric spaces (including Culler-Vogtmann Outer space), the reversal of a geodesic may not be a geodesic. Meanwhile, geodesics on asymmetric metric spaces are continuous thanks to the local symmetry of the metric. 
\end{remark}

We will frequently use Inequality \ref{eqn:quasiGeod} in the following form. For any points $p, q$ on a $K$-bi-quasigeodesic $\gamma$, we have \begin{equation}\label{eqn:biQuasiRev1}
\diam\left(\gamma^{-1}(p) \cup \gamma^{-1}(q)\right) \le Kd(p, q) + K^{2},
\end{equation} \begin{equation}\label{eqn:biQuasiRev2}
d(q, p) \le K\diam\left(\gamma^{-1}(p) \cup \gamma^{-1}(q)\right) + K \le K^{2}d(p, q) + K^{3} + K.
\end{equation}

Fixing a basepoint $o \in X$, we define the \emph{translation length} of an isometry $g$ of $X$ by \[
\tau(g) := \lim_{n \rightarrow \infty} \frac{1}{n} d(o, g^{n} o).
\]
The triangle inequality tells us that $\tau(g)$ does not depend on the choice of $o$. Meanwhile, because the metric $d$ can be asymmetric, $\tau(g)$ and $\tau^{-1}(g)$ are not equal in general.

\subsection{Bounded geodesic image property (BGIP)} \label{subsect:BGIP}

In this subsection, we fix a (possibly asymmetric) metric space $(X, d)$. Given a subset $A$ of $X$, we define the closest point projection $\pi_{A} : X \rightarrow 2^{A}$ onto $A$ by \[
\pi_{A}(x) := \{ a \in A : d(x, a) = d(x, A)\}.
\]

\begin{definition}[Bounded geodesic image property]\label{dfn:boundedGeod}
A subset $A \subseteq X$ of a geodesic metric space $X$ is said to satisfy the \emph{$K$-bounded geodesic image property}, or $K$-BGIP in short, if the following hold: \begin{enumerate}
\item for each $z \in X$, $\pi_{A}(z)$ is nonempty;
\item for each geodesic $\eta$ such that $d^{sym}(\eta, A) > K$, we have $\diam(\pi_{A}(\eta) ) \le K$.
\end{enumerate}

A $K$-bi-quasigeodesic that satisfies $K$-BGIP is called a \emph{$K$-BGIP axis}.
\end{definition}

\begin{obs}\label{obs:BGIPReversal}
The reversal of a $K$-BGIP axis is again a $K$-BGIP axis.
\end{obs}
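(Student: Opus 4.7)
The plan is to observe that the $K$-BGIP condition in Definition \ref{dfn:boundedGeod} depends only on the image of an axis as a subset of $X$, not on its parameterization, while the bi-quasigeodesic condition is symmetric under time-reversal because it is required to hold for \emph{all} pairs $s, t$ in the domain, not merely for $s<t$.

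First I would record that if $\gamma : I \to X$ is a $K$-bi-quasigeodesic, then its reversal $\bar\gamma$, obtained by precomposing with the map $t \mapsto -t$ from $-I$ to $I$, is again a $K$-bi-quasigeodesic. Indeed, for any $s, t \in -I$ the inequality
\[
\tfrac{1}{K}|t-s| - K \;\le\; d(\bar\gamma(s),\bar\gamma(t)) \;=\; d(\gamma(-s), \gamma(-t)) \;\le\; K|t-s| + K
\]
follows directly from the bi-quasigeodesic condition on $\gamma$ applied to the pair $(-s,-t)$, for which no ordering hypothesis is needed.

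Second, I would note that the image $A := \bar\gamma(-I) = \gamma(I) \subseteq X$ is the same subset regardless of orientation. Consequently the closest point projection $\pi_{A}$ in Definition \ref{dfn:boundedGeod} is identical for $\gamma$ and $\bar\gamma$, and the two BGIP conditions, namely nonemptiness of $\pi_{A}(z)$ for each $z \in X$ and boundedness of $\diam(\pi_{A}(\eta))$ for geodesics $\eta$ with $d^{sym}(\eta,A)>K$, are properties of $A$ alone. Since $\gamma$ satisfies them, so does $\bar\gamma$.

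Combining the two items shows that $\bar\gamma$ is a $K$-bi-quasigeodesic satisfying $K$-BGIP, i.e., a $K$-BGIP axis. There is essentially no obstacle: the only point to be careful about is the asymmetry of $d$, but since the BGIP is phrased in terms of the subset and of the symmetric distance $d^{sym}$, this asymmetry does not interfere. The argument never needs to reverse a geodesic in $X$ (which indeed need not remain a geodesic in the asymmetric setting, as Remark \ref{rem:asymGeod} warns), only to reverse the parameterization of $\gamma$ itself.
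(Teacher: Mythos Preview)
Your proof is correct and is essentially the natural justification the paper has in mind; the paper states this as an observation without proof, relying on exactly the two points you make explicit (that the bi-quasigeodesic condition is symmetric in $s,t$ and that the BGIP conditions depend only on the image set).
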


Note that strongly contracting property (as in \cite[Definition 2.1]{choi2022random1}) and BGIP are not equivalent on asymmetric metric spaces. Hence, we need additional arguments to establish the analogues of results in \cite{choi2022random1}. Roughly speaking, we need to replace $d(x, y)$, the distance between two points $x$ and $y$, with its symmetrization $d^{sym}(x, y)$ in the arguments in \cite{choi2022random1} and \cite{choi2022random2}. For example, we now define that: 

\begin{definition}[{\cite[Definition 5.8]{bestvina2009higher}}]\label{dfn:indep}
Bi-infinite paths $\kappa = (x_{i})_{i \in \Z}$, $\eta = (y_{i})_{i \in \Z}$ are said to be \emph{independent} if the map $(n, m) \mapsto d^{sym}(x_{n}, y_{m})$ is proper, i.e., for any $M > 0$, $\{(n, m) : d^{sym}(x_{n}, y_{m}) < M\}$ is bounded.

Isometries $g, h$ of $X$ are said to be \emph{independent} if their orbits are independent.
\end{definition}

\begin{definition}\label{dfn:nonElt}
A subgroup of $\Isom(X)$ is said to be \emph{non-elementary} if it contains two independent BGIP isometries.
\end{definition}

From now on, we always discuss under Convention \ref{conv:main}: $(X, d)$ is a geodesic metric space, with possibly asymmetric metric, $G$ is a countable group of isometries of $X$, and $G$ contains two independent BGIP isometries.

\subsection{Random walks} \label{subsect:RW}

We recall the notations in \cite[Subsection 2.4]{choi2022random1}.

Let $\mu$ be a probability measure $G$, which comes with its reflected version $\check{\mu}$ defined by $\check{\mu}(g) := \mu(g^{-1})$. The \emph{random walk} generated by $\mu$ is the Markov chain on $G$ with the transition probability $p(g, h) := \mu(g^{-1} h)$; this can be defined on a probability space on which the \emph{step elements} $\{g_{n}\}_{n \in \Z}$ are measurable, where $g_{i}$'s are i.i.d.s distributed according to $\mu$. Such a probability space is called the \emph{probability space for $\mu$}. Equivalently,  $(\Omega, \Prob)$ is a probability space for $\mu$ if there is a measure preserving map from $(\Omega, \Prob)$ to $(G^{\Z}, \mu^{\Z})$.

Given a step path $(g_{n})_{n \in \Z} \in (G^{\Z}, \mu^{\Z})$, we define the \emph{(bi-infinite) sample path} $(Z_{n})_{n \in \Z}$ by \[
Z_{n} = \left\{ \begin{array}{cc} g_{1} \cdots g_{n} & n > 0 \\ id & n=0 \\ g_{0}^{-1} \cdots g_{n+1}^{-1} & n < 0. \end{array}\right.
\]
We also introduce the notation $\check{g}_{n} = g_{-n+1}^{-1}$ and $\check{Z}_{n} = Z_{-n}$. Note that we have an isomorphism $(G^{\Z}, \mu^{\Z}) \rightarrow (G^{\Z_{>0}}, \check{\mu}^{\Z_{>0}}) \times (G^{\Z_{>0}}, \mu^{\Z_{>0}})$ by $(g_{n})_{n \in \Z} \mapsto ((\check{g}_{n})_{n >0} , (g_{n})_{n > 0})$. In view of this, we sometimes write the bi-infinite sample path as $((\check{Z}_{n})_{n\ge0}, (Z_{n})_{n\ge0})$.

When a constant $M_{0}$ is understood, we will denote the $M_{0}$-long subpath of the sample path ending at $Z_{i} o$ by $\mathbf{Y}_{i}$ In other words, we denote the sequence $\big(Z_{i-M_{0}}(\w) o, Z_{i - M_{0} + 1}(\w) o, \ldots, Z_{i-1}(\w) o, Z_{i} (\w) o \big)$ by  $\mathbf{Y}_{i}(\w)$.

We define the \emph{support} of $\mu$, denoted by $\supp \mu$, as the set of elements of $G$ that are assigned nonzero value by $\mu$. We denote by $\mu^{N}$ the product measure of $N$ copies of $\mu$, and by $\mu^{\ast N}$ the $N$-th convolution of $\mu$.

A probability measure $\mu$ on $G$ is said to be \emph{non-elementary} if the semigroup $\llangle \supp \mu \rrangle$ generated by the support of $\mu$ contains two independent BGIP isometries $g, h$ of $X$. $\mu$ is said to be \emph{admissible} if $\llangle \supp \mu \rrangle$ equals the entire group $G$.  $\mu$ is said to be \emph{non-arithmetic} if there exist $N > 0$ and $g, h \in \supp \mu^{\ast N}$ such that $\tau(g) \neq \tau(h)$. Finally, we say that $\mu$ is \emph{asymptotically asymmetric} if there exists $N>0$ and $g, h \in \supp \mu^{\ast N}$ such that \[
\tau(g) - \tau(g^{-1}) \neq \tau(h) - \tau(h^{-1}).
\] The random walk $(Z_{n})_{n \ge 0}$ generated by $\mu$ is said to be admissible (non-elementary, non-arithmetic or asymptotically asymmetric, resp.) if $\mu$ is admissible (non-elementary, non-arithmetic or asymptotically asymmetric, resp.).

For a given $p>0$, we define the $p$-th moment of $\mu$ by \[
\E_{\mu}[d(o, go)^{p}] := \sum_{g \in G} d(o, go)^{p}\, \mu(g).
\]
Note that $\E_{\mu}[d(o, go)^{p}]$ and \[
\E_{\check{\mu}}[d(o, go)^{p}] := \sum_{g\in G} d(o, go)^{p} \,\check{\mu}(g) = \sum_{g \in G} d(go, o)^{p} \,\mu(g)
\]
are \emph{distinct in general}, and the finitude of the former does not imply that of the latter. This technicality leads to a subtle difference between limit laws for symmetric and asymmetric metric spaces. However, many asymmetric metric spaces (including Outer space) satisfy the following coarse symmetry: there exists a global constant $K>0$ such that $d(x, y) \le K d(x, y)$ for $x, y \in Go$. Under such a coarse symmetry, a measure $\mu$ has finite $p$-th moment if and only if its reflected version $\check{\mu}(\cdot) := \mu(\cdot^{-1})$ does so. Hence, this subtlety will not matter for Outer space and many other spaces.

\subsection{Properties of BGIP axes} \label{subsect:BGIPProperty}
In \cite{choi2022random1}, we summarized some useful properties of contracting axes that were established earlier by many authors in \cite{arzhantseva2015growth}, \cite{sisto2018contracting}, \cite{yang2019statistically}. Here, we record the analogous properties for BGIP axes.

\begin{lem}[cf. {\cite[Lemma 3.1]{choi2022random1}}]\label{lem:coarseEquiv}
Let $K>1$, let $\gamma$ be a $K$-BGIP axis and let $\eta : I \rightarrow X$ be a geodesic such that $\diam (\pi_{\gamma}(\eta)) > K$. Then there exist $t < t'$ in $I$ such that $\pi_{\gamma}(\eta)$ and $\eta|_{[t, t']}$ are $20K^{3}$-coarsely equivalent, and moreover, \[
\diam \big(\pi_{\gamma}(\eta|_{(-\infty, t]}) \cup \eta(t) \big) < 7K^{3}, \quad \diam \big(\pi_{\gamma}( \eta|_{[t', +\infty)}) \cup \eta(t')\big) < 7K^{3}.
\]
\end{lem}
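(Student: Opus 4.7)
The plan is to port the symmetric-metric proof of \cite[Lemma 3.1]{choi2022random1} to the asymmetric setting, systematically using $d^{sym}$ in place of the underlying distance and converting asymmetric distances along $\gamma$ into symmetric ones via the bi-quasigeodesic inequalities \eqref{eqn:biQuasiRev1}--\eqref{eqn:biQuasiRev2}.

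First, I would define the entrance and exit times
\[
t := \inf\{s \in I : d^{sym}(\eta(s), \gamma) \le K\}, \qquad t' := \sup\{s \in I : d^{sym}(\eta(s), \gamma) \le K\}.
\]
These are well-defined: if the set were empty, $d^{sym}(\eta, \gamma) > K$ would force $\diam(\pi_\gamma(\eta)) \le K$ by BGIP, contradicting the hypothesis; and continuity of $\eta$ (Remark~\ref{rem:asymGeod}) makes the extrema attained, so $d^{sym}(\eta(t), \gamma) = d^{sym}(\eta(t'), \gamma) = K$. For the initial-ray bound, BGIP applied to $\eta|_{(-\infty, t - \epsilon]}$ gives $\diam(\pi_\gamma(\eta|_{(-\infty, t - \epsilon]})) \le K$ for every $\epsilon > 0$; letting $\epsilon \to 0$ and comparing $\pi_\gamma(\eta(t - \epsilon))$ with a point of $\pi_\gamma(\eta(t))$ via a triangle-inequality chain on $\gamma$, where each asymmetric-to-symmetric conversion via \eqref{eqn:biQuasiRev2} contributes a $K^3$ additive summand, produces the $7K^3$ bound. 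The analogous statement at $t'$ follows by reversing $\eta$ and invoking Observation~\ref{obs:BGIPReversal}.

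For the $20K^3$ coarse equivalence of $\pi_\gamma(\eta)$ with $\eta|_{[t, t']}$, I would decompose $\pi_\gamma(\eta) = \pi_\gamma(\eta|_{(-\infty, t]}) \cup \pi_\gamma(\eta|_{[t, t']}) \cup \pi_\gamma(\eta|_{[t', \infty)})$. The outer two pieces already lie within $7K^3$ of $\eta(t), \eta(t') \in \eta|_{[t, t']}$ by the previous step, so it suffices to show that $\pi_\gamma(\eta|_{[t, t']})$ is Hausdorff close to $\eta|_{[t, t']}$. This reduces to a \emph{uniform excursion bound}: $d^{sym}(\eta(s), \gamma) \le O(K^3)$ for every $s \in [t, t']$, as then the projection of each point of $\eta|_{[t, t']}$ lies within $O(K^3)$ of that point.

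The excursion bound is the main obstacle. Given $s \in (t, t')$, I let
\[
s_1 := \sup\{s'' \in [t, s] : d^{sym}(\eta(s''), \gamma) \le K\}, \qquad s_2 := \inf\{s'' \in [s, t'] : d^{sym}(\eta(s''), \gamma) \le K\}.
\]
The open subpath $\eta|_{(s_1, s_2)}$ is strictly $d^{sym}$-outside the $K$-neighborhood of $\gamma$, so BGIP yields $\diam(\pi_\gamma(\eta|_{(s_1, s_2)})) \le K$; together with $d^{sym}(\eta(s_j), \gamma) = K$ and a short limiting argument at the endpoints, this gives a symmetric bound $d^{sym}(\eta(s_1), \eta(s_2)) \le O(K)$. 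Because $\eta$ is a geodesic, $s_2 - s_1 = d(\eta(s_1), \eta(s_2)) \le O(K)$, hence also the forward distances $d(\eta(s_1), \eta(s))$ and $d(\eta(s), \eta(s_2))$ are $O(K)$. Assembling a $d^{sym}$-bound on $(\eta(s), \gamma)$ from these forward distances requires routing through a point of $\gamma$ and converting a reverse-direction length via \eqref{eqn:biQuasiRev2}, which is precisely the step that contributes the additive $K^3$ and produces the stated constant $20K^3$. The delicate part is the careful bookkeeping of the asymmetry factors throughout the chain, rather than any single sharp inequality.
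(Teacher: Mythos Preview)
Your overall plan matches the paper's: take $t, t'$ as the first and last times $\eta$ meets the closed $K$-$d^{sym}$-neighborhood of $\gamma$, use BGIP on the outer rays, and for $s \in [t, t']$ with $\eta(s)$ outside the neighborhood bracket $s$ by re-entry times $s_1 < s < s_2$. The paper packages the key asymmetric conversion as a standalone observation: if $d^{sym}(x, y) \le K$ with $y \in \gamma$, then $d(y, \pi_\gamma(x)) \le K$ and, via \eqref{eqn:biQuasiRev2}, $d(\pi_\gamma(x), y) \le 2K^3 + K$.

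Two pieces of your bookkeeping do not hold, though, and together they prevent you from reaching the stated $20K^3$. First, $d^{sym}(\eta(s_1), \eta(s_2))$ is $O(K^3)$, not $O(K)$: to route from $\eta(s_1)$ to $\eta(s_2)$ through $\gamma$ you must at some point travel from a projection $\pi_\gamma(\eta(s_j))$ back out to $\eta(s_j)$, and that backward leg already requires the observation above and costs $2K^3 + O(K)$. Second, even granting an excursion bound $d^{sym}(\eta(s), \gamma) \le C$, your inference ``then the projection of $\eta(s)$ lies within $O(C)$ of $\eta(s)$'' is not valid for large $C$: converting $d(y, \pi_\gamma(\eta(s))) \le O(C)$ to $d(\pi_\gamma(\eta(s)), y)$ via \eqref{eqn:biQuasiRev2} multiplies by $K^2$, so $C = O(K^3)$ yields only $O(K^5)$. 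The paper sidesteps this by never passing through a generic excursion bound: it directly estimates $d\big(\pi_\gamma(\eta(s_1)), \eta(s)\big)$ and $d\big(\eta(s), \pi_\gamma(\eta(s_1))\big)$ by explicit chains through $\eta(s_1), \eta(s_2)$ and their nearby points $p, q \in \gamma$, so that \eqref{eqn:biQuasiRev2} is applied only to legs of length $\le K$, contributing $2K^3 + K$ each and summing to at most $20K^3$. This is a constant-tracking correction rather than a structural one; with that adjustment your outline coincides with the paper's argument.
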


This was proved for symmetric metric spaces in \cite[Lemma 2.14]{arzhantseva2015growth}, \cite[Lemma 2.4]{sisto2018contracting} and \cite[Lemma 2.4(4)]{yang2019statistically}. We give a proof for asymmetric metric spaces, which is an adaptation of \cite[Lemma 2.2]{chawla2023genericity}, for completeness.

\begin{proof}
Let $N = N_{K}(\gamma)$ be the $K$-$d^{sym}$-neighborhood of $\gamma$, $\bar{N}$ be its closure and $S = \eta \cap \bar{N}$. Then $S$ is closed. Moreover, since $\diam(\pi_{\gamma}(\eta)) > K$, $S$ is nonempty by the $K$-BGIP of $\gamma$. We now claim:

\begin{obs}\label{obs:coarseEquivObs1}
Let $x \in S$, let $y \in \gamma$ and suppose that $d^{sym}(x, y) \le K$. Then $d(y, \pi_{\gamma}(x)) \le K$ and $d(\pi_{\gamma}(x), y) \le 2K^{3} + K$.
\end{obs}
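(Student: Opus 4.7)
The plan is to fix an arbitrary closest-point projection $z \in \pi_{\gamma}(x)$ (nonempty by the $K$-BGIP of $\gamma$) and derive the two bounds in turn: the forward bound $d(y,z) \le K$ will come from the variational definition of $\pi_\gamma$, after which the reverse bound $d(z,y) \le 2K^3 + K$ will be extracted from the bi-quasigeodesic property of $\gamma$ applied to the two points $z, y$ that already lie on it.

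For the first inequality, since $y \in \gamma$ is a competitor and $z$ minimizes the forward distance $d(x,\cdot)$ to $\gamma$, one has $d(x,z) \le d(x,y)$. A triangle-inequality chain then produces
\[
d(y,z) \le d(y,x) + d(x,z) \le d(y,x) + d(x,y) = d^{sym}(x,y) \le K.
\]
The key trick is that using the minimality of the projection to swap $d(x,z)$ for $d(x,y)$ is exactly what lets the two terms recombine into the symmetrization $d^{sym}(x,y)$.

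For the second inequality, both $z$ and $y$ lie on the $K$-bi-quasigeodesic $\gamma$, so I would invoke inequality \ref{eqn:biQuasiRev2} with $p = y$ and $q = z$, which reads $d(z,y) \le K^2 d(y,z) + K^3 + K$. Plugging in the bound $d(y,z) \le K$ from the previous step yields $d(z,y) \le K^3 + K^3 + K = 2K^3 + K$, matching the claimed constant.

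The main obstacle is conceptual rather than technical: because $d$ is asymmetric, the projection $\pi_\gamma$ is naturally tied to the forward distance $d(x,\cdot)$, so $d(y,z)$ and $d(z,y)$ cannot be treated symmetrically, and one must pay a $K^2$-factor to reverse direction. That is precisely why the two halves of the observation carry such different constants. Note that BGIP itself enters only to guarantee the nonemptiness of $\pi_\gamma(x)$; the main bounded-image clause of BGIP is not used here, which is consistent with this observation appearing as a warm-up at the start of the proof of Lemma \ref{lem:coarseEquiv}.
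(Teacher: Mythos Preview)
Your proof is correct and follows essentially the same approach as the paper: both obtain $d(y,\pi_\gamma(x)) \le K$ from the triangle inequality together with the minimizing property $d(x,\pi_\gamma(x)) \le d(x,y)$, and then invoke the bi-quasigeodesic reversal estimate (Inequality~\ref{eqn:biQuasiRev2}) to get $d(\pi_\gamma(x),y) \le 2K^3 + K$. Your additional commentary on why the two bounds carry different constants is accurate and helpful, though not present in the paper's terser writeup.
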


\begin{proof}[Proof of Observation \ref{obs:coarseEquivObs1}]
We have \[
d\big(y, \pi_{\gamma}(x) \big) \le d\big(y, x \big)  + d\big(x, \pi_{\gamma}(x) \big) \le d\big(y, x \big)  + d\big(x, y\big) \le K.
\]
Since $y$ and $\pi_{\gamma}(x)$ both lie in a $K$-bi-quasigeodesic $\gamma$, Inequality \ref{eqn:biQuasiRev2} implies  $d\big(\pi_{\gamma}(x), y \big) \le 2K^{3} + K$.
\end{proof}

Let $t$ and $t'$ be the infimum and the supremum of $\eta^{-1}(S)$. Then $\eta|_{(-\infty, t]}$ is a geodesic disjoint from $N$ so $\diam\big(\pi_{\gamma}(\eta|_{(-\infty, t]})\big) < K$ holds. Furthermore, $\eta(t)$ belongs to $\eta \cap \partial N \subseteq S$, so there exists $y \in \gamma$ such that $d^{sym}(\eta(t), y) = K$. Then Observation \ref{obs:coarseEquivObs1} implies \[\begin{aligned}
d\big(\eta(t), \pi_{\gamma}(\eta(t))\big) &\le d(\eta(t), y) + d\big(y, \pi_{\gamma}(\eta(t)) \big) \le 2K,\\
d\big(\pi_{\gamma}(\eta(t)), \eta(t) \big) &\le d\big(\pi_{\gamma}(\eta(t)), y\big) + d(y, \eta(t)) \le 2K^{3} + 2K.
\end{aligned}
\]
We deduce that \[\begin{aligned}
\diam \big(\eta(t) \cup \pi_{\gamma}(\eta|_{(-\infty, t]}) \big) &\le d^{sym}\big(\eta(t), \pi_{\gamma}(\eta(t))\big) + \diam \big( \pi_{\gamma} (\eta|_{(-\infty, t]}) \big) \\
&< 2K^{3} + 4K + K \le 7K^{3}.
\end{aligned}
\]
For a similar reason, we have $\diam \big( \eta(t') \cup \pi_{\gamma}(\eta|_{[t', +\infty)})\big) < 7K^{3}$.

We next claim that $d^{sym}(\eta(s), \pi_{\gamma}(\eta)) \le 20K^{3}$ for each $s \in [t, t']$. If $\eta(s) \in S$, then there exists $y \in \gamma$ such that $d^{sym}(\eta(s), y) = K$. Then Observation  \ref{obs:coarseEquivObs1} implies that \begin{equation}\label{eqn:coarseEquivEasy}\begin{aligned}
d^{sym}\big(\eta(s), \pi_{\gamma}(\eta(s))\big) &\le d^{sym}(\eta(s), y) + d^{sym}\big(y, \pi_{\gamma}(\eta(s))\big)\\
& \le K + (K + 2K^{3} + K) \le 7K^{3}.
\end{aligned}
\end{equation}
If $\eta(s) \notin S$, then we take the connected component $(s_{1}, s_{2}) \in [t, t'] \setminus \eta^{-1}(S)$ of $s$. Then the geodesic $\eta|_{[s_{1}, s_{2}]}$ is disjoint from the $K$-neighborhood of $\gamma$ so the diameter of $\pi_{\gamma}(\eta|_{[s_{1}, s_{2}]})$ is at most $K$. Moreover, $\eta(s_{1})$ and $\eta(s_{2})$ belong to $\eta \cap \partial N$: let $p, q \in \gamma$ be points such that $d^{sym}(\eta(s_{1}), p), d^{sym}(\eta(s_{2}), q) = K$. By Observation \ref{obs:coarseEquivObs1}, we have\[
\begin{aligned}
d\big(\pi_{\gamma}(\eta(s_{1})), \eta(s)\big) &\le d\big(\pi_{\gamma}(\eta(s_{1})), \eta(s_{1})\big) + d\big(\eta(s_{1}), \eta(s) \big) \\
&\le d\big( \pi_{\gamma}(\eta(s_{1})), \eta(s_{1}) \big) + d\big(\eta(s_{1}), \eta(s_{2}) \big) \\
&\le d\big(\pi_{\gamma}(\eta(s_{1})), \eta(s_{1})\big) + d\big(\eta(s_{1}), \pi_{\gamma}(\eta(s_{1})) \big) \\
&\quad+ \diam \big(\pi_{\gamma}(\eta|_{[s_{1}, s_{2}]}) \big) + d\big(\pi_{\gamma}(\eta(s_{2})), \eta(s_{2}) \big)\\
&\le d\big(\pi_{\gamma}(\eta(s_{1})), p\big)  + d(p, \eta(s_{1})) + d(\eta(s_{1}), \gamma)\\
&\quad + \diam \big(\pi_{\gamma}(\eta|_{[s_{1}, s_{2}]}) \big)+  d\big(\pi_{\gamma}(\eta(s_{2})), q\big) + d\big(q, \eta(s_{2}) \big)\\
&\le(2K^{3} + K) + K + K + (2K^{3} + K) + K.
\end{aligned}
\]
Also, we have \[
\begin{aligned}
d\big(\eta(s), \pi_{\gamma}(\eta(s_{1})) \big) &\le d(\eta(s), \eta(s_{2})) + d\big(\eta(s_{2}), \pi_{\gamma}(\eta(s_{2})) \big) + d\big(\pi_{\gamma}(\eta(s_{2})), \pi_{\gamma}(\eta(s_{1})) \big) \\
&\le d(\eta(s_{1}), \eta(s_{2})) + d(\eta(s_{2}), \gamma\big) + \diam\big(\pi_{\gamma}(\eta|_{[s_{1}, s_{2}]}) \big)  \\
&\le \Big(d\big(\eta(s_{1}), \gamma \big)+ \diam \big(\pi_{\gamma}(\eta|_{[s_{1}, s_{2}]}) \big)  +  d\big(\pi_{\gamma}(\eta(s_{2})), q\big) + d\big(q, \eta(s_{2}) \big)\Big) \\
&+d\big(\eta(s_{2}), \gamma\big) + \diam\big(\pi_{\gamma}(\eta|_{[s_{1}, s_{2}]}) \big) \\
&\le K + K+ (2K^{3} + K) + K  +K.
\end{aligned}
\]
In summary, we have $d^{sym}(p, \eta(s)) \le 20K^{3}$ as desired.

Next, we claim that $d^{sym}\big(\pi_{\gamma}(\eta(s)), \eta|_{[t, t']}\big) < 20K^{3}$ for each $s \in I$. If $\eta(s) \in S$, then $d^{sym}\big(\pi_{\gamma}(\eta(s)), \eta(s)\big) \le 7K^{3}$ by Inequality \ref{eqn:coarseEquivEasy}.

If $\eta(s) \notin S$, then we take the connected component $(s_{1}, s_{2}) \in I \setminus \eta^{-1}(S)$ of $s$, with either $\eta(s_{1})$ or $\eta(s_{2})$ being in $\partial N$. Without loss of generality, suppose that $\eta(s_{2}) \in  \partial N$ and let $y \in \gamma$ be such that $d^{sym}(\eta(s_{2}), y) = K$. Observation \ref{obs:coarseEquivObs1} and the $K$-BGIP of $\gamma$ implies that \[\begin{aligned}
d^{sym}\big(\pi_{\gamma}(\eta(s)), \eta(s_{2}) \big) &\le 2 \diam \big(\pi_{\gamma}(\eta|_{[s, s_{2}]})\big) + d^{sym}\big(\pi_{\gamma}(\eta(s_{2})), y) + d^{sym}(y, \eta(s_{2}) \big) \\
&\le 2K + (2K^{3} + 2K) + K \le 10K^{3}.\qedhere
\end{aligned}
\]
\end{proof}

\begin{cor}\label{cor:coarseLipschitz}
Let $K>1$ and let $\gamma$ be a $K$-BGIP axis. Then the closest point projection $\pi_{\gamma}(\cdot) : X \rightarrow \gamma$ is $(1, 14K^{3})$-coarsely Lipschitz, i.e., \[
d(\pi_{\gamma}(x), \pi_{\gamma}(y) \big) \le d(x, y) + 14K^{3} \quad (\forall \, x, y \in X).
\]
\end{cor}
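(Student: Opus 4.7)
The plan is to derive the coarse Lipschitz estimate directly from Lemma \ref{lem:coarseEquiv}. Fix $x, y \in X$ and let $\eta : [a, b] \to X$ be a geodesic from $x$ to $y$, so that $\eta(a) = x$ and $\eta(b) = y$. I will split the argument into two cases according to the size of $\diam\big(\pi_{\gamma}(\eta)\big)$.

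\textbf{Case 1: $\diam\big(\pi_{\gamma}(\eta)\big) \le K$.} Since $\pi_{\gamma}(x) \subseteq \pi_{\gamma}(\eta)$ and $\pi_{\gamma}(y) \subseteq \pi_{\gamma}(\eta)$, any choice of $\pi_{\gamma}(x)$ and $\pi_{\gamma}(y)$ satisfies $d\big(\pi_{\gamma}(x), \pi_{\gamma}(y)\big) \le K \le 14K^{3}$, and the desired bound is immediate.

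\textbf{Case 2: $\diam\big(\pi_{\gamma}(\eta)\big) > K$.} Here I invoke Lemma \ref{lem:coarseEquiv} to obtain parameters $t < t'$ in $[a, b]$ such that
\[
\diam \big(\pi_{\gamma}(\eta|_{(-\infty, t]}) \cup \{\eta(t)\} \big) < 7K^{3}, \qquad \diam \big(\pi_{\gamma}(\eta|_{[t', +\infty)}) \cup \{\eta(t')\} \big) < 7K^{3}.
\]
Since $x = \eta(a)$ with $a \le t$, we have $\pi_{\gamma}(x) \subseteq \pi_{\gamma}(\eta|_{(-\infty, t]})$, whence $d\big(\pi_{\gamma}(x), \eta(t)\big) < 7K^{3}$. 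Symmetrically, $\pi_{\gamma}(y) \subseteq \pi_{\gamma}(\eta|_{[t', +\infty)})$, giving $d\big(\eta(t'), \pi_{\gamma}(y)\big) < 7K^{3}$. Because $\eta$ is a geodesic and $t \le t'$, we also have $d\big(\eta(t), \eta(t')\big) = t' - t \le b - a = d(x, y)$. The triangle inequality then yields
\[
d\big(\pi_{\gamma}(x), \pi_{\gamma}(y)\big) \le d\big(\pi_{\gamma}(x), \eta(t)\big) + d\big(\eta(t), \eta(t')\big) + d\big(\eta(t'), \pi_{\gamma}(y)\big) < d(x, y) + 14K^{3}.
\]

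There is no real obstacle here: the work has already been done inside Lemma \ref{lem:coarseEquiv}, which supplies precisely the geometric control needed at the two endpoints where $\eta$ enters and leaves a neighborhood of $\gamma$. The only subtlety to verify is that the argument is insensitive to the asymmetry of $d$, which is the case because we use only forward distances $d(\cdot, \cdot)$ and the fact that $\eta$ is parameterized from $x$ to $y$ makes $d(\eta(t), \eta(t'))$ coincide with the geodesic length $t' - t$.
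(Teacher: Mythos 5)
Your proof is correct and follows essentially the same route as the paper: split on whether $\diam\big(\pi_{\gamma}(\eta)\big)$ exceeds $K$, apply Lemma \ref{lem:coarseEquiv} to get points $\eta(t)$, $\eta(t')$ within $7K^{3}$ of $\pi_{\gamma}(x)$ and $\pi_{\gamma}(y)$, and finish with the triangle inequality using $d(\eta(t),\eta(t')) \le d(x,y)$. You merely spell out the inclusion $\pi_{\gamma}(x) \subseteq \pi_{\gamma}(\eta|_{(-\infty,t]})$ that the paper leaves implicit.
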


\begin{proof}
Let $\eta : I \rightarrow X$ represent the geodesic from $x$ to $y$. If $\pi_{\gamma}(\eta)$ has diameter smaller than $K$, the conclusion follows. If not, then by Lemma \ref{lem:coarseEquiv}, there exist $t < t'$ in $I$ such that $d(\pi_{\gamma}(x), \eta(t)) < 7K^{3}$ and $d(\eta(t'), \pi_{\gamma}(y)) < 7K^{3}$. We then have \[\begin{aligned}
d(\pi_{\gamma}(x), \pi_{\gamma}(y)) &\le d(\pi_{\gamma}(x), \eta(t)) + d(\eta(t), \eta(t')) + d(\eta(t'), \pi_{\gamma}(y)) \\
&< 7K^{3} + \diam(I) + 7K^{3} \le d(x, y) + 14K^{3}.\qedhere
\end{aligned}
\]
\end{proof}

The following lemma was proved in \cite[Section 3.1]{choi2022random1} for symmetric metrics; the same proof works here as well, after replacing quasi-geodesics with bi-quasigeodesics.

\begin{lem}[{\cite[Lemma 3.2]{choi2022random1}}] \label{lem:coarseGeod}
For each $K>1$ there exists a constant $K'>K$ such that the following holds.

Let $\eta : J \rightarrow X$ be a $K$-bi-quasigeodesic whose endpoints are $x$ and $y$, let $A \subseteq \eta$ be a subset of $\eta$ such that $d(x, A) < K$, $d(y, A) < K$, and let $\gamma : J' \rightarrow X$ be a geodesic that is $K$-coarsely equivalent to $A$. Then $\eta$ and $\gamma$ are also $K'$-coarsely equivalent, and moreover, there exists a $K'$-quasi-isometry $\varphi : J \rightarrow J'$ such that $d^{sym}(\eta(t), (\gamma \circ \varphi)(t)) < K'$ for each $t \in J$.
\end{lem}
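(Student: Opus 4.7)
The plan is to follow the symmetric proof \cite[Lemma 3.2]{choi2022random1} with the systematic modification of replacing one-sided distances by $d^{sym}$ and invoking the bi-quasigeodesic bounds \ref{eqn:biQuasiRev1}--\ref{eqn:biQuasiRev2} whenever a reversed directed distance must be controlled. The argument splits into (i) showing $\eta \subseteq \mathcal{N}_{C_1}(A)$ for some $C_1 = C_1(K)$, which together with $A \subseteq \eta$ and $d_H(A, \gamma) < K$ gives the coarse equivalence, and (ii) constructing the reparameterization $\varphi$.

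For (i), fix $t \in J$ with $\eta(t) \notin A$, and set $s_- = \sup\{s \le t : \eta(s) \in A\}$ and $s_+ = \inf\{s \ge t : \eta(s) \in A\}$ (falling back to endpoint estimates via $d(x,A), d(y,A) < K$ and \ref{eqn:biQuasiRev2} if either set is empty). Let $a_\pm = \eta(s_\pm)$ and pick $g_\pm \in \gamma$ with $d^{sym}(a_\pm, g_\pm) < K$, assuming WLOG that the $\gamma$-parameter of $g_-$ precedes that of $g_+$. I subdivide the directed $\gamma$-segment from $g_-$ to $g_+$ into $n$ steps of length $\le 2K$ and lift each intermediate $g_i$ to $a_i = \eta(s_i) \in A$ with $d^{sym}(a_i, g_i) < K$ using $d_H(A, \gamma) < K$. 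The triangle inequality gives $d(a_i, a_{i+1}) \le d(a_i, g_i) + d(g_i, g_{i+1}) + d(g_{i+1}, a_{i+1}) \le 4K$, and \ref{eqn:biQuasiRev1} then yields $|s_i - s_{i+1}| \le 5K^2$. Since every $s_i \in \eta^{-1}(A)$ must lie outside $(s_-, s_+)$ while $s_0 = s_-$ and $s_n = s_+$ straddle the gap, at least one step of the chain spans distance $\ge s_+ - s_-$; hence $s_+ - s_- \le 5K^2$, and the bi-quasigeodesic bound converts this into $d^{sym}(\eta(t), A) \le C_1 = O(K^3)$.

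For (ii), for each $t \in J$ pick $a(t) \in A$ with $d^{sym}(\eta(t), a(t)) < C_1$ and $\varphi(t) \in J'$ with $d^{sym}(a(t), \gamma(\varphi(t))) < K$; the fellow-traveling bound $d^{sym}(\eta(t), \gamma(\varphi(t))) < C_1 + K$ is then immediate. To see that $\varphi$ is a $K'$-quasi-isometry, for $s < t$ in $J$ the triangle inequality and fellow-traveling give $d(\gamma(\varphi(s)), \gamma(\varphi(t))) \le d(\eta(s), \eta(t)) + 2(C_1 + K) \le K|t-s| + K + 2(C_1+K)$, and the geodesic property of $\gamma$ (up to $\pm 1$) recovers $|\varphi(t) - \varphi(s)|$ from the left-hand side; the lower quasi-isometry bound is analogous, using \ref{eqn:biQuasiRev2} to pass from the directed bi-quasigeodesic lower bound on $\eta$ to a symmetric estimate on $d^{sym}(\eta(s), \eta(t))$. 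Any non-monotonicity of $\varphi$ is absorbed into $K'$ via a monotone-envelope construction, and coarse surjectivity follows from $\gamma \subseteq \mathcal{N}_K(A) \subseteq \mathcal{N}_K(\eta)$.

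The main obstacle is the chain argument in (i): the subdivision is natural only along the directed geodesic $\gamma$, whose reversal need not be a geodesic by Remark \ref{rem:asymGeod}, and to apply \ref{eqn:biQuasiRev1} one needs the directed bound $d(a_i, a_{i+1}) \le 4K$ rather than merely its symmetric counterpart. This works because the lifting $d^{sym}(a_i, g_i) < K$ controls both directions while the subdivision step is in the forward $\gamma$-direction, giving the forward-directed control on $\eta$ that the bi-quasigeodesic parameter bound requires.
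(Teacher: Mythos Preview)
Your proposal is correct and follows exactly the approach the paper indicates: the paper does not supply its own argument here but simply states that the symmetric proof from \cite[Section~3.1]{choi2022random1} goes through verbatim after replacing quasigeodesics with bi-quasigeodesics, and your reconstruction---the chain-lifting argument along $\gamma$ to bound the gap $s_+-s_-$, followed by the obvious fellow-traveling reparametrisation---is precisely that proof with the $d^{sym}$ and \eqref{eqn:biQuasiRev1}--\eqref{eqn:biQuasiRev2} modifications inserted where needed. The one point worth tightening is the tacit assumption $\eta(s_\pm)\in A$; since $A$ need not be closed you should instead pick $s_\pm'$ with $\eta(s_\pm')\in A$ arbitrarily close to $s_\pm$, which changes nothing in the estimates.
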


\begin{cor}[{\cite[Corollary 3.3]{choi2022random1}}] \label{cor:coarseGeod}
For each $K>1$ there exists a constant $K'>K$ such that the following holds.

Let $\eta : J \rightarrow X$ be a $K$-BGIP axis and let $\gamma : J' \rightarrow X$ be a geodesic connecting the endpoints of $\eta$. Then there exists a $K'$-quasi-isometry $\varphi : J \rightarrow J'$ such that $d^{sym}(\eta(t), (\gamma \circ \varphi)(t)) < K'$ for each $t \in J$; in particular, $\eta$ and $\gamma$ are $K'$-coarsely equivalent.
\end{cor}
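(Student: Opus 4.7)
The idea is to produce the quasi-isometry $\varphi$ by first applying Lemma \ref{lem:coarseEquiv} to locate a ``useful'' subpath of $\gamma$ via its projection onto $\eta$, and then feeding the resulting data into Lemma \ref{lem:coarseGeod}. Let $x, y$ denote the common endpoints of $\eta$ and $\gamma$, and set $A := \pi_{\eta}(\gamma) \subseteq \eta$. Since $x, y$ already lie on $\eta$ and are therefore in their own projections onto $\eta$, we have $\{x, y\} \subseteq A$, so $d(x, A) = d(y, A) = 0$; this is precisely the hypothesis needed to invoke Lemma \ref{lem:coarseGeod} with $A$ as the chosen subset of $\eta$.

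If $\diam(A) \le K$, then $d(x, y) \le K$, and Inequality \ref{eqn:biQuasiRev1} forces the domain $J$ of $\eta$ to have bounded diameter; similarly $\gamma$ has bounded image. In this degenerate case any affine bijection $\varphi : J \rightarrow J'$ works for large enough $K'$. Otherwise $\diam(A) > K$, and Lemma \ref{lem:coarseEquiv}, applied with the BGIP axis $\eta$ and the geodesic $\gamma$, supplies parameters $s < s'$ in $J'$ such that $A$ is $20K^{3}$-coarsely equivalent to $\gamma|_{[s, s']}$, and such that $\gamma(s), \gamma(s')$ lie within $7K^{3}$ of $x, y$ respectively. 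Since $\gamma$ is a geodesic, this also bounds the lengths of the portions of $J'$ lying outside $[s, s']$ by a constant depending only on $K$.

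Now I would apply Lemma \ref{lem:coarseGeod} to the $K$-bi-quasigeodesic $\eta$, the subset $A \subseteq \eta$, and the geodesic $\gamma|_{[s, s']}$ (feeding it $\max(K, 20K^{3})$ as the input constant): this yields a constant $K''$ and a $K''$-quasi-isometry $\psi : J \rightarrow [s, s']$ satisfying $d^{sym}(\eta(t), \gamma(\psi(t))) < K''$ for every $t \in J$. Since $[s, s']$ is coarsely dense in the full interval $J'$ with a constant depending only on $K$, composing $\psi$ with the inclusion $[s, s'] \hookrightarrow J'$ (and adjusting $\psi$ at the two ends so the endpoints are sent to the endpoints of $J'$) produces a $K'$-quasi-isometry $\varphi : J \rightarrow J'$ for a suitable $K' = K'(K)$, with the fellow-traveling bound $d^{sym}(\eta(t), \gamma(\varphi(t))) < K'$ inherited directly from the bound on $\psi$. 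I do not expect any real obstacle; the only work is bookkeeping the additive constants across the two lemmas and verifying that the extension from $[s, s']$ to $J'$ genuinely costs only a bounded constant.
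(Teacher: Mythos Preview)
Your proposal is correct and follows the paper's intended approach: take $A = \pi_{\eta}(\gamma)$, use Lemma~\ref{lem:coarseEquiv} to see that (a subsegment of) $\gamma$ is coarsely equivalent to $A$, and then feed this into Lemma~\ref{lem:coarseGeod}. One simplification: since the endpoints $x,y$ of $\gamma$ lie on $\eta$ itself, they belong to $\overline{\mathcal{N}_{K}(\eta)}$, so in the proof of Lemma~\ref{lem:coarseEquiv} the parameters $s,s'$ are already the endpoints of $J'$; hence $\gamma|_{[s,s']} = \gamma$ and your final ``extension from $[s,s']$ to $J'$'' step is unnecessary.
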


By using Corollary \ref{cor:coarseGeod} and Corollary \ref{cor:coarseLipschitz}, we deduced the following in \cite[Section 3.1]{choi2022random1}; the proof there is phrased in terms of symmetric metrics, but the same proof works for asymmetric metrics as well.

\begin{cor}[{\cite[Corollary 3.4]{choi2022random1}}] \label{cor:largeProj}
For each $K>1$ there exists a constant $K' = K'(K)$ that satisfies the following.

Let $\kappa : I \rightarrow X$ and $\eta : J \rightarrow X$ be $K$-BGIP axes. Suppose that $\diam(\pi_{\kappa}(\eta)) > K'$. Then there exist $t < t'$ in $I$ and $s < s'$ in $J$ such that the following sets are all $K'$-coarsely equivalent: \[
\kappa|_{[t, t']}, \,\, \eta|_{[s, s']},\,\, \pi_{\kappa}(\eta), \,\, \pi_{\eta}(\kappa).
\]
Moreover, we have \[
\diam\big(\pi_{\kappa}(\eta|_{(-\infty, s]}) \cup \eta(s) \big) < K', \quad \diam\big(\pi_{\kappa}(\eta|_{[s', +\infty)} ) \cup \eta(s') \big) < K'.
\]
\end{cor}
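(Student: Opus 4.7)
The approach is to upgrade Lemma \ref{lem:coarseEquiv}, which controls the projection of a \emph{geodesic} onto a BGIP axis, to the case of two BGIP axes $\kappa$ and $\eta$. The bridge is Corollary \ref{cor:coarseGeod}, which lets us approximate a BGIP axis by an honest geodesic, combined with the coarsely Lipschitz property of projections from Corollary \ref{cor:coarseLipschitz}.

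With $K'$ chosen sufficiently large (depending on $K$, larger than the constants appearing in Corollary \ref{cor:coarseGeod} and Corollary \ref{cor:coarseLipschitz}), I would first pick $s<s'$ in $J$ so that $\pi_\kappa(\eta(s))$ and $\pi_\kappa(\eta(s'))$ lie near the extreme points of $\pi_\kappa(\eta)$ along $\kappa$, and pick $t<t'$ in $I$ so that $\kappa(t)$ and $\kappa(t')$ are uniformly close to these two projections. Then I connect $\eta(s)$ and $\eta(s')$ by a geodesic $\tilde{\gamma}$; Corollary \ref{cor:coarseGeod} applied to the BGIP sub-axis $\eta|_{[s,s']}$ provides a uniform parametrized closeness between $\eta|_{[s,s']}$ and $\tilde{\gamma}$, and Corollary \ref{cor:coarseLipschitz} transfers this to a coarse equivalence between $\pi_\kappa(\eta|_{[s,s']})$ and $\pi_\kappa(\tilde{\gamma})$.

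I would then apply Lemma \ref{lem:coarseEquiv} to the geodesic $\tilde{\gamma}$ and the $K$-BGIP axis $\kappa$; provided $\diam \pi_\kappa(\tilde{\gamma})$ exceeds $K$ (which is forced by taking $K'$ large enough and invoking the coarse Lipschitz bound), this yields parameters $a<b$ in the domain of $\tilde{\gamma}$ together with both the coarse equivalence $\pi_\kappa(\tilde{\gamma}) \sim \tilde{\gamma}|_{[a,b]}$ and the two tail bounds. Pulling $a,b$ back through the quasi-isometric parametrization from Corollary \ref{cor:coarseGeod} produces $\sigma<\sigma'$ in $[s,s']$ with $\eta|_{[\sigma,\sigma']} \sim \tilde{\gamma}|_{[a,b]}$, which settles the $\eta|_{[s,s']}$ part of the conclusion. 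For the coarse equivalence with $\kappa|_{[t,t']}$, I would invoke Corollary \ref{cor:coarseGeod} once more, this time on the BGIP sub-axis $\kappa|_{[t,t']}$ and the geodesic between its endpoints; since that geodesic has endpoints uniformly near $\tilde{\gamma}(a)$ and $\tilde{\gamma}(b)$, it is coarsely equivalent to $\tilde{\gamma}|_{[a,b]}$ and hence to everything else in sight.

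Finally, for the coarse equivalence with $\pi_\eta(\kappa)$, I would exploit symmetry: the preceding argument produces a long sub-arc of $\kappa$ passing uniformly close to $\eta$, so $\diam \pi_\eta(\kappa)$ is also large (after possibly enlarging $K'$), and the same reasoning applied with the roles of $\kappa$ and $\eta$ swapped, using Observation \ref{obs:BGIPReversal} to handle orientation, yields the last coarse equivalence. The main obstacle is the bookkeeping of constants across the successive applications of Lemma \ref{lem:coarseEquiv}, Corollary \ref{cor:coarseLipschitz}, and Corollary \ref{cor:coarseGeod}: each step inflates the error polynomially in $K$, and these must combine into a single uniform $K'(K)$ large enough to trigger the $\diam>K$ hypothesis of each invocation of Lemma \ref{lem:coarseEquiv}, yet small enough to serve as the stated output constant. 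A secondary subtlety is verifying that the two tail bounds from Lemma \ref{lem:coarseEquiv} pull back intact through the quasi-isometric parametrization from Corollary \ref{cor:coarseGeod}, so that the final assertions on $\pi_\kappa(\eta|_{(-\infty,s]})$ and $\pi_\kappa(\eta|_{[s',+\infty)})$ are uniformly bounded rather than merely finite.
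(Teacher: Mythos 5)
Your outline follows the same circle of ideas the paper points to (Lemma \ref{lem:coarseEquiv}, Corollary \ref{cor:coarseLipschitz}, Corollary \ref{cor:coarseGeod}), but as written it has a genuine gap: the ``moreover'' tail estimates are never established. You build the geodesic $\tilde{\gamma}$ only between the pre-chosen points $\eta(s)$ and $\eta(s')$, so the quasi-isometric parametrization you get from Corollary \ref{cor:coarseGeod} relates $\tilde{\gamma}$ to $\eta|_{[s,s']}$ alone. The tail bounds produced by Lemma \ref{lem:coarseEquiv} therefore concern the portions of $\tilde{\gamma}$ outside $[a,b]$, which are still subsegments of $[\eta(s),\eta(s')]$ --- they say nothing about $\pi_{\kappa}(\eta|_{(-\infty,s]})$ or $\pi_{\kappa}(\eta|_{[s',+\infty)})$, which is what the corollary asserts; there is nothing to ``pull back.'' Compounding this, your choice of $s,s'$ (parameters whose projections land near the extremes of $\pi_{\kappa}(\eta)$) does not force $\eta(s)$ or $\eta(s')$ to lie near $\kappa$, yet the stated bound $\diam\big(\pi_{\kappa}(\eta|_{(-\infty,s]})\cup\eta(s)\big)<K'$ includes the point $\eta(s)$ and hence implicitly requires it to be uniformly close to its projection. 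The standard repair is either to run Corollary \ref{cor:coarseGeod} on a geodesic joining the endpoints of the \emph{whole} axis $\eta$ (or an exhaustion of $J$), so that the reparametrization covers the tails and Lemma \ref{lem:coarseEquiv}'s tail estimates transfer directly, or to define $s,s'$ as the first and last entrance times of $\eta$ into a fixed $d^{sym}$-neighborhood of $\kappa$ and control the tails by a separate projection argument, exactly as is done for the analogous tail issue in the proof of Lemma \ref{lem:BGIPRestriction}.

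A secondary weak point: to compare $\kappa|_{[t,t']}$ with $\tilde{\gamma}|_{[a,b]}$ you assert that the geodesic between $\kappa(t)$ and $\kappa(t')$ is coarsely equivalent to $\tilde{\gamma}|_{[a,b]}$ ``since the endpoints are uniformly close.'' In a general (asymmetric) geodesic space two geodesic segments with nearby endpoints need not fellow-travel; here the step is salvageable only because $\tilde{\gamma}|_{[a,b]}$ is coarsely equivalent to $\pi_{\kappa}(\tilde{\gamma})\subseteq\kappa$, which is a coarsely connected subset of the bi-quasigeodesic $\kappa$ reaching from near $\kappa(t)$ to near $\kappa(t')$ and is therefore coarsely equivalent to $\kappa|_{[t,t']}$ by the quasigeodesic inequalities (or via Lemma \ref{lem:coarseGeod}); that justification, and the check that the swapped argument for $\pi_{\eta}(\kappa)$ returns the \emph{same} subsegments up to coarse equivalence, should be made explicit.
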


The following fact is proven in \cite[Proposition 2.2.(3)]{yang2020genericity}. For completeness, we sketch the proof in the setting of asymmetric metric spaces.

\begin{lem}\label{lem:BGIPRestriction}
For each $K>1$, there exists $K'= K'(K)>1$ such that every subpath of a $K$-BGIP axis is again a $K'$-BGIP axis.
\end{lem}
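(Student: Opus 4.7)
The plan is to verify the three defining properties of a $K'$-BGIP axis for $\gamma' := \gamma|_J$, with $K' = K'(K)$ to be chosen. Two of these are essentially inherited from $\gamma$: first, $\gamma'$ is a $K$-bi-quasigeodesic since the defining inequality is preserved under restriction; second, nonemptiness of $\pi_{\gamma'}(z)$ for any $z \in X$ follows from the lower bi-quasigeodesic bound, which forces $d(z, \gamma(s)) \to \infty$ as $|s|$ leaves any bounded range, so the infimum of $d(z, \cdot)$ on $\gamma'$ is attained (on a compact subset, by continuity of $\gamma$ in the $d^{sym}$-topology per Remark \ref{rem:asymGeod}, when $\gamma$ has an interval domain, or on a finite subset when $\gamma$ is indexed by integers).

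The substantive step is the bounded-projection condition: for a geodesic $\eta$ with $d^{sym}(\eta, \gamma') > K'$, we must show $\diam(\pi_{\gamma'}(\eta)) \le K'$. The strategy is to compare $\pi_{\gamma'}$ with $\pi_\gamma$. I would first establish a coarse additivity of the form $d(z, r) = d(z, \pi_\gamma(z)) + d(\pi_\gamma(z), r) + O(K^3)$ for every $z \in X$ and $r \in \gamma$, obtained by applying Lemma \ref{lem:coarseEquiv} to the geodesic $\xi$ from $z$ to $r$: its $\pi_\gamma$-image contains both $\pi_\gamma(z)$ and $r$, so by the lemma $\xi$ coarsely fellow-travels a sub-segment of $\gamma$ passing through points near $\pi_\gamma(z)$ and $r$, which then yields the lower bound when combined with the matching triangle inequality upper bound. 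This additivity forces any $p \in \pi_{\gamma'}(z)$ to be close, in the $\gamma$-parameter, to $\pi_\gamma(z)$ when that projection lies in $\gamma'$, and otherwise to the endpoint of $\gamma'$ on the same side as $\pi_\gamma(z)$.

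With this comparison, the bound splits into two cases. If $d^{sym}(\eta, \gamma) > K$, then $\diam(\pi_\gamma(\eta)) \le K$ by BGIP of $\gamma$, and the comparison immediately controls $\pi_{\gamma'}(\eta)$. If $d^{sym}(\eta, \gamma) \le K$, then $\eta$ runs close to $\gamma$ somewhere, but since $d^{sym}(\eta, \gamma') > K'$, the contact must occur within $\gamma \setminus \gamma'$; BGIP of $\gamma$ applied to those subsegments of $\eta$ that move away from $\gamma$ confines this contact to a single side of $\gamma'$, so that every $\pi_{\gamma'}(z)$ for $z \in \eta$ is close to the same endpoint of $\gamma'$. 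The main obstacle I anticipate is bookkeeping the constants through the coarse additivity: the $O(K^3)$ additive error can interact with the multiplicative slack $K$ of the bi-quasigeodesic to produce a naive bound depending on the $\gamma$-parameter gap between $\pi_\gamma(z)$ and $\gamma'$, and resolving this requires using BGIP's bound on $\diam(\pi_\gamma(\eta))$ in case one and a careful BGIP-based localization of $\eta$'s contact with $\gamma$ in case two.
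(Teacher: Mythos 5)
Your treatment of the easy parts (the bi-quasigeodesic inequality passes to restrictions; nonemptiness of $\pi_{\gamma'}(z)$ via properness) is fine, and your overall plan---compare $\pi_{\gamma'}$ with $\pi_{\gamma}$ and split according to whether $\eta$ stays far from all of $\gamma$ or touches $\gamma\setminus\gamma'$---has the same general shape as the paper's argument. The genuine gap is precisely the point you flag as ``the main obstacle,'' and the fix you propose does not close it. Coarse additivity at $\pi_{\gamma}(z)$, i.e.\ $d(z,r)=d(z,\pi_{\gamma}(z))+d(\pi_{\gamma}(z),r)+O(K^{3})$ for $r\in\gamma$, is too weak to localize $\pi_{\gamma'}(z)$ near the endpoint $\gamma(a)$ when a point $\gamma(t_{0})\in\pi_{\gamma}(z)$ lies beyond that endpoint: feeding the bi-quasigeodesic bounds into that additivity and comparing $d(z,\gamma(s))$ with $d(z,\gamma(a))$ only excludes parameters $s$ with $s-a\gtrsim (K^{2}-1)(a-t_{0})$, a bound that degenerates with the gap $a-t_{0}$. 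This defect is per point; it has nothing to do with how concentrated $\pi_{\gamma}(\eta)$ is, so invoking the BGIP bound on $\diam(\pi_{\gamma}(\eta))$ in your case one (or the one-sidedness of the contact in your case two) cannot repair it. In both of your cases, the assertion ``every $\pi_{\gamma'}(z)$ is close to the same endpoint of $\gamma'$'' is exactly the statement left unproved.

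What is missing is a gate property at the endpoint of the subpath with constants depending only on $K$: if $\pi_{\gamma}(z)$ lies to the left of $\gamma'=\gamma|_{[a,b]}$ and $s\in[a,b]$ with $s-a$ large, then the geodesic $[z,\gamma(s)]$ passes uniformly close to $\gamma(a)$. This is how the paper argues: apply Lemma \ref{lem:coarseEquiv} to $[z,\gamma(s)]$ (its projection onto $\gamma$ contains both $\pi_{\gamma}(z)$ and $\gamma(s)$, hence has large diameter), and then upgrade via Lemma \ref{lem:coarseGeod} from ``coarsely equivalent to the projection set'' to fellow-traveling the \emph{entire} subsegment of $\gamma$ between $\pi_{\gamma}(z)$ and $\gamma(s)$, which contains $\gamma(a)$. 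Exact additivity along the geodesic $[z,\gamma(s)]$ then gives $d(z,\gamma(s))\ge d(z,\gamma(a))+\tfrac{1}{K}(s-a)-O(K^{3})$, so $\gamma(s)\notin\pi_{\gamma'}(z)$ once $s-a$ exceeds a constant depending only on $K$; this is the uniform localization of $\pi_{\gamma'}(z)$ near $\gamma(a)$ that your additivity cannot deliver. The same upgrade through Lemmas \ref{lem:coarseEquiv} and \ref{lem:coarseGeod} is also what rules out, in your case two, contact of $\eta$ with both components of $\gamma\setminus\gamma'$, since a geodesic whose projection reaches both sides must fellow-travel all of $\gamma'$ in between, contradicting $d^{sym}(\eta,\gamma')>K'$. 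Once this endpoint-gate step is inserted, the rest of your case analysis goes through essentially as in the paper.
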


\begin{proof}
Let $\gamma: I \rightarrow X$ be a $K$-BGIP axis and let $\kappa = \gamma|_{[a, b]} : [a, b] \rightarrow X$ be the restriction for some $a< b$ in $I$. Our goal is to find a constant $K'$, depending only on $K$, such that $\diam(\pi_{\kappa}(\eta)) > K'$  implies $\eta \cap N_{K'}(\kappa) \neq \emptyset$ for each geodesic $\eta = [x, y]$. Without loss of generality, we can assume that $\kappa$ is longer than a large enough constant.

Before the proof, we make a simple observation: for $x \in X$, if $\pi_{\gamma}(x)$ intersects $\kappa$, then $\pi_{\kappa}(x)$ equals $\pi_{\gamma}(x) \cap \kappa$, which is nonempty.

Let $x, y \in X$. First consider the case that $\pi_{\gamma}(x)$ is far from $\gamma \setminus \kappa$ and deep inside $\kappa$. In this case, Lemma \ref{lem:coarseEquiv} tells us that $[x, y]$ passes through a bounded neighborhood of $\pi_{\gamma}(x)$ unless $\pi_{\gamma}(y)$ is $K$-close to $\pi_{\gamma}(x)$, in which case $\pi_{\gamma}(x)$ and $\pi_{\gamma}(y)$ are both contained in $\kappa$ and $\diam(\pi_{\kappa}(x) \cup \pi_{\kappa}(y)) = \diam(\pi_{\gamma}(x) \cup \pi_{\gamma}(y)) \le K$.  Similarly, when $\pi_{\gamma}(y)$ lies deep in $\kappa$, then $\diam(\pi_{\kappa}(x) \cup \pi_{\kappa}(y)) > K$ implies that $[x, y]$ passes nearby $\kappa$.

Now consider the case that $\pi_{\gamma}(x)$ and $\pi_{\gamma}(y)$ are both near $\gamma \setminus \kappa$. Let $\gamma_{L}$ and $\gamma_{R}$ be the left and the right components of $\gamma \setminus \kappa$, respectively. If $\pi_{\gamma}(x)$ $(\pi_{\gamma}(y)$, resp.) intersects with a bounded neighborhood of $\gamma_{L}$ ($\gamma_{R}$, resp.), then they are far away (because $\kappa$ is assumed to be long enough). Lemma \ref{lem:coarseEquiv} says that some subgeodesic $\eta'$ of $[x, y]$ is coarsely equivalent to a subset of $\gamma$ that coarsely connects $\gamma_{L}$ and $\gamma_{R}$. Lemma \ref{lem:coarseGeod} then tells us that $\eta'$ coarsely contains the entire subsegement of $\gamma$ in between $\gamma_{L}$ and $\gamma_{R}$, which is $\kappa$. Hence, $[x, y]$ passes nearby $\kappa$.

By symmetry, it remains to handle the case that $\pi_{\gamma}(x)$ and $\pi_{\gamma}(y)$ are both contained in a bounded neighborhood $\gamma_{L}$. In this case, we claim that $\pi_{\kappa}(x)$ is close to $\gamma(a)$. To see this, pick any $s \in [a, b]$ with large $s-a$. Also, pick $t \in (-\infty, a]$ such that $\gamma(t) \in \pi_{\gamma}(x)$. We now apply Lemma \ref{lem:coarseEquiv} to the geodesic $[x,  \gamma(s)]$ and obtain its subsegment $\eta'$ which is coarsely equivalent to a subset of $\gamma$ containing $\gamma(t)$ and $\gamma(s)$. Lemma \ref{lem:coarseGeod} then tells us that $\eta'$ is close to each $\gamma(u)$ for $t \le u \le s$, including $u = a$. Hence, $[x, \gamma(s)]$ passes nearby $\gamma(a)$, with $\gamma(s)$ and $\gamma(a)$ being far away from each other (since $s-a$ is large). This implies that $\gamma(a)$ is closer than $\gamma(s)$ to $x$, and $\gamma(s) \notin \pi_{\kappa}(x)$. This implies that $\pi_{\kappa}(x)$ consists of points close to $\gamma(a)$ as desired. 

For the same reason, $\pi_{\kappa}(y)$ is also close to $\gamma(a)$ and $\pi_{\kappa}(x) \cup \pi_{\kappa}(y)$ has small diameter. This ends the proof.
\end{proof}

\subsection{BGIP axes and alignment} \label{subsection:BGIPAlign}

We are now ready to discuss the alignment of BGIP axes.

\begin{definition}[{\cite[Definition 3.5]{choi2022random1}}]\label{dfn:alignment}
Given paths $\kappa_{i}$ from $x_{i}$ to $x'_{i}$ for each $i=1, \ldots, n$, we say that $(\kappa_{1}, \ldots, \kappa_{n})$ is $C$-aligned if \[
\diam\left(x_{i}' \cup \pi_{\kappa_{i}}(\kappa_{i+1})\right) < C,\quad \diam\left(x_{i+1} \cup \pi_{\kappa_{i+1}}(\kappa_{i})\right) < C.
\]
hold for $i=1, \ldots, n-1$.
\end{definition}

Here, we regard points as degenerate paths. For example, for a point $x$ and a path $\gamma$, we say that $(x, \gamma)$ is $C$-aligned if \[
\diam \big(\textrm{beginning point of $\gamma$} \cup \pi_{\gamma}(x)\big) < C.
\]

\begin{obs}[{\cite[Observation 2.4]{choi2022random2}}]\label{obs:BGIPAlignRev}
Let $g$ be an isometry of $X$, let $1 \le k \le n$, let $K>0$ and let $\gamma_{1}, \ldots, \gamma_{n}$ be paths on $X$.
\begin{enumerate}
\item If $(\gamma_{1}, \ldots, \gamma_{k})$ and $(\gamma_{k}, \ldots, \gamma_{n})$ are $K$-aligned, then there concatenation $(\gamma_{1}, \ldots, \gamma_{n})$ is also $K$-aligned.
\item If $(\gamma_{1}, \ldots, \gamma_{n})$ is $K$-aligned, then $(\bar{\gamma}_{n}, \ldots, \bar{\gamma}_{1})$ is also $K$-aligned.
\item If $(\gamma_{1}, \ldots, \gamma_{n})$ is $K$-aligned, then $(g\gamma_{1}, \ldots, g\gamma_{n})$ is also $K$-aligned.
\end{enumerate}
\end{obs}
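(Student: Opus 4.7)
The plan is to unwind Definition 3.5 in each of the three parts and verify that the asymmetry of $d$ creates no issue, because both the closest point projection and the diameter are invariant under the relevant operations.

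For part (1), I would simply note that the $K$-alignment condition is a pointwise statement about each consecutive pair of paths $(\gamma_i, \gamma_{i+1})$. The first hypothesis supplies the condition for $1 \le i \le k-1$ and the second for $k \le i \le n-1$; together these cover every $1 \le i \le n-1$, which is precisely what is needed for the concatenation $(\gamma_1, \ldots, \gamma_n)$ to be $K$-aligned. So this part is immediate from unpacking the definition.

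For part (2), the key observation is that the closest point projection $\pi_A(x) = \{a \in A : d(x,a) = d(x, A)\}$ depends only on $A$ as a subset of $X$, not on any parametrization, and in particular $\pi_{\bar{\gamma}_i}(\bar{\gamma}_{i+1}) = \pi_{\gamma_i}(\gamma_{i+1})$ as sets. Moreover, reversal swaps endpoints: the starting point of $\bar{\gamma}_i$ is $x_i'$ and its ending point is $x_i$. Writing out the $K$-alignment condition for the consecutive pair $(\bar{\gamma}_{i+1}, \bar{\gamma}_i)$ in the reversed sequence, these identifications turn it into the pair of inequalities
\[
\diam\bigl(x_{i+1} \cup \pi_{\gamma_{i+1}}(\gamma_i)\bigr) < K, \quad \diam\bigl(x_i' \cup \pi_{\gamma_i}(\gamma_{i+1})\bigr) < K,
\]
which is exactly the original condition for $(\gamma_i, \gamma_{i+1})$, with the two clauses swapped.

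For part (3), I would use that an isometry $g$ of $(X, d)$ satisfies $d(gx, gy) = d(x, y)$ even in the asymmetric setting. Consequently $\pi_{gA}(gx) = g\, \pi_A(x)$ and $\diam(gA) = \diam(A)$ for any subset $A$ and point $x$. Applying $g$ inside each $\diam$ clause of the alignment condition for $(\gamma_i, \gamma_{i+1})$ therefore reproduces the identical clause for $(g\gamma_i, g\gamma_{i+1})$.

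There is no real obstacle here; the only subtlety worth flagging is that the closest point projection in Definition \ref{dfn:boundedGeod} is defined with the directed distance $d$, and so one should confirm that this directed definition is compatible with reversal (it is, because the projection depends only on the target set, not on how a path is parametrized) and with the action of isometries (it is, since isometries preserve $d$ on the nose). In a written proof I would likely dispatch (1) in one sentence and give each of (2) and (3) a two-line justification along these lines.
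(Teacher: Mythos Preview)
Your proposal is correct and is exactly the natural verification; the paper itself states this as an observation without proof (citing the companion paper \cite{choi2022random2}), so there is nothing further to compare. Your remarks about why the asymmetry of $d$ causes no trouble---that $\pi_{A}$ depends only on the underlying set $A$ and that isometries preserve the directed distance on the nose---are the only points worth checking, and you have checked them.
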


By combining Lemma \ref{lem:coarseEquiv} and Lemma \ref{lem:coarseGeod}, we deduce: 

\begin{cor}[{\cite[Corollary 3.6]{choi2022random1}}]\label{cor:1segmentAlign}
For each $C, K>1$, there exists $K' = K'(K, C) > K, C$ that satisfies the following.

Let $x, y \in X$ and let $\kappa$ be a $K$-BGIP axis such that $\diam(\kappa) > K'$ and such that $(x, \kappa, y)$ is $C$-aligned. Then $[x, y]$ contains a subsegment $\eta$ that is contained in the $20K^{3}$-$d^{sym}$-neighborhood of $\kappa$ and is $K'$-fellow traveling with $\kappa$.
\end{cor}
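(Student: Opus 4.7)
The plan is to combine Lemma \ref{lem:coarseEquiv} with Lemma \ref{lem:coarseGeod}: first use the alignment hypothesis together with the length of $\kappa$ to force a long projection of $[x,y]$ onto $\kappa$, then extract the desired subsegment $\eta$ via Lemma \ref{lem:coarseEquiv}, and finally promote the coarse equivalence between $\eta$ and this projection into a fellow-traveling with $\kappa$ itself via Lemma \ref{lem:coarseGeod}.

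In detail, I would first choose $K'$ large enough (depending only on $K$ and $C$) so that $\diam(\kappa) > K'$ together with the $K$-bi-quasigeodesicity of $\kappa$ forces $d$ between the starting and ending points of $\kappa$ to exceed $2C + K$ in both directions; this uses Inequality \ref{eqn:biQuasiRev2} applied to the pair of points in $\kappa$ realizing the large diameter. The $C$-alignment of $(x, \kappa, y)$ places $\pi_{\kappa}(x)$ within $C$ of the starting point of $\kappa$ and $\pi_{\kappa}(y)$ within $C$ of the ending point, so the triangle inequality yields $\diam(\pi_{\kappa}([x,y])) \ge d(\pi_{\kappa}(x), \pi_{\kappa}(y)) > K$. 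Lemma \ref{lem:coarseEquiv} then produces $t < t'$ such that $\eta := [x,y]|_{[t, t']}$ is $20K^{3}$-coarsely equivalent to $A := \pi_{\kappa}([x,y])$, and in particular $\eta \subseteq \mathcal{N}_{20K^{3}}(A) \subseteq \mathcal{N}_{20K^{3}}(\kappa)$ since $A \subseteq \kappa$; this settles the neighborhood conclusion.

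For the fellow-traveling I would apply Lemma \ref{lem:coarseGeod} with $\kappa$ as the $K$-bi-quasigeodesic, $A \subseteq \kappa$ as the distinguished subset (it contains $\pi_{\kappa}(x)$ and $\pi_{\kappa}(y)$, which lie within $C$ of the endpoints of $\kappa$), and $\eta$ as the geodesic $20K^{3}$-coarsely equivalent to $A$; enlarging the input constant of Lemma \ref{lem:coarseGeod} to $\max(K, 20K^{3}, C)$ matches all the hypotheses. The lemma outputs a constant $K'$ depending only on $K$ and $C$ together with a reparameterization $\varphi$ satisfying $d^{sym}(\kappa(\cdot), \eta \circ \varphi(\cdot)) < K'$, which, after the routine promotion of a quasi-isometry between parameter intervals to a non-decreasing surjection at the cost of slightly enlarging $K'$, is exactly the $K'$-fellow-traveling condition. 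The main obstacle is simply the bookkeeping of constants across the two lemma applications; the quasi-isometry-to-surjection upgrade is routine because $\kappa$ and $\eta$ are nearly parallel paths joining points close to the same pair of endpoints, so the quasi-isometry is already coarsely monotone and coarsely surjective by construction.
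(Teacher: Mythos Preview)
Your proposal is correct and follows exactly the approach the paper indicates: the paper simply states ``By combining Lemma \ref{lem:coarseEquiv} and Lemma \ref{lem:coarseGeod}, we deduce'' the corollary, and your argument spells out precisely this combination, with the correct bookkeeping of constants and the standard promotion of the quasi-isometry from Lemma \ref{lem:coarseGeod} to a non-decreasing surjection for the fellow-traveling conclusion.
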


The following was proved in \cite{choi2022random1} for symmetric metrics. Using Lemma \ref{lem:coarseEquiv} and Corollary \ref{cor:coarseLipschitz} this time, the same proof works for asymmetric metrics as well.

\begin{lem}[{\cite[Lemma 3.7]{choi2022random1}}]\label{lem:1segment}
For each $C>0$ and $K>1$, there exists $D= D(K, C)>C$ that satisfies the following property.

Let $\kappa, \eta$ be $K$-BGIP axes whose beginning points are $x$ and $x'$, respectively. Suppose that $(\kappa, x')$ and $(x, \eta)$ are $C$-aligned. Then $(\kappa, \eta)$ is $D$-aligned.
\end{lem}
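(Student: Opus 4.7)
The plan is to adapt the symmetric-metric proof of this lemma from \cite{choi2022random1}, replacing $d$ with $d^{sym}$ where necessary and invoking Lemma \ref{lem:coarseEquiv} and Corollary \ref{cor:coarseLipschitz} throughout. I establish the two defining inequalities of $(\kappa, \eta)$ being $D$-aligned separately.

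For the first inequality $\diam(x'' \cup \pi_{\kappa}(\eta)) < D$, I fix $p \in \eta$ and examine the geodesic $[x, p]$. A first application of Lemma \ref{lem:coarseEquiv} to $\eta$ and $[x, p]$ yields two cases: either $\diam(\pi_{\eta}([x, p])) \le K$, in which case the tail estimate and $(x, \eta)$-alignment force $p$ to lie within $O(K, C)$ of $x'$ in $d^{sym}$, so that Corollary \ref{cor:coarseLipschitz} combined with $(\kappa, x')$-alignment gives $\pi_{\kappa}(p) \approx \pi_{\kappa}(x') \approx x''$; or a parameter $s$ exists with $[x, p](s) \approx x'$, and I then apply Lemma \ref{lem:coarseEquiv} to $\kappa$ and the subgeodesic $[x, p]$ restricted past $s$. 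The subcase where this subgeodesic's projection to $\kappa$ is small is again handled by coarse Lipschitz; the remaining (hard) subcase is when the subgeodesic enters the $K$-$d^{sym}$-neighborhood of $\kappa$ near $x''$ and fellow-travels $\kappa$ for some positive length.

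To bound this fellow-traveling length absolutely, I combine two observations. First, Corollary \ref{cor:1segmentAlign} applied to the $C$-aligned sequence $(x, \kappa, x')$ shows that $[x, x']$ passes within $K'(K, C)$-$d^{sym}$ of $x''$, so that $d(x, x'') \le d(x, x') + K'$. Second, $(x, \eta)$-alignment gives $d(x, \eta(v)) \ge d(x, x') - C$ for every $v$ in the domain of $\eta$; hence the inner endpoint $\eta(v_1)$ of the fellow-traveling sub-segment satisfies $d(x, \kappa(u_0)) \ge d(x, x') - C - O(K)$, which via the $K$-bi-quasigeodesic structure of $\kappa$ (Inequality \ref{eqn:biQuasiRev2}) bounds the parameter length $u_1 - u_0$ by a constant depending only on $K$ and $C$. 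This in turn bounds $d^{sym}(\pi_{\kappa}(p), x'')$.

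For the second inequality $\diam(x' \cup \pi_{\eta}(\kappa)) < D$, I again use Corollary \ref{cor:1segmentAlign} on $(x, \kappa, x')$ to obtain a subsegment of $[x, x']$ that $K'$-fellow-travels $\kappa$ in the $20K^{3}$-$d^{sym}$-neighborhood of $\kappa$. Hence every $q \in \kappa$ lies within $K'$-$d^{sym}$ of a point on $[x, x']$. A single application of Lemma \ref{lem:coarseEquiv} to $\eta$ and $[x, x']$ then suffices: since both endpoints $x$ and $x'$ project to $\eta$ near $x'$ (by $(x, \eta)$-alignment and because $\pi_{\eta}(x') = x'$), the tail estimate forces the parameters of entry and exit into the $K$-neighborhood of $\eta$ along $[x, x']$ to be near the right endpoint of the parametrization, which bounds the diameter of $\pi_{\eta}([x, x'])$ and places it near $x'$. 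Corollary \ref{cor:coarseLipschitz} then transfers this bound to $\pi_{\eta}(q)$ for every $q \in \kappa$. The principal obstacle is the fellow-traveling length bound in the hard subcase of the first inequality, where both alignment hypotheses and the bi-quasigeodesic structure of $\kappa$ must be combined to extract an absolute bound depending only on $K$ and $C$.
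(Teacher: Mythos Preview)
Your overall plan---run the symmetric proof with $d^{sym}$ in place of $d$, driven by Lemma~\ref{lem:coarseEquiv} and Corollary~\ref{cor:coarseLipschitz}---is exactly what the paper indicates. Your argument for the second inequality $\diam(x'\cup\pi_{\eta}(\kappa))<D$ is sound: Corollary~\ref{cor:1segmentAlign} puts $\kappa$ in a bounded $d^{sym}$-neighbourhood of $[x,x']$, both endpoints of $[x,x']$ project near $x'$ on $\eta$, and Lemma~\ref{lem:coarseEquiv} then forces the entire projection $\pi_{\eta}([x,x'])$ to lie near $x'$, whence the coarse Lipschitz property finishes.

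The first inequality, however, has a genuine gap in the ``hard subcase''. Your notation there is garbled (you write ``the inner endpoint $\eta(v_{1})$'' while discussing the fellow-travel of $[q,p]$ with $\kappa$), and more seriously the bound you claim does not follow. Granting your two observations---$d(x,x'')\le d(x,x')+K'$ and $d(x,\kappa(u_{i}))\ge d(x,x')-C-O(K)$ (the latter because $[q,p](t_{i})$ lies past $q\approx x'$ on the geodesic $[x,p]$)---you obtain only $d(x,\kappa(u_{1}))\ge d(x,x'')-O(K,C)$. For a $K$-\emph{bi-quasi}geodesic $\kappa$ this yields merely $u_{1}-a\ge (b-a)/K^{2}-O(1)$, not $u_{1}\approx b$; the appeal to Inequality~\ref{eqn:biQuasiRev2} (which compares $d(p,q)$ with $d(q,p)$) does not supply the missing step. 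In short, your distance comparisons would work if $\kappa$ were a geodesic, but lose a factor $K^{2}$ for a quasi-axis and give no absolute bound on $|u_{1}-u_{0}|$.

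One clean repair: having established the second inequality, observe that if $\diam(\pi_{\kappa}(\eta))$ were larger than the constant $K'$ of Corollary~\ref{cor:largeProj}, that corollary would force $\pi_{\eta}(\kappa)$ to be $K'$-coarsely equivalent to a long subsegment of $\eta$, contradicting the bound you already proved on $\diam(x'\cup\pi_{\eta}(\kappa))$. Hence $\diam(\pi_{\kappa}(\eta))\le K'$, and since $\pi_{\kappa}(x')\subseteq\pi_{\kappa}(\eta)$ is $C$-close to $x''$, the first inequality follows. Alternatively, stay inside your case analysis but add the observation that $[q,p]$ is $K'$-coarsely contained in $\eta$ (Corollary~\ref{cor:coarseGeod} plus Lemma~\ref{lem:BGIPRestriction}); then $[q,p](t_{2})$ is simultaneously near $\kappa$ and near some $\eta(w)$, and the already-proved second inequality forces $\eta(w)\approx x'$, which closes the argument.
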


In \cite{sisto2018contracting}, Sisto proved the following property for constricting geodesics with respect to special paths. The same proof works for BGIP axes; we give a rephrasing of Sisto's proof for convenience.

\begin{lem}[{\cite[Lemma 2.5]{sisto2018contracting}}] \label{lem:Behrstock}
For each $D>0$ and $K>1$, there exists $E = E(K, D) > K, D$ that satisfies the following.

Let $\kappa$ and $\eta$ be $K$-BGIP axes in $X$. Suppose that $(\kappa, \eta)$ is $D$-aligned. Then for any $p \in X$, either $(p, \eta)$ is $E$-aligned or $(\kappa, p)$ is $E$-aligned.
\end{lem}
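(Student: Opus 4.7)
The plan is to argue by contradiction. Fix a large threshold $E$ (to be determined in terms of $K$ and $D$) and suppose the conclusion fails: there exist $w \in \pi_{\kappa}(p)$ with $d^{sym}(w, x') > E$ and $p^{*} \in \pi_{\eta}(p)$ with $d^{sym}(p^{*}, y) > E$, where $x'$ is the endpoint of $\kappa$ and $y$ is the starting point of $\eta$. The idea is to feed the geodesic $\beta = [p, y]$ into Lemma \ref{lem:coarseEquiv} twice and derive incompatible length estimates. First, $\pi_{\eta}(\beta) \supseteq \{p^{*}, y\}$ has diameter exceeding $E$, so Lemma \ref{lem:coarseEquiv} produces a window $[s, s']$ along $\beta$ with $\beta(s)$ close (in $d^{sym}$) to $p^{*}$ and $\beta(s')$ close to $y$; since $\beta$ terminates at $y$, $s'$ lies within $O_K(1)$ of the endpoint. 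Second, using the $D$-alignment, $\pi_{\kappa}(y) \in \pi_{\kappa}(\eta)$ is within $D$ of $x'$, so $\pi_{\kappa}(\beta)$ also has diameter exceeding $E - D$, yielding another window $[u, u']$ with $\beta(u)$ close to $w$ and $\beta(u')$ close to $x'$.

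I would then split on whether the two windows are disjoint or overlap. If $u' < s$, the subgeodesic $\beta|_{[u', s']}$ travels from close to $x'$ to close to $y$ while passing through a point close to $p^{*}$ at time $s$, and the geodesic length decomposition $s' - u' = (s - u') + (s' - s)$ yields
\[
d(x', p^{*}) + d(p^{*}, y) \le d(x', y) + O_K(1).
\]
The $D$-alignment forces $d(x', p^{*}) \ge d(x', \eta) \ge d(x', y) - D$, so the display collapses to $d(p^{*}, y) \le O_K(D)$. Since $p^{*}, y$ both lie on the $K$-bi-quasigeodesic $\eta$, Inequality \ref{eqn:biQuasiRev2} also bounds $d(y, p^{*})$, so $d^{sym}(p^{*}, y) \le O_K(D)$, contradicting $d^{sym}(p^{*}, y) > E$ once $E$ is chosen large enough.

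If instead $u' \ge s$, then $\beta(u')$, close to $x'$, lies in the $\eta$-fellow-traveling window $[s, s']$, so $d^{sym}(x', \eta) \le O_K(1)$; combined with $\pi_{\eta}(x')$ being within $D$ of $y$, this gives $d^{sym}(x', y) \le O_K(D)$. The plan here is to switch to the auxiliary geodesic $\beta' = [p, p^{*}]$. Its projection onto $\kappa$ still contains $w$ and $\pi_{\kappa}(p^{*})$ (the latter within $D$ of $x'$), so Lemma \ref{lem:coarseEquiv} supplies fellow-traveling along $\kappa$ from close to $w$ to close to $x'$, and hence the lower bound
\[
d(p, p^{*}) \ge d(p, w) + d(w, x') + d(x', p^{*}) - O_K(1).
\]
The triangle upper bound $d(p, p^{*}) \le d(p, y) \le d(p, w) + d(w, x') + d(x', y)$ then forces $d(x', p^{*}) \le d(x', y) + O_K(1)$, so $d(y, p^{*}) \le d(y, x') + d(x', p^{*}) \le O_K(D)$ via $d^{sym}(x', y) \le O_K(D)$. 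Another application of Inequality \ref{eqn:biQuasiRev2} to $y, p^{*} \in \eta$ bounds $d(p^{*}, y)$ too, producing the contradiction as before.

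The main technical obstacle will be bookkeeping under the asymmetric metric: length comparisons must respect the direction of $d$, and bounds on $d(p^{*}, y)$ translate to bounds on $d(y, p^{*})$ only through Inequality \ref{eqn:biQuasiRev2} applied along the bi-quasigeodesic $\eta$ (and analogously along $\kappa$). The case split above is dictated by the geometric configuration near the ``junction'' between $\kappa$ and $\eta$, and the switch to the auxiliary geodesic $[p, p^{*}]$ in the overlapping case is essential because $[p, y]$ becomes too short to supply the needed inequality when $x'$ and $y$ nearly coincide.
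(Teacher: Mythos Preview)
Your argument is correct, and the bookkeeping you flag (reversing directions via Inequality~\ref{eqn:biQuasiRev2} along $\eta$ and $\kappa$, checking that $s'$ is the terminal time since $\beta$ ends on $\eta$, and verifying that $\diam(\pi_\kappa(\beta))$ is large enough to invoke Lemma~\ref{lem:coarseEquiv}) all goes through.

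That said, the paper's proof is considerably shorter and avoids both the contradiction setup and the case split. Instead of working with $[p,y]$ and applying Lemma~\ref{lem:coarseEquiv} twice, the paper takes the single geodesic $\gamma=[p,x']$ (to the \emph{endpoint of $\kappa$}) and looks at the earliest point $q\in\gamma$ that enters the closed $K$-neighborhood of $\kappa\cup\eta$; such a point exists because the terminal point $x'$ lies in $\mathcal N_K(\kappa)$. If $q\in\overline{\mathcal N_K(\kappa)}$, then the initial segment $[p,q]$ misses $\mathcal N_K(\eta)$, so BGIP bounds $\diam(\pi_\eta([p,q]))$ and the coarse Lipschitz property (Corollary~\ref{cor:coarseLipschitz}) places $\pi_\eta(q)$ near $\pi_\eta(\kappa)$, hence near $y$; this gives $(p,\eta)$ aligned directly. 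If instead $q\in\overline{\mathcal N_K(\eta)}$, the symmetric reasoning gives $(\kappa,p)$ aligned. The whole argument uses only raw BGIP and Corollary~\ref{cor:coarseLipschitz}, with no appeal to the structural Lemma~\ref{lem:coarseEquiv}, no auxiliary geodesic $[p,p^*]$, and no separate treatment of the ``$x'$ and $y$ nearly coincide'' configuration. Your approach trades this first-entry trick for heavier use of Lemma~\ref{lem:coarseEquiv}; it works, but the paper's route explains more transparently \emph{why} the dichotomy holds.
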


\begin{proof}
Let $I = [a, b]$ and $J = [c, d]$ be the domains of $\kappa$ and $\eta$, respectively. The assumption says that $\pi_{\kappa}(\eta)$ is close to $\kappa(b)$ and $\pi_{\eta}(\kappa)$ is close to $\eta(c)$. Now, consider the geodesic $\gamma = [p, \kappa(b)]$ from $p$ to $\kappa(b)$, and pick the earliest point $q \in \gamma$ that belongs to the $K$-closed neighborhood of $\kappa \cup \eta$. (Such point exists because $\kappa(b) \in N_{K}(\kappa \cup \eta)$.)\begin{enumerate}
\item If $q \in \overline{\mathcal{N}_{K}(\kappa)}$, then $d^{sym}(q, \kappa) = K$ and $[x, q]$ does not intersect the $K$-neighborhood of $\eta$. Then $\pi_{\eta}(x)$ is contained in the $\diam(\pi_{\eta}([x, q]))$-neighborhood of $\pi_{\eta}(q)$, and $\pi_{\eta}([x, q])$ has diameter at most $K$ by the $K$-BGIP of $\kappa$. Now, $\pi_{\eta}(q)$ is $\big(d(q, \kappa) + 14K^{3}\big)$-close to $\pi_{\eta}(\kappa)$ by Corollary \ref{cor:coarseLipschitz}, which is close to $\eta(c)$. It follows that $\pi_{\eta}(x)$ is contained in a uniform neighborhood of $\eta(c)$, and $(x, \eta)$ is aligned.
\item If $q \in \overline{\mathcal{N}_{K}(\eta)}$, then $d^{sym}(q, \eta) = K$ and $[x, q]$ is does not intersect the $K$-neighborhood of $\kappa$. By a symmetric argument, $\pi_{\kappa}(x)$ is uniformly close to $\kappa(b)$ and $(\kappa, x)$ is aligned.\qedhere
\end{enumerate}
\end{proof}

Using Corollary \ref{cor:1segmentAlign} and Lemma \ref{lem:Behrstock} as ingredients, we proved the following in \cite{choi2022random1}:

\begin{prop}[{\cite[Proposition 3.11]{choi2022random1}}]\label{prop:BGIPWitness}
For each $D>0$ and $K>1$, there exist $E = E(K, D)>K, D$ and $L = L(K,D)>K, D$ that satisfy the following. 

Let $x$ and $y$ be points in $X$ and let $\kappa_{1}, \ldots, \kappa_{n}$ be $K$-BGIP axes whose domains are longer than $L$ and such that $(x, \kappa_{1}, \ldots, \kappa_{n}, y)$ is $D$-aligned. Then the geodesic $[x, y]$ has subsegments $\eta_{1}, \ldots, \eta_{n}$, in order from left to right, that are longer than $100E$ and such that $\eta_{i}$ and $\kappa_{i}$ are $0.1E$-fellow traveling for each $i$.  In particular, $(x, \kappa_{i}, y)$ are $E$-aligned for each $i$.
\end{prop}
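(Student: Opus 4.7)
The plan is to reduce the proposition to a separate application of Corollary \ref{cor:1segmentAlign} for each index $i$. Concretely, the goal is first to establish that for every $i$ the triple $(x, \kappa_i, y)$ is $E_0$-aligned for a single constant $E_0 = E_0(K, D)$, and then to invoke Corollary \ref{cor:1segmentAlign} once per index to produce subsegments $\eta_i \subseteq [x, y]$ fellow-traveling $\kappa_i$.

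The heart of the argument is a downward induction on $i$ yielding $(\kappa_i, y)$ $E_0$-aligned, together with a symmetric upward induction for $(x, \kappa_i)$. The base case $(\kappa_n, y)$ is $D$-aligned by hypothesis, with $D \le E_0$. For the inductive step, assume $(\kappa_{i+1}, y)$ is $E_0$-aligned and apply Lemma \ref{lem:Behrstock} to the $D$-aligned pair $(\kappa_i, \kappa_{i+1})$ with $p = y$: either $(\kappa_i, y)$ is $E_0$-aligned (closing the induction) or $(y, \kappa_{i+1})$ is $E_0$-aligned. The second alternative is incompatible with the inductive hypothesis, since together they place $\pi_{\kappa_{i+1}}(y)$ within $E_0$ of both endpoints of $\kappa_{i+1}$, yielding $d^{sym}(\kappa_{i+1}^-, \kappa_{i+1}^+) < 2 E_0$. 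By Inequality \ref{eqn:biQuasiRev1}, this forces the domain length of the $K$-bi-quasigeodesic $\kappa_{i+1}$ to be at most $2KE_0 + K^2$; choosing $L > 2KE_0 + K^2$ eliminates the second alternative. Crucially, the same $E_0$, depending only on $(K, D)$ through Lemma \ref{lem:Behrstock}, propagates through all $n$ iterations without accumulation, because each step re-applies Lemma \ref{lem:Behrstock} to the fixed input $(K, D)$ rather than to a growing constant.

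With $(x, \kappa_i, y)$ shown to be $E_0$-aligned for every $i$, Corollary \ref{cor:1segmentAlign} provides subsegments $\eta_i \subseteq [x, y]$ fellow-traveling $\kappa_i$ at some scale $K' = K'(K, E_0)$; setting $E$ to be a sufficiently large multiple of $K'$ yields $0.1E$-fellow traveling, and enlarging $L$ once more makes each $\eta_i$ longer than $100E$ via the bi-quasigeodesic bounds of Inequality \ref{eqn:biQuasiRev2}. The correct left-to-right ordering of the $\eta_i$ along $[x, y]$ follows from the order of projections together with the geodesicity of $[x, y]$: the terminal point of $\eta_i$ lies close to $\kappa_i^+$, the initial point of $\eta_{i+1}$ close to $\kappa_{i+1}^-$, and the $D$-alignment of $(\kappa_i, \kappa_{i+1})$ forces the projection of $\kappa_{i+1}^-$ onto $\kappa_i$ to sit near $\kappa_i^+$; any attempted reversal of order along the geodesic would violate the coarse monotonicity of projections supplied by Corollary \ref{cor:coarseLipschitz}. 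The main obstacle I foresee is bookkeeping the constants through the induction; the key observation that tames it is precisely that $E_0$ is a single function of $(K, D)$ rather than accumulating with $i$, and that the length hypothesis $L$ can be chosen after $E_0$ is fixed.
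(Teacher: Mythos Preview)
Your proposal is correct and follows essentially the same approach the paper indicates: the paper does not reprove Proposition \ref{prop:BGIPWitness} in detail but cites \cite{choi2022random1} and explicitly names Corollary \ref{cor:1segmentAlign} and Lemma \ref{lem:Behrstock} as the two ingredients, which is exactly your strategy of propagating alignment via Behrstock (with the crucial non-accumulation observation that each application uses the original $D$-aligned pair $(\kappa_i,\kappa_{i+1})$) and then invoking Corollary \ref{cor:1segmentAlign} index by index. The ordering argument you sketch is the standard one and is what is done in \cite{choi2022random1}; it could use slightly more care than ``coarse monotonicity via Corollary \ref{cor:coarseLipschitz}'' (one typically argues that the initial point of $\eta_{i+1}$, being $d^{sym}$-close to $\kappa_{i+1}^{-}$, projects onto $\kappa_i$ near $\kappa_i^{+}$ and hence must lie beyond the parameter $t_i'$ furnished by Lemma \ref{lem:coarseEquiv}), but the idea is right and the bookkeeping is routine.
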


Often, an isometry $g \in \Isom(X)$ gives rise to a \emph{periodic} aligned sequence of BGIP axes $( \ldots, g^{i-1} \gamma_{n}, g^{i} \gamma_{1}, \ldots, g^{i} \gamma_{n}, g^{i+1} \gamma_{1}, \ldots)$. The following lemma enables to conclude that $g$ is an BGIP isometry in this situation.

\begin{lem}[{\cite[Proposition 2.9]{yang2019statistically}}]\label{lem:BGIPConcat}
For each $D, M>0$ and $K>1$, there exist $E = E(K, D, M) > D$ and $L = L(K, D) > D$ that satisfies the following.

Let $\kappa_{1}, \ldots, \kappa_{n}$ be $K$-BGIP axes whose domains are longer than $L$. Suppose that $(\kappa_{1}, \ldots, \kappa_{n})$ is $D$-aligned and $d(\kappa_{i}, \kappa_{i+1}) < M$ for each $i$. Then the concatenation $\kappa_{1} \cup \ldots \cup \kappa_{n}$ of $\kappa_{1}, \ldots, \kappa_{n}$ is an $E$-BGIP axis.
\end{lem}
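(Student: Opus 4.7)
The plan is to verify the two defining properties of a $K$-BGIP axis for $\hat{\kappa} := \kappa_1 \cup \cdots \cup \kappa_n$, interpreted as the concatenation of each $\kappa_k$ together with a short bridge segment at each junction (the bridge between $\kappa_k$ and $\kappa_{k+1}$ exists with controlled length because $d(\kappa_k, \kappa_{k+1}) < M$, the $D$-alignment forces $\pi_{\kappa_k}(\kappa_{k+1})$ near the terminal point of $\kappa_k$, and Inequality \ref{eqn:biQuasiRev2} converts forward to backward estimates along $\kappa_k$). I would choose $L$ to exceed the threshold in Proposition \ref{prop:BGIPWitness} for parameters $(K,D)$, and $E$ large enough to absorb all the constants coming from $M$, the coarse-Lipschitz bound in Corollary \ref{cor:coarseLipschitz}, and the fellow-travelling and alignment constants from the lemmas above.

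\emph{Bi-quasigeodesic property.} Given $p \in \kappa_i$ and $q \in \kappa_j$ with $i \le j$, the $D$-alignment of $(\kappa_1, \ldots, \kappa_n)$ together with the inclusions $p \in \kappa_i$ and $q \in \kappa_j$ implies that $(p, \kappa_{i+1}, \ldots, \kappa_{j-1}, q)$ is $D$-aligned, using the trivial observation $\pi_{\kappa_{i+1}}(p) \subseteq \pi_{\kappa_{i+1}}(\kappa_i)$ and its symmetric analogue for $q$. Proposition \ref{prop:BGIPWitness} then yields non-overlapping subsegments of the geodesic $[p,q]$ each of which fellow-travels an intermediate $\kappa_k$; summing their lengths gives a lower bound for $d(p,q)$ proportional to the total length $\sum_{k=i}^{j} \operatorname{len}(\kappa_k)$, up to a constant. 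The upper bound on $d(p,q)$ follows from the natural piecewise path through $\hat{\kappa}$: along each $\kappa_k$ the $K$-bi-quasigeodesic inequality gives a forward-length bound, and each of the $j-i$ bridge segments contributes at most a constant depending on $K, D, M$. Together these produce the $E$-bi-quasigeodesic estimate in both directions.

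\emph{BGIP.} Let $\eta = [x, y]$ be any geodesic with $d^{sym}(\eta, \hat{\kappa}) > E$. Applying Lemma \ref{lem:Behrstock} to each consecutive pair $(\kappa_k, \kappa_{k+1})$ produces a constant $E_1 = E_1(K, D)$ such that for every point $p \in X$ the quantity
\[
k_+(p) := \max\bigl\{\, k : (\kappa_k, p) \text{ is } E_1\text{-aligned}\,\bigr\}
\]
is well-defined (setting $k_+(p) = 0$ if no such $k$ exists), and for every $k > k_+(p)$ the pair $(p, \kappa_{k+1})$ is $E_1$-aligned. A short projection estimate, using coarse Lipschitzness (Corollary \ref{cor:coarseLipschitz}) and the $D$-alignment, shows that $\pi_{\hat{\kappa}}(p)$ lies in a uniform neighborhood of the junction between $\kappa_{k_+(p)}$ and $\kappa_{k_+(p)+1}$. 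It therefore suffices to bound the variation of $k_+$ on $\eta$. If $k_+(y) - k_+(x)$ exceeded a uniform constant, then $(x, \kappa_{k_+(x)+1}, \ldots, \kappa_{k_+(y)}, y)$ would be $E_1$-aligned, and Proposition \ref{prop:BGIPWitness} would force $\eta$ to come $0.1 E_2$-close (in $d^{sym}$) to each intermediate $\kappa_k$ for some $E_2 = E_2(K, E_1)$, contradicting $d^{sym}(\eta, \hat{\kappa}) > E$ once $E > E_2$.

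The main obstacle will be the bookkeeping of constants: ensuring that the derived thresholds ($E_1$, $E_2$, the junction-localization bound, the coarse-Lipschitz additive error, and the bridge-length bound) depend only on $(K, D, M)$, and that the quasigeodesic threshold $L$ depends only on $(K, D)$ as asserted. Some additional care is needed in the asymmetric setting when invoking Corollary \ref{cor:coarseLipschitz} (which controls only the forward direction) and when translating between $d$ and $d^{sym}$ on bi-quasigeodesic segments via Inequality \ref{eqn:biQuasiRev2}; however, these conversions only introduce factors polynomial in $K$ and thus are absorbed into the final $E$.
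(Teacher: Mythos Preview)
The paper does not supply its own proof of this lemma; it is quoted directly from \cite[Proposition 2.9]{yang2019statistically}. Your outline is in the right spirit and invokes the correct toolkit (Proposition \ref{prop:BGIPWitness} for the quasigeodesic lower bound, Lemma \ref{lem:Behrstock} for the cascade), and the bi-quasigeodesic half is essentially correct once you track the partial contributions from $\kappa_i$ and $\kappa_j$ via the endpoints of the fellow-traveling segments that Proposition \ref{prop:BGIPWitness} produces (your phrase ``proportional to $\sum_{k=i}^{j}\operatorname{len}(\kappa_k)$'' should be ``proportional to the parameter distance from $p$ to $q$ along $\hat\kappa$'', but the argument survives).

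There is, however, a genuine gap in the BGIP half. The assertion that $\pi_{\hat\kappa}(p)$ lies in a uniform neighborhood of the junction between $\kappa_{k_+(p)}$ and $\kappa_{k_+(p)+1}$ is false. Already for $n=1$ (so $\hat\kappa=\kappa_1$), any $p$ whose projection lands in the middle of $\kappa_1$ has $k_+(p)=0$, yet $\pi_{\hat\kappa}(p)$ is nowhere near the start of $\kappa_1$. More generally, the Behrstock cascade says nothing about the piece $\kappa_{k_+(p)+1}$ itself, and $\pi_{\kappa_{k_+(p)+1}}(p)$ can sit anywhere along it. What is missing is a two-step localization: first show that $\pi_{\hat\kappa}(p)$ is confined to a bounded number of consecutive pieces $\kappa_m$ with $|m-k_+(p)|$ bounded (argue by contradiction: if $q\in\pi_{\hat\kappa}(p)\cap\kappa_j$ with $j\ge k_+(p)+3$, apply Proposition \ref{prop:BGIPWitness} to the aligned sequence $(p,\kappa_{k_+(p)+2},\ldots,\kappa_{j-1},q)$ to find a point on $[p,q]$ near the start of $\kappa_{k_+(p)+2}$, and observe that the long fellow-traveling subsegment forces a strictly closer point of $\hat\kappa$ once $L$ is large); second, use the $K$-BGIP of each individual $\kappa_m$ to get $\diam(\pi_{\kappa_m}(\eta))\le K$, noting that $\pi_{\hat\kappa}(\eta)\cap\kappa_m\subseteq\pi_{\kappa_m}(\eta)$. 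Combining these with your bound on the variation of $k_+$ along $\eta$ then yields the desired diameter bound.
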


\begin{lem}[{\cite[Proposition 2.9]{yang2019statistically}}]\label{lem:BGIPConcat2}
For each $D, M>0$ and $K>1$, there exist $E = E(K, D, M) > D$ and $L = L(K, D) > D$ that satisfies the following.

Let $\kappa_{1}, \ldots, \kappa_{n}$ be $K$-BGIP axes whose domains are longer than $L$, and suppose that $\kappa_{i}$ is connecting $y_{i-1}$ to $x_{i}$. Suppose that $(x_{0}, \kappa_{1}, \ldots, \kappa_{n})$ is $D$-aligned. Then the concatenation $[x_{0}, y_{0}] \cup \kappa_{1} \cup [x_{1}, y_{1}] \cup \kappa_{2} \cup \ldots$ is an $E$-quasigeodesic.
\end{lem}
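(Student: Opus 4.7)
The plan is to deduce the quasigeodesic property of $\gamma = [x_0, y_0] \cup \kappa_1 \cup [x_1, y_1] \cup \cdots \cup \kappa_n$ from a length-versus-distance lower bound; the matching upper bound follows from the piecewise triangle inequality, once $L$ is large enough to absorb per-piece additive constants. The central tool is Proposition \ref{prop:BGIPWitness}: since $(\kappa_n, x_n)$ is trivially $0$-aligned (as $x_n$ is the endpoint of $\kappa_n$), the sequence $(x_0, \kappa_1, \ldots, \kappa_n, x_n)$ is $D$-aligned. Proposition \ref{prop:BGIPWitness} then produces constants $E_0$ and $L_0$ depending on $K$ and $D$ such that, for $L \ge L_0$, the geodesic $[x_0, x_n]$ contains disjoint subsegments $\eta_1, \ldots, \eta_n$ of length greater than $100 E_0$, with each $\eta_i$ fellow-traveling $\kappa_i$ at scale $0.1 E_0$.

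For the endpoint estimate, fellow-traveling places the endpoints of $\eta_i$ within $0.1 E_0$ of $y_{i-1}$ and $x_i$ in $d^{sym}$, so the length of $\eta_i$ is at least $d(y_{i-1}, x_i) - 0.2 E_0$. Inequality \ref{eqn:biQuasiRev1} applied to $\kappa_i$ gives a further lower bound of $(1/K)$ times the parameter length of $\kappa_i$, minus a constant depending on $K$ and $E_0$. By the triangle inequality, the portion of $[x_0, x_n]$ between $\eta_{i-1}$ and $\eta_i$ has length at least $d(x_{i-1}, y_{i-1}) - 0.2 E_0$, with a similar bound for the initial tail before $\eta_1$. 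Summing, $d(x_0, x_n) \ge (1/K) \sum_i \ell_i + \sum_i d(x_i, y_i) - C_1 n$ for some $C_1 = C_1(K, E_0)$, where $\ell_i$ is the parameter length of $\kappa_i$. Choosing $L > 4 K^2 C_1$ ensures each $\ell_i \ge L$ absorbs the additive error $C_1 n$ into at most a factor of $1/2$, yielding the desired linear lower bound for $d(x_0, x_n)$ in terms of the total parameter length of $\gamma$.

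For arbitrary subpaths from $p$ to $q$ on $\gamma$, I would truncate the aligned sequence and repeat the argument. If $p$ lies in the interior of some $\kappa_i$ and $q$ in the interior of $\kappa_j$ with $i < j$, the truncated sequence $(p, \kappa_{i+1}, \ldots, \kappa_{j-1}, q)$ remains $D$-aligned via the inclusion $\pi_{\kappa_{i+1}}(p) \subseteq \pi_{\kappa_{i+1}}(\kappa_i)$ and symmetrically for $q$. If $p$ lies on a bridging geodesic $[x_{i-1}, y_{i-1}]$, I would use $y_{i-1}$ as a proxy endpoint: apply the endpoint estimate to $(y_{i-1}, \kappa_i, \ldots)$ and compare $d(p, q)$ with $d(y_{i-1}, q)$ via $d(y_{i-1}, q) \le d(y_{i-1}, p) + d(p, q)$, absorbing the $d(y_{i-1}, p)$ term using the length of $[p, y_{i-1}]$ contributed to $\gamma$. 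The main obstacle I foresee is the asymmetry-induced bookkeeping in this last step: since $d(p, y_{i-1})$ and $d(y_{i-1}, p)$ may differ in general, controlling $d(y_{i-1}, p)$ in terms of the piece length requires invoking the local symmetry of Definition \ref{dfn:metric} or Inequality \ref{eqn:biQuasiRev2} applied to subsegments of $[x_{i-1}, y_{i-1}]$, and all resulting constants must be calibrated so that $E$ depends only on $K$ and $D$.
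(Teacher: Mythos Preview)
The paper does not give its own proof of this lemma; it is stated with a citation to \cite[Proposition 2.9]{yang2019statistically} and then used as a black box. So there is no in-paper argument to compare against, and your task is really to supply a self-contained proof in the asymmetric setting.

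Your overall strategy---invoke Proposition~\ref{prop:BGIPWitness} on the aligned sequence to get fellow-traveling subsegments of $[x_0,x_n]$, then sum the resulting lower bounds---is the right one, and your treatment of the global endpoint estimate is correct. The place where your plan is genuinely incomplete is the subpath case, and specifically the ``proxy endpoint'' maneuver for $p\in[x_{i-1},y_{i-1}]$. You correctly identify that comparing $d(p,q)$ to $d(y_{i-1},q)$ costs you a $d(y_{i-1},p)$ term, which in an asymmetric space is not controlled by the geodesic sub-length $d(p,y_{i-1})$; neither local symmetry nor Inequality~\ref{eqn:biQuasiRev2} (which is for bi-quasigeodesics, not mere geodesics) rescues this. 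So as written, that step does not close.

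The fix is to avoid the proxy altogether and show directly that $(p,\kappa_i)$ is $D'$-aligned for a constant $D'=D'(K,D)$. Since $(x_{i-1},\kappa_i)$ is $D$-aligned and $y_{i-1}\in\kappa_i$ is the other endpoint of the bridging geodesic, both $\pi_{\kappa_i}(x_{i-1})$ and $\pi_{\kappa_i}(y_{i-1})=y_{i-1}$ lie within $D$ of $y_{i-1}$; feeding the geodesic $[x_{i-1},y_{i-1}]$ into Lemma~\ref{lem:coarseEquiv} forces $\diam\big(\pi_{\kappa_i}([x_{i-1},y_{i-1}])\big)$ to be bounded in terms of $K$ and $D$, hence $\pi_{\kappa_i}(p)$ is near $y_{i-1}$ for every such $p$. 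The symmetric statement $(\kappa_j,q)$ for $q\in[x_j,y_j]$ follows the same way using that $\pi_{\kappa_j}(y_j)\subseteq\pi_{\kappa_j}(\kappa_{j+1})$ is near $x_j$. With this in hand you can run Proposition~\ref{prop:BGIPWitness} on $(p,\kappa_i,\ldots,\kappa_j,q)$ directly, and the asymmetry issue disappears. For $p$ interior to some $\kappa_i$, use Lemma~\ref{lem:BGIPRestriction} to treat the tail $\kappa_i|_{[p,x_i]}$ as a BGIP axis when it is long enough, and absorb it into the additive constant (via the bi-quasigeodesic bound $d^{sym}(p,x_i)\le 2KL+2K$) when it is short. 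Finally, note that the parameter $M$ in the statement is vestigial (it does not appear in the hypotheses), so your instinct that $E$ should depend only on $K$ and $D$ is correct.
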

Here, note that the resulting path in Lemma \ref{lem:BGIPConcat2} may not be bi-quasigeodesic.

We now recall the concept of Schottky sets. Given a sequence $\alpha  = (a_{1}, \ldots, a_{n}) \in G^{n}$, we employ the following notations: \[\begin{aligned}
\Pi(\alpha) &:= a_{1} a_{2} \cdots a_{n}, \\
\Gamma(\alpha) &:= \big(o, a_{1} o, a_{1} a_{2} o, \ldots, \Pi(\alpha) o\big).
\end{aligned}
\]

\begin{definition}[{\cite[Definition 3.14]{choi2022random1}}]\label{dfn:Schottky}
Let $K > 0$ and $S \subseteq G^{M}$ be a set of sequences of $M$ isometries. We say that $S$ is \emph{$K$-Schottky} if the following hold: \begin{enumerate}
\item $\Gamma(\alpha)$ is a $K$-BGIP axis for all $\alpha \in S$;
\item for each $x \in X$, we have \[
\#\Big\{ \alpha \in S : \textrm{$\left(x, \Gamma(\alpha)\right)$ and $\left(\Gamma(\alpha), \Pi(\alpha)x \right)$ are $K$-aligned} \Big\} \ge \#S - 1;
\]
\item for each $\alpha\in S$, $\left(\Gamma(\alpha), \Pi(\alpha)\Gamma(\alpha) \right)$ is $K$-aligned.
\end{enumerate}
We say that $S$ is \emph{large enough} if its cardinality is at least 400.
\end{definition}

\begin{definition}[{\cite[Definition 2.8]{choi2022random2}}]  \label{dfn:SchottkyLong}
Given a constant $K_{0}>0$, we define: \begin{itemize}
\item $D_{0}= D(K_{0}, K_{0})$ be as in Lemma \ref{lem:1segment}, 
\item $D_{1} = E(K_{0}, D_{0})$, $L = L(K_{0}, D_{0})$ be as in Proposition \ref{prop:BGIPWitness},
\item $E_{0}= E(K_{0}, D_{1})$, $L' = L(K_{0}, D_{1})$ be as in Proposition \ref{prop:BGIPWitness},
\item $L' = L(K_{0}, D_{1})$ be as in Lemma \ref{lem:BGIPConcat}.
\end{itemize}
A $K_{0}$-Schottky set $S \subseteq G^{n}$ is called a \emph{fairly long $K_{0}$-Schottky set} if: \begin{enumerate}
\item $n > \max \{L, L', L''\}$, and
\item $d(o, \prodSeq(\alpha) o) \ge 10E_{0}$ for all $\alpha \in S$.
\end{enumerate}
When the Schottky parameter $K_{0}$ is understood, the constants $D_{0}, D_{1}, E_{0}$ always denote the ones defined above. Once a fairly long Schottky set $S$ is understood, its element $s$ is called a \emph{Schottky sequence} and the translates of $\Gamma^{\pm}(s)$ are called \emph{Schottky axes}. When a probability measure $\mu$ on $G$ is given in addition such that $S \subseteq (\supp \mu)^{n}$, we say that $S$ is a fairly long Schottky set for $\mu$.
\end{definition}

\begin{definition}[{\cite[Definition 3.16]{choi2022random1}}]  \label{dfn:semiAlign}
Let $S$ be a fairly long Schottky set and let $K>0$. We say that a sequence of Schottky axes is $K$-semi-aligned if it is a subsequence of a $K$-aligned sequence of Schottky axes.

More precisely, for Schottky axes $\gamma_{1}, \ldots, \gamma_{n}$, we say that $(\gamma_{1}, \ldots, \gamma_{n})$ is \emph{$K$-semi-aligned} if there exist Schottky axes $\eta_{1}, \ldots, \eta_{m}$ such that $(\eta_{1}, \ldots, \eta_{m})$ is $K$-aligned and if there exists a subsequence $\{i(1) < \ldots < i(n)\} \subseteq \{1, \ldots, m\}$ such that $\gamma_{l} = \eta_{i(l)}$ for $l=1, \ldots, n$.

Similarly, for points $x, y \in X$ and Schottky axes $\gamma_{1}, \ldots, \gamma_{n}$, we say that $(x, \gamma_{1}, \ldots, \gamma_{n}, y)$ is \emph{$K$-semi-aligned} if it is a subsequence of a $K$-aligned sequence $(x, \eta_{1}, \ldots, \eta_{m}, y)$ for some Schottky axes $\eta_{1}, \ldots, \eta_{m}$.
\end{definition}

Proposition \ref{prop:BGIPWitness} implies the following corollary.

\begin{cor}\label{cor:semiAlign}
Let $S$ be a fairly long $K_{0}$-Schottky set, with constants $D_{0}, D_{1}, E_{0}$ as in Definition \ref{dfn:Schottky}. Let $x, y \in X$ and $\gamma_{1}, \ldots, \gamma_{N}$ be Schottky axes. Then $(x, \gamma_{1}, \ldots, \gamma_{N}, y)$  is $D_{1}$-aligned $(E_{0}$, resp.) whenever it is $D_{0}$-semi-aligned ($D_{1}$-semi-aligned, resp.), and moreover, $[x, y]$ contains subsegments $\eta_{1}, \ldots, \eta_{N}$, each longer than $100D_{1}$ ($100E_{0}$, resp.) and in order from closest to farthest from $x$, such that $\eta_{i}$ and $\gamma_{i}$ are $0.1D_{1}$-fellow traveling ($0.1E_{0}$-fellow traveling, resp.) for each $i$.
\end{cor}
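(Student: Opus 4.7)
The plan is to unpack the semi-alignment hypothesis, apply Proposition~\ref{prop:BGIPWitness} to the ambient aligned sequence of Schottky axes, and then convert the per-axis conclusions into pairwise alignment of the subsequence via Lemma~\ref{lem:1segment}.

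By Definition~\ref{dfn:semiAlign}, saying $(x, \gamma_{1}, \ldots, \gamma_{N}, y)$ is $D_{0}$-semi-aligned means there exist Schottky axes $\eta_{1}, \ldots, \eta_{m}$ and indices $i(1) < \cdots < i(N)$ with $\gamma_{l} = \eta_{i(l)}$ such that $(x, \eta_{1}, \ldots, \eta_{m}, y)$ is $D_{0}$-aligned. The fairly-long hypothesis on $S$ guarantees that each $\eta_{j}$ is a $K_{0}$-BGIP axis of length $> L = L(K_{0}, D_{0})$, so Proposition~\ref{prop:BGIPWitness} applies with $(K, D) = (K_{0}, D_{0})$. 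This yields subsegments $\eta'_{1}, \ldots, \eta'_{m}$ of $[x, y]$, in order from $x$ to $y$, each of length $> 100 D_{1}$ and $0.1 D_{1}$-fellow-traveling with the corresponding $\eta_{j}$, and also the $D_{1}$-alignment of each triple $(x, \eta_{j}, y)$. The restricted subsegments $\eta'_{i(1)}, \ldots, \eta'_{i(N)}$ then provide the fellow-traveling subsegments required by the conclusion.

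To verify that $(x, \gamma_{1}, \ldots, \gamma_{N}, y)$ is itself $D_{1}$-aligned, the endpoint alignments $(x, \gamma_{1})$ and $(\gamma_{N}, y)$ follow from the per-axis $D_{1}$-alignment specialized to $j = i(1)$ and $j = i(N)$. For an interior pair $(\gamma_{l}, \gamma_{l+1})$, the subsegments $\eta'_{i(l)}, \eta'_{i(l+1)}$ lie on $[x, y]$ in that order. Applying Lemma~\ref{lem:coarseEquiv} to the $K_{0}$-BGIP axis $\gamma_{l}$ against the geodesic $[x, y]$ shows the portion of $[x, y]$ lying after $\eta'_{i(l)}$ projects onto $\gamma_{l}$ uniformly near the terminal endpoint of $\gamma_{l}$, and symmetrically the portion of $[x,y]$ lying before $\eta'_{i(l+1)}$ projects onto $\gamma_{l+1}$ uniformly near its initial endpoint. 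Combined with Corollary~\ref{cor:coarseLipschitz}, these estimates supply the two one-sided alignments needed by Lemma~\ref{lem:1segment} for the pair $(\gamma_{l}, \gamma_{l+1})$, yielding alignment with a constant bounded in terms of $K_{0}$ and $D_{1}$, which is absorbed into $D_{1}$ by construction.

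The $E_{0}$-version is a direct bootstrap of the same argument with $(K_{0}, D_{0})$ replaced by $(K_{0}, D_{1})$. A $D_{1}$-semi-aligned sequence embeds into a $D_{1}$-aligned ambient sequence; Proposition~\ref{prop:BGIPWitness} now returns $E_{0}$-alignment of each $(x, \eta_{j}, y)$ together with subsegments of $[x, y]$ of length $> 100 E_{0}$ that are $0.1 E_{0}$-fellow-traveling with the $\eta_{j}$, and the required axis length $> L(K_{0}, D_{1}) = L'$ is again built into the fairly-long definition of $S$. I expect the main obstacle to be the interior alignment step, since the fellow-traveling of subsegments of $[x, y]$ alone does not control how the bi-infinite axes $\gamma_{l}$ and $\gamma_{l+1}$ project onto each other away from $[x, y]$; one must genuinely invoke $K_{0}$-BGIP (through Lemma~\ref{lem:coarseEquiv}) together with the cancellation property of Lemma~\ref{lem:1segment} to bridge the gap between per-axis and pairwise alignment.
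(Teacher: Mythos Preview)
Your overall structure is sound: the fellow-traveling subsegments and the endpoint alignments $(x,\gamma_1)$ and $(\gamma_N,y)$ follow directly from applying Proposition~\ref{prop:BGIPWitness} to the ambient $D_0$-aligned sequence, and you correctly recognize that the interior pairwise alignment $(\gamma_l,\gamma_{l+1})$ needs additional argument beyond the per-axis conclusion of Proposition~\ref{prop:BGIPWitness}.

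However, your route for the interior pairs is more circuitous than what the paper intends, and the constant bookkeeping does not close. The direct argument is: for any point $z\in\gamma_l=\eta_{i(l)}$, the truncated sequence $(z,\eta_{i(l)+1},\ldots,\eta_m,y)$ is still $D_0$-aligned, because $\pi_{\eta_{i(l)+1}}(z)\subseteq\pi_{\eta_{i(l)+1}}(\eta_{i(l)})$ already lies within $D_0$ of the initial point of $\eta_{i(l)+1}$. Applying Proposition~\ref{prop:BGIPWitness} to this truncated sequence gives $(z,\gamma_{l+1})$ $D_1$-aligned for every such $z$; the symmetric truncation $(x,\eta_1,\ldots,\eta_{i(l+1)-1},w)$ for $w\in\gamma_{l+1}$ gives the other half. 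This produces the constant $D_1$ exactly, with no detour through Lemma~\ref{lem:coarseEquiv}, Corollary~\ref{cor:coarseLipschitz}, or Lemma~\ref{lem:1segment}.

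Your approach instead passes through the geometry of $[x,y]$ and accumulates a constant of the form $c(K_0)+0.1D_1$ before feeding it into Lemma~\ref{lem:1segment}, whose output $D(K_0,\cdot)$ is yet another constant. The claim that this is ``absorbed into $D_1$ by construction'' is not justified: $D_1=E(K_0,D_0)$ is defined purely as the output of Proposition~\ref{prop:BGIPWitness} on input $(K_0,D_0)$, and nothing in Definition~\ref{dfn:SchottkyLong} guarantees it dominates the constants you collect along the way. Also note that once you have shown every point of $\gamma_{l+1}$ projects near the end of $\gamma_l$ and vice versa, that \emph{is} the definition of alignment---the final appeal to Lemma~\ref{lem:1segment} is redundant.
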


Using Lemma \ref{lem:BGIPRestriction}, Lemma \ref{lem:1segment} and Proposition \ref{prop:BGIPWitness}, we proved the following in \cite{choi2022random1}:

\begin{prop}[{\cite[Proposition 3.12]{choi2022random1}}]\label{prop:Schottky} Let $(X, G, o)$ be as in Convention \ref{conv:main} and let $\mu$ be a non-elementary probability measure on $G$. Then for each $N, L > 0$, there exists $K=K(N) > 0$ and $n>L$ such that there exists a $K$-Schottky set of cardinality $N$ in $(\supp \mu)^{n}$. 
\end{prop}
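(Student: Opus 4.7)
The plan is to construct $N$ Schottky sequences as carefully chosen products of powers of two independent BGIP isometries in $\llangle \supp \mu \rrangle$. Since $\mu$ is non-elementary, I extract such a pair $u, v$ and express them in $\supp \mu$ as $u = u_{1} \cdots u_{p}$ and $v = v_{1} \cdots v_{q}$. The two orbits $\{u^{k} o\}$ and $\{v^{k} o\}$ are $K_{0}$-BGIP axes for some uniform $K_{0}$, and independence together with Corollary \ref{cor:largeProj} yields a bound $B$ on their mutual projections; after passing to large enough powers we may additionally assume $d(o, u o), d(o, v o)$ are arbitrarily large relative to $B$.

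For a large parameter $A$ and each $i = 1, \ldots, N$, I would take $\alpha_{i}$ to spell out a word of the form $u^{a_{i,1}} v^{b_{i,1}} u^{a_{i,2}} v^{b_{i,2}}$ in $\supp \mu$, with exponents chosen so that (a) all sequences have a common total length $n = \sum_{k} (p a_{i,k} + q b_{i,k}) > L$ in $(\supp \mu)^{n}$, achievable by keeping the exponents on a common level set of the word-length function; (b) the initial blocks differ across $i$ in a way that makes the axes $\Gamma(\alpha_{i})$ emanate from $o$ in quantitatively distinct directions; and (c) each $\Gamma(\alpha_{i})$ is a concatenation of long $u$- and $v$-axis segments whose consecutive pieces are automatically $D$-aligned at their junctions, since $B$ is small relative to $d(o, u o)$ and $d(o, v o)$. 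The endpoints $\Pi(\alpha_{i}) o$ are then distinct by direct inspection of the word structure together with the independence of $u, v$. Condition (1) of Definition \ref{dfn:Schottky} follows from Lemma \ref{lem:BGIPConcat} applied to each $\Gamma(\alpha_{i})$, yielding a $K$-BGIP axis with $K = K(N)$; condition (3) follows from the same concatenation argument applied to $\Gamma(\alpha_{i})$ prefixed to $\Pi(\alpha_{i}) \Gamma(\alpha_{i})$.

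The main obstacle is condition (2): for each $x \in X$, at most one $\alpha_{i}$ may fail the pair of alignments $(x, \Gamma(\alpha_{i}))$ and $(\Gamma(\alpha_{i}), \Pi(\alpha_{i}) x)$. The argument is a Behrstock-style dichotomy via Lemma \ref{lem:Behrstock}: if two distinct axes $\Gamma(\alpha_{i}), \Gamma(\alpha_{j})$ were both non-aligned with $x$ at the start, then $\pi_{\Gamma(\alpha_{i})}(x)$ and $\pi_{\Gamma(\alpha_{j})}(x)$ would both be forced far from the common basepoint $o$; applying Corollary \ref{cor:coarseLipschitz} to the projection $\pi_{\Gamma(\alpha_{j})} \circ \pi_{\Gamma(\alpha_{i})}$ would then produce points far from $o$ in the mutual projection of the two axes, contradicting clause (b). The delicate point is therefore the realization of clause (b) itself: one must exhibit $N$ initial patterns of $u$ and $v$ whose resulting axes through $o$ have pairwise bounded mutual projection near the start. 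This is where the independence of $u, v$ is essential, since it guarantees that the large-power subsemigroup $\llangle u, v \rrangle$ contains arbitrarily many elements whose associated BGIP rays from $o$ spread into asymptotically distinct directions; once this spread is secured, Proposition \ref{prop:BGIPWitness} and Lemma \ref{lem:1segment} package the remaining alignment bookkeeping into a uniform constant $K = K(N)$.
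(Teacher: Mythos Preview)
The paper does not give its own proof here; it only points back to \cite{choi2022random1} and notes that the argument there goes through once Lemma~\ref{lem:BGIPRestriction}, Lemma~\ref{lem:1segment} and Proposition~\ref{prop:BGIPWitness} are available in the asymmetric setting. Your overall scheme---build $N$ words in two independent BGIP isometries $u,v \in \llangle \supp\mu\rrangle$, verify (1) and (3) via concatenation, and attack (2) by a Behrstock-type dichotomy---is the right shape and overlaps substantially with the intended toolkit.

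There is, however, a real gap in your treatment of condition~(2). First, your dichotomy argument only rules out two distinct $\alpha_i,\alpha_j$ failing the \emph{start} alignment $(x,\Gamma(\alpha_\bullet))$. Definition~\ref{dfn:Schottky}(2) demands that at most one $\alpha$ fail \emph{either} $(x,\Gamma(\alpha))$ \emph{or} $(\Gamma(\alpha),\Pi(\alpha)x)$; nothing in your sketch prevents $\alpha_i$ from failing at the start while a different $\alpha_j$ fails at the end. Ruling this out requires the initial and terminal patterns of the words to be coordinated, not just separately spread, and this coordination is where the actual work lies. Second, your stated word form $u^{a_{i,1}} v^{b_{i,1}} u^{a_{i,2}} v^{b_{i,2}}$ with varying $a_{i,1}$ does \emph{not} produce axes emanating from $o$ in ``quantitatively distinct directions'': every $\Gamma(\alpha_i)$ begins by fellow-travelling the $u$-axis, so a point $x$ far along that axis (say $x=u^{M}o$ with $M$ between $\min_i a_{i,1}$ and $\max_i a_{i,1}$) will project onto several $\Gamma(\alpha_i)$ far from $o$ simultaneously, violating exactly the property your clause (b) asserts. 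Getting $N$ genuinely divergent initial (and terminal) directions out of a \emph{semigroup} generated by two elements is more delicate than the sketch suggests---this is where the paper's listed ingredient Lemma~\ref{lem:BGIPRestriction} (subpaths of a BGIP axis remain BGIP) enters, and your outline does not invoke it. As a minor additional point, your coarse-Lipschitz justification for the Behrstock step is not the right mechanism: Corollary~\ref{cor:coarseLipschitz} bounds $d(\pi_\gamma(x),\pi_\gamma(y))$ by $d(x,y)+C$, which is useless when $y=\pi_{\Gamma(\alpha_i)}(x)$ since $d(x,y)$ is uncontrolled; the correct route is Lemma~\ref{lem:Behrstock} applied to the pair $(\bar{\Gamma}(\alpha_i),\Gamma(\alpha_j))$.
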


\section{Limit laws} \label{section:limit}

Recall again that we are fixing an asymmetric metric space $X$ and its countable isometry group $G$ involving two independent BGIP isometries. We will now describe the results established in Section 4, 5 and 6 of \cite{choi2022random1}.

\subsection{Results from \cite{choi2022random1}} \label{subsection:limit1}

In \cite[Section 5]{choi2022random1}, the author constructed pivotal times, first in a discrete model and next in random walks, following the idea of Gou{\"e}zel \cite[Section 4A]{gouezel2022exponential}. The arguments in \cite[Section 5]{choi2022random1} relied the following ingredients: \begin{enumerate}
\item Existence of large and fairly long Schottky set for non-elementary probability measure $\mu$ (Proposition \ref{prop:Schottky}),
\item Alignments of points and Schottky axes (the properties of Schottky set described in Definition \ref{dfn:Schottky}), and
\item Alignment lemma that guarantees the alignment of two Schottky axes given the alignments of one Schottky axis and an endpoint of another Schottky axis (Lemma \ref{lem:1segment}).
\end{enumerate}
Since we have the versions of alignment and Schottky set for BGIP isometries, the entire proof works verbatim. As a consequence, we obtain the following proposition. Recall again the notation \[
\mathbf{Y}_{i}(\w) := \big(Z_{i-M_{0}}(\w) o, Z_{i - M_{0} + 1}(\w) o, \ldots, Z_{i-1}(\w) o, Z_{i} (\w) o \big).
\]

\begin{definition}[{\cite[Definition 4.1]{choi2022random1}}]\label{dfn:pivotClass}
Let $(X, G, o)$ be as in Convention \ref{conv:main}, let $\mu$ be a non-elementary probability measure on $G$, let $(\Omega, \Prob)$ be a probability space for $\mu$, let $K_{0}, M_{0} > 0$ and let $S$ be a fairly long $K_{0}$-Schottky set contained in $(\supp \mu)^{M_{0}}$. 

A subset $\mathcal{E}$ of $\Omega$ is called a pivotal equivalence class if there exists a set $\mathcal{P}(\mathcal{E}) = \{j(1) < j(2) < \ldots \} \subseteq M_{0} \Z_{>0}$, called the \emph{set of pivotal times}, such that the following hold: \begin{enumerate}
\item for each $i \notin \{j(k) - l : k \ge 1, l = 0, \ldots, M_{0} - 1 \}$, $g_{i}(\w)$ is fixed on $\mathcal{E}$;
\item for each $\w \in \mathcal{E}$ and $k \ge 1$, $s_{k}(\w) := \big( g_{j(k) - M_{0} + 1}(\w), g_{j(k) - M_{0} + 2} (\w), \ldots, g_{j(k)} (\w) \big)$ is a Schottky sequence;
\item for each $\w \in \mathcal{E}$, the sequence $\big(o, \mathbf{Y}_{j(1)} (\w) , \mathbf{Y}_{j(2)} (\w), \ldots \big)$ is $D_{0}$-semi-aligned;
\item on $\mathcal{E}$, $\{s_{1}(\w), s_{2}(\w), \ldots\}$ are i.i.d.s distributed according to the uniform measure on $S$.
\end{enumerate}
For $\w \in \mathcal{E}$, we call $\mathcal{P}(\mathcal{E})$ the \emph{set of pivotal times for $\w$} and write it as $\mathcal{P}(\w)$.
\end{definition}

We say that a pivotal equivalence class $\mathcal{E}$ \emph{avoids} an integer $k$ if $k$ is not in $\{ j - l: j \in \mathcal{P}(\mathcal{E}), l=0, \ldots, M_{0} - 1\}$.

\begin{prop}[{\cite[Proposition 4.2]{choi2022random1}}] \label{prop:pivotClassProp}
Let $(X, G, o)$ be as in Convention \ref{conv:main}, let $\mu$ be a non-elementary probabiltiy measure on $G$ and let $S$ be a large and fairly long Schottky set for $\mu$. Then there exist a probability space $(\Omega, \Prob)$ for $\mu$ and a constant $K>0$ such that, for each $n \ge 0$, we have a measurable partition $\mathscr{P}_{n} = \{\mathcal{E}_{\alpha}\}_{\alpha}$ of $\Omega$ by pivotal equivalence classes avoiding $1$, $\ldots$, $\lfloor n/2 \rfloor+ 1$ and $n+1$, that satisfies \[
\Prob \left( \w : \# (\mathcal{P}(\w) \cap \{1, \ldots, k\} ) \le k / K \, | \, g_{1}, \ldots, g_{\lfloor n/2  \rfloor + 1}, g_{n+1} \right) \le K e^{-k/K}
\]
for each choice of $g_{1}, \ldots, g_{\lfloor n/2 \rfloor + 1}, g_{n+1} \in G$ and for each $k \ge n$.
\end{prop}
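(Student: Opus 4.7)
The plan is to carry over the pivotal time construction of \cite[Section 5]{choi2022random1}, which in turn adapts Gou\"ezel's construction in \cite[Section 4A]{gouezel2022exponential}, to the present BGIP/asymmetric setting. The author already remarks in the paragraph preceding Definition \ref{dfn:pivotClass} that the proof of the result in \cite{choi2022random1} relies only on three ingredients: Proposition \ref{prop:Schottky} (existence of large and fairly long Schottky sets), the alignment axioms of a Schottky set (Definition \ref{dfn:Schottky}), and the one-step alignment Lemma \ref{lem:1segment}. Since all three ingredients have been re-established in Section \ref{section:prelim} for BGIP axes in (possibly asymmetric) metric spaces, the argument of \cite[Section 5]{choi2022random1} transfers verbatim; below I describe the key steps.

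First, I would realize the probability space $(\Omega, \Prob)$ as a product over $M_{0}$-blocks of steps. Within each block, with probability $\#S \cdot \mu^{M_{0}}(S) \le 1$ the block is declared a \emph{Schottky block} whose content is then uniformly distributed on $S$, and otherwise the block contains arbitrary i.i.d.\ $\mu^{M_{0}}$ content conditioned on not lying in $S$; this re-randomization gives the same law as $\mu^{\Z}$ and makes the Schottky index independent of everything else. In order to avoid the indices $1,\ldots,\lfloor n/2 \rfloor+1$ and $n+1$ required by the proposition, I start the blocking at an offset chosen to push these indices out of all Schottky blocks; these finitely many steps remain fixed in each pivotal equivalence class.

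Next, I build the pivotal times $\mathcal{P}(\omega)$ by running the discrete pivoting algorithm of \cite[Section 5]{choi2022random1}: we scan the Schottky blocks from left to right, and at the $k$-th Schottky block centered at $j(k)$ we tentatively add $j(k)$ to the pivotal list $\mathcal{P}$. Using the Schottky axioms (items (2) and (3) of Definition \ref{dfn:Schottky}), the uniform distribution of $s_{k}$ on $S$ guarantees that, conditionally on everything before, with probability at least $1-2/\#S$ the new Schottky axis at $j(k)$ is $K_{0}$-aligned with the previous pivot axis; Lemma \ref{lem:1segment} then promotes this to a $D_{0}$-alignment of the full axis sequence, so all of Definition \ref{dfn:pivotClass} holds. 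In the bad event we pop the last pivot and, if necessary, repeat; the key point is that each pop consumes one coin flip that was independent of the current position, and Lemma \ref{lem:Behrstock} bounds the number of consecutive pops needed before alignment is restored. A pivotal equivalence class is then the set of $\omega$'s that produce the same list of non-Schottky steps, the same sleeping pattern, and the same failed Schottky choices; condition (4) of Definition \ref{dfn:pivotClass} is immediate because inside such a class the surviving Schottky blocks are untouched uniform samples from $S$.

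Finally, the exponential tail estimate is a standard large deviation bound for a Markov chain on $\Z_{\ge0}$. Let $P_{k}$ be the number of pivots after the $k$-th Schottky block; by the previous paragraph, $P_{k+1}-P_{k}$ is bounded below by $+1$ with probability at least $1-2/\#S$ (when $\#S\ge 400$ this is $\ge 199/200$), and is bounded below by $-C$ for a deterministic constant $C$ depending on $K_{0}$ via Lemma \ref{lem:Behrstock}. Hence $P_{k}$ dominates a random walk on $\Z$ with strictly positive drift and bounded jumps, and a Cram\'er/Azuma-type estimate gives $\Prob(P_{k} \le k/K') \le K' e^{-k/K'}$ for a suitable $K'$. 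Converting from counts of Schottky blocks to counts of sample path indices (which only costs a factor $M_{0}$) and noting that the conditioning on $g_{1},\ldots,g_{\lfloor n/2\rfloor+1},g_{n+1}$ does not affect the Schottky blocks (they avoid these indices by construction) yields the claimed bound. The main obstacle is really just the bookkeeping around the asymmetric metric in Lemma \ref{lem:Behrstock} and Lemma \ref{lem:1segment}, which has already been dealt with in Section \ref{section:prelim}, so no new geometric input is needed here.
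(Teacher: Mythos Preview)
Your high-level approach is exactly the paper's: the paper does not reprove this proposition but simply observes that the construction in \cite[Section 5]{choi2022random1} depends only on the three ingredients you name (Proposition \ref{prop:Schottky}, the Schottky axioms in Definition \ref{dfn:Schottky}, and Lemma \ref{lem:1segment}), all of which have been re-established in the BGIP/asymmetric setting, so the proof carries over verbatim. In that sense your proposal is correct and aligned with the paper.

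That said, two details in your sketch of the underlying argument are inaccurate and would not survive a careful write-up. First, Lemma \ref{lem:Behrstock} plays no role in bounding the number of pops; the paper explicitly does \emph{not} list it among the ingredients for this construction. The backtracking step is controlled purely probabilistically: at each pop one re-tests alignment against an earlier pivot whose Schottky content is an unused independent uniform sample from $S$, so the number of consecutive pops is stochastically dominated by a geometric random variable with failure probability $O(1/\#S)$. Second, this means $P_{k+1}-P_{k}$ is \emph{not} bounded below by a deterministic constant $-C$; it can in principle drop all the way to zero. Consequently an Azuma-type bounded-increment argument does not apply. The correct tail estimate comes from the fact that the increments, while unbounded, have uniform exponential moments (via the geometric domination), which is enough for a Cram\'er/Chernoff bound yielding the stated exponential decay. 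Once you replace the Behrstock/Azuma explanation with this geometric-tail argument, your sketch matches the intended proof.
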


Furthermore, again by using the property of Schottky set (Definition \ref{dfn:Schottky}) and the alignment lemma (Lemma \ref{lem:1segment}), we obtained the following:

\begin{cor}[{\cite[Corollary 4.5]{choi2022random1}}]\label{cor:pivotingRW1}
Let $(X, G, o)$ be as in Convention \ref{conv:main}, let $\mu$ be a non-elementary probability measure on $G$, let $K_{0}, N_{0}>0$ and let  $S$ be a long enough $K_{0}$-Schottky set for $\mu$ with cardinality $N_{0}$. Let $\mathcal{E}$ be a pivotal equivalence class for $\mu$ with $\diffPivot(\mathcal{E}) = \{j(1) < j(2) < \ldots\}$ and let $x$ be a point in $X$. Then for each $k \ge 1$ we have \[
\Prob\left( \big(x, \,\axes_{j(k)}(\w),\, \axes_{j(k+1)}(\w), \ldots \big) \,\, \textrm{is $D_{0}$-semi-aligned} \, \Big| \, \mathcal{E} \right) \ge 1- (1/N_{0})^{k}.
\]
Moreover, for any $m\ge 1$, $n \ge j(m)$ and $k = 1, \ldots, m$, we have \[
\Prob\left( \big(\axes_{j(1)}(\w), \,\ldots, \,\axes_{j(m - k+1)}(\w),\, Z_{n}(\w) o \big) \,\, \textrm{is $D_{0}$-semi-aligned} \, \Big| \, \mathcal{E} \right) \ge 1- (1/N_{0})^{k}.
\]
\end{cor}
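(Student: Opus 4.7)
The plan is to use the Schottky property (item (2) of Definition \ref{dfn:Schottky}), which guarantees that for any fixed point $y \in X$, at most one Schottky sequence $\alpha \in S$ fails to have both $(y, \Gamma(\alpha))$ and $(\Gamma(\alpha), \Pi(\alpha) y)$ $K_{0}$-aligned. On a pivotal equivalence class $\mathcal{E}$, the non-pivotal step elements are frozen while the pivotal Schottky sequences $s_{1}(\w), s_{2}(\w), \ldots$ are i.i.d.\ uniform on $S$ (item (4) of Definition \ref{dfn:pivotClass}). All the remaining randomness is concentrated in these $s_{i}$'s.

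For the first assertion, let $A_{i}$ be the event $\{(x, \axes_{j(i)}) \textrm{ is $K_{0}$-aligned}\}$. Since $\axes_{j(i)} = Z_{j(i) - M_{0}}\, \Gamma(s_{i})$, translating by $Z_{j(i) - M_{0}}^{-1}$ identifies $A_{i}$ with the event $\{(y_{i}, \Gamma(s_{i})) \textrm{ is $K_{0}$-aligned}\}$, where $y_{i} := Z_{j(i) - M_{0}}^{-1} x$ is determined by $s_{1}, \ldots, s_{i-1}$ and the frozen steps. Conditioning on $s_{1}, \ldots, s_{i-1}$ fixes $y_{i}$, so the Schottky property rules out at most one $s_{i} \in S$ and gives $\Prob(A_{i}^{c} \mid s_{1}, \ldots, s_{i-1}) \le 1/\#S \le 1/N_{0}$. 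Chaining these bounds via successive conditioning yields $\Prob(\bigcap_{i=1}^{k} A_{i}^{c}) \le (1/N_{0})^{k}$. On the complementary event, fix any $i \le k$ for which $A_{i}$ holds: concatenating $(x, \axes_{j(i)})$ with the tail of the aligned sequence of Schottky axes witnessing the $D_{0}$-semi-alignment of $(o, \axes_{j(1)}, \axes_{j(2)}, \ldots)$ (item (3) of Definition \ref{dfn:pivotClass}), via Observation \ref{obs:BGIPAlignRev}(1), produces a $D_{0}$-aligned sequence starting with $x$ and containing $\axes_{j(i)}, \axes_{j(i+1)}, \ldots$ Since $i \le k$, the target $(x, \axes_{j(k)}, \axes_{j(k+1)}, \ldots)$ is a subsequence, hence $D_{0}$-semi-aligned.

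The second assertion is symmetric, using the ``right'' half of the Schottky property. Set $B_{i} := \{(\axes_{j(i)}, Z_{n} o) \textrm{ is $K_{0}$-aligned}\}$ for $i = m-k+1, \ldots, m$. Translating by $Z_{j(i) - M_{0}}^{-1}$ identifies $B_{i}$ with $\{(\Gamma(s_{i}), \Pi(s_{i}) q_{i}) \textrm{ is $K_{0}$-aligned}\}$, where $q_{i} := g_{j(i)+1} \cdots g_{n} o$ depends on $s_{i+1}, \ldots, s_{m}$ and the frozen steps but \emph{not} on $s_{i}$. Conditioning in reverse on $s_{i+1}, \ldots, s_{m}$, the Schottky property gives $\Prob(B_{i}^{c} \mid s_{i+1}, \ldots, s_{m}) \le 1/N_{0}$, and iterating backwards from $i = m$ down to $i = m-k+1$ yields $\Prob(\bigcap_{i=m-k+1}^{m} B_{i}^{c}) \le (1/N_{0})^{k}$. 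Whenever some $B_{i}$ holds, extending by $Z_{n} o$ the witnessing aligned sequence of the $D_{0}$-semi-alignment of $(o, \axes_{j(1)}, \axes_{j(2)}, \ldots)$ (again using Observation \ref{obs:BGIPAlignRev}(1)) produces an aligned sequence containing $(\axes_{j(1)}, \ldots, \axes_{j(m-k+1)}, Z_{n} o)$ as a subsequence.

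The main obstacle is organizational rather than geometric: one must identify a filtration that makes each alignment event of the form ``a fixed test point meets an independent uniform Schottky sequence,'' so that the Schottky property gives the $1/N_{0}$ bound directly. For the first statement the forward filtration on $s_{1}, s_{2}, \ldots$ works because $y_{i}$ is $\sigma(s_{1}, \ldots, s_{i-1})$-measurable; for the second statement the analogous test point $q_{i}$ depends on \emph{later} pivotal sequences, forcing reverse conditioning. Once this is set up correctly, the rest is assembling the right subsequences from the witnessing aligned sequences via Observation \ref{obs:BGIPAlignRev}.
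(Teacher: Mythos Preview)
Your argument is correct and is essentially the approach taken in \cite{choi2022random1}, to which the paper defers for this corollary: use Schottky property~(2) against a test point that is measurable in the ``earlier'' (resp.\ ``later'') pivotal choices, chain the resulting $1/N_{0}$ bounds through the appropriate filtration, and then splice the successful alignment onto the witnessing $D_{0}$-aligned sequence from Definition~\ref{dfn:pivotClass}(3) via Observation~\ref{obs:BGIPAlignRev}(1). One minor imprecision: in the second part, $q_{i}$ may depend on pivotal sequences $s_{m+1},\ldots,s_{m'}$ as well (for any further pivotal times $j(m+1),\ldots,j(m')\le n$), not just $s_{i+1},\ldots,s_{m}$; but since the only thing you use is that $q_{i}$ is independent of $s_{i}$ and measurable in the reverse filtration, this does not affect the argument.
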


\begin{cor}[{\cite[Corollary 4.6]{choi2022random1}}]\label{cor:pivotingRW2}
Let $(X, G, o)$ be as in Convention \ref{conv:main}, let $\mu$ be a non-elementary probability measure on $G$ and $\check{\mu}$ be its reflected version, let $K_{0}, N_{0}>0$ and let $S$ and $\check{S}$ be long enough $K_{0}$-Schottky sets for $\mu$ and $\check{\mu}$, respectively, with cardinality $N_{0}$. Let $\mathcal{E}$ be a pivotal equivalence class for $\mu$ with $\diffPivot(\mathcal{E}) = \{j(1) < j(2) < \ldots\}$, and let $\check{\mathcal{E}}$ be a pivotal equivalence class for $\check{\mu}$ with $\diffPivot(\check{\mathcal{E}}) = \{\check{j}(1) < \check{j}(2) < \ldots\}$.  Then for each $k \ge 1$ we have \[
\Prob\left( \big(\bar{\axes}_{j(k)}(\check{\w}),\, \axes_{\check{j}(k)}(\w)\big) \,\, \textrm{is $D_{0}$-semi-aligned} \, \Big| \, \mathcal{E} \right) \ge 1- (2/N_{0})^{k}.
\]
\end{cor}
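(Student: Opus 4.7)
My plan is to derive the estimate from two applications of Corollary~\ref{cor:pivotingRW1}, one on each half of the bi-infinite random walk, and glue the resulting semi-alignments at the basepoint $o$. Under the factorization $(G^{\Z}, \mu^{\Z}) \cong (G^{\Z_{>0}}, \check{\mu}^{\Z_{>0}}) \times (G^{\Z_{>0}}, \mu^{\Z_{>0}})$, the forward walk $(Z_{n})_{n\ge 0}$ and the backward walk $(\check{Z}_{n})_{n\ge 0}$ are independent, and the events $\mathcal{E}$, $\check{\mathcal{E}}$ belong to the respective factors, so conditioning decomposes cleanly. I would first apply Corollary~\ref{cor:pivotingRW1} with $x = o$ on the $\mu$-side to obtain that, conditional on $\mathcal{E}$, the sequence $(o, \axes_{j(k)}(\w), \axes_{j(k+1)}(\w), \ldots)$ is $D_{0}$-semi-aligned with probability at least $1 - (1/N_{0})^{k}$. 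Applying the same corollary to the backward walk under $\check{\mu}$ with its Schottky set $\check{S}$ and $x = o$, conditional on $\check{\mathcal{E}}$ I obtain that $(o, \axes_{\check{j}(k)}(\check{\w}), \ldots)$ is $D_{0}$-semi-aligned with probability at least $1 - (1/N_{0})^{k}$; reversing via Observation~\ref{obs:BGIPAlignRev}(2) then yields that $(\ldots, \bar{\axes}_{\check{j}(k)}(\check{\w}), o)$ is likewise $D_{0}$-semi-aligned.

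On the intersection of these two good events, which has conditional probability at least $1 - 2(1/N_{0})^{k} \geq 1 - (2/N_{0})^{k}$ by independence and the union bound, I would glue the two witness aligned sequences at their common element $o$ via Observation~\ref{obs:BGIPAlignRev}(1). This produces a single aligned sequence containing $(\bar{\axes}_{\check{j}(k)}(\check{\w}), o, \axes_{j(k)}(\w))$ as a subsequence, from which the target pair $(\bar{\axes}_{\check{j}(k)}(\check{\w}), \axes_{j(k)}(\w))$ is visible as a further subsequence.

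The delicate step is passing from this mixed witness, which contains the degenerate path $o$ sandwiched between the two flanking axes, to a pure Schottky-axis witness as demanded by Definition~\ref{dfn:semiAlign}. I would handle this at the gluing site by deleting $o$ and invoking Lemma~\ref{lem:1segment}: the two alignments through $o$, combined with the BGIP of the flanking axes and the coarse-Lipschitz projection of Corollary~\ref{cor:coarseLipschitz}, should imply the hypotheses of Lemma~\ref{lem:1segment} for those two axes, so that they are themselves aligned and $o$ can be dropped from the witness. The main obstacle is to verify that the constant coming out of Lemma~\ref{lem:1segment} is compatible with the asserted parameter $D_{0}$; this should be reconciled by exploiting the large geometric separation of the Schottky axes in the two pivotal sequences (a consequence of Definition~\ref{dfn:pivotClass} and Definition~\ref{dfn:SchottkyLong}) and, if necessary, by enlarging the witness with further pivotal axes from either side so that the desired pair sits inside a genuinely $D_{0}$-aligned Schottky-axis sequence.
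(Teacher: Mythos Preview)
Your architecture—pivot on each half and glue at $o$—is in the right spirit, but the step where you delete $o$ from the witness has a genuine gap. Lemma~\ref{lem:1segment} requires the alignments $(\kappa, x')$ and $(x, \eta)$ where $x$ and $x'$ are the \emph{beginning points} of $\kappa$ and $\eta$ respectively. What you actually have is $(\kappa, o)$ and $(o, \eta)$ for a third point $o$. Knowing that $\pi_{\eta}(o)$ lies near $x'$ does not bound $d^{sym}(o, x')$, so the coarse Lipschitz estimate of Corollary~\ref{cor:coarseLipschitz} gives no control on $\pi_{\kappa}(x')$ in terms of $\pi_{\kappa}(o)$. In a tree one can arrange $\kappa$ horizontal with $o$ far above its terminal point (so $(\kappa, o)$ is aligned) and $\eta$ beginning at a point $x'$ sitting above the \emph{initial} end of $\kappa$ with $\pi_{\eta}(o)$ near $x'$ (so $(o,\eta)$ is aligned); then $(\kappa, x')$ fails by the full length of $\kappa$. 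Your fallback of ``enlarging the witness'' does not help, since the obstruction is local at the seam and persists no matter how many extra pivotal axes you append on either side.

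There is a related symptom: taking $x = o$ in Corollary~\ref{cor:pivotingRW1} spends the pivoting on an alignment you already have deterministically from Definition~\ref{dfn:pivotClass}(3), so the factors $(1/N_{0})^{k}$ in your union bound are doing no real work. In the argument the paper points to (Schottky property plus Lemma~\ref{lem:1segment}), the pivoting is instead spent on the \emph{interaction} between the two halves: one invokes the Schottky property of Definition~\ref{dfn:Schottky}(2), or equivalently Corollary~\ref{cor:pivotingRW1}, with $x$ taken to be an endpoint of a \emph{backward} Schottky axis, and symmetrically on the $\check{\mu}$-side with a forward endpoint. That is what produces, after $k$ pivots on each side, exactly the $K_{0}$-alignments $(\kappa, x')$ and $(x, \eta)$ that Lemma~\ref{lem:1segment} needs at the seam, yielding $D_{0}$-alignment there and accounting for the constant $(2/N_{0})^{k}$.
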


Combining these, we proved in the proof of \cite[Corollary 4.7]{choi2022random1} that: 

\begin{prop}[{\cite[Corollary 4.7]{choi2022random1}}]\label{prop:pivotingRWSLLN1}
Let $(X, G, o)$ be as in Convention \ref{conv:main}, let $\mu$ be a non-elementary probability measure on $G$, let $(\Omega, \Prob)$ be a probability space for $\mu$, and let $S$ be a large and fairly long Schottky set for $\mu$. Then there exists $K>0$ such that \[
\Prob\left( \w : \begin{array}{c}\textrm{$\big(o, \mathbf{Y}_{j(1)}, \ldots, \mathbf{Y}_{j(\lfloor n/2K \rfloor)}, Z_{n}(\w) o\big)$ is $D_{0}$-semi-aligned} \\\textrm{ for some $j(1), \ldots, j(\lfloor n/ 2K \rfloor) \in \Z$} \end{array} \right) \ge 1 - K e^{-n/K}.
\]
\end{prop}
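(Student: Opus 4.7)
The plan is to combine the tail bound on the number of pivotal times from Proposition \ref{prop:pivotClassProp} with the moreover clause of Corollary \ref{cor:pivotingRW1}, which attaches $Z_{n} o$ to a truncated pivotal sequence. Fix $n$ and let $K_{1}$ and $\mathscr{P}_{n} = \{\mathcal{E}_{\alpha}\}$ be as produced by Proposition \ref{prop:pivotClassProp}. Instantiating its tail bound at $k=n$, off an event of probability at most $K_{1} e^{-n/K_{1}}$, every $\w$ lies in some class $\mathcal{E} \in \mathscr{P}_{n}$ whose pivotal-time set $\diffPivot(\mathcal{E})$ contains at least $m := \lceil n/K_{1} \rceil$ elements of $\{1, \ldots, n\}$; list them as $j(1) < \cdots < j(m)$. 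On all of $\mathcal{E}$, property (3) of Definition \ref{dfn:pivotClass} gives that $(o, \mathbf{Y}_{j(1)}, \mathbf{Y}_{j(2)}, \ldots)$ is $D_{0}$-semi-aligned, so any finite prefix is as well.

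Next I would set $M := \lfloor n/(2K_{1}) \rfloor$ and invoke the moreover clause of Corollary \ref{cor:pivotingRW1} with the integer $k := m - M + 1$, which lies in $\{1, \ldots, m\}$ and satisfies $k \ge n/(2K_{1})$. This yields
\[
\Prob\big(\w : (\mathbf{Y}_{j(1)}(\w), \ldots, \mathbf{Y}_{j(M)}(\w), Z_{n}(\w) o) \textrm{ is $D_{0}$-semi-aligned} \,\big|\, \mathcal{E}\big) \ge 1 - (1/N_{0})^{n/(2K_{1})},
\]
which is exponentially small in $n$ since the Schottky set $S$ has cardinality at least $400$ by Definition \ref{dfn:Schottky}.

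Finally I would splice the two semi-alignment statements into one. Each semi-alignment comes with a witness $D_{0}$-aligned sequence, and both witnesses contain $\mathbf{Y}_{j(1)}, \ldots, \mathbf{Y}_{j(M)}$ as a subsequence; truncating the first witness right after $\mathbf{Y}_{j(M)}$ and concatenating with the tail of the second witness from $\mathbf{Y}_{j(M)}$ onward produces, via Observation \ref{obs:BGIPAlignRev}(1), a single $D_{0}$-aligned sequence that has $(o, \mathbf{Y}_{j(1)}, \ldots, \mathbf{Y}_{j(M)}, Z_{n} o)$ as a subsequence. Summing the two failure probabilities gives a total bound $K_{1} e^{-n/K_{1}} + (1/N_{0})^{n/(2K_{1})}$, which is absorbed into $K e^{-n/K}$ after enlarging $K$; note also that $\lfloor n/(2K) \rfloor \le M$ for such a $K$, so restricting to the first $\lfloor n/(2K) \rfloor$ of the kept pivotal indices preserves semi-alignment.

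The main obstacle I anticipate is the splicing step: semi-alignment is defined existentially, and one must verify that the leftward witness coming from the pivotal class structure and the rightward witness coming from the $Z_{n} o$-attachment can genuinely be joined at the shared axis $\mathbf{Y}_{j(M)}$. This reduces to Observation \ref{obs:BGIPAlignRev}(1) applied at the common anchor, and is essentially bookkeeping once one notes that both aligned witnesses really do pass through $\mathbf{Y}_{j(M)}$ as a term of the sequence; beyond this care with the existential definition, the argument is a routine assembly of the two previous results.
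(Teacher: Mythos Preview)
Your proposal is correct and follows essentially the same approach as the paper, which simply states that the proposition is obtained by combining Proposition~\ref{prop:pivotClassProp} and Corollary~\ref{cor:pivotingRW1}. Your splicing concern is justified but resolves exactly as you outline: since alignment is a condition on consecutive pairs, truncating the pivotal-class witness after $\mathbf{Y}_{j(M)}$ and appending the tail of the $Z_n o$-witness from $\mathbf{Y}_{j(M)}$ onward yields a single $D_0$-aligned sequence of Schottky axes (with point endpoints) via Observation~\ref{obs:BGIPAlignRev}(1), and the desired sequence sits inside it as a subsequence.
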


Now, Corollary \ref{cor:semiAlign} and our choices of the constants $D_{0}, D_{1}$ tells us that, if $(x, \kappa_{1}, \ldots, \kappa_{m}, y)$ is a $D_{0}$-semi-aligned sequence, then $d(x, y)$ is larger than $50 D_{1}m$. Combining this with Proposition \ref{prop:pivotingRWSLLN1}, we deduce the strict positivity of the escape rate: 

\begin{thm}[{\cite[Corollary 4.7]{choi2022random1}}]\label{thm:pivotingRWSLLN2}
Let $(X, G, o)$ be as in Convention \ref{conv:main} and let $(Z_{n})_{n}$ be the random walk generated by a non-elementary probability measure $\mu$ on $G$. Then there exists a strictly positive quantity $\lambda(\mu) \in (0, +\infty]$, called the \emph{drift} of $\mu$, such that  \[
\lambda(\mu) := \lim_{n \rightarrow \infty} \frac{1}{n} d(o, Z_{n} o) \quad \textrm{almost surely.}
\]
\end{thm}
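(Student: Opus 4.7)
The plan is to convert the pivoting result of Proposition \ref{prop:pivotingRWSLLN1} into a linear lower bound on $d(o, Z_n o)$ that holds with exponentially high probability, and then combine a Borel--Cantelli argument with Kingman's subadditive ergodic theorem.

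First I would invoke Corollary \ref{cor:semiAlign}: if $(o, \mathbf{Y}_{j(1)}, \ldots, \mathbf{Y}_{j(m)}, Z_n o)$ is $D_{0}$-semi-aligned, then the geodesic $[o, Z_{n} o]$ contains $m$ disjoint subsegments (appearing in order from closest to farthest from $o$), each of length at least $100 D_{1}$, fellow traveling the corresponding Schottky axes. Since the total length of $[o, Z_{n} o]$ is $d(o, Z_{n} o)$, this immediately yields $d(o, Z_{n} o) \ge 50 D_{1} \, m$, as advertised in the paragraph preceding the theorem. Plugging $m = \lfloor n/2K \rfloor$ into Proposition \ref{prop:pivotingRWSLLN1} then produces a constant $c > 0$ (slightly smaller than $25 D_{1}/K$, to absorb the floor) such that
\[
\Prob\Big( d(o, Z_{n} o) < cn \Big) \le K e^{-n/K}
\]
for all sufficiently large $n$.

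Next, since $\sum_{n} K e^{-n/K} < \infty$, the first Borel--Cantelli lemma guarantees that almost surely $d(o, Z_{n} o) \ge cn$ for all but finitely many $n$, so
\[
\liminf_{n \to \infty} \frac{d(o, Z_{n} o)}{n} \ge c > 0 \quad \text{almost surely.}
\]

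Finally, for the existence of the limit I would apply Kingman's subadditive ergodic theorem to $a_{n} := d(o, Z_{n} o)$. For the Bernoulli shift $\theta$ on the step sequence $(g_{i})$, the triangle inequality together with $G$-invariance of $d$ gives
\[
a_{n+m}(\w) \le d(o, Z_{n} o) + d(Z_{n} o, Z_{n+m} o) = a_{n}(\w) + a_{m}(\theta^{n} \w),
\]
using that $Z_{n}^{-1} Z_{n+m}$ is independent of $Z_{n}$ and distributed as $Z_{m}$. Kingman's theorem, applied without a moment assumption, yields almost sure convergence $a_{n}/n \to \lambda(\mu) \in [0, +\infty]$, and combined with the $\liminf$ bound, $\lambda(\mu) \ge c > 0$.

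The only point that needs minor care is the asymmetric metric: subadditivity only uses $d(Z_{n} o, Z_{n+m} o) = d(o, Z_{n}^{-1} Z_{n+m} o)$, which is $G$-invariance and does not require symmetry, so the argument goes through unchanged. There is no significant obstacle; the theorem is essentially a bookkeeping step once Proposition \ref{prop:pivotingRWSLLN1} and Corollary \ref{cor:semiAlign} are in hand.
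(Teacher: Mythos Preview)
Your proposal is correct and follows essentially the same route as the paper, which is very terse here: the paper simply remarks that Corollary \ref{cor:semiAlign} yields $d(x,y) > 50D_{1}m$ from a $D_{0}$-semi-aligned sequence of $m$ Schottky axes, and then cites Proposition \ref{prop:pivotingRWSLLN1} to conclude strict positivity. You have filled in the Borel--Cantelli and subadditive ergodic theorem details that the paper leaves implicit; the only point worth flagging is that invoking Kingman's theorem without any moment assumption (allowing $\lambda(\mu)=+\infty$) requires a mild extension of the classical statement, but this is standard for nonnegative subadditive cocycles and is consistent with the theorem's stated range $(0,+\infty]$.
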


We next turn to deviation inequalities. We use the parametrization $(\check{\w}, \check{w}) \in  (G^{\Z_{>0}}, \check{\mu}^{\Z_{>0}}) \times (G^{\Z_{>0}}, \mu^{\Z_{>0}})$. In \cite{choi2022random1}, we defined: 

\begin{definition}[{\cite[Section 4.3]{choi2022random1}}]\label{dfn:Devi}
Let $(X, G, o)$ be as in Convention \ref{conv:main}, let $\mu$ be a non-elementary probability measure on $G$ and let $S$ be a large and fairly long $K_{0}$-Schottky set in $(\supp \mu)^{M_{0}}$ for some $K_{0}, M_{0} > 0$. 

For $(\check{\w}, \w) \in  (G^{\Z_{>0}}, \check{\mu}^{\Z_{>0}}) \times (G^{\Z_{>0}}, \mu^{\Z_{>0}})$, we define $\Devi = \Devi(\check{\w}, \w)$ to be the minimal index such that there exists $M_{0} \le i \le \Devi$ satisfying: \begin{enumerate}
\item $\mathbf{Y}_{i}(\w) := (Z_{i-M_{0}}o, \ldots, Z_{i-1} o, Z_{i} o)$ is a Schottky axis;
\item $(\check{Z}_{m} o, \mathbf{Y}_{i}(\w), Z_{n} o)$ is $D_{0}$-semi-aligned for all $n \ge \Devi$ and $m \ge 0$.
\end{enumerate}

We then define $\check{\Devi} = \check{\Devi}(\check{\w}, \w)$ to be the minimal index such that there exists $M_{0} \le i \le \check{\Devi}$ satisfying: \begin{enumerate}
\item $\bar{\mathbf{Y}}_{i}(\check{\w}) := (\check{Z}_{i} o,\check{Z}_{i-1} o, \ldots, \check{Z}_{i-M_{0}} o)$ is a Schottky axis;
\item $(\check{Z}_{n} o, \bar{\mathbf{Y}}_{i}(\check{\w}), Z_{m} o)$ is $D_{0}$-semi-aligned for all $n \ge \check{\Devi}$ and $m \ge 0$.
\end{enumerate}
\end{definition}

Using Corollary \ref{cor:pivotingRW1} and \ref{cor:pivotingRW2}, we deduced the following: 

\begin{lem}[{\cite[Lemma 4.9]{choi2022random1}}]\label{lem:Devi}
Let $(X, G, o)$ be as in Convention \ref{conv:main}, let $\mu$ be a non-elementary probability measure on $G$ and let $S$ be a large and long enough Schottky set for $\mu$. Then there exists $K'>0$ such that 
\begin{equation}\label{eqn:DeviDecays1}
\Prob\left(\Devi(\check{\w}, \w) \ge k \, \Big|\, g_{k+1}, \check{g}_{1}, \ldots, \check{g}_{k+1}\right) \le K' e^{-k/K'}
\end{equation}
holds for every $k \ge 0$ and every choice of $g_{k+1}, \check{g}_{1}, \ldots, \check{g}_{k+1} \in G$, and \begin{equation}\label{eqn:DeviDecays2}
\Prob\left(\check{\Devi}(\check{\w}, \w) \ge k \, \Big|\, \check{g}_{k+1},g_{1}, \ldots, g_{k+1}\right) \le K' e^{-k/K'}
\end{equation}
holds for every $k \ge 0$ and every choice of $\check{g}_{k+1}, g_{1}, \ldots, g_{k+1} \in G$.
\end{lem}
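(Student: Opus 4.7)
The plan is to produce, with high probability, a witness index $i$ for the event $\{\Devi \le k\}$ from among the pivotal times of the forward walk, and then bound the probability that this choice fails either condition of Definition \ref{dfn:Devi}. By the symmetry between $(\mu,\w)$ and $(\check{\mu},\check{\w})$, it suffices to establish Inequality \ref{eqn:DeviDecays1}; Inequality \ref{eqn:DeviDecays2} follows by exchanging the roles of $\mu$ and $\check{\mu}$. Throughout, I use the independence of $\w$ and $\check{\w}$ inherent in the product parametrization of Subsection \ref{subsect:RW}.

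First, apply Proposition \ref{prop:pivotClassProp} to $\mu$ with parameter $n = k$, partitioning the $\w$-factor into pivotal equivalence classes avoiding the indices $\{1,\dots,\lfloor k/2\rfloor+1, k+1\}$. Conditional on such a class, the number $M$ of pivotal times in $\{1,\dots,k\}$ satisfies $M \ge k/K$ with probability at least $1-Ke^{-k/K}$, uniformly in any fixing of $g_1,\dots,g_{\lfloor k/2\rfloor+1}, g_{k+1}$; integrating over $g_1,\dots,g_{\lfloor k/2\rfloor+1}$ preserves this bound conditional only on $g_{k+1}$, and also on $\check{g}_1,\dots,\check{g}_{k+1}$ by independence. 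On the good event, let the pivotal times be $j(1)<\dots<j(M)$; by Definition \ref{dfn:pivotClass}(2), each $\mathbf{Y}_{j(l)}(\w)$ is a Schottky axis, and I choose $i := j(1)$ as the witness. The second statement of Corollary \ref{cor:pivotingRW1}, with its ``$m$'' set to $M$ and its ``$k$'' set to $\lceil M/2\rceil$, gives that $(\mathbf{Y}_{j(1)}(\w),\dots,\mathbf{Y}_{j(M-\lceil M/2\rceil+1)}(\w), Z_n o)$ is $D_0$-semi-aligned for every $n\ge j(M)$, and hence for every $n\ge k$, with probability at least $1-(1/N_0)^{\lceil M/2\rceil}$. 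Since a subsequence of a $D_0$-semi-aligned sequence is again $D_0$-semi-aligned, $(\mathbf{Y}_{j(1)}(\w), Z_n o)$ is $D_0$-semi-aligned for every $n\ge k$, settling the $n$-aspect of condition (2).

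For the $m$-aspect, namely the semi-alignment $(\check{Z}_m o, \mathbf{Y}_{j(1)}(\w))$ for \emph{every} $m\ge 0$, I carry out the parallel pivotal construction for $\check{\mu}$, producing backward pivotal times $\check{j}(1)<\dots<\check{j}(\check{M})$ in $\{1,\dots,k\}$ with $\check{M}\ge k/K$ on a similarly good event. Corollary \ref{cor:pivotingRW2}, combined with the first statement of Corollary \ref{cor:pivotingRW1} applied to both factors, then shows that the bi-infinite chain $(\bar{\mathbf{Y}}_{\check{j}(\check{M})}(\check{\w}),\dots,\bar{\mathbf{Y}}_{\check{j}(1)}(\check{\w}), \mathbf{Y}_{j(1)}(\w),\dots,\mathbf{Y}_{j(M)}(\w))$ is $D_0$-semi-aligned with failure probability exponentially small in $k$. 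Moreover, each $\check{Z}_m o$ is either an interior vertex of some $\bar{\mathbf{Y}}_{\check{j}(r)}(\check{\w})$ or lies in a gap between consecutive backward Schottky axes (or between $o$ and the first such axis), so inserting it at the correct position in the chain preserves $D_0$-semi-alignment by Observation \ref{obs:BGIPAlignRev}. The two-term subsequence $(\check{Z}_m o, \mathbf{Y}_{j(1)}(\w))$ then inherits $D_0$-semi-alignment for every $m\ge 0$ from this single alignment event.

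The main obstacle lies in precisely this last consolidation: one must avoid estimating the semi-alignment probability separately for each $m$, which would entail a divergent union bound, and instead extract all the $m$-alignments from the single event that the full bi-infinite pivotal chain is $D_0$-semi-aligned. The combinatorial placement of the $\check{Z}_m o$ as vertices or gap points of the backward half of the chain, together with Observation \ref{obs:BGIPAlignRev}, makes this consolidation possible. A final union bound over the finitely many exponentially-decaying failure modes --- too few forward pivots, too few backward pivots, failure of the forward extension, failure of the bridging alignment --- yields the bound $\Prob(\Devi\ge k \mid g_{k+1},\check{g}_1,\dots,\check{g}_{k+1}) \le K'e^{-k/K'}$ for a constant $K'$ depending only on the Schottky parameters $K_0, M_0, N_0$ and the constant from Proposition \ref{prop:pivotClassProp}.
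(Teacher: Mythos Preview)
Your overall strategy --- produce many forward pivotal times via Proposition \ref{prop:pivotClassProp}, pick an early one as the witness index $i$, and verify the two halves of Definition \ref{dfn:Devi} via Corollaries \ref{cor:pivotingRW1} and \ref{cor:pivotingRW2} --- matches the ingredients the paper cites. The treatment of the forward half (the $Z_n o$'s for $n\ge k$) is fine.

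The backward half, however, contains a genuine gap. Inequality \eqref{eqn:DeviDecays1} is a bound \emph{conditional on} $\check{g}_1,\dots,\check{g}_{k+1}$: all backward steps up to time $k+1$ are fixed, and there is no randomness left in $\{\check{g}_1,\dots,\check{g}_{k+1}\}$ to exploit. Proposition \ref{prop:pivotClassProp} produces pivotal times only in the range \emph{avoiding} the conditioned indices; to absorb the conditioning on $\check{g}_1,\dots,\check{g}_{k+1}$ you would need $\lfloor n/2\rfloor+1\ge k+1$, which forces all backward pivots to lie at times $\ge k+2$. Your claim ``producing backward pivotal times $\check{j}(1)<\dots<\check{j}(\check{M})$ in $\{1,\dots,k\}$ with $\check{M}\ge k/K$ on a similarly good event'' is therefore unjustified: for a fixed (possibly adversarial) choice of $\check{g}_1,\dots,\check{g}_{k+1}$ there may be \emph{no} backward pivots in $\{1,\dots,k\}$ at all, and certainly no exponential lower bound on their number.

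A second, smaller issue: the assertion that inserting $\check{Z}_m o$ into the bi-infinite pivotal chain ``preserves $D_0$-semi-alignment by Observation \ref{obs:BGIPAlignRev}'' is not what that observation says. Observation \ref{obs:BGIPAlignRev} covers concatenation, reversal, and translation of aligned sequences; it does not license insertion of an arbitrary point between two Schottky axes. For $\check{Z}_m o$ lying in a gap between consecutive backward pivots you would need the second clause of Corollary \ref{cor:pivotingRW1} applied to the backward walk, which again presupposes a backward pivotal class with randomness --- unavailable here for $m\le k+1$.

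A repair is possible but requires reorganizing the backward argument so that it uses only the \emph{forward} pivotal randomness. The points $\check{Z}_m o$ for $m\le k+1$ are deterministic under the conditioning (finitely many), and the uniform-in-$x$ bound in the first clause of Corollary \ref{cor:pivotingRW1} lets you handle them by a union bound over $k+2$ terms against $(1/N_0)^{l}$ with $l$ a fixed fraction of $M$; the points $\check{Z}_m o$ for $m>k+1$ must then be controlled differently (e.g.\ via a shifted backward pivotal construction beyond time $k+1$, linked to the forward chain through Corollary \ref{cor:pivotingRW2}). As written, your argument does not make this distinction and so does not go through.
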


This martingale-like estimates enable us to compute the moments of $d(o, Z_{\Devi}o)$ and $d(o, \check{Z}_{\check{\Devi}} o)$ based on the moments of $\mu$ and $\check{\mu}$, respectively. In \cite[Lemma 4.8]{choi2022random1}, these quantities were used to estimate the Gromov product between the backward and the forward sample paths. This lemma needs a modification in our setting due to the asymmetry of the metric: 

\begin{lem}[cf. {\cite[Lemma 4.8]{choi2022random1}}] \label{lem:GromEstimDevi}
Let $(X, G, o)$ be as in Convention \ref{conv:main}, let $\mu$ be a non-elementary probability measure on $G$ and let $S$ be a fairly long Schottky set for $\mu$. Then for each $(\check{\w}, \w) \in (G^{\Z_{>0}}, \check{\mu}^{\Z_{>0}}) \times (G^{\Z_{>0}}, \mu^{\Z_{>0}})$, we have \begin{equation}\label{eqn:GromEstimDeviMod1}
(\check{Z}_{m} o, Z_{n} o)_{o} \le \frac{1}{2} \left[d(Z_{k} o, o) + d(o, Z_{k} o) \right] = \frac{1}{2} d^{sym}(o, Z_{k} o)
\end{equation}
for all $m \ge 0$ and $n, k \ge \Devi(\check{\w}, \w)$. Moreover, we have \begin{equation}\label{eqn:GromEstimDeviMod2}
(\check{Z}_{m} o, Z_{n} o)_{o} \le \frac{1}{2} \left[d(\check{Z}_{\min (m, \check{\Devi})} o, o) + d(o ,Z_{\min (n, \Devi)} o) \right]
\end{equation}
for all $m, n \ge 0$.
\end{lem}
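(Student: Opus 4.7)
The plan is to adapt the proof of \cite[Lemma 4.8]{choi2022random1} to the asymmetric setting. The key modification is that one-directional distance bounds from the symmetric case must be replaced by their symmetrized analogues $d^{sym}$; this accounts for the factor $\tfrac{1}{2}\bigl[d(Z_k o, o) + d(o, Z_k o)\bigr]$ on the right-hand side of Inequality \ref{eqn:GromEstimDeviMod1}.

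For Inequality \ref{eqn:GromEstimDeviMod1}, I would first use Definition \ref{dfn:Devi} to pick an index $i$ with $M_0 \le i \le \Devi$ and a corresponding Schottky axis $\mathbf{Y}_i(\w)$ such that both $(\check{Z}_m o, \mathbf{Y}_i(\w), Z_n o)$ and $(\check{Z}_m o, \mathbf{Y}_i(\w), Z_k o)$ are $D_0$-semi-aligned (the former by the hypothesis $n \ge \Devi$, the latter by $k \ge \Devi$). Corollary \ref{cor:semiAlign} then furnishes points $p_n$ on $[\check{Z}_m o, Z_n o]$ and $p_k$ on $[\check{Z}_m o, Z_k o]$ within $d^{sym}$-distance $0.1 D_1$ of $Z_i o$. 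The main work is then to establish a reverse triangle inequality of the form
\[
d(\check{Z}_m o, Z_n o) \ge d(\check{Z}_m o, Z_k o) + d(Z_k o, Z_n o) - C,
\]
for some constant $C$ depending only on the Schottky parameters. Once that is in hand, I substitute into the Gromov product formula and apply the triangle inequalities $d(\check{Z}_m o, o) \le d(\check{Z}_m o, Z_k o) + d(Z_k o, o)$ and $d(o, Z_n o) \le d(o, Z_k o) + d(Z_k o, Z_n o)$ to conclude, absorbing $C$ into the $d^{sym}(o, Z_k o)$ bound.

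For Inequality \ref{eqn:GromEstimDeviMod2}, I would proceed by case analysis on whether $n \ge \Devi$ and $m \ge \check{\Devi}$. When both conditions hold, I combine Inequality \ref{eqn:GromEstimDeviMod1} with its backward analog, obtained by running the same argument with the roles of $\mu$ and $\check{\mu}$ swapped and using the reversed Schottky axis $\bar{\mathbf{Y}}_{\check{i}}(\check{\w})$ together with $\check{\Devi}$. In the remaining cases, say $n < \Devi$ so that $\min(n, \Devi) = n$, the bound is an immediate consequence of the trivial estimate $(\check{Z}_m o, Z_n o)_o \le \tfrac{1}{2}\bigl[d(\check{Z}_m o, o) + d(o, Z_n o)\bigr]$ coming from non-negativity of the Gromov product, since then $d(o, Z_{\min(n, \Devi)} o) = d(o, Z_n o)$ and $d(\check{Z}_{\min(m, \check{\Devi})} o, o) \ge d(\check{Z}_m o, o)$ is ensured similarly.

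The hard part is establishing the reverse triangle inequality in Inequality \ref{eqn:GromEstimDeviMod1}: a priori, $Z_k o$ need not lie near the geodesic $[\check{Z}_m o, Z_n o]$. The argument must exploit that both $Z_k o$ and $Z_n o$ project close to the far endpoint $Z_i o$ of $\mathbf{Y}_i$, so that they lie coarsely on the same side of $\mathbf{Y}_i$. I expect to combine this observation with Lemma \ref{lem:Behrstock} to propagate the alignment past $Z_k o$, and then invoke Lemma \ref{lem:BGIPConcat2} to argue that the broken path $[\check{Z}_m o, Z_k o] \cup [Z_k o, Z_n o]$ is a quasigeodesic with length comparable to $d(\check{Z}_m o, Z_n o)$ up to bounded additive error, which yields the desired reverse triangle inequality.
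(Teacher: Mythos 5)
Your strategy for Inequality \ref{eqn:GromEstimDeviMod1} hinges on a reverse triangle inequality $d(\check{Z}_m o, Z_n o) \ge d(\check{Z}_m o, Z_k o) + d(Z_k o, Z_n o) - C$, and this is a genuine gap: the inequality is false in general, and neither Lemma \ref{lem:Behrstock} nor Lemma \ref{lem:BGIPConcat2} can rescue it. The definition of $\Devi$ only guarantees that every $Z_{n'} o$ with $n' \ge \Devi$ lies beyond the single axis $\mathbf{Y}_i(\w)$ as seen from $\check{Z}_m o$; it imposes no constraint on the position of $Z_k o$ relative to $Z_n o$. Concretely (picture a tree): if beyond the far endpoint of $\mathbf{Y}_i$ the space branches and the sample path first travels deep into one branch (reaching $Z_k o$) and then backtracks into another branch (reaching $Z_n o$), every semi-alignment demanded by Definition \ref{dfn:Devi} still holds, yet $d(\check{Z}_m o, Z_k o) + d(Z_k o, Z_n o)$ exceeds $d(\check{Z}_m o, Z_n o)$ by roughly twice the distance from $\mathbf{Y}_i$ to $Z_k o$, which is unbounded; and Lemma \ref{lem:BGIPConcat2} does not apply because no aligned chain has $Z_k o$ as an intermediate point. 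The paper's proof never places $Z_k o$ near the geodesic: it only uses that the \emph{beginning point} $Z_{i-M_0} o$ of $\mathbf{Y}_i$ is $0.1E_0$-close to $[\check{Z}_m o, Z_n o]$, giving $(\check{Z}_m o, Z_n o)_o \le \tfrac12 d^{sym}(o, Z_{i-M_0} o) + 0.05E_0$, and then invokes the $m=0$ instance of the defining property of $\Devi$ (so $(o, \mathbf{Y}_i, Z_k o)$ and its reversal are $D_0$-semi-aligned) together with Corollary \ref{cor:semiAlign} to get $d(o, Z_{i-M_0} o) \le d(o, Z_k o) - 99E_0$ and $d(Z_{i-M_0} o, o) \le d(Z_k o, o) - 99E_0$. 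That slack is also what eliminates the additive error; your plan, which would end with $\tfrac12 d^{sym}(o, Z_k o) + C/2$, has no mechanism for "absorbing $C$", while the stated inequality has no additive constant.

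Your treatment of Inequality \ref{eqn:GromEstimDeviMod2} has two further problems. In the case $m \ge \check{\Devi}$, $n \ge \Devi$, combining \ref{eqn:GromEstimDeviMod1} with its backward analogue only yields bounds of the shape $\tfrac12 d^{sym}(o, Z_{\Devi} o)$ or $\tfrac12 d^{sym}(o, \check{Z}_{\check{\Devi}} o)$; it does not produce the mixed bound $\tfrac12 [d(\check{Z}_{\check{\Devi}} o, o) + d(o, Z_{\Devi} o)]$, which can be much smaller and is exactly what Proposition \ref{prop:DeviWeak} needs so that only the moment of $\mu$ (not $\check{\mu}$) enters. The paper obtains it by showing the four-term sequence $(\check{Z}_m o, \bar{\mathbf{Y}}_j(\check{\w}), \mathbf{Y}_i(\w), Z_n o)$ is $D_1$-aligned and applying Proposition \ref{prop:BGIPWitness} to place two ordered points near $\check{Z}_{j-M_0} o$ and $Z_{i-M_0} o$ on $[\check{Z}_m o, Z_n o]$. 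Moreover, your dismissal of the "remaining cases" is incorrect: when $n < \Devi$ but $m > \check{\Devi}$ you assert $d(\check{Z}_{\check{\Devi}} o, o) \ge d(\check{Z}_m o, o)$, which fails in general since the backward path escapes to infinity, so $d(\check{Z}_m o, o)$ typically far exceeds $d(\check{Z}_{\check{\Devi}} o, o)$. These mixed cases require a genuine argument (alignment of $(\check{Z}_m o, \bar{\mathbf{Y}}_j(\check{\w}), Z_n o)$ plus Proposition \ref{prop:BGIPWitness}); only the case $m \le \check{\Devi}$ and $n \le \Devi$ follows from the trivial Gromov-product estimate.
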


\begin{proof}
For the first assertion, let $i \le \Devi(\check{\w}, \w)$ be the index such that $(\check{Z}_{m} o, \mathbf{Y}_{i}(\w), Z_{n'} o)$ is $D_{0}$-semi-aligned for all $n' \ge \Devi(\check{\w}, \w)$ and $m \ge 0$. Since $(\check{Z}_{m} o, \mathbf{Y}_{i}(\w), Z_{n} o)$ is $D_{0}$-semi-aligned, Corollary \ref{cor:semiAlign} tells us the geodesic $[\check{Z}_{m} o, Z_{n} o]$ contains a point $q$ in the $0.1E_{0}$-$d^{sym}$-neighborhood of $Z_{i-M_{0}} o$, the beginning point of $\mathbf{Y}_{i}(\w)$. We then have \[\begin{aligned}
d(\check{Z}_{m} o, Z_{n} o) & = d(\check{Z}_{m} o, q) + d(q, Z_{n} o)\\
&\ge \big(d(\check{Z}_{m} o, Z_{i-M_{0}} o) - d(q, Z_{i-M_{0}} o) \big) + \big(d(Z_{i-M_{0}} o, Z_{n} o) - d(Z_{i-M_{0}} o, q) \big) \\
&\ge d(\check{Z}_{m} o, o) - d(Z_{i-M_{0}} o, o) + d(o, Z_{n} o) - d(o, Z_{i-M_{0}} o) - 0.1E_{0}.
\end{aligned}
\]
This implies that \begin{equation}\label{eqn:GromEstimDevi1}
(\check{Z}_{m} o, Z_{n} o)_{o} \le \frac{1}{2} \left( d(Z_{i-M_{0}} o, o) + d(o, Z_{i-M_{0}} o) \right) + 0.05E_{0}.
\end{equation} Meanwhile, $(o, \mathbf{Y}_{i}(\w), Z_{k} o)$ and $(Z_{k} o, \bar{\mathbf{Y}}_{i} (\w), o)$ are both $D_{0}$-semi-aligned as well. By Corollary \ref{cor:semiAlign}, $[o, Z_{k} o]$ contains a subsegment longer than $100E_{0}$ that is $0.1E_{0}$-fellow traveling with $\mathbf{Y}_{i}(\w)$. It follows that $d(o, Z_{i-M_{0}} o) \le d(o, Z_{k} o) - 99E_{0}$. $(\ast$) Similarly, by applying Corollary \ref{cor:semiAlign} to $[Z_{k} o, o]$, we deduce that $d(Z_{i-M_{0}} o, o) \le d(Z_{k} o, o) - 99E_{0}$. ($\ast\ast$) By combining these with Inequality \ref{eqn:GromEstimDevi1}, we conclude the first claim.

Let us now see the second assertion. When $m \le \check{\Devi}(\check{\w}, w)$ and $n \le \Devi(\check{\w}, \w)$, the conclusion follows from the definition of the Gromov product.

Next, consider the case that $m \ge \check{\Devi}$ and $n \ge \Devi$. Together with the choice of $i \le \Devi(\check{\w}, \w)$, let $j \le \check{\Devi}(\check{\w}, \w)$ be the index such that $(\check{Z}_{n''} o, \bar{\mathbf{Y}}_{j}(\check{\w}), Z_{m} o)$ is $D_{0}$-semi-aligned for all $n'' \ge \check{\Devi}(\check{\w}, \w)$ and $m \ge 0$. Then $(\check{Z}_{n''} o, \bar{\mathbf{Y}}_{j}(\check{\w}), Z_{m} o)$ is $D_{0}$-semi-aligned, hence $D_{1}$-aligned by Corollary \ref{cor:semiAlign},  for $n'' \ge \check{\Devi}$ and $m = i-M_{0}, \ldots, i$. Similarly, $(\check{Z}_{m} o, \mathbf{Y}_{i}(\w), Z_{n'}o)$ is $D_{1}$-aligned for $n' \ge \Devi$ and $m = j-M_{0}, \ldots, j$. We conclude that \[
\big(\check{Z}_{m}o, \bar{\mathbf{Y}}_{j} (\check{\w}), \mathbf{Y}_{i} (\w), Z_{n} o\big) \,\,\textrm{is $D_{1}$-aligned}.
\]
By Proposition \ref{prop:BGIPWitness}, there exist $p, q \in [\check{Z}_{m} o, Z_{n} o]$, with $p$ coming first, such that $p \in \mathcal{N}_{0.1E_{0}} (\check{Z}_{j-M_{0}} o)$ and $q \in \mathcal{N}_{0.1E_{0}} (Z_{i-M_{0}} o)$. We then have \[\begin{aligned}
d(\check{Z}_{m} o, Z_{n} o) &= d(\check{Z}_{m} o,p) + d(p, q) + d(q, Z_{n} o)\\
&\ge d(\check{Z}_{m} o, p) + d(q, Z_{n} o) \\
&\ge d(\check{Z}_{m} o, \check{Z}_{j-M_{0}} o)+ d(Z_{i-M_{0}} o, Z_{n} o) - 0.2E_{0} \\
&\ge d(\check{Z}_{m} o, o) + d(o, Z_{n} o) - d(\check{Z}_{j-M_{0}} o, o) - d(o, Z_{i-M_{0}} o) - 0.2E_{0}.
\end{aligned}
\]
Meanwhile, by $(\ast$) and $(\ast\ast$), we have $ d(\check{Z}_{j-M_{0}} o, o) \le  d(\check{Z}_{l} o, o) + 99E_{0}$ and $d(o, Z_{i-M_{0}} o) \le d(o, Z_{k} o)+ 99E_{0}$. Combining these yields the conclusion.

Finally, consider the case that $m \ge \check{\Devi}$ and $n \le \Devi$. Then $(\check{Z}_{m} o, \bar{\mathbf{Y}}_{j}(\check{w}), Z_{n} o)$ is $D_{1}$-aligned. By Proposition \ref{prop:BGIPWitness}, there exist $p\in [\check{Z}_{m} o, Z_{n} o]$ such that $p \in \mathcal{N}_{0.1E_{0}} (\check{Z}_{j-M_{0}} o)$. We then have \[\begin{aligned}
d(\check{Z}_{m} o, Z_{n} o) &\ge d(\check{Z}_{m} o,p) \ge d(\check{Z}_{m} o, o) - d(\check{Z}_{j-M_{0}} o, o)  - 0.05E_{0}.
\end{aligned}
\]
This implies that $(\check{Z}_{m} o, Z_{n} o)_{o} \le \frac{1}{2} \left[ d(\check{Z}_{j-M_{0}} o, o) + d(o, Z_{n} o) \right]$. Meanwhile, by $(\ast\ast)$, we have $d(\check{Z}_{j-M_{0}} o, o) \le  d(\check{Z}_{l} o, o) + 99E_{0}$. Combining these results yields the conclusion. The case $m \le \check{\Devi}$ and $n \ge \Devi$ can be handled with a similar argument.
\end{proof}

Because Inequality \ref{eqn:GromEstimDeviMod1} involves $d^{sym}(o, Z_{k} o)$ instead of $d(o, Z_{k} o)$, \cite[Proposition 4.10]{choi2022random2} now requires the finite $p$-th moment of $\mu$ with respect to $d^{sym}$, or equivalently, the finite $p$-th moments of $\mu$ and $\check{\mu}$: 

\begin{prop}[{\cite[Proposition 4.10]{choi2022random1}}] \label{prop:Devi}
Let $(X, G, o)$ be as in Convention \ref{conv:main}, let $p>0$ and let $((\check{Z}_{n})_{n>0}, (Z_{n})_{n>0})$ be the (bi-directional) random walk generated by a non-elementary probability measure $\mu$ on $G$. Suppose that both $\mu$ and $\check{\mu}$ has finite $p$-th moment. Then the random variable $\sup_{n, m \ge 0} (\check{Z}_{m} o, Z_{n} o)_{o}$ has finite $2p$-th moment.
\end{prop}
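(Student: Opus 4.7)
The plan is to combine the pivotal bound from Lemma \ref{lem:GromEstimDevi} with the elementary estimate $(x,y)_z \le \tfrac12 d^{sym}(x,z)$ (obtained by applying the triangle inequality $d(z,y) \le d(z,x) + d(x,y)$ inside the defining expression of the Gromov product) and its counterpart $(x,y)_z \le \tfrac12 d^{sym}(y,z)$. Specifically, Inequality \ref{eqn:GromEstimDeviMod1} with $k=\Devi$ gives $(\check Z_m o, Z_n o)_o \le \tfrac12 d^{sym}(o, Z_\Devi o)$ whenever $n \ge \Devi$, while the triangle bound handles $n < \Devi$, so in both cases $(\check Z_m o, Z_n o)_o \le \tfrac12 B$ with $B := \sum_{i=1}^\Devi d^{sym}(o, g_i o)$. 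Running the symmetric argument, swapping the roles of forward and backward walk, gives $\sup_{m,n}(\check Z_m o, Z_n o)_o \le \tfrac12 A$ for $A := \sum_{i=1}^{\check\Devi} d^{sym}(o, \check g_i o)$. Taking the minimum and applying $\min(A,B) \le \sqrt{AB}$ yields the crucial quadratic bound
\[
\Bigl(\sup_{m,n\ge 0}(\check Z_m o, Z_n o)_o\Bigr)^{2p} \;\le\; \tfrac{1}{4^p}\, A^p\, B^p,
\]
which is precisely the inequality that converts a $p$-th moment hypothesis into a $2p$-th moment conclusion.

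Next I would bound $\|A\|_p$ and $\|B\|_p$ individually by a Minkowski-plus-exponential-tail argument. Writing $B = \sum_{i\ge1} X_i \mathbf 1_{i \le \Devi}$ with $X_i := d^{sym}(o, g_i o)$, Lemma \ref{lem:Devi} applied with $k = i-1$, combined with $\{\Devi \ge i\} \subseteq \{\Devi \ge i-1\}$, yields the uniform conditional estimate
\[
\Prob(\Devi \ge i \mid g_i, \check g_1, \ldots, \check g_i) \;\le\; K' e^{-(i-1)/K'}.
\]
Because $X_i$ is a function of $g_i$ alone, the tower property gives $\E[X_i^p \mathbf 1_{i \le \Devi}] \le K' e^{-(i-1)/K'} \E_\mu[d^{sym}(o, go)^p]$, after which Minkowski's inequality and the convergence of the resulting geometric series yield $\|B\|_p < \infty$. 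The estimate for $\|A\|_p$ is obtained from the identical computation with $\check\mu$ in place of $\mu$.

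The final step is to pass from $\E[A^p], \E[B^p] < \infty$ to $\E[A^p B^p] < \infty$, and this is the main obstacle. The naive Cauchy--Schwarz estimate requires $2p$-th moments of $A$ and $B$ and is thus insufficient, so one must exploit the forward--backward structure of the walk: $A$ depends principally on $\check\w$ and $B$ on $\w$, but both $\Devi$ and $\check\Devi$ are defined through alignment conditions against the \emph{entire} opposite half of the sample path (cf.\ Definition \ref{dfn:Devi}), so $A$ and $B$ are not literally independent. The plan is to dominate $A \le A'$ and $B \le B'$ by analogous sums stopped at one-sided pivotal times depending on only one of $\w, \check\w$, extracted from Proposition \ref{prop:pivotClassProp} applied separately to the forward walk and to its reflected counterpart; since $A'$ and $B'$ are then genuinely independent, $\E[A^p B^p] \le \E[(A')^p]\,\E[(B')^p] < \infty$, which combined with the quadratic bound closes the argument. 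Producing such one-sided dominating stopping times while preserving the exponential tail is the technical heart of the proof; once this decoupling is in place, the moment bounds of the previous paragraph complete the proof.
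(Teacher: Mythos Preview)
Your reduction in steps 1--6 is essentially the argument the paper has in mind: bound the Gromov product by $\tfrac12 B$ using Inequality~\ref{eqn:GromEstimDeviMod1} together with the elementary estimate, observe the mirror bound $\tfrac12 A$, and apply $\min(A,B)\le\sqrt{AB}$ to upgrade a $p$-th moment hypothesis to a $2p$-th moment conclusion. This is exactly the mechanism behind the ``replace $d$ by $\tfrac12 d^{sym}$'' remark in the paper.

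The genuine gap is step 8. Your proposed decoupling---dominating $A\le A'$ and $B\le B'$ by sums stopped at one-sided pivotal times depending only on $\w$ or $\check\w$---does not go through. The index $\Devi(\check\w,\w)$ requires alignment of a forward Schottky axis with \emph{every} backward point $\check Z_m o$; for fixed $\w$ one can choose backward paths that fail this alignment for any given forward axis, so $\sup_{\check\w}\Devi(\check\w,\w)$ is not controlled and no such $A',B'$ with finite $p$-th moment can be extracted from Proposition~\ref{prop:pivotClassProp} alone. Lemma~\ref{lem:Devi} gives a \emph{conditional} tail bound, not a uniform one, and that distinction matters here.

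The correct route bypasses decoupling entirely: expand $A^pB^p$ as a double sum over the increments and use Lemma~\ref{lem:Devi} inside the expectation. For $p\le 1$, subadditivity gives $A^pB^p\le\sum_{i,j}X_i^pY_j^p\mathbf 1_{\Devi\ge i}\mathbf 1_{\check\Devi\ge j}$ with $X_i=d^{sym}(o,g_io)$, $Y_j=d^{sym}(o,\check g_jo)$. On the region $i\ge j$, drop $\mathbf 1_{\check\Devi\ge j}$ and condition on $\sigma(g_i,\check g_1,\ldots,\check g_i)$: both $X_i$ and $Y_j$ are measurable, and Inequality~\ref{eqn:DeviDecays1} bounds $\Prob(\Devi\ge i\mid\cdots)\le K'e^{-(i-1)/K'}$, so
\[
\E\bigl[X_i^pY_j^p\mathbf 1_{\Devi\ge i}\bigr]\le K'e^{-(i-1)/K'}\,\E[X_i^p]\,\E[Y_j^p],
\]
using that $X_i$ and $Y_j$ are functions of independent steps. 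The sum over $j\le i$ and then $i$ converges; the region $j>i$ is handled symmetrically with Inequality~\ref{eqn:DeviDecays2}. For $p\ge 1$ replace subadditivity by Minkowski on the double sum and run the same conditioning. No global independence of $A$ and $B$ is needed---only the independence of individual increments together with the one-step conditional tail already present in Lemma~\ref{lem:Devi}.
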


\begin{proof}
Note that the assumption implies that $\E_{\mu} [d^{sym}(o, go)^{p}] < +\infty$. The proof is done almost verbatim to the proof of \cite[Proposition 4.10]{choi2022random1}, after replacing $d( \cdot, \cdot)$ with $\frac{1}{2} d^{sym}(\cdot, \cdot)$ throughout.
\end{proof}

By using Inequality \ref{eqn:GromEstimDeviMod2} instead, we obtain the following weaker result:

 \begin{prop}[{\cite[Proposition 4.10]{choi2022random1}}] \label{prop:DeviWeak}
Let $(X, G, o)$ be as in Convention \ref{conv:main}, let $p>0$ and let $((\check{Z}_{n})_{n>0}, (Z_{n})_{n>0})$ be the (bi-directional) random walk generated by a non-elementary probability measure $\mu$ on $G$ with finite $p$-th moment. Then the random variable $\sup_{n, m \ge 0} (\check{Z}_{m} o, Z_{n} o)_{o}$ has finite $p$-th moment.
\end{prop}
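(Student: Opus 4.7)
The plan is to parallel the proof of Proposition \ref{prop:Devi} (that is, \cite[Proposition 4.10]{choi2022random1}), but to feed in Inequality \ref{eqn:GromEstimDeviMod2} from Lemma \ref{lem:GromEstimDevi} in place of the sharper Inequality \ref{eqn:GromEstimDeviMod1}. The point is that \ref{eqn:GromEstimDeviMod2} bounds the Gromov product by a sum of \emph{directed} distances, with no appearance of $d^{sym}$, so only the $p$-th moment of $\mu$ will be needed.

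Concretely, I first take suprema in Inequality \ref{eqn:GromEstimDeviMod2}:
\[
\sup_{m, n \ge 0} (\check Z_m o, Z_n o)_o \;\le\; \tfrac{1}{2}\sup_{m \ge 0} d(\check Z_{\min(m,\check\Devi)} o, o) \,+\, \tfrac{1}{2}\sup_{n \ge 0} d(o, Z_{\min(n,\Devi)} o).
\]
Each supremum is realized at an index no larger than the corresponding stopping time, and the triangle inequality combined with the isometric action of $\check Z_{i-1}$ and $Z_{i-1}$ gives
\[
\sup_{m \ge 0} d(\check Z_{\min(m,\check\Devi)} o, o) \le \sum_{i=1}^{\check\Devi} d(\check g_i o, o), \qquad \sup_{n \ge 0} d(o, Z_{\min(n,\Devi)} o) \le \sum_{i=1}^{\Devi} d(o, g_i o).
\]
Since $d(\check g_i o, o) = d(g_{-i+1}^{-1} o, o) = d(o, g_{-i+1} o)$ after applying the isometry $g_{-i+1}$, each summand of either sum is distributed as $d(o, g o)$ under $\mu$, hence has finite $p$-th moment by hypothesis.

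It therefore suffices to show that $\|\sum_{i=1}^{T} W_i\|_p < \infty$ whenever $(W_i)$ are i.i.d. with $\E[W_1^p] < \infty$ and $T$ obeys the conditional exponential tail of Lemma \ref{lem:Devi}. I shall apply the very same $L^p$-estimate that powers \cite[Proposition 4.10]{choi2022random1}: a dyadic truncation of the sum, combined with the uniform conditional decay $\Prob(T \ge k \mid \mathcal{F}_k) \le K' e^{-k/K'}$ from Lemma \ref{lem:Devi}, controls $\E\bigl[(\sum_{i=1}^{T} W_i)^p\bigr]$ by a convergent geometric-type series. Applying this once with $(T, W_i) = (\Devi, d(o, g_i o))$ and once with $(\check\Devi, d(\check g_i o, o))$ completes the proof.

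The main obstacle is that the summands $W_i$ are not independent of the stopping time $T$: for instance $\Devi$ depends on $(g_j)_{j \le \Devi}$ as well as on later data. The reason the $L^p$-argument nevertheless goes through is precisely that Lemma \ref{lem:Devi} is stated as a \emph{conditional} tail bound uniform in $g_{k+1}, \check g_1, \ldots, \check g_{k+1}$, which decouples $\{\Devi \ge k\}$ from the $g_i$ with $i \le k$ strongly enough to run the truncation estimate. Crucially, no moment of $\check\mu$ or of $d^{sym}$ is invoked along the way, and no higher moment of $\mu$ than $p$ is needed, so the single hypothesis $\E_\mu[d(o, g o)^p] < \infty$ suffices, as the proposition claims.
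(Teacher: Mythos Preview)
Your proposal is correct and follows essentially the same approach as the paper: bound the Gromov product via Inequality~\ref{eqn:GromEstimDeviMod2}, dominate each directed displacement by the partial sum $D_{\Devi}=\sum_{i=1}^{\Devi} d(o,g_i o)$ (resp.\ $\check D_{\check\Devi}$), observe that $d(\check g_i o,o)=d(o,g_{1-i}o)$ so only the $p$-th moment of $\mu$ enters, and then control $\E[D_{\Devi}^p]$ using the conditional tail bound of Lemma~\ref{lem:Devi}. One small imprecision: the paper's $L^p$-computation proceeds by telescoping $D_{\Devi}^p=\sum_i (D_{i+1}^p-D_i^p)\,1_{i<\Devi}$ and then conditioning on $g_{i+1}$ (the step appearing in the increment bound), rather than a dyadic truncation decoupling from $g_1,\ldots,g_k$; your description of which variables are being decoupled is slightly off, but the intended argument is the same.
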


\begin{proof}
Let $K'$ be as in Lemma \ref{lem:Devi}. We define $D_{k} := \sum_{i=1}^{k} d(o, g_{i} o)$ and $\check{D}_{k} := \sum_{i=1}^{k} d(\check{g}_{i} o, o) = \sum_{i=1}^{k} d(o, g_{1-i} o)$. Note the following consequence of the triangle inequality:\[
d(o, Z_{1} o), \ldots, d(o, Z_{k} o) \le D_{k}, \quad d(\check{Z}_{1} o, o), \ldots, d(\check{Z}_{k} o, o) \le \check{D}_{k}.
\]
Also note the following computation: if non-negative constants $A, B$ and $C$ satisfy $A \le \frac{1}{2}(B+C)$, then \[
A^{p} \le \left( \frac{1}{2} (B+C) \right)^{p} \le \left( \max(B, C) \right)^{p} \le B^{p} + C^{p}.
\]
Noting this inequality, by Lemma \ref{lem:GromEstimDevi}, we have \[\begin{aligned}
\sup_{m, n \ge 0} (\check{Z}_{m} o, Z_{n} o)_{o}^{p} &\le \sup_{m, n \ge 0} \left( d(\check{Z}_{\min(m, \check{\Devi})} o, o)^{p} + d(o, Z_{\min(n, \Devi)} o)^{p} \right) \\
&\le D_{\Devi}^{p} + \check{D}_{\check{\Devi}}^{p}.
\end{aligned}
\]

Hence, it suffices to control $\E[ D_{\Devi}^{p}]$ and $\E[\check{D}_{\check{\Devi}}^{p}]$ on $(G^{\Z_{>0}}, \check{\mu}^{\Z_{>0}}) \times (G^{\Z_{>0}}, \mu^{\Z_{>0}})$. The computations are analogous so we will only show that $\E[D_{\Devi}^{p}]$ is finite. First, when $p \le 1$, the concavity of $f(t) = t^{p}$ tells us that $D_{k+1}^{p} - D_{k}^{p} \le (D_{k+1} - D_{k})^{p} = d(o, g_{k+1}o)^{p}$ for each $k$. Hence, we compute \begin{equation}\label{eqn:workDevi1}
\begin{aligned}
\E[D_{\Devi}^{p}] &= \E \left[ \sum_{i=0}^{\infty} \left( D_{i+1}^{p} - D_{i}^{p}\right) 1_{i < \Devi}\right] \le \sum_{i=0}^{\infty} \E \left[d(o, g_{i+1} o)^{p} 1_{i < \Devi} \right] \\
&\le \sum_{i=0}^{\infty} \E \left[ d( o, g_{i+1} o)^{p} \cdot \Prob\left[ i < \Devi \, \big|\, g_{i+1} \right] \right] \le \sum_{i=0}^{\infty} \E \left[ d(o, g_{i+1} o)^{p} \cdot K' e^{-i/ K'} \right] & (\because \textrm{Lemma \ref{lem:Devi}}) \\
&\le  \frac{K'}{1-e^{-1/K'}} \E_{\mu}[d(o, go)^{p}] < + \infty.
\end{aligned}
\end{equation}
When $p \ge 1$, we have \[
D_{k+1}^{p} - D_{k}^{p} \le 2^{p} d(o, g_{k+1} o)^{p} + 2^{p} D_{k}^{p-1} d(o, g_{k+1} o).
\]
Using this, we can bound $\E[D_{\Devi}^{p}] = \E \left[ \sum_{i=0}^{\infty} \left( D_{i+1}^{p} - D_{i}^{p}\right) 1_{i < \Devi}\right]$ by a linear combination of $\sum_{i=0}^{\infty} \E \left[d(o, g_{i+1} o)^{p} 1_{i < \Devi} \right] $ and $\sum_{i=0}^{\infty} \E \left[D_{i}^{p-1}d(o, g_{i+1} o) 1_{i < \Devi} \right]$. The former summation was bounded by a multiple of $\E_{\mu}[d(o, go)^{p}]$ in Display \ref{eqn:workDevi1}. We now treat the summands of the latter summation as follows: \begin{equation}\label{eqn:workDevi2}\begin{aligned}
\E \left[D_{i}^{p-1}d(o, g_{i+1} o) 1_{i < \Devi} \right] &\le \E \left[ e^{i/2K'} \cdot  d(o, g_{i+1} o) 1_{i < \Devi} \right] + \E \left[ D_{i}^{p-1} 1_{D_{i} > e^{i/2K'(p-1)}} \cdot d(o, g_{i+1} o) 1_{i < \Devi} \right] \\
&\le e^{i/2K'} \cdot \E\left[ d(o, g_{i+1} o) \Prob \left[i < \Devi \, \big| \, g_{i+1} \right] \right] + \E\left[D_{i}^{p} \cdot e^{-i/2K' (p-1)} \cdot d(o, g_{i+1} o) \right] \\
&\le K' e^{-i/2K'} \E[d(o, g_{i+1} o)^{p}] \\
&+ e^{-i/2K'(p-1)} \cdot \E[d(o, g_{i+1} o)] \cdot i^{p}  \E[d(o, g_{1}o)^{p} + \ldots + d(o, g_{i} o)^{p}].
\end{aligned}
\end{equation}
Here, the final term due to the fact that $\E[(X_{1} + \ldots + X_{n})^{p}] \le \E[ (n \max_{i} X_{i} )^{p} ] = n^{p} \cdot \E[ \max_{i} X_{i}^{p} ] \le n^{p} \E[ X_{1}^{p} + \ldots + X_{n}^{p}]$ for nonnegative RVs $X_{i}$'s. Since this summand is exponentially decaying, its summation is finite as desired.
\end{proof}

Using Inequality \ref{eqn:GromEstimDeviMod2}, we also obtain the exponential deviation inequality as below.

\begin{prop}[{\cite[Corollary 4.12]{choi2022random1}}]\label{prop:DeviExp}
Let $(X, G, o)$ be as in Convention \ref{conv:main} and let $((\check{Z}_{n})_{n>0}, (Z_{n})_{n>0})$ be the (bi-directional) random walk generated by a non-elementary random walk $\mu$ on $G$ with finite exponential moment. Then there exists $K>0$ such that \[
\E\left[ \operatorname{exp}\left(\sup_{n, m \ge 0} (\check{Z}_{m} o, Z_{n} o)_{o} / K \right) \right] < K.
\]
\end{prop}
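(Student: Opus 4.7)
The plan is to mirror the proof of Proposition \ref{prop:DeviWeak}, upgrading the polynomial moment estimates to exponential ones. As in that proof, Inequality \ref{eqn:GromEstimDeviMod2} combined with the triangle inequality gives
\[
\sup_{m, n \ge 0}(\check{Z}_{m} o, Z_{n} o)_{o} \le \tfrac{1}{2}\bigl(D_{\Devi} + \check{D}_{\check{\Devi}}\bigr),
\]
where $D_{k} = \sum_{i=1}^{k} d(o, g_{i} o)$ and $\check{D}_{k} = \sum_{i=1}^{k} d(\check{g}_{i} o, o)$. Since $G$ acts by isometries, $d(\check{g}_{i} o, o) = d(o, g_{-i+1} o)$, so $\check{D}_{k}$ is equidistributed with $D_{k}$. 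By Cauchy--Schwarz applied to the product $\exp(D_{\Devi}/(2K)) \cdot \exp(\check{D}_{\check{\Devi}}/(2K))$, the desired bound will follow once I produce some $\beta > 0$ with
\[
\E\bigl[\exp(\beta D_{\Devi})\bigr] < +\infty,
\]
and symmetrically for $\check{D}_{\check{\Devi}}$ via Inequality \ref{eqn:DeviDecays2}. The final constant $K$ in the statement is then taken to be the maximum of $2/\beta$ and the resulting numerical bound on the symmetrized expectation, so that $\E[\exp(\sup/K)] \le K$ holds.

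To control $\E[\exp(\beta D_{\Devi})]$, I decompose according to the value of $\Devi$ and apply Cauchy--Schwarz:
\[
\E[\exp(\beta D_{\Devi})] = \sum_{k=0}^{\infty} \E\bigl[\exp(\beta D_{k}) \mathbf{1}_{\Devi = k}\bigr] \le \sum_{k=0}^{\infty} \E\bigl[\exp(2\beta D_{k})\bigr]^{1/2}\, \Prob(\Devi \ge k)^{1/2}.
\]
Since the $g_{i}$ are i.i.d., $\E[\exp(2\beta D_{k})] = C(\beta)^{k}$ with $C(\beta) := \E_{\mu}[\exp(2\beta d(o, go))]$, and Lemma \ref{lem:Devi} gives $\Prob(\Devi \ge k) \le K' e^{-k/K'}$ for some $K' > 0$. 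The finite exponential moment assumption ensures $C(\beta) < +\infty$ for all sufficiently small $\beta > 0$ and $C(\beta) \to 1$ as $\beta \to 0^{+}$. Hence one may choose $\beta > 0$ small enough that $C(\beta)^{1/2} e^{-1/(2K')} < 1$, making the above sum a convergent geometric series.

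I do not anticipate any structural difficulty: the argument is a direct exponential analog of the moment calculation in the proof of Proposition \ref{prop:DeviWeak}, with the role of the inequality $D_{k+1}^{p} - D_{k}^{p} \le 2^{p} d(o, g_{k+1} o)^{p} + 2^{p} D_{k}^{p-1} d(o, g_{k+1} o)$ replaced by the cleaner i.i.d.\ factorization of the exponential moment generating function. The only point that requires care is the quantitative balance in the Cauchy--Schwarz step: $\beta$ must be chosen small enough so that the geometric growth $C(\beta)^{k/2}$ produced by the exponential moments of $\mu$ is overwhelmed by the exponential decay $e^{-k/(2K')}$ from Lemma \ref{lem:Devi}. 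Once this is arranged, summing the geometric series and invoking the Cauchy--Schwarz bound above yield the stated inequality.
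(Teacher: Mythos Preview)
Your proof is correct and follows essentially the same approach as the paper: both reduce to controlling $\E[\exp(\beta D_{\Devi})]$ via Lemma \ref{lem:GromEstimDevi} and then exploit the exponential decay of $\Devi$ from Lemma \ref{lem:Devi} against the i.i.d.\ structure of the increments. The paper uses the cruder bound $e^{A/K}\le e^{B/K}+e^{C/K}$ in place of your first Cauchy--Schwarz and then defers the summability of $\E[\exp(D_{i+1}/K)\,\mathbf{1}_{i<\Devi}]$ to the proof of \cite[Corollary 4.12]{choi2022random1}; your second Cauchy--Schwarz makes this step explicit and self-contained, which is a minor presentational improvement rather than a different route.
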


\begin{proof}
As in the proof of Proposition \ref{prop:DeviWeak}, we let $D_{k} := \sum_{i=1}^{k} d(o, g_{i} o)$ and $\check{D}_{k} := \sum_{i=1}^{k} d(\check{g}_{i} o, o) = \sum_{i=1}^{k} d(o, g_{1-i} o)$. Then by Lemma \ref{lem:GromEstimDevi}, we have \[\begin{aligned}
\operatorname{exp} \left(\sup_{m, n \ge 0} (\check{Z}_{m} o, Z_{n} o)_{o}/K \right) &\le \sup_{m, n \ge 0} \left( \operatorname{exp} \big(d(\check{Z}_{\min(m, \check{\Devi})} o, o)/K\big) + \operatorname{exp}\big(d(o, Z_{\min(n, \Devi)} o)/K \big) \right) \\
&\le \operatorname{exp}(D_{\Devi}/K) + \operatorname{exp}(\check{D}_{\check{\Devi}}/K).
\end{aligned}
\]
Also note that \[
\E[\operatorname{exp}(D_{\Devi}/K)] \le \sum_{i=0}^{\infty} \E\left[\operatorname{exp}(D_{i+1}/K) 1_{i < \Devi } \right], \quad 
\E[\operatorname{exp}(\check{D}_{\check{\Devi}}/K)] \le \sum_{i=0}^{\infty} \E\left[\operatorname{exp}(\check{D}_{i+1}/K) 1_{i < \check{\Devi} } \right].
\]
Hence, it suffices to prove that $ \E\left[\operatorname{exp}(D_{i+1}/K) 1_{i < \Devi } \right]$ (and its symmetric counterpart) are exponentially summable for large enough $K$. This follows from  the proof of \cite[Corollary 4.12]{choi2022random1}.
\end{proof}

From these deviation inequalities, we deduced several limit theorems in \cite[Section 4.4]{choi2022random1}. These include:

\begin{thm}[{\cite[Theorem 4.13]{choi2022random1}}]\label{thm:CLTStrong}
Let $(X, G, o)$ be as in Convention \ref{conv:main} and let $(Z_{n})_{n}$ be the random walk generated by a non-elementary probability measure $\mu$ on $G$ with finite second moment. Then the following limit (called the \emph{asymptotic variance of $\mu$}) exists: \[
\sigma^{2} (\mu) := \lim_{n \rightarrow \infty} \frac{1}{n} Var[d(o, Z_{n} o)],
\]
and the random variable $\frac{1}{\sqrt{n}} [d(o, Z_{n}o) - \lambda(\mu) n]$ converges in law to the Gaussian law $\mathcal{N}(0, \sigma(\mu))$ with zero mean and variance $\sigma^{2}(\mu)$.
\end{thm}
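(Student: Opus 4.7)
The strategy is to convert the $L^2$-bound on the Gromov product from Proposition \ref{prop:DeviWeak} into a CLT for the displacement via a cocycle identity and a martingale-type approximation, in the spirit of Benoist--Quint and Gou\"ezel. The starting point is the identity
\[
d(o, Z_n o) = d(o, Z_k o) + d(o, g_{k+1} \cdots g_n o) - 2 (o, Z_n o)_{Z_k o},
\]
which unpacks the definition of the Gromov product together with the isometric invariance $d(Z_k o, Z_n o) = d(o, g_{k+1}\cdots g_n o)$. The middle term is independent of $Z_k$ and distributed as $d(o, Z_{n-k} o)$, while the error $(o, Z_n o)_{Z_k o} = (Z_k^{-1} o, g_{k+1}\cdots g_n o)_o$ is equal in distribution to $(\check{Z}_k o, Z_{n-k} o)_o$, whose supremum over $k, n$ has finite second moment by Proposition \ref{prop:DeviWeak} when $\E_\mu[d(o, go)^2] < \infty$.

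For the existence of $\sigma^2(\mu)$, I would take variances in the identity. Independence of the two displacement pieces together with the $L^2$-controlled Gromov product yields
\[
\operatorname{Var}[d(o, Z_{m+n} o)] = \operatorname{Var}[d(o, Z_m o)] + \operatorname{Var}[d(o, Z_n o)] + O(1),
\]
and the usual Fekete-with-bounded-error argument forces $\operatorname{Var}[d(o, Z_n o)]/n$ to converge to some $\sigma^2(\mu) \in [0, +\infty)$.

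For distributional convergence, set $\Phi_n := d(o, Z_n o) - \lambda(\mu) n$. The cocycle identity then reads
\[
\Phi_{k+\ell} = \Phi_k + \Phi_\ell' - 2(o, Z_{k+\ell} o)_{Z_k o},
\]
where $\Phi_\ell'$ is an independent copy of $\Phi_\ell$ and the error is $L^2$-bounded. This is the standard setup for Gou\"ezel-type CLTs for displacements: iterate the identity dyadically and compare $\Phi_n$ with a martingale whose increments live on dyadic sub-intervals. Each of the $O(\log n)$ levels contributes an $O(1)$ error in $L^2$, so the total approximation is of order $\sqrt{\log n}$, which vanishes after rescaling by $\sqrt{n}$. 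Brown's martingale CLT applied to the approximating martingale then yields $\Phi_n / \sqrt{n} \Rightarrow \mathcal{N}(0, \sigma^2(\mu))$.

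The main obstacle is the one-sided second moment hypothesis in the asymmetric setting: we are given $\E_\mu[d(o, go)^2] < \infty$ but not the reversed counterpart, which forces the use of Proposition \ref{prop:DeviWeak} rather than the stronger Proposition \ref{prop:Devi}. One must carefully verify that every appearance of the Gromov product in the argument above arises as $(\check{Z}_k o, Z_{n-k} o)_o$ for some shifted bi-infinite walk, so that the \textbf{DeviWeak} estimate applies; this is the case because the identity never involves an inverted forward factor that would demand a moment on $\check{\mu}$. A secondary technical point is ensuring the convergence of conditional variances required for Brown's theorem; this should follow from Kingman's subadditive ergodic theorem applied to the displacement cocycle, which is valid under the stated moment assumption.
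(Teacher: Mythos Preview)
Your proposal is correct and follows the same approach the paper takes: the paper does not re-prove the CLT here but cites it from \cite[Theorem~4.13]{choi2022random1}, after having verified that the key deviation input---the $L^{2}$-bound on $\sup_{m,n}(\check Z_{m}o,Z_{n}o)_{o}$ from Proposition~\ref{prop:DeviWeak}---holds in the asymmetric setting under a one-sided second-moment hypothesis. Your identification of Proposition~\ref{prop:DeviWeak} (rather than Proposition~\ref{prop:Devi}) as the relevant input, and the Benoist--Quint/Mathieu--Sisto cocycle scheme you sketch, match this exactly; the only slip is that the additivity-of-variance step carries an $O\big(\sqrt{\operatorname{Var}[d(o,Z_{m}o)]}+\sqrt{\operatorname{Var}[d(o,Z_{n}o)]}\big)$ cross term from Cauchy--Schwarz rather than $O(1)$, but this still forces convergence of $\operatorname{Var}[d(o,Z_{n}o)]/n$.
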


\begin{thm}[{\cite[Theorem 4.16]{choi2022random1}}]\label{thm:LILStrong}
Let $(X, G, o)$ be as in Convention \ref{conv:main} and let $(Z_{n})_{n}$ be the random walk generated by a non-elementary probability measure $\mu$ on $G$ with finite second moment. Then for almost every sample path $(Z_{n})_{n}$ we have \[
\limsup_{n \rightarrow \infty} \frac{d(o, Z_{n} o) - \lambda(\mu) n}{\sqrt{2n \log \log n}} = \sigma(\mu),
\] 
where $\lambda(\mu)$ is the drift of $\mu$ and $\sigma^{2}(\mu)$ is the asymptotic variance of $\mu$.
\end{thm}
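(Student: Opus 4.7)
The plan is to reduce the LIL to the classical Hartman--Wintner theorem for sums of i.i.d.\ random variables, in the same spirit as the CLT of Theorem~\ref{thm:CLTStrong}. The key probabilistic input beyond the classical LIL is the second-moment deviation inequality of Proposition~\ref{prop:DeviWeak}: under the finite second moment assumption on $\mu$, the random variable $\sup_{m, n \ge 0}(\check{Z}_{m} o, Z_{n} o)_{o}$ lies in $L^{2}$, and this is what lets me compare $d(o, Z_{n} o)$ to an i.i.d.\ sum with well-controlled error.

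Concretely, I would fix a block length $n_{0}$ and set $W_{k} := Z_{(k-1) n_{0}}^{-1} Z_{k n_{0}} = g_{(k-1)n_{0}+1} \cdots g_{k n_{0}}$, so that $\{W_{k}\}_{k \ge 1}$ is i.i.d.\ with law $\mu^{\ast n_{0}}$. The Gromov product identity $d(a,c) = d(a,b) + d(b,c) - 2(a,c)_{b}$ applied to $a = o$, $b = Z_{(k-1) n_{0}} o$, $c = Z_{k n_{0}} o$, together with the isometric invariance of Gromov products, yields the telescoping decomposition
\[
d(o, Z_{N n_{0}} o) \;=\; \Sigma_{N} - 2 E_{N}, \qquad \Sigma_{N} := \sum_{k=1}^{N} d(o, W_{k} o), \qquad E_{N} := \sum_{k=1}^{N} \big(Z_{(k-1) n_{0}}^{-1} o,\, W_{k} o\big)_{o}.
\]
The i.i.d.\ sum $\Sigma_{N}$ has finite variance because $\mu$, and hence $\mu^{\ast n_{0}}$, has finite second moment, so Hartman--Wintner applies directly to $\Sigma_{N}$. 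Each summand of $E_{N}$ is stochastically dominated by $\sup_{m, \ell \ge 0}(\check{Z}_{m} o, Z_{\ell} o)_{o}$, hence uniformly $L^{2}$-bounded by Proposition~\ref{prop:DeviWeak}.

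The main obstacle is upgrading this $L^{2}$ bound on individual terms of $E_{N}$ into an almost sure LIL-scale control on the partial sums: the Gromov products are neither i.i.d.\ nor a martingale difference sequence, as each depends on the entire backward trajectory $Z_{(k-1) n_{0}}^{-1} o$. My plan here is to combine the exponential tail bound on the deviation time $\Devi$ from Lemma~\ref{lem:Devi} with the pivotal-time construction of Proposition~\ref{prop:pivotClassProp}: on blocks separated by pivotal times, the Gromov products decouple up to errors that decay exponentially in the separation, so a Serfling-type maximal inequality yields $E_{N} - \mathbb{E}[E_{N}] = o(\sqrt{N \log \log N})$ almost surely.

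Finally, interpolating from block times $N n_{0}$ to arbitrary $n$ uses the triangle-inequality bound $d(o, Z_{n+1} o) - d(o, Z_{n} o) \le d(o, g_{n+1} o)$ (together with the reverse bound handled analogously via pivotal times, so as not to require a moment on $\check{\mu}$), plus Borel--Cantelli to absorb the maximal oscillation within a single block. The asymptotic variance appearing in the resulting LIL is identified with $\sigma^{2}(\mu)$ by matching it with the CLT of Theorem~\ref{thm:CLTStrong}, which is available under the same hypothesis. This completes the reduction.
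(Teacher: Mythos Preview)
Your block decomposition is the right starting point, but the crucial claim that $E_{N} - \mathbb{E}[E_{N}] = o(\sqrt{N \log\log N})$ almost surely is false in general, and this is where the argument breaks. If that claim held, then the LIL constant for $d(o, Z_{Nn_{0}} o)$ along the subsequence $n = Nn_{0}$ would coincide with the Hartman--Wintner constant for $\Sigma_{N}$, namely $\sqrt{\operatorname{Var}(d(o, Z_{n_{0}} o))/n_{0}}$. But there is no reason for $\operatorname{Var}(d(o, Z_{n_{0}} o))$ to equal $n_{0}\sigma^{2}(\mu)$ exactly: take the simple random walk on a free group generated uniformly by the standard generators and their inverses, where $d(o, Z_{1} o) \equiv 1$ so $\operatorname{Var}(d(o, Z_{1} o)) = 0$, yet $\sigma^{2}(\mu) > 0$. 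Tracing the variance identity $\operatorname{Var}(d(o, Z_{Nn_{0}} o)) = \operatorname{Var}(\Sigma_{N}) - 4\operatorname{Cov}(\Sigma_{N}, E_{N}) + 4\operatorname{Var}(E_{N})$ shows that $E_{N}$ must fluctuate at order $\sqrt{N}$, so its contribution to the LIL scale does not vanish. Your appeal to ``Serfling-type'' inequalities and exponential decoupling across pivotal times cannot rescue this: exponential mixing of the Gromov-product terms would give $E_{N}$ its own LIL at scale $\sqrt{N\log\log N}$, not make it negligible. Invoking the CLT to ``identify the constant'' is circular here, since it presupposes the very cancellation you have not established.

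The paper itself does not give a proof; it simply cites \cite[Theorem~4.16]{choi2022random1}, where the argument is carried out via the route standard since Benoist--Quint and Mathieu--Sisto: use the $L^{2}$ deviation bound (Proposition~\ref{prop:DeviWeak} here) not to kill the Gromov-product sum outright, but to build a martingale approximation. Concretely, one writes $d(o, Z_{n} o) - \lambda n = M_{n} + R_{n}$ where $M_{n}$ is a martingale with stationary square-integrable increments and $R_{n}$ is bounded in $L^{2}$ (the coboundary coming from the solution to the Poisson equation, whose existence is exactly what the deviation inequality provides). Stout's martingale LIL then applies to $M_{n}$, and the $L^{2}$-bounded remainder is $o(\sqrt{n\log\log n})$ almost surely by Borel--Cantelli along a geometric subsequence. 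The variance of the martingale increments is automatically $\sigma^{2}(\mu)$ because the same decomposition proves the CLT. Your block scheme can be made to work only if you carry out this martingale step for the error $E_{N}$ itself, at which point the blocks are no longer doing anything.
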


\begin{thm}[{\cite[Theorem 4.18]{choi2022random2}}]\label{thm:tracking}
Let $(X, G, o)$ be as in Convention \ref{conv:main} and let $(Z_{n})_{n}$ be the random walk generated by a non-elementary measure $\mu$ on $G$. \begin{enumerate}
\item Suppose that  $\mu$ and $\check{\mu}$ have finite $p$-th moment for some $p > 0$. Then for almost every sample path $(Z_{n}(\w))_{n \ge 0}$, there exists a quasigeodesic $\gamma$ such that \[
\lim_{n \rightarrow\infty} \frac{1}{n^{1/2p}} d^{sym}(Z_{n} o, \gamma) = 0.
\]
\item Suppose that $\mu$ and $\check{\mu}$ have finite exponential moment. Then there exists $K<+\infty$ such that for almost every sample path $(Z_{n}(\w)_{n\ge 0}$, there exists a quasigeodesic $\gamma$ satisfying \[
\lim_{n \rightarrow \infty} \frac{1}{\log n} d^{sym}(Z_{n} o, \gamma) < K.
\]
\end{enumerate}
\end{thm}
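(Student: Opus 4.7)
The plan is to construct a bi-infinite tracking quasigeodesic $\gamma$ from the Schottky axes that appear along the sample path at pivotal times, and then bound the pointwise deviation $d^{sym}(Z_n o, \gamma)$ via a shift-invariant application of the Gromov-product moment estimates in Proposition \ref{prop:Devi} and Proposition \ref{prop:DeviExp}.

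First I would construct $\gamma$. Applied once to the forward half-path $(g_n)_{n \ge 1}$ and once to the backward half-path $(\check g_n)_{n \ge 1}$, Proposition \ref{prop:pivotingRWSLLN1} produces almost surely a bi-infinite sequence of pivotal indices $\cdots < j(-1) < j(0) < j(1) < \cdots$ of positive density, along which the blocks $\mathbf{Y}_{j(k)}(\w)$ are Schottky axes and $(\ldots, \mathbf{Y}_{j(-1)}, \mathbf{Y}_{j(0)}, \mathbf{Y}_{j(1)}, \ldots)$ is $D_0$-semi-aligned. By Corollary \ref{cor:semiAlign} this sequence is in fact $D_1$-aligned, and Lemma \ref{lem:BGIPConcat2} then glues these axes with the interpolating geodesics into a bi-infinite quasigeodesic $\gamma$.

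Next I would bound the deviation at each time $k$. Re-basing the bi-infinite walk at $Z_k o$ gives another bi-infinite walk driven by $(\mu, \check{\mu})$, so by stationarity the random variable
\[
S_k(\w) := \sup_{m, n \ge 0} (Z_{k-m}\, o,\, Z_{k+n}\, o)_{Z_k o}
\]
has the same distribution as $S_0 := \sup_{m, n \ge 0}(\check{Z}_m o, Z_n o)_o$. Under the hypothesis of part (1), Proposition \ref{prop:Devi} gives $\E[S_0^{2p}] < \infty$; under the hypothesis of part (2), Proposition \ref{prop:DeviExp} gives $\E[e^{S_0/K_0}] < \infty$ for some $K_0 > 0$. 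An alignment argument, applying Proposition \ref{prop:pivotingRWSLLN1} to the forward and backward halves of the walk based at $Z_k o$ and invoking Corollary \ref{cor:semiAlign}, shows that $Z_k o$ lies within $d^{sym}$-distance $S_k + C$ of a Schottky axis used in building $\gamma$, for some universal constant $C$ depending only on the Schottky parameters; hence $d^{sym}(Z_k o, \gamma) \le S_k + C$.

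The conclusions now follow from Borel-Cantelli. For part (1), since the $S_k$ are identically distributed, the tail sum
\[
\sum_{k \ge 1} \Prob\big(S_k > \epsilon k^{1/2p}\big) = \sum_{k \ge 1} \Prob\big(S_0^{2p} > \epsilon^{2p} k\big) \le \E[S_0^{2p}]/\epsilon^{2p}
\]
is finite for every $\epsilon > 0$, so $S_k / k^{1/2p} \to 0$ almost surely, yielding the conclusion of part (1). For part (2), $\Prob(S_k > C \log k) \le \E[e^{S_0/K_0}] \cdot k^{-C/K_0}$ is summable whenever $C/K_0 > 1$, giving $d^{sym}(Z_k o, \gamma) \le K \log k$ eventually almost surely for some $K$ depending on $K_0$. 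The main obstacle is the alignment step in the preceding paragraph: one must certify that $Z_k o$ sits close to $\gamma$ through a Schottky axis whose index is near $k$, rather than via some distant part of $\gamma$, so that the shift invariance genuinely controls the deviation. This requires re-running the pivotal construction from the re-based walk and matching its Schottky axes to those already used in the construction of $\gamma$.
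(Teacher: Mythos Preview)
The paper does not give a self-contained proof of this theorem; it simply asserts that the result follows from the deviation inequalities (Propositions~\ref{prop:Devi} and~\ref{prop:DeviExp}) by the argument in \cite[Section~4.4]{choi2022random1}. Your overall strategy---build $\gamma$ from the pivotal Schottky axes via Lemma~\ref{lem:BGIPConcat2}, use stationarity to reduce to the moment of $S_0$, and finish with Borel--Cantelli---is exactly the shape of that argument, and your moment/Borel--Cantelli computations are correct.

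The gap you flag, however, is genuine and is not merely a bookkeeping issue. The inequality $d^{sym}(Z_k o,\gamma)\le S_k+C$ does not follow from smallness of the Gromov product alone: in a space that is not Gromov hyperbolic, a small value of $(x,y)_z$ says nothing about $d^{sym}(z,[x,y])$, so even though $(Z_{j(l)}o,Z_{j(l+1)-M_0}o)_{Z_k o}\le S_k$ for $j(l)\le k< j(l+1)$, this by itself does not place $Z_k o$ near the geodesic piece $[Z_{j(l)}o,Z_{j(l+1)-M_0}o]\subset\gamma$. What actually does the work is not $S_k$ but the alignment that underlies it: re-basing at $Z_k$, Definition~\ref{dfn:Devi} produces an index $i'\le\Devi_k$ and a Schottky axis $\mathbf{Y}_{k+i'}$ such that $(Z_{k-m}o,\mathbf{Y}_{k+i'},Z_{k+n}o)$ is $D_0$-semi-aligned for all $m\ge 0$ and $n\ge\Devi_k$. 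Choosing $m=k-j(l)$ and a pivotal $j(l')>k+\Devi_k$ (which exists since pivotal times have positive density), one gets the alignment $(Z_{j(l)}o,\mathbf{Y}_{k+i'},Z_{j(l')-M_0}o)$; Proposition~\ref{prop:BGIPWitness} then forces the geodesic $[Z_{j(l)}o,Z_{j(l')-M_0}o]$ to pass within $0.1E_0$ of $\mathbf{Y}_{k+i'}$. Since both endpoints of this geodesic lie on $\gamma$ and $\gamma$ is a quasigeodesic, this segment is uniformly close to $\gamma$; hence $d^{sym}(Z_k o,\gamma)$ is controlled by $d^{sym}(Z_k o,Z_{k+i'-M_0}o)$ plus a constant. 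That displacement has the same law as $d^{sym}(o,Z_{i'-M_0}o)$ with $i'\le\Devi$, whose $2p$-th (resp.\ exponential) moment is finite by the computation in Proposition~\ref{prop:Devi} (resp.\ Proposition~\ref{prop:DeviExp}). The Borel--Cantelli step then goes through as you wrote it. In short: replace the Gromov product $S_k$ in your bound by the re-based displacement $d^{sym}(o,Z_{\Devi_k}o)$, and justify closeness to $\gamma$ via Proposition~\ref{prop:BGIPWitness} rather than via the Gromov product.
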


In \cite[Section 6]{choi2022random1}, we discussed a more complicated version of pivotal times following \cite[Section 5.A]{gouezel2022exponential}. The ingredients required for the construction of pivotal times are again Definition \ref{dfn:Schottky} and Lemma \ref{lem:1segment}. Furthermore, \cite[Lemma 3.17]{choi2022random1} linked the alignment and the almost additivity of (forward) step progresses, and Proposition \ref{prop:BGIPWitness} now plays the same role. Hence, by following \cite[Section 6]{choi2022random1}, we deduce the large deviation principle:

\begin{thm}[{\cite[Theorem 6.4]{choi2022random1}}]\label{thm:LDPStrong}
Let $(X, G, o)$ be as in Convention \ref{conv:main} and let $(Z_{n})_{n}$ be the random walk generated by a non-elementary probability measure $\mu$ on $G$. Let $\lambda(\mu) = \lim_{n} \frac{1}{n} \E[d(o, Z_{n} o)]$ be the drift of $\mu$. Then for each $0 < L <\lambda(\mu)$, the probability $\Prob(d(o, Z_{n} o) \le Ln)$ decays exponentially as $n$ tends to infinity.
\end{thm}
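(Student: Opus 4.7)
The plan is to adapt the refined pivoting construction from \cite[Section~6]{choi2022random1}, which itself follows Gou\"ezel's \cite[Section~5.A]{gouezel2022exponential}. That construction has only three geometric ingredients: the existence of large fairly long Schottky sets (Proposition~\ref{prop:Schottky}), the single-segment alignment lemma (Lemma~\ref{lem:1segment}), and an ``almost additivity'' lemma converting alignment into a lower bound on forward displacement. In the symmetric setting the third ingredient was \cite[Lemma~3.17]{choi2022random1}; in the asymmetric setting Proposition~\ref{prop:BGIPWitness} plays exactly the same role, producing disjoint subsegments of $[x,y]$ that fellow-travel the aligned BGIP axes in order, and hence contribute additively from left to right to the forward distance $d(x,y)$.

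Fix $0<L<\lambda(\mu)$ and choose $L<L'<\lambda(\mu)$. The argument proceeds in four steps. Step~1: by Theorem~\ref{thm:pivotingRWSLLN2}, for any $\epsilon>0$ there exists $N$ such that $\mu^{\ast NM_{0}}$-typical trajectories $(o,g_{1}o,\ldots,g_{1}\cdots g_{NM_{0}}o)$ have forward displacement at least $L'\cdot NM_{0}$. By Proposition~\ref{prop:Schottky}, enlarging $N$ if necessary, one builds a large fairly long Schottky set $S\subseteq(\supp\mu)^{NM_{0}}$ whose elements $\alpha$ all satisfy $d(o,\Pi(\alpha)o)\ge L'\cdot NM_{0}$. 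Step~2: the refined pivotal time partition of $\Omega$ is constructed from $S$ exactly as in \cite[Section~6]{choi2022random1}, giving pivotal times $j(1)<j(2)<\cdots$ whose density in $\{1,\ldots,n\}$ is concentrated near a positive constant $\rho_{N}>0$ with exponentially small error, and whose pivotal Schottky sequences $s_{l}$ are uniformly distributed in $S$ conditional on the partition.

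Step~3: by the semi-alignment of $(o,\mathbf{Y}_{j(1)},\ldots,\mathbf{Y}_{j(k)},Z_{n}o)$ and Proposition~\ref{prop:BGIPWitness}, the geodesic $[o,Z_{n}o]$ contains disjoint subsegments, in order, each fellow-travelling one Schottky axis $\mathbf{Y}_{j(l)}$; summing these contributions yields
\[
d(o,Z_{n}o)\;\ge\;\sum_{l=1}^{k}d(o,\Pi(s_{l})o)\;-\;Ck\;\ge\;(L'\cdot NM_{0}-C)\,k
\]
for a constant $C$ depending only on the Schottky parameters. Step~4: choose $\epsilon$ and $N$ so that $\rho_{N}(L'\cdot NM_{0}-C)/NM_{0}>L$. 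The exponentially small failure probability of the density bound on pivotal times together with the deterministic inequality above gives
\[
\Prob\bigl(d(o,Z_{n}o)\le Ln\bigr)\le Ke^{-n/K}
\]
for some $K>0$, as desired.

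The main obstacle is Step~1: we must simultaneously enforce the Schottky/BGIP conditions on the sequences in $S$ and a near-maximal forward displacement $L'\cdot NM_{0}$ on each of them. The naive pivoting argument already yields exponential decay for some small $L>0$, but to reach every $L<\lambda(\mu)$ one genuinely needs the Schottky set to sit inside the ``typical'' LLN event. This is handled by intersecting the LLN-typical event (of probability $\ge 1-\epsilon$ under $\mu^{\ast NM_{0}}$) with the finite list of Schottky/BGIP conditions required by Definition~\ref{dfn:Schottky}; for large $N$ the latter still has positive $\mu^{\ast NM_{0}}$-mass (because Proposition~\ref{prop:Schottky} produces arbitrarily large Schottky sets there), so Proposition~\ref{prop:Schottky} applied inside this constrained event yields the desired $S$. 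The asymmetry of the metric does not interfere anywhere, since the lower bound in Step~3 is a forward bound and Proposition~\ref{prop:BGIPWitness} is precisely a forward fellow-travelling statement.
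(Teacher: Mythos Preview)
Your overall strategy---follow the refined pivotal construction of \cite[Section~6]{choi2022random1} and replace \cite[Lemma~3.17]{choi2022random1} by Proposition~\ref{prop:BGIPWitness}---is exactly what the paper does. However, the sketch you give in Steps~1--4 does not faithfully reproduce that construction, and as written it does not reach every $L<\lambda(\mu)$.

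The gap is in Steps~3--4. Your lower bound $d(o,Z_{n}o)\ge(L'NM_{0}-C)k$ only counts the forward progress along the Schottky axes $\mathbf{Y}_{j(l)}$; it discards the progress made \emph{between} consecutive pivotal blocks. For the inequality $\rho_{N}(L'NM_{0}-C)>L$ to hold with $L$ close to $\lambda(\mu)$, you would need $\rho_{N}$ close to $1/(NM_{0})$, i.e.\ almost every $NM_{0}$-block would have to be a pivotal Schottky block. That in turn would force the Schottky set $S$ to carry $\mu^{NM_{0}}$-mass close to $1$. But Proposition~\ref{prop:Schottky} produces Schottky sets of prescribed \emph{cardinality}, not prescribed \emph{measure}, and condition~(2) in Definition~\ref{dfn:Schottky} is a constraint on the collection $S$ as a whole (for every $x\in X$, at most one $\alpha\in S$ fails the alignment), not an ``event'' in $G^{NM_{0}}$ that one can intersect with the LLN-typical set. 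So the argument in your ``main obstacle'' paragraph does not go through: you cannot manufacture a Schottky set of nearly full measure.

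What actually happens in \cite[Section~6]{choi2022random1} (and in Gou\"ezel's \cite[Section~5.A]{gouezel2022exponential}) is different in emphasis. The Schottky blocks have \emph{small} density; they serve only as alignment markers. The bulk of the displacement comes from the intermediate words $w_{i}$ between consecutive Schottky markers. Proposition~\ref{prop:BGIPWitness} (in place of \cite[Lemma~3.17]{choi2022random1}) guarantees that the forward increments $d(o,w_{i}o)$ add up to $d(o,Z_{n}o)$ up to a bounded error per marker, and a separate large-deviation estimate for these increments (which are distributed like $\mu^{\ast k}$ for various $k$) shows they each contribute close to $\lambda$ per step. Your Step~3 should therefore sum $d(o,\Pi(\beta_{l})v_{l}\Pi(\gamma_{l})w_{l}\,o)$ over all blocks, not just $d(o,\Pi(s_{l})o)$ over the Schottky ones.
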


\begin{cor}[{\cite[Corollary 6.5]{choi2022random1}}] \label{cor:LDPMod}
Let $(X, G, o)$ be as in Convention \ref{conv:main} and let $(Z_{n})_{n\ge0}$ be the random walk generated by a non-elementary probability measure $\mu$ on $G$. Then there exists a proper convex function $I : \mathbb{R} \rightarrow [0, +\infty]$, vanishing only at the drift $\lambda(\mu)$, such that  \[\begin{aligned}
- \inf_{x \in \operatorname{int}(E)} I(x) &\le \liminf_{n \rightarrow \infty} \frac{1}{n} \log \Prob\left( \frac{1}{n}d(id, Z_{n}) \in E\right), \\
 -\inf_{x \in \bar{E}} I(x) & \ge\limsup_{n \rightarrow \infty} \frac{1}{n} \log \Prob\left(\frac{1}{n} d(id, Z_{n}) \in E\right) 
\end{aligned}
\]
holds for every measurable set $E \subseteq \mathbb{R}$.
\end{cor}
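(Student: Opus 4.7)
The plan is to derive the rate function $I$ from the exponential lower-deviation bound in Theorem~\ref{thm:LDPStrong}, using Fekete-type superadditivity from the triangle inequality for the lower tail and an approximate subadditivity from Schottky-bridge concatenation for the upper tail. For lower deviations, independence of increments and the triangle inequality give
$$\Prob\bigl(d(o, Z_{n+m} o) \le L(n+m)\bigr) \;\ge\; \Prob\bigl(d(o, Z_n o) \le Ln\bigr)\,\Prob\bigl(d(o, Z_m o) \le Lm\bigr),$$
so Fekete's lemma produces $I^-(L) := -\lim_n \tfrac{1}{n} \log \Prob(d(o, Z_n o) \le Ln) \in [0, \infty]$. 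By Theorem~\ref{thm:LDPStrong}, $I^-(L) > 0$ for $0 < L < \lambda(\mu)$, and by Theorem~\ref{thm:pivotingRWSLLN2} we have $I^-(L) = 0$ for $L \ge \lambda(\mu)$.

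Dually, for upper deviations I would fix a Schottky sequence $s$ of length $k_0$, drawn with some probability $c_0 > 0$ from $\mu^{\ast k_0}$; by Proposition~\ref{prop:BGIPWitness} and Corollary~\ref{cor:semiAlign}, inserting $s$ between two independent samples of lengths $n$ and $m$ produces a length $n+m+k_0$ sample whose displacement is at least the sum of the two input displacements minus a uniform constant $C$. This yields
$$\Prob\bigl(d(o, Z_{n+m+k_0} o) \ge L(n+m) - C\bigr) \;\ge\; c_0 \, \Prob\bigl(d(o, Z_n o) \ge Ln\bigr)\, \Prob\bigl(d(o, Z_m o) \ge Lm\bigr),$$
and a Fekete-type argument now produces $I^+(L) := -\lim_n \tfrac{1}{n} \log \Prob(d(o, Z_n o) \ge Ln)$, with $I^+(L) = 0$ for $L \le \lambda(\mu)$ by the SLLN. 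Setting $I := \max(I^-, I^+)$, the function $I$ vanishes only at $\lambda(\mu)$; convexity follows from the same bridge construction applied to a convex combination $L = \alpha L_1 + (1-\alpha) L_2$, concatenating samples realizing slopes $L_1$ and $L_2$ through $s$ to realize slope $L + o(1)$ on a length-$n$ sample. Properness is immediate from $I(\lambda(\mu)) = 0 < \infty$.

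The LDP upper bound is then immediate from the definitions of $I^\pm$ and the monotonicity of the one-sided tails. For the lower bound on an open set $E$, I would choose $x \in \operatorname{int}(E)$ nearly realizing the infimum of $I|_E$, pick $\epsilon$ with $(x-\epsilon, x+\epsilon) \subseteq E$, and express the probability of $\{d(o, Z_n o)/n \in (x-\epsilon, x+\epsilon)\}$ as a difference of one-sided tails whose leading exponents coincide, by convexity of $I$, with $-n I(x)$. The main obstacle is the upper-tail subadditivity: the triangle inequality only bounds $d(o, Z_{n+m} o)$ from above by individual displacements, the wrong direction for the upper tail, so one must force alignment via a Schottky bridge and exploit the almost-additivity of displacements along aligned BGIP axes, which is exactly what Proposition~\ref{prop:BGIPWitness} provides. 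A secondary subtlety is the orientation-sensitivity of the asymmetric metric, which requires care in the choice of $s$ and its reversal; this is controlled by Observation~\ref{obs:BGIPAlignRev}.
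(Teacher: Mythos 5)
Your route is the Boulanger--Mathieu--Sert--Sisto one (sub/super-multiplicativity plus Fekete, with a Schottky bridge forcing alignment), whereas the paper does not reprove the corollary at all: it observes that the pivotal-time argument of \cite[Section 6]{choi2022random1}, following Gou\"ezel, goes through verbatim once Schottky sets, Lemma \ref{lem:1segment} and Proposition \ref{prop:BGIPWitness} are available in the asymmetric setting. Your bridging inequality itself is sound: conditionally on the two chunks, Schottky property (2) of Definition \ref{dfn:Schottky} leaves at least $\#S-2$ good bridge elements, each occurring with probability at least $c_{0}$, and alignment plus Proposition \ref{prop:BGIPWitness} gives additivity of displacements up to a uniform constant. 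But as written the proposal has genuine gaps. The most serious is the claim that $I:=\max(I^{-},I^{+})$ ``vanishes only at $\lambda(\mu)$'': you prove $I^{-}(L)>0$ for $L<\lambda(\mu)$ via Theorem \ref{thm:LDPStrong}, but you never show $I^{+}(L)>0$ for $L>\lambda(\mu)$, i.e.\ exponential decay of the upper tail $\Prob\big(d(o,Z_{n}o)\ge Ln\big)$. None of the ingredients you invoke (SLLN, Theorem \ref{thm:LDPStrong}, the bridge) controls that side, and such decay is genuinely delicate without moment assumptions (a single heavy-tailed increment already produces only subexponential decay of upper deviations), so this property cannot simply be read off from your Fekete scheme; as it stands your $I$ may vanish on all of $[\lambda(\mu),\infty)$, contradicting the stated conclusion.

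The second gap is in the LDP lower bound. Writing $\Prob\big(d(o,Z_{n}o)/n\in(x-\epsilon,x+\epsilon)\big)$ as a difference of one-sided tails only works when the two tails have strictly different exponents; on any flat piece of $I$ (in particular wherever $I$ vanishes, or wherever $I^{\pm}$ is locally constant) the difference of two quantities with the same exponential rate gives no lower bound, and the step ``the leading exponents coincide, by convexity of $I$, with $-nI(x)$'' is unjustified exactly there. The standard repair is to run your bridging superadditivity directly on window probabilities $\Prob\big(d(o,Z_{n}o)\in[xn-C,xn+C]\big)$: the bridge gives both the lower bound (alignment plus Proposition \ref{prop:BGIPWitness}) and the upper bound (triangle inequality) on the concatenated displacement, so windows are genuinely supermultiplicative. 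That also removes the circularity in your convexity argument, where concatenating ``samples realizing slopes $L_{1}$ and $L_{2}$'' presupposes two-sided window control that the one-sided exponents $I^{\pm}$ do not supply. Finally, two smaller points need attention: the Fekete step for the upper tail involves index and threshold shifts ($n+m+k_{0}$ versus $L(n+m)-C$) that must be absorbed before the limit defining $I^{+}(L)$ exists, and the boundary values $I^{\pm}(\lambda(\mu))=0$ do not follow directly from Theorem \ref{thm:pivotingRWSLLN2}.
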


\subsection{Results from \cite{choi2022random2}} \label{subsection:limit2}

We now discuss the results of \cite{choi2022random2}, which consists of two parts. In the first part, we made use of the properties of the indices $\Devi(\check{\w}, \w)$ and $\check{\Devi}(\check{\w}, \w)$ in Definition \ref{dfn:Devi}. As we saw in Lemma \ref{lem:GromEstimDevi}, the asymmetry of the metric necessitates suitable modification. 

For us, \cite[Claim 3.3]{choi2022random2} during the proof of \cite[Theorem B]{choi2022random2} requires a modification. This claim asserts that the discrepancy between the displacement $d(o, Z_{n} o)$ and the translation length $\tau(Z_{n})$ of a random isometry $Z_{n}$ is bounded by (a multiple of) the minimum of $d(o, Z_{\nu} o)$ and $d(o, \check{Z}_{\check{\nu}} o)$. This remains true if we replace $d(\cdot, \cdot)$ with $d^{sym}(\cdot, \cdot)$, and we obtain the full conclusion of \cite[Theorem B.(1)]{choi2022random2} if $\mu$ and $\check{\mu}$ have finite $p$-th moment. 

In general, we have the following modified version of \cite[Claim 3.3]{choi2022random2}.

\begin{lem}
Let $(X, G, o)$ be as in Convention \ref{conv:main}, let $\mu$ be a non-elementary probability measure on $G$ with finite $p$-th moment, and let $S$ be a fairly long Schottky set for $\mu$. Then for each $n>0$, there exist RVs $D_{n}$ and $\check{D}_{n}$ such that the following holds: \begin{enumerate}
\item $D_{n}$ and $\check{D}_{n}$ each have the same distribution with $d(o, Z_{\Devi}(\w) o)$ and $d(Z_{\check{\Devi}}(\check{\w}) o)$ for $(\check{\w} ,\w) \in (G^{\Z_{>0}}, \check{\mu}^{\Z_{>0}}) \times (G^{\Z_{>0}}, \mu^{\Z_{>0}})$;
\item $d(o, Z_{n} o) - \tau(Z_{n}) \le \check{D}_{n} + D_{n}$ holds outside a set of exponentially decaying probability (in $n$).
\end{enumerate}
\end{lem}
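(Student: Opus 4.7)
The plan is to follow the strategy of \cite[Claim 3.3]{choi2022random2}, with modifications to respect the directionality of the asymmetric metric. The key idea is to use two different pivotal constructions: one from the forward walk (giving the left overhang $D_{n}$) and one from the reversed walk based at $Z_{n}$ (giving the right overhang $\check{D}_{n}$), so that both overhangs appear in the natural forward direction in the triangle inequality applied to $[o, Z_{n}o]$.

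\textbf{Construction of $D_{n}$ and $\check{D}_{n}$.} Enlarge the probability space so that the finite walk $(g_{1}, \ldots, g_{n})$ extends to a bi-infinite walk: append independent $\mu$-steps on the right, and an independent $\check{\mu}$-walk $(\check{g}_{j})_{j \ge 1}$ on the left. On this extended space, $\Devi = \Devi(\check{\w}, \w)$ from Definition \ref{dfn:Devi} is defined; set $D_{n} := d(o, Z_{\Devi} o)$, which has the required distribution by construction. For $\check{D}_{n}$, consider the reversed-at-$n$ walk $\tilde{g}_{j} := g_{n-j+1}^{-1}$ for $j = 1, \ldots, n$, extended to a bi-infinite $\check{\mu}$-walk by independent draws; its forward pivotal time $\tilde{\Devi}$ has the distribution of $\check{\Devi}$ in a bi-infinite $\mu$-walk by the $\mu \leftrightarrow \check{\mu}$ symmetry. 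Setting $\check{D}_{n} := d(\tilde{Z}_{\tilde{\Devi}} o, o)$ gives the required distribution, and using $\tilde{Z}_{k} = Z_{n}^{-1} Z_{n-k}$, we see that $\check{D}_{n} = d(Z_{n - \tilde{\Devi}} o, Z_{n} o)$ in the original walk's coordinates. By Lemma \ref{lem:Devi}, both $\Devi$ and $\tilde{\Devi}$ have exponential tails.

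\textbf{Axis structure and translation length lower bound.} On the event $E_{n} := \{\Devi + \tilde{\Devi} \le n/3\}$, which has probability $\ge 1 - K e^{-n/K}$, the pivotal construction of Definition \ref{dfn:Devi} provides Schottky axes $\mathbf{Y}_{i_{\ast}}$ with $M_{0} \le i_{\ast} \le \Devi$ near the start and $\mathbf{Y}'_{j_{\ast}}$ with $n - \tilde{\Devi} \le j_{\ast} \le n$ near the end, such that $(o, \mathbf{Y}_{i_{\ast}}, \mathbf{Y}'_{j_{\ast}}, Z_{n} o)$ is $D_{0}$-semi-aligned. Applying $Z_{n}$ to $(o, \mathbf{Y}_{i_{\ast}})$ gives $(Z_{n} o, Z_{n} \mathbf{Y}_{i_{\ast}})$ semi-aligned, and combining with $(\mathbf{Y}_{i_{\ast}}, Z_{n} o)$ semi-aligned via Lemma \ref{lem:1segment} yields that $(\mathbf{Y}_{i_{\ast}}, Z_{n} \mathbf{Y}_{i_{\ast}})$ is $D_{1}$-aligned. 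Iterating produces the bi-infinite $E_{0}$-aligned sequence $(Z_{n}^{k} \mathbf{Y}_{i_{\ast}})_{k \in \Z}$, and by Proposition \ref{prop:BGIPWitness} applied to $[o, Z_{n}^{k} o]$ we obtain $k$ fellow-traveling subsegments, giving $d(o, Z_{n}^{k} o) \ge k \cdot d(p, Z_{n} p) - O(k)$ where $p = Z_{i_{\ast} - M_{0}} o$; dividing by $k$ yields $\tau(Z_{n}) \ge d(p, Z_{n} p) - O(1)$.

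\textbf{Triangle inequality and the main obstacle.} Let $q' = Z_{j_{\ast}} o$. Applying Proposition \ref{prop:BGIPWitness} to the aligned sequence $(o, \mathbf{Y}_{i_{\ast}}, \mathbf{Y}'_{j_{\ast}}, Z_{n} o)$ decomposes $d(o, Z_{n} o) \le d(o, p) + d(p, q') + d(q', Z_{n} o) + O(1)$, with $d(o, p) \le D_{n} + O(1)$ (from $p \approx Z_{\Devi} o$) and $d(q', Z_{n} o) \le \check{D}_{n} + O(1)$ (from $q' \approx Z_{n - \tilde{\Devi}} o$ and the forward direction of $\check{D}_n$). The middle term satisfies $d(p, q') \le d(p, Z_{n} p) + O(1) \le \tau(Z_{n}) + O(1)$, since $q'$ and $Z_{n} p$ lie within bounded distance on the axis with $q'$ preceding $Z_{n} p$. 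Combining yields $d(o, Z_{n} o) - \tau(Z_{n}) \le D_{n} + \check{D}_{n} + O(1)$, with the additive $O(1)$ absorbed by a minor inflation of the definitions of $D_{n}$ and $\check{D}_{n}$ that does not affect the stated distributions. The main obstacle is the directional matching: in the asymmetric setting $d(Z_{n - \tilde{\Devi}} o, Z_{n} o) \neq d(Z_{n} o, Z_{n - \tilde{\Devi}} o)$, and it is precisely the reversed-walk construction of $\check{D}_{n}$ (rather than a backward extension of the original walk) that matches $\check{D}_{n}$ with the forward overhang $d(q', Z_{n} o)$ appearing in the triangle inequality.
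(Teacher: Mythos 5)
Your construction of the random variables $D_{n}$ and $\check{D}_{n}$ (forward extension of the walk for $D_{n}$, the walk reversed at $Z_{n}$ for $\check{D}_{n}$) is reasonable and does give the correct marginal distributions, but the geometric core of your argument has a genuine gap, and it is precisely the point the paper's proof is built to handle. The step ``applying $Z_{n}$ to $(o, \mathbf{Y}_{i_{\ast}})$ gives $(Z_{n} o, Z_{n} \mathbf{Y}_{i_{\ast}})$ semi-aligned, and combining with $(\mathbf{Y}_{i_{\ast}}, Z_{n} o)$ via Lemma \ref{lem:1segment} yields $(\mathbf{Y}_{i_{\ast}}, Z_{n} \mathbf{Y}_{i_{\ast}})$ aligned'' is not a valid use of Lemma \ref{lem:1segment}: that lemma requires $(\kappa, x')$ and $(x, \eta)$ to be aligned where $x'$ is the \emph{beginning point of $\eta$} (here $Z_{n} Z_{i_{\ast}-M_{0}} o$) and $x$ is the \emph{beginning point of $\kappa$} (here $Z_{i_{\ast}-M_{0}} o$), whereas you only know both axes project the single intermediate point $Z_{n} o$ near their facing endpoints. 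Two axes can both ``point at'' a common middle point while the second doubles back along the first (already in a tree), so $(\mathbf{Y}_{i_{\ast}}, Z_{n}\mathbf{Y}_{i_{\ast}})$ need not be aligned. Worse, this cannot be repaired from your data: because you extended both the forward walk and the reversed-at-$n$ walk by \emph{fresh independent} steps, the defining properties of $\Devi$ and $\tilde{\Devi}$ only control alignment against points lying on those two bi-infinite paths, and the points $Z_{n} Z_{m} o$ (the $Z_{n}$-translates of the early sample path, which are exactly what is needed to chain up a $Z_{n}$-periodic aligned sequence along $[o, Z_{n}^{k} o]$) lie on neither of them. Since your bound $d(p, q') \le d(p, Z_{n} p) + O(1) \le \tau(Z_{n}) + O(1)$ also rests on this periodic axis structure (note $d(p, Z_{n} p) \ge \tau(Z_{n})$ always; the reverse inequality up to a constant is exactly what the aligned periodic chain provides), the main inequality is not established. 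A further, smaller issue: absorbing the additive $O(1)$ ``by a minor inflation of the definitions of $D_{n}$ and $\check{D}_{n}$'' would change their laws and violate item (1).

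The paper's proof circumvents all of this by a different construction: it splits the sample path at $\lfloor n/2 \rfloor$ and forms two recombined bi-infinite walks ($t = 0, 1$), e.g.\ for $t = 0$ the backward half is the reversed second half of the walk (so $\check{Z}_{i;0} = Z_{n}^{-1} Z_{n-i}$) and the forward half is the first half. The pivotal times $\Devi_{0}, \check{\Devi}_{0}, \Devi_{1}, \check{\Devi}_{1}$ of these pairs then control exactly the cross-alignments you are missing: after translating by $Z_{n}$, alignment of an axis in the (conjugated) second half against forward points $Z_{m} o$ becomes alignment of $\check{\gamma}$ against points $Z_{n} Z_{m} o$, i.e.\ against $Z_{n}\gamma$, and these chain up (via Proposition \ref{prop:BGIPWitness}) into ordered points $p_{0}, q_{0}, \ldots, p_{k-1}, q_{k-1}$ on $[o, Z_{n}^{k} o]$ for every $k$; summing, dividing by $k$ and letting $k \to \infty$ gives $d(o, Z_{n} o) - \tau(Z_{n}) \le d(o, Z_{i-M_{0}} o) + d(Z_{n}^{-1} Z_{n-j+M_{0}} o, o) + 0.2E_{0}$, and the alignments $(o, \gamma, Z_{\Devi_{0};0} o)$ and $(\check{Z}_{\check{\Devi}_{0};0} o, Z_{n}^{-1}\check{\gamma}, o)$ absorb the additive constant into $D_{n}$ and $\check{D}_{n}$ without altering their distributions. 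If you want to salvage your approach, you would need to replace your fresh-tail extensions by some recombination of the two halves of the actual walk (as in the paper) so that the pivot conditions see the points $Z_{n} Z_{m} o$.
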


\begin{proof}
We follow the proof of \cite[Theorem B]{choi2022random2}. Let $K_{0}, D_{0}, D_{1}, E_{0}$ be the constants associated to the Schottky set $S$.

First, we prepare two bi-infinite sequences $(g_{i})_{i \in Z}$ and $(h_{i})_{i \in \Z}$ of independent RVs, all distributed according to $\mu$. Following our standard convention, $Z_{i}$ always denotes $g_{1} \cdots g_{i}$. We now fix $n$, and define \[\begin{aligned}
&g_{i; 0} := \left\{ \begin{array}{cc} g_{i} & i = 1, \ldots, \lfloor n/2 \rfloor \\ h_{i} & i > \lfloor n/2  \rfloor, \end{array}\right. &\check{g}_{i; 0} := \left\{ \begin{array}{cc} g_{n - i + 1}^{-1} & i = 1, \ldots, n- \lfloor n/2 \rfloor \\ h_{-i}^{-1} & i >n-\lfloor n/2 \rfloor, \end{array}\right. \\
&g_{i; 1} := \left\{ \begin{array}{cc} g_{ \lfloor n/2 \rfloor +i} & i = 1, \ldots, n-\lfloor n/2 \rfloor \\ h_{i}  &i > n- \lfloor n/2  \rfloor, \end{array}\right.&\check{g}_{i; 1} := \left\{ \begin{array}{cc} g_{\lfloor n/2 \rfloor  - i + 1}^{-1} & i = 1, \ldots, \lfloor n/2 \rfloor \\ h_{-i}^{-1} & i > \lfloor n/2  \rfloor .\end{array}\right. 
\end{aligned}
\]

Then $\big((\check{g}_{i; t})_{i > 0}, (g_{i; t})_{i>0} \big)$ is distributed according to $\check{\mu}^{\Z_{>0}} \times \mu^{\Z_{>0}}$ for $t=0, 1$. Using them, we similarly defined other RVs such as  \[\begin{aligned}
Z_{i; t} &:= g_{1; t} \cdots g_{i; t}, &\check{Z}_{i; t} &:= \check{g}_{1; t} \cdots \check{g}_{i;t}, \\
\Devi_{t} &:= \Devi\big( (\check{g}_{i;t})_{i>0}, (g_{i; t})_{i>0}\big), &\check{\Devi}_{t} &:= \check{\Devi}\big(  (\check{g}_{i;t})_{i>0}, (g_{i; t})_{i>0}\big).
\end{aligned}
\]
Finally, we will set $D_{n} := d(o, Z_{\Devi_{0}; 0} o)$ and $\check{D}_{n}:= d(\check{Z}_{\check{\Devi}_{0};0} o, o)$. Since $\big((\check{g}_{i; 0})_{i > 0}, (g_{i; 0})_{i>0} \big)$  is distributed according to $\check{\mu}^{\Z_{>0}} \times \mu^{\Z_{>0}}$, we can verify Item (1) of the conclusion.

We then define \[
A_{n} := \Big\{ \max \{\Devi_{0}, \check{\Devi}_{0}, \Devi_{1}, \check{\Devi}_{1} \} \ge n/10\Big\}.
\]
Since $\big((\check{g}_{i; 0})_{i > 0}, (g_{i; 0})_{i>0} \big)$ and $\big((\check{g}_{i; 1})_{i > 0}, (g_{i; 1})_{i>0} \big)$ are both distributed according to $\check{\mu}^{\Z_{>0}} \times \mu^{\Z_{>0}}$, Lemma \ref{lem:Devi} implies that $\Prob(A_{n})$ decays exponentially in $n$.

In the proof of \cite[Claim 3.3]{choi2022random2}, using the alignment lemma (Proposition \ref{prop:BGIPWitness} in the current setting), we showed the existence of $0 \le i, j \le n/2$ such that: \begin{enumerate}
\item $\gamma := (Z_{i-M_{0}} o, \ldots, Z_{i} o)$ and $\check{\gamma} := (Z_{n-j}o, \ldots, Z_{n-j + M_{0}} o)$ are Schottky axes;
\item on $A_{n}$, $(o, \,\gamma, Z_{\Devi_{0}; 0} \,o)$ is $D_{1}$-aligned;
\item on $A_{n}$, $(\check{Z}_{\check{\Devi}_{0}; 0} o,\, Z_{n}^{-1} \check{\gamma}, \,o)$ is $D_{1}$-aligned;
\item on $A_{n}$, for each $k > 0$, there exist points $p_{0}, q_{0}, \ldots, p_{k-1}, q_{k-1}$  on $[o, Z_{n}^{k} o]$, from left to right, so that \[
d^{sym}(p_{i}, Z_{n}^{i} \cdot Z_{i-M_{0}} o) < 0.1E_{0}, \quad d^{sym}(q_{i}, Z_{n}^{i} \cdot Z_{n-j + M_{0}} o) < 0.1E_{0}.
\]
\end{enumerate}
Then we have \begin{align}
d(p_{i}, q_{i}) &\ge d(Z_{i-M_{0}} o, Z_{n-j + M_{0}} o) - 0.2E_{0} \label{eqn:critLDPEqn}\\
&\ge d(o, Z_{n} o) - d(o, Z_{i-M_{0}} o) - d(Z_{n}^{-1} Z_{n-j + M_{0}} o, o) - 0.2E_{0},
\end{align}
and we deduce \[
d(o, Z_{n}^{k} o) \ge \sum_{i=1}^{k} d(p_{i-1}, q_{i-1})  \ge k \cdot \left( d(o, Z_{n} o) - d(o, Z_{i-M_{0}} o) - d(Z_{n}^{-1} Z_{n-j + M_{0}} o, o) - 0.2E_{0}\right).
\]
By dividing by $k$ and taking the limit, we deduce that \[
d(o, Z_{n} o) - \tau(Z_{n}) \le d(o, Z_{i-M_{0}} o) + d(Z_{n}^{-1} Z_{n-j+ M_{0}} o, o) + 0.2E_{0}.
\]
Meanwhile, the alignments of $(o, \,\gamma, Z_{\Devi_{0}; 0} \,o)$ and $(\check{Z}_{\check{\Devi}_{0}; 0} o,\, Z_{n}^{-1} \check{\gamma}, \,o)$ imply that $d(o, Z_{i-M_{0}} o) + 0.1E_{0}$ is smaller than $D_{n} := d(o, Z_{\Devi_{0}; 0} o)$, and that $d(Z_{n}^{-1} Z_{n-j+ M_{0}} o, o) + 0.1E_{0}$ is smaller than $\check{D}_{n} := d(\check{Z}_{\check{\Devi}_{0}; 0} o, o)$. This concludes Item (2).
\end{proof}

Given this lemma, it follows that \[
\Prob( d(o, Z_{n} o) - \tau(Z_{n}) \ge \epsilon n^{1/p} ) \le \Prob(A_{n}^{c}) + \Prob( D_{n} + \check{D}_{n} \ge \epsilon n^{1/p})
\]for every $\epsilon > 0$. Since $D_{n}+ \check{D}_{n}$ has the same distribution with $d( Z_{\check{\Devi}}(\check{\w}) o, o) + d(o, Z_{\Devi}(\w) o)$, which has finite $p$-th moment (cf. Proposition \ref{prop:DeviWeak}), $\Prob(D_{n} + \check{D}_{n} \ge \epsilon n^{1/p})$ is summable. Using Borel-Cantelli lemma, we conclude:

\begin{thm}\label{thm:discrepancy}
Let $(X, G, o)$ be as in Convention \ref{conv:main}, and $(Z_{n})_{n>0}$ be the random walk generated by a non-elementary measure $\mu$ on $G$. If $\mu$ has finite $p$-th moment for some $p > 0$, then \[
\lim_{n \rightarrow \infty} \frac{1}{n^{1/p}} [d(o, Z_{n} o)- \tau(Z_{n}) ]= 0 \quad \textrm{a.s.}
\]
\end{thm}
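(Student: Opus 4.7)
The plan is to derive the theorem directly from the preceding lemma by a Borel--Cantelli argument, using the fact that finite $p$-th moment is precisely the condition that governs $n^{1/p}$-scale fluctuations. Since $\tau(Z_n) \le d(o, Z_n o)$ by subadditivity, the quantity $d(o, Z_n o) - \tau(Z_n)$ is nonnegative, so it suffices to prove that for every fixed $\epsilon > 0$, almost surely
\[
d(o, Z_n o) - \tau(Z_n) < \epsilon n^{1/p} \quad \text{for all sufficiently large } n,
\]
and then intersect over a countable sequence $\epsilon_k \to 0$.

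To that end, I would bound $\Prob\bigl( d(o, Z_n o) - \tau(Z_n) \ge \epsilon n^{1/p} \bigr)$ using the decomposition provided by the lemma: on an event $A_n$ whose complement has probability exponentially small in $n$, the difference is dominated by $D_n + \check{D}_n$, where $D_n$ and $\check{D}_n$ are distributed as $d(o, Z_{\Devi} o)$ and $d(\check{Z}_{\check{\Devi}} o, o)$ respectively, \emph{independently of $n$}. The contribution from $A_n^c$ is trivially summable. For the contribution from $A_n$, Proposition \ref{prop:DeviWeak} guarantees that $D_n + \check{D}_n$ has finite $p$-th moment. Writing $X$ for a random variable with the common distribution of $D_n + \check{D}_n$, the standard layer-cake identity yields
\[
\sum_{n \ge 1} \Prob\bigl( D_n + \check{D}_n \ge \epsilon n^{1/p} \bigr) = \sum_{n \ge 1} \Prob\bigl( X^p \ge \epsilon^p n \bigr) \le 1 + \frac{\E[X^p]}{\epsilon^p} < \infty.
\]
Combining the two bounds and applying Borel--Cantelli finishes the argument.

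The substantive work is not in this deduction but in the preceding lemma, which uses the alignment machinery (Proposition \ref{prop:BGIPWitness}) to compare $d(o, Z_n o)$ with $\tau(Z_n)$ up to the boundary terms $D_n + \check{D}_n$ by exploiting a periodic structure on iterates of $Z_n$. Once that comparison is available, the $n^{1/p}$ rate is forced precisely by the moment hypothesis: finite $p$-th moment is exactly the right scaling for which the tails of $X$ become summable at threshold $\epsilon n^{1/p}$, so no additional structural input about the asymmetric metric is needed beyond what Proposition \ref{prop:DeviWeak} already provides.
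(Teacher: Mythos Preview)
Your proposal is correct and matches the paper's own argument essentially line for line: bound the probability by $\Prob(A_{n}^{c}) + \Prob(D_{n} + \check{D}_{n} \ge \epsilon n^{1/p})$, use exponential decay for the first term, invoke the finite $p$-th moment coming from (the proof of) Proposition~\ref{prop:DeviWeak} to make the second term summable, and apply Borel--Cantelli. The only cosmetic point is that the lemma fixes the \emph{marginal} laws of $D_{n}$ and $\check{D}_{n}$ rather than the law of their sum, but this is harmless since $\Prob(D_{n}+\check{D}_{n}\ge \epsilon n^{1/p}) \le \Prob(D_{n}\ge \tfrac{\epsilon}{2} n^{1/p}) + \Prob(\check{D}_{n}\ge \tfrac{\epsilon}{2} n^{1/p})$ and each marginal is summable; the paper itself glosses over the same point.
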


We note again that if $\mu$ and $\check{\mu}$ both have finite $p$-th moment, the proof of \cite[Theorem B]{choi2022random1} works up to changing $d(\cdot, \cdot)$ with $d^{sym}(\cdot, \cdot)$ and give the stronger result ($o(n^{1/2p})$-tracking, and logarithmic tracking when $p = 1$).

Although we have weaker result than \cite[Theorem B]{choi2022random1}, Theorem \ref{thm:discrepancy} is sufficient to deduce the corollaries. Namely, combined with the SLLN and CLT for displacement (Theorem \ref{thm:LDPStrong} and \ref{thm:CLTStrong}), Theorem \ref{thm:discrepancy} implies: 

\begin{cor}[{\cite[Corollary 3.7]{choi2022random2}}]\label{cor:SLLNFinite}
Let $(X, G, o)$ be as in Convention \ref{conv:main}, and let $(Z_{n})_{n\ge 0}$ be the random walk generated by a non-elementary measure $\mu$ on $G$ with finite first moment. Then \begin{equation}\label{eqn:SLLN}
\lim_{n} \frac{1}{n} \tau(Z_{n}) = \lambda
\end{equation}
holds almost surely, where $\lambda = \lambda(\mu)$ is the escape rate of $\mu$.
\end{cor}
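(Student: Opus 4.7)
The plan is to derive this corollary by combining two SLLN-type inputs already at our disposal. First, Theorem \ref{thm:pivotingRWSLLN2} gives the strong law of large numbers for the displacement,
\[
\lim_{n \to \infty} \frac{1}{n} d(o, Z_{n} o) = \lambda \quad \text{almost surely},
\]
where $\lambda = \lambda(\mu)$; under the finite first moment hypothesis, $\lambda$ is finite and strictly positive. Second, Theorem \ref{thm:discrepancy}, applied with $p = 1$, controls the gap between displacement and translation length:
\[
\lim_{n \to \infty} \frac{1}{n} \bigl[d(o, Z_{n} o) - \tau(Z_{n})\bigr] = 0 \quad \text{almost surely}.
\]
It is worth noting that Theorem \ref{thm:discrepancy} requires only $\mu$ (and not also $\check{\mu}$) to have finite first moment, which matches the hypothesis of the corollary exactly; this asymmetry in the moment assumption is a feature of the weaker-but-optimal statement obtained via Inequality \ref{eqn:GromEstimDeviMod2}.

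Subtracting the two displayed limits on the common full-probability event yields
\[
\lim_{n \to \infty} \frac{1}{n} \tau(Z_{n}) = \lambda - 0 = \lambda \quad \text{almost surely},
\]
which is exactly the claim. There is no genuine obstacle here, since the substantive work — the pivotal-time construction, the indices $\Devi$ and $\check{\Devi}$, and the Borel--Cantelli step based on the exponential decay of $\Prob(A_{n}^{c})$ — has already been carried out in the proof of Theorem \ref{thm:discrepancy}. The role of the corollary is therefore simply to repackage that tracking estimate, alongside the SLLN for displacement, into the familiar SLLN form for the translation length.
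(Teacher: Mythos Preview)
Your proposal is correct and matches the paper's own approach: the paper explicitly states that Theorem \ref{thm:discrepancy}, combined with the SLLN for displacement, implies this corollary, and you have spelled out precisely that two-line deduction. Your observation that only $\mu$ (not $\check{\mu}$) needs finite first moment is also accurate and consistent with how Theorem \ref{thm:discrepancy} is stated.
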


\begin{cor}[{\cite[Corollary 3.8]{choi2022random2}}]\label{cor:CLT}
Let $(X, G, o)$ be as in Convention \ref{conv:main}, and let $(Z_{n})_{n\ge 0}$ be the random walk generated by a non-elementary measure $\mu$ on $G$. If $\mu$ has finite second moment, then there exists $\sigma(\mu) \ge 0$ such that $\frac{1}{\sqrt{n}}(\tau(Z_{n}) - n \lambda)$ and $\frac{1}{\sqrt{n}}(d(o, Z_{n} o) - n\lambda)$ converge to the same Gaussian distribution $\mathscr{N}(0, \sigma(\mu)^{2})$ in law. We also have \[
\limsup_{n \rightarrow \infty}  \frac{\tau(Z_{n}) - \lambda n}{\sqrt{2n \log \log n}} =\sigma(\mu) \quad \textrm{almost surely}.
\]
\end{cor}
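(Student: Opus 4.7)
The plan is to deduce the corollary from Theorem \ref{thm:discrepancy} together with the corresponding CLT and LIL for displacement (Theorem \ref{thm:CLTStrong} and Theorem \ref{thm:LILStrong}) via Slutsky-type arguments. Since the corollary only asks that $\frac{1}{\sqrt{n}}(\tau(Z_n) - \lambda n)$ obey the same Gaussian limit and LIL as the displacement version, the content is really a comparison between $\tau(Z_n)$ and $d(o, Z_n o)$.

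Concretely, observe first that the hypothesis of finite second moment lets us apply Theorem \ref{thm:discrepancy} with $p=2$, which yields
\[
\frac{1}{\sqrt{n}} \bigl[ d(o, Z_n o) - \tau(Z_n) \bigr] \longrightarrow 0 \quad \text{almost surely}.
\]
The asymptotic variance $\sigma^{2}(\mu)$ is then defined as in Theorem \ref{thm:CLTStrong}, and we have the decomposition
\[
\frac{1}{\sqrt{n}}\bigl(\tau(Z_n) - \lambda n\bigr) \;=\; \frac{1}{\sqrt{n}}\bigl(d(o, Z_n o) - \lambda n\bigr) \;-\; \frac{1}{\sqrt{n}}\bigl(d(o, Z_n o) - \tau(Z_n)\bigr).
\]
By Theorem \ref{thm:CLTStrong}, the first summand on the right converges in law to $\mathscr{N}(0, \sigma(\mu)^{2})$, while the second summand converges almost surely (hence in probability, hence in distribution) to $0$. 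By Slutsky's theorem, the left-hand side converges in law to $\mathscr{N}(0, \sigma(\mu)^{2})$. This gives both the existence of $\sigma(\mu)$ and the fact that the two scaled quantities share the same Gaussian limit.

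For the LIL statement, the same almost-sure comparison suffices: since $\sqrt{2n \log \log n}$ grows faster than $\sqrt{n}$, we have
\[
\frac{d(o, Z_n o) - \tau(Z_n)}{\sqrt{2 n \log \log n}} \longrightarrow 0 \quad \text{almost surely},
\]
so
\[
\limsup_{n \rightarrow \infty} \frac{\tau(Z_n) - \lambda n}{\sqrt{2n \log \log n}} \;=\; \limsup_{n \rightarrow \infty} \frac{d(o, Z_n o) - \lambda n}{\sqrt{2n \log \log n}},
\]
and the right-hand side equals $\sigma(\mu)$ by Theorem \ref{thm:LILStrong}.

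There is no real obstacle here once Theorem \ref{thm:discrepancy} is in hand; the only mild subtlety is that Theorem \ref{thm:discrepancy} was proved under a one-sided moment condition on $\mu$, rather than on both $\mu$ and $\check{\mu}$, so the $o(\sqrt{n})$-tracking used here is genuinely weaker than the $o(n^{1/2p})$-tracking available when $\check{\mu}$ also has finite $p$-th moment. This loss is exactly tolerable for the CLT and LIL scaling at $p = 2$, so no coarse-symmetry assumption on $d$ is required and the corollary holds in the full asymmetric generality of Convention \ref{conv:main}.
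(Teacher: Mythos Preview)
Your proof is correct and follows essentially the same approach as the paper: the paper states that Theorem~\ref{thm:discrepancy} combined with the CLT and LIL for displacement (Theorems~\ref{thm:CLTStrong} and~\ref{thm:LILStrong}) yields the corollary, and your argument spells out exactly this Slutsky-type reduction. Your closing remark about needing only the one-sided moment condition on $\mu$ is also consistent with the paper's discussion preceding the corollary.
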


Meanwhile, the exponential bound in \cite[Theorem A]{choi2022random2} was established based on Inequality \ref{eqn:critLDPEqn} and the exponential bound for displacement (Theorem \ref{thm:LDPStrong}). Moreover, we know that exponentially generic isometries have BGIP, this time using Lemma \ref{lem:BGIPConcat} instead of \cite[Lemma 2.7]{choi2022random2}. Considering these, the proof of \cite[Theorem A]{choi2022random2}  yields the following: 

\begin{thm} \label{thm:expBd}
Let $(X, G, o)$ be as in Convention \ref{conv:main} and let $(Z_{n})_{n \ge 0}$ be the random walk generated by a non-elementary measure $\mu$ on $G$. Let $\lambda(\mu)$ be the escape rate of $\mu$, i.e., $\lambda(\mu) := \lim_{n \rightarrow \infty} \E_{\mu^{\ast n}} [d(o, go)] / n$. Then for each $0 < L < \lambda(\mu)$, there exists $K>0$ such that for each $n$ we have  \[
\Prob\Big(\textrm{$Z_{n}$ has BGIP and $\tau(Z_{n}) \ge Ln$}\Big) \ge 1-K e^{-n/K}.
\]
\end{thm}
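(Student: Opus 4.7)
\emph{Plan.} Following the strategy sketched just above the statement, I combine the exponential displacement LDP (Theorem \ref{thm:LDPStrong}), the translation-length argument behind Inequality \ref{eqn:critLDPEqn}, and the BGIP-concatenation lemma (Lemma \ref{lem:BGIPConcat}). Fix $0<L<\lambda(\mu)$, pick $L' \in (L,\lambda(\mu))$, and pick $\epsilon>0$ small enough that $L'(1-4\epsilon)>L$. I would set up the bi-directional walk $((g_{i;t})_{i>0},(\check{g}_{i;t})_{i>0})_{t=0,1}$ used in the proof of Theorem \ref{thm:discrepancy}, together with the deviation indices $\Devi_t,\check{\Devi}_t$ from Definition \ref{dfn:Devi}, and define $\mathcal{G}_n$ to be the event on which $\max(\Devi_0,\check{\Devi}_0,\Devi_1,\check{\Devi}_1)\le \epsilon n$ and simultaneously $d(Z_{a;t}o,Z_{n-b;t}o)\ge L'(1-4\epsilon)n$ holds for every $0\le a,b\le \epsilon n$ and $t\in\{0,1\}$. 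By Lemma \ref{lem:Devi} the first condition has exponentially small complement; a union bound over the $O(n^2)$ choices of $(a,b)$ combined with Theorem \ref{thm:LDPStrong} applied to each shifted sub-walk (of length at least $n(1-2\epsilon)$) gives the same for the second, so $\Prob(\mathcal{G}_n)\ge 1-K_1 e^{-n/K_1}$.

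\emph{Translation length.} On $\mathcal{G}_n$, the construction preceding Theorem \ref{thm:discrepancy} produces indices $i\le \Devi_0\le \epsilon n$ and $j\le \check{\Devi}_0\le \epsilon n$, and, for every $k\ge 1$, points $p_0,q_0,\ldots,p_{k-1},q_{k-1}$ on $[o,Z_n^k o]$ satisfying Inequality \ref{eqn:critLDPEqn}, i.e.
\[
d(p_\ell,q_\ell) \ge d(Z_{i-M_0;0}o,\,Z_{n-j+M_0;0}o) - 0.2 E_0 \ge L'(1-4\epsilon)n - 0.2 E_0.
\]
Summing over $\ell$ and dividing by $k$, then letting $k \to \infty$, yields $\tau(Z_n)\ge L'(1-4\epsilon)n - 0.2 E_0 \ge Ln$ for all $n$ past some threshold; the finitely many smaller $n$ are absorbed into the constant $K$.

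\emph{BGIP.} On $\mathcal{G}_n$, Proposition \ref{prop:pivotingRWSLLN1} supplies pivotal Schottky axes $\mathbf{Y}_{j(1)},\ldots,\mathbf{Y}_{j(m)}$ inside $[0,n]$ with $m=\Theta(n)$ such that $(o,\mathbf{Y}_{j(1)},\ldots,\mathbf{Y}_{j(m)},Z_n o)$ is $D_0$-semi-aligned. Translating by powers of $Z_n$, and using Lemma \ref{lem:1segment} to glue adjacent blocks through the aligned endpoint $Z_n^k o$, the resulting bi-infinite sequence $\bigl(\ldots,\,Z_n^{-1}\mathbf{Y}_{j(i)},\,\mathbf{Y}_{j(i)},\,Z_n\mathbf{Y}_{j(i)},\ldots\bigr)$ becomes a $D_0$-aligned sequence of $K_0$-BGIP axes, each of length $M_0$ and of diameter bounded by the finite Schottky set $S$. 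Lemma \ref{lem:BGIPConcat} then certifies that the concatenation is an $E$-BGIP axis, which by Proposition \ref{prop:BGIPWitness} is coarsely equivalent to the orbit $\{Z_n^k o\}_{k\in\Z}$; hence $Z_n$ has BGIP.

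\emph{Main obstacle.} The most delicate point is assembling all bounds on $\mathcal{G}_n$ simultaneously with exponentially small complement, which relies on the union-bound argument for sub-walk LDP absorbing the $O(n^2)$ count into the exponential tail coming from length $\Theta(n)$ sub-walks. On the BGIP side, the inter-axis gaps in the bi-infinite sequence can grow of order $\sim\lambda n$ (since $Z_n$ translates by that much), so Lemma \ref{lem:BGIPConcat} yields an $E$-BGIP constant that also depends on $n$; this is acceptable because the theorem asks only that $Z_n$ be a BGIP isometry, without requiring its BGIP constants to be uniform in $n$.
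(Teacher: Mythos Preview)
Your translation-length argument is sound and follows the paper's outline: the bi-directional construction from the lemma preceding Theorem~\ref{thm:discrepancy} together with Inequality~\ref{eqn:critLDPEqn} and the displacement LDP (Theorem~\ref{thm:LDPStrong}) gives $\tau(Z_n)\ge Ln$ outside an exponentially small set, and your union bound over $O(n^2)$ subwalks is legitimate.

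The BGIP paragraph, however, has a genuine gap. You switch from the bi-directional construction to Proposition~\ref{prop:pivotingRWSLLN1}, which only tells you that $(o,\mathbf{Y}_{j(1)},\ldots,\mathbf{Y}_{j(m)},Z_n o)$ is $D_0$-semi-aligned. From this you get $(\mathbf{Y}_{j(m)},Z_n o)$ aligned and, after translation, $(Z_n o, Z_n\mathbf{Y}_{j(1)})$ aligned. But Lemma~\ref{lem:1segment} does \emph{not} let you glue these: its hypothesis is that $(\kappa,x')$ and $(x,\eta)$ are aligned where $x,x'$ are the \emph{beginning points of $\kappa$ and $\eta$}, and $Z_n o$ is neither. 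Having a common aligned point that is not an endpoint of either axis does not force $(\mathbf{Y}_{j(m)},Z_n\mathbf{Y}_{j(1)})$ to be aligned; one can arrange configurations where $\pi_{\mathbf{Y}_{j(m)}}(Z_n\mathbf{Y}_{j(1)})$ spreads across the whole axis.

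The fix is already in your hands: reuse the bi-directional construction you invoked for the translation length. By Definition~\ref{dfn:Devi}, the Schottky axis $\gamma=\mathbf{Y}_i$ produced there satisfies $(\check{Z}_m o,\gamma,Z_{n'} o)$ semi-aligned for \emph{all} $m\ge 0$ and $n'\ge\Devi$, and symmetrically for $\check{\gamma}$. This universal alignment with backward and forward points is exactly what lets the proof of \cite[Claim~3.3]{choi2022random2} assemble the periodic $D_1$-aligned sequence $(\ldots,Z_n^{k}\gamma,Z_n^{k}\check{\gamma},Z_n^{k+1}\gamma,\ldots)$ (recorded as item~(4) of the lemma before Theorem~\ref{thm:discrepancy}). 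With that periodic alignment in hand, Lemma~\ref{lem:BGIPConcat} applies directly to $\{Z_n^k\gamma,Z_n^k\check{\gamma}\}_{k\in\Z}$; your observation that the resulting BGIP constant depends on $n$ is correct and harmless.
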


In the second part of \cite{choi2022random2}, we discussed the pivotal time construction following \cite[Subsection 5.A]{gouezel2022exponential}. \cite[Section 4.1]{choi2022random2} describes pivotal times in a discrete model, which applies to the current setting verbatim. We record one definition from \cite[Section 4.1]{choi2022random2}. 

\begin{definition}[{\cite[Defnition 4.2]{choi2022random2}}] \label{dfn:SchottkyTilde}
Let $(X, G, o)$ be as in Convention \ref{conv:main}. Given a fairly long Schottky set $S$, we define \[
\tilde{S} := \big\{ (\beta, \gamma, v) \in S^{2} \times G : \textrm{$\big(\Gamma(\beta), \Pi(\beta) v \Pi(\gamma) o\big)$ and $\big(v^{-1} o, \Gamma(\gamma)\big)$ are $K_{0}$-aligned} \big\}.
\]
\end{definition}

In \cite[Section 4.2]{choi2022random2}, we made several reductions for random walks. In \cite[Section 4.3]{choi2022random2}, we discussed the pivoting for translation length. Thanks to the existence of Schottky set for non-elementary probability measures,  (Proposition \ref{prop:Schottky}), the property of Schottky sets (Definition \ref{dfn:Schottky}) and alignment lemma (Lemma \ref{lem:1segment}, we can bring these to the current setting of Convention \ref{conv:main}.

For latter use, let us record one lemma from \cite[Section 4.2]{choi2022random2}.

\begin{lem}[cf. {\cite[Lemma 4.6]{choi2022random2}}] \label{lem:firstReduction}
Let $(X, G, o)$ be as in Convention \ref{conv:main}, let $\mu$ be a non-elementary probability measure on $G$, let $\mu'$ be a probability measure dominated by a multiple of a convolution of $\mu$ (i.e., there exists $\alpha, k> 0$ such that $\mu'(g) \le \alpha\mu^{\ast k}(g)$ for all $g \in G$) and let $S \subseteq G^{M_{0}}$ be a fairly long $K_{0}$-Schottky set for $\mu$. Then for each $n$ there exist an integer $m(n)$, a probability space $\Omega_{n}$, a measurable subset $B_{n} \subseteq \Omega_{n}$, a measurable partition $\mathcal{Q}_{n}$ of $B_{n}$, and random variables \[\begin{aligned}
Z & \in G,\\
\{w_{i}, i=0, \ldots, m(n)\} &\in G^{m(n)+1}, \\
\{v_{i} : i=1, \ldots, m(n)\} &\in G^{m(n)},\\
 \{\alpha_{i}, \beta_{i}, \gamma_{i}, \delta_{i} : i=1, \ldots, m(n)\} &\in S^{4m(n)}
\end{aligned}
\]
such that the following hold: \begin{enumerate}
\item $\lim_{n \rightarrow +\infty} \Prob(B_{n}) = 1$ and $\lim_{n \rightarrow +\infty} m(n)/n>0$.
\item On each equivalence class $\mathcal{F} \in \mathcal{Q}_{n}$, $(w_{i})_{i=0}^{m(n)}$ are constant and $(\alpha_{i}, \beta_{i}, \gamma_{i}, \delta_{i}, v_{i})_{i=1}^{m(n)}$ are i.i.d.s distributed according to $(\textrm{uniform measure on $S^{4}$}) \times \mu'$.
\item $Z$ is distributed according to $\mu^{\ast n}$ on $\Omega_{n}$ and \[
Z = w_{0} \Pi(\alpha_{1}) \Pi(\beta_{1}) v_{1} \Pi(\gamma_{1}) \Pi(\delta_{1}) w_{1} \cdots \Pi(\alpha_{m(n)}) \Pi(\beta_{m(n)}) v_{m(n)} \Pi(\gamma_{m(n)}) \Pi(\delta_{m(n)}) w_{m(n)}
\]
holds on $A_{n}$.
\end{enumerate}
\end{lem}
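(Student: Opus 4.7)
The plan is to construct the probability space $\Omega_{n}$ and the factorization of $Z$ by partitioning the $n$ steps of the $\mu$-walk into consecutive blocks of equal length $M = 4M_{0} + k$, activating each block independently with a fixed positive probability, and absorbing the non-activated steps and the left-over tail into the $w_{i}$'s. More precisely, since $S \subseteq (\supp \mu)^{M_{0}}$ is finite, the uniform measure on $S$ is dominated by a multiple of $\mu^{M_{0}}$; and by hypothesis $\mu'$ is dominated by a multiple of $\mu^{\ast k}$. Hence the product $\nu := (\Unif_{S})^{\otimes 4} \otimes \mu'$ on $G^{4} \times G$ is dominated by a multiple $c \cdot (\mu^{M_{0}})^{\otimes 4} \otimes \mu^{\ast k}$, and the push-forward of the latter under the map $(\alpha,\beta,\gamma,\delta,v) \mapsto \Pi(\alpha)\Pi(\beta) \cdot v \cdot \Pi(\gamma)\Pi(\delta)$ is a multiple of $\mu^{\ast M}$. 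We may thus write $\mu^{\ast M} = p \, \rho + (1-p) \, \rho'$, where $\rho$ is the push-forward of $\nu$ and $\rho'$ is some residual probability measure, and $p \in (0,1)$ is a fixed constant independent of $n$.

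Set $N = \lfloor n/M \rfloor$ and think of $\mu^{\ast n}$ as $N$ i.i.d. increments of $\mu^{\ast M}$ followed by at most $M-1$ remaining $\mu$-steps. For each of the $N$ blocks, sample independently a Bernoulli $\eta_{i} \in \{0,1\}$ with $\Prob(\eta_{i}=1)=p$; conditional on $\eta_{i}=1$, sample $(\alpha_{i},\beta_{i},\gamma_{i},\delta_{i},v_{i})$ from $\nu$ and take the $i$-th block increment to be the resulting product; conditional on $\eta_{i}=0$, sample the $i$-th block increment from $\rho'$ and leave $(\alpha_{i},\beta_{i},\gamma_{i},\delta_{i},v_{i})$ undefined. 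By the decomposition of $\mu^{\ast M}$, the concatenation of the $N$ block increments together with the tail steps has law $\mu^{\ast n}$; call the resulting element $Z$. Now let $m(n) := \sum_{i=1}^{N} \eta_{i}$, relabel the active blocks as $i=1,\ldots,m(n)$, and let $w_{i}$ ($i=0,\ldots,m(n)$) be the product of all block increments and tail steps lying between the $i$-th and $(i{+}1)$-st active blocks (with $w_{0}$ and $w_{m(n)}$ including the pre- and post-active material). Then the displayed identity for $Z$ holds by construction.

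Take $B_{n} := \{m(n) \ge pN/2\}$; since the $\eta_{i}$ are i.i.d. Bernoulli$(p)$, Hoeffding's inequality gives $\Prob(B_{n}) \to 1$ and $\liminf m(n)/n \ge p/(2M) > 0$ on $B_{n}$, yielding (1). Define the equivalence classes $\mathcal{Q}_{n}$ on $B_{n}$ by fixing the value of $m(n)$, the positions of the active blocks, all the inactive block increments, and all the tail steps — equivalently, by fixing $(w_{0},\ldots,w_{m(n)})$ together with the locations of the active blocks. On each such $\mathcal{F} \in \mathcal{Q}_{n}$, the only remaining randomness is the i.i.d.\ family $(\alpha_{i},\beta_{i},\gamma_{i},\delta_{i},v_{i})_{i=1}^{m(n)}$, each distributed according to $\nu = \Unif_{S^{4}} \times \mu'$ by construction; this is (2). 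Finally, (3) follows from the decomposition of $Z$ displayed above, which holds everywhere on $\Omega_{n}$ and thus in particular on $B_{n}$.

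The main delicate point is the bookkeeping in step two: verifying that the joint law of $Z$ and the marked pivotal data $(\eta_{i}, (\alpha_{i},\beta_{i},\gamma_{i},\delta_{i},v_{i})_{\eta_{i}=1})$ constructed from the mixture decomposition $\mu^{\ast M}=p\rho+(1-p)\rho'$ indeed yields $Z \sim \mu^{\ast n}$ while simultaneously providing the clean conditional distribution demanded in (2). This is essentially the content of \cite[Lemma 4.6]{choi2022random2}, whose proof is phrased in the symmetric-metric setting but uses no metric property of $X$ beyond the existence of a fairly long Schottky set $S \subseteq (\supp\mu)^{M_{0}}$; that input is supplied by Proposition \ref{prop:Schottky}, so the argument goes through verbatim under Convention \ref{conv:main}.
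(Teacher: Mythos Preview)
Your proposal is correct and follows essentially the same route as the paper: the paper's entire argument is the one-line observation that the mixture decomposition
\[
\mu^{2M_{0}} \times \mu^{\ast k} \times \mu^{2M_{0}} \;=\; p\bigl(\mu_{S}^{2} \times \mu' \times \mu_{S}^{2}\bigr) + (1-p)\,\nu
\]
(possible because $\mu_{S}$ is dominated by a multiple of $\mu^{M_{0}}$ and $\mu'$ by a multiple of $\mu^{\ast k}$) feeds directly into the proof of \cite[Lemma 4.6]{choi2022random2}, which is a purely measure-theoretic Bernoulli-activation construction using no metric input. Your sketch spells out exactly that construction.

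Two small remarks. First, your decomposition $\mu^{\ast M} = p\rho + (1-p)\rho'$ is phrased on $G$ via the push-forward, whereas the paper writes it on the product space $G^{2M_{0}}\times G\times G^{2M_{0}}$; since you actually sample the full tuple $(\alpha_{i},\beta_{i},\gamma_{i},\delta_{i},v_{i})$ from $\nu$ before multiplying, the two formulations are equivalent and your conditional-independence claim in (2) is justified. Second, the lemma asks for a \emph{deterministic} integer $m(n)$ (note the clause ``$\lim_{n} m(n)/n>0$''), while your $m(n)=\sum_{i}\eta_{i}$ is random. This is repaired by the standard thinning step: on $B_{n}=\{\sum_{i}\eta_{i}\ge m^{\ast}\}$ with $m^{\ast}:=\lfloor pN/2\rfloor$, keep only the first $m^{\ast}$ active blocks and absorb the surplus active blocks into $w_{m^{\ast}}$; the retained tuples remain i.i.d.\ with law $(\Unif_{S^{4}})\times\mu'$, and now $m(n)=m^{\ast}$ is fixed with $m^{\ast}/n\to p/(2M)>0$.
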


In \cite[Lemma 4.6]{choi2022random2}, the lemma was stated for the case $\mu' = \mu$. By employing the decomposition \[
\mu^{2M_{0}} \times\mu^{\ast k} \times \mu^{2M_{0}} = p(\mu_{S}^{2} \times \mu^{\ast k} \times \mu_{S}^{2}) + (1-p) \nu
\]
for a suitable choice of $0<p<1$ and (nonnegative) probability measure $\nu$, the proof of \cite[Lemma 4.6]{choi2022random2} leads to the current general version.

In \cite[Section 4.4]{choi2022random2}, we made the first application of the discussion above, which is the converse of CLT. For future use, we record a slight modification of \cite[Corollary 4.13]{choi2022random2}.

\begin{definition}[{\cite[Definition 4.7]{choi2022random2}}]\label{dfn:wForPivot}
Let $(X, G, o)$ be as in Convention \ref{conv:main}, let $n>0$ and $K>0$, and let $S$ be a fairly long Schottky set. (This determines $\tilde{S}$ following Definition \ref{dfn:SchottkyTilde}.) We say that a sequence $(w_{i})_{i=0}^{n}$ in $G$ is \emph{$K$-pre-aligned} if, for the isometries $W_{0} := w_{0}$ and \[
V_{k} := W_{k} \Gamma(\beta_{k+1}) v_{k+1}, \,\, W_{k+1} := V_{k} \Gamma(\gamma_{k+1}) w_{k+1} \quad (k=0, \ldots, n-1),
\]
the sequence \[
\Big(o, \,W_{0} \Gamma(\beta_{1}),\, V_{0} \Gamma(\gamma_{1}),\, \ldots, \,W_{n-1}\Gamma(\beta_{n}), \,V_{n-1} \Gamma(\gamma_{n}), \,W_{n} o\Big)
\]
is $K$-semi-aligned for any choices of $(\beta_{i}, \gamma_{i}, v_{i}) \in \tilde{S}$ ($i=1, \ldots, n$).

We say that an isometry $\phi \in G$ is \emph{$K$-pre-aligned} if \[
\Big(\Gamma(\gamma'),\,\, \Pi(\gamma') \phi \cdot \Gamma(\beta) \Big)
\]
is $K$-semi-aligned for any choices of $(\beta, \gamma, v), (\beta', \gamma', v') \in \tilde{S}$.
\end{definition}

\begin{cor}[cf. {\cite[Corollary 4.13]{choi2022random2}}]\label{cor:pivotCorCombined}
Let $(X, G, o)$ be as in Convention \ref{conv:main}, let $\mu$ be a non-elementary probability measure on $G$, let $\mu'$ be a probability measure dominated by a multiple of a convolution of $\mu$, and let $S$ be a fairly long $K_{0}$-Schottky set for $\mu$ with cardinality at least $400$. Then for each $N>0$, for each $n$, there exist $m(n) \in \{2^{s} : s > 0\}$, a probability space $\Omega_{n}$ with a measurable subset $A_{n} \subseteq \Omega_{n}$, a measurable partition $\mathcal{P}_{n}$ of $A_{n}$ and RVs \[
\begin{aligned}
Z_{n} &\in G, \\
\{w_{i} : i = 0, \ldots, m(n)\} &\in G^{m(n) + 1}, \\
\{v_{i} : i = 1, \ldots, m(n)\} &\in G^{m(n)}, \\
\{\beta_{i}, \gamma_{i} : i = 1, \ldots, m(n)\} &\in S^{2m(n)}
\end{aligned}
\]
such that the following hold: \begin{enumerate}
\item $\lim_{n \rightarrow +\infty} \Prob(A_{n}) = 1$ and $N < m(n) < 2N$ eventually.
\item On $A_{n}$, $(w_{i})_{i=0}^{m(n)}$ is a $D_{0}$-pre-aligned sequence in $G$ and $w_{m(n)}^{-1} w_{0}$ is a $D_{0}$-pre-aligned isometry.
\item On each equivalence class $\mathcal{E} \in \mathcal{P}_{n}$, $(w_{i})_{i=0}^{m(n)}$ are constant and $(\beta_{i}, \gamma_{i}, v_{i})_{i=1}^{m(n)}$ are i.i.d.s distributed according to the measure $\left(\textrm{uniform measure on $S^{2}$}\right) \times \mu'$ conditioned on $\tilde{S}$.
\item $Z_{n}$ is distributed according to $\mu^{\ast n}$ on $\Omega_{n}$ and \[
Z_{n} = w_{0} \Pi(\beta_{1}) v_{1} \Pi(\gamma_{1}) w_{1} \cdots \Pi(\beta_{m(n)})v_{m(n)} \Pi(\gamma_{m(n)}) w_{m(n)}
\]holds on $A_{n}$.
\end{enumerate}
\end{cor}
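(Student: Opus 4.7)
My plan is to deduce Corollary \ref{cor:pivotCorCombined} from Lemma \ref{lem:firstReduction} by thinning the linearly many pivot blocks down to a bounded number satisfying the $\tilde S$ condition and absorbing the rest into the anchors. The first step is to apply Lemma \ref{lem:firstReduction} to $\mu$ with the dominating measure $\mu'$ and Schottky set $S$, producing a probability space $\Omega_n$, a high-probability event $B_n$, and a decomposition $Z = w_0 \Pi(\alpha_1) \Pi(\beta_1) v_1 \Pi(\gamma_1) \Pi(\delta_1) w_1 \cdots \Pi(\delta_{m'(n)}) w_{m'(n)}$ with $m'(n)/n \to c > 0$ and i.i.d.\ blocks $(\alpha_i, \beta_i, \gamma_i, \delta_i, v_i) \sim \Unif(S^4) \times \mu'$ on $B_n$.

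Next I would declare block $i$ to be \emph{good} when $(\beta_i, \gamma_i, v_i) \in \tilde S$. By Definition \ref{dfn:Schottky}(2), each of the two alignment conditions defining $\tilde S$ fails for at most a $1/\#S$ fraction of the relevant uniform Schottky choice, so with $\#S \ge 400$ one has $\Prob\bigl((\beta_i, \gamma_i, v_i) \in \tilde S\bigr) \ge 1 - 2/\#S > 0$. Since goodness depends only on block $i$'s own data, the indicators $(G_i)$ are i.i.d.\ on $B_n$, and the strong law forces $\#\{i \le m'(n) : G_i = 1\}$ to grow linearly in $n$. Taking $m(n)$ to be the unique power of $2$ in $[N, 2N)$ and letting $A_n \subseteq B_n$ be the event that at least $m(n)$ blocks are good then yields $\Prob(A_n) \to 1$, establishing the first half of (1); the second half of (1) is immediate from the choice of $m(n)$.

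On $A_n$, I would select the first $m(n)$ good blocks with indices $k_1 < \cdots < k_{m(n)}$, relabel $(\beta_{k_j}, \gamma_{k_j}, v_{k_j})$ as $(\beta_j, \gamma_j, v_j)$, and absorb the other blocks together with the heads $\Pi(\alpha_{k_j})$ and tails $\Pi(\delta_{k_j})$ at the kept pivots into new anchors $(w'_j)_{j=0}^{m(n)}$, yielding the product decomposition in (4). Because the selection criterion depends only on the kept triples' $\tilde S$-status, the partition $\mathcal{P}_n$ of $A_n$ obtained by fixing every random variable other than the kept triples realizes (3), with the kept triples i.i.d.\ $\Unif(S^2) \times \mu'$ conditioned on $\tilde S$. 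For (2), I would expand the candidate sequence $(o, W_0 \Gamma(\beta_1), V_0 \Gamma(\gamma_1), \ldots, W_{m(n)} o)$ by inserting the Schottky axes $\Gamma(\alpha_{k_j})$, $\Gamma(\delta_{k_j})$ together with those coming from absorbed blocks; the $\tilde S$ condition at kept pivots and the Schottky alignments inherited from the absorbed chain then combine via Lemma \ref{lem:1segment} to render the expanded chain $D_0$-aligned, and the statement for $w'_{m(n)}^{-1} w'_0$ follows by the analogous argument since this isometry also begins and ends with Schottky axes.

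The main obstacle is the detailed bookkeeping of the alignment chain through the expanded sequence, in particular verifying that all pairwise alignments between consecutive Schottky axes in the expansion follow from the $\tilde S$ condition at the kept pivots together with the Schottky set axioms. This parallels the verification in \cite[Section 4.3]{choi2022random2} essentially verbatim, with Definition \ref{dfn:Schottky} and Lemma \ref{lem:1segment} serving as the BGIP replacements for the strongly contracting tools used there.
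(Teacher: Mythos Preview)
Your approach diverges from the paper's and contains a real gap in the verification of item~(2).

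The paper does not thin the output of Lemma~\ref{lem:firstReduction}. Instead, it observes that the mixing parameter $p$ in the decomposition
\[
\mu_S^{2M_0}\times\mu^{\ast M'}\times\mu_S^{2M_0}=p(\mu_S^2\times\mu'\times\mu_S^2)+(1-p)\nu
\]
can be taken arbitrarily small once it is positive, and then reruns the entire pivotal time construction of \cite[Section~4]{choi2022random2} with $p=p(n)$ chosen so that the expected number of pivotal times lands in $[N,2N]$. The pre-alignment in item~(2) is then inherited directly from that construction.

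Your route starts from Lemma~\ref{lem:firstReduction} alone and selects blocks where $(\beta_i,\gamma_i,v_i)\in\tilde S$. The problem is that Lemma~\ref{lem:firstReduction} carries \emph{no} alignment information: it is purely a measure-theoretic decomposition. The $\tilde S$ condition controls only the alignment of $\Gamma(\beta_{k_j})$ with $\Gamma(\gamma_{k_j})$ through $v_{k_j}$ inside a single kept block. It says nothing about how $\Gamma(\alpha_{k_j})$ aligns with $\Gamma(\beta_{k_j})$, how $\Gamma(\delta_{k_j})$ aligns with the absorbed material to its right, or how the absorbed (bad) blocks align with anything at all. For instance, the alignment $\bigl(\Gamma(\alpha_{k_j}),\Pi(\alpha_{k_j})\Gamma(\beta_{k_j})\bigr)$ would require $\bigl(\Gamma(\alpha_{k_j}),\Pi(\alpha_{k_j})\Pi(\beta_{k_j})o\bigr)$ to be $K_0$-aligned, but Schottky property~(2) only guarantees this for all but one choice of $\alpha$ given the point; for the actual random $\alpha_{k_j}$ it may fail. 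The absorbed bad blocks are worse: by definition they fail the $\tilde S$ condition, so there is no alignment across them at all. Your phrase ``Schottky alignments inherited from the absorbed chain'' therefore has no content.

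What is missing is precisely the pivotal time mechanism of \cite[Sections~4.1--4.3]{choi2022random2}, which does not merely check $\tilde S$ but recursively selects indices at which the \emph{entire} preceding chain aligns with the new block (and prunes when it does not). That is where $D_0$-pre-alignment of $(w_i)$ and of $w_{m(n)}^{-1}w_0$ actually comes from; you cannot get it by post-hoc filtering on $\tilde S$ alone.
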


The difference between \cite[Corollary 4.13]{choi2022random2} and Corollary \ref{cor:pivotCorCombined} is that: \begin{enumerate}
\item $\mu'$ is a measure dominated by some $\mu^{\ast M'}$ instead of $\mu' = \mu$;
\item  $m(n)$ eventually lies in $[N, 2N]$, rather than growing linearly. 
\end{enumerate}
The first item was addressed in Lemma \ref{lem:firstReduction}. Next, the original proof of \cite[Corollary 4.13]{choi2022random2} (which depends on \cite[Lemma 4.6]{choi2022random2}) can in fact handle the second item, thanks to the following fact: if we have a decomposition \[
\mu_{S}^{2M_{0}} \times \mu^{\ast M'} \times \mu_{S}^{2M_{0}} = p(\mu_{S}^{2} \times \mu' \times \mu_{S}^{2}) +(1-p) \nu \quad(\nu : \textrm{probability measure})
\]
for some $p=\epsilon > 0$, then the decomposition of the same form is possible for all $0 < p < \epsilon$.

As an example, we can plug in $\mu' = \textrm{atom at $g$}$ for $g \in\llangle  \supp \mu\rrangle$. Combining this with Proposition \ref{prop:BGIPWitness}, we obtain the following.

\begin{prop}\label{prop:stability}
Let $(X, G, o)$ be as in Convention \ref{conv:main}. Given  a non-elementary probability measure $\mu$ on $G$, there exists $K>0$ such that the following holds.

For each $g \in \llangle \supp \mu \rrangle$, there exists $\epsilon>0$ such that the following holds outside a set of exponentially decaying probability. For a random path $(Z_{i})_{i=1}^{\infty}$, there exist $0 <i(1) < \ldots < i(\epsilon n) < n$ such that $Z_{i(1)} o, Z_{i(1)} g^{M}o$, $ \ldots$, $Z_{i(\epsilon n)} o,Z_{i(\epsilon n)} g^{M} o$ are $K$-$d^{sym}$-close to points $p_{1}, \ldots, p_{2\epsilon n}$ on $[o, Z_{n} o]$, respectively, such that $d(o, p_{i}) \le d(o, p_{i+1} o)$ for each $i$.
\end{prop}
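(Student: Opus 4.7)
The plan is to apply the pivot time decomposition with auxiliary measure $\mu' = \delta_{g^M}$, the Dirac mass at a suitable power of $g$. Since $g \in \llangle \supp \mu \rrangle$, the element $g^M$ is a finite product of elements in $\supp \mu$, so $\mu'$ is dominated by a constant multiple of a convolution of $\mu$; hence the admissibility hypothesis of Lemma \ref{lem:firstReduction} is met. Fixing a large and fairly long $K_0$-Schottky set $S \subseteq (\supp \mu)^{M_0}$ (which exists by Proposition \ref{prop:Schottky}), I choose $M$ large enough that $g^M$ satisfies the compatibility needed to realize configurations in the set $\tilde S$ of Definition \ref{dfn:SchottkyTilde}. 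Applying Lemma \ref{lem:firstReduction} with $\mu' = \delta_{g^M}$ yields a decomposition
\[
Z_n = w_0 \Pi(\alpha_1) \Pi(\beta_1)\, g^M\, \Pi(\gamma_1) \Pi(\delta_1) w_1 \cdots \Pi(\alpha_{m(n)}) \Pi(\beta_{m(n)})\, g^M\, \Pi(\gamma_{m(n)}) \Pi(\delta_{m(n)}) w_{m(n)}
\]
on a set $A_n$ with $m(n) \ge \epsilon' n$ eventually. The finer output of the pivot construction (as in Proposition \ref{prop:pivotClassProp}) in fact upgrades $\Prob(A_n^c)$ to be exponentially small in $n$, giving the desired exponential decay of the bad event.

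Next, conditioned on each equivalence class of the partition, the tuples $(\alpha_i, \beta_i, \gamma_i, \delta_i)$ are uniformly distributed on $S^4$. By the defining properties of the Schottky set (Definition \ref{dfn:Schottky}), the $\tilde S$-condition at each insertion $g^M$ between $\Pi(\beta_k)$ and $\Pi(\gamma_k)$, and the alignment lemma (Lemma \ref{lem:1segment}), the resulting sequence of translated Schottky axes
\[
\bigl(o, \, W_0\Gamma(\alpha_1),\, W_0\Gamma(\beta_1),\, V_0 \Gamma(\gamma_1),\, V_0 \Gamma(\delta_1),\, \ldots, \, W_{m(n)-1}\Gamma(\beta_{m(n)}),\, V_{m(n)-1}\Gamma(\gamma_{m(n)}), \, \ldots, Z_n o\bigr)
\]
is $D_0$-semi-aligned, where $V_{k-1} := W_{k-1} \Pi(\alpha_k) \Pi(\beta_k) g^M$ and $W_k := V_{k-1} \Pi(\gamma_k) \Pi(\delta_k) w_k$. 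Applying Proposition \ref{prop:BGIPWitness} together with Corollary \ref{cor:semiAlign}, the geodesic $[o, Z_n o]$ contains subsegments in left-to-right order that $0.1 E_0$-fellow travel each of these axes.

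Finally, define $i(k)$ to be the word length (in $g_1, \ldots, g_n$) of the prefix of $Z_n$ ending just before the $k$-th inserted copy of $g^M$. Then $Z_{i(k)} \cdot o = W_{k-1} \Pi(\alpha_k) \Pi(\beta_k) \cdot o$ is the terminal endpoint of the Schottky axis $W_{k-1}\Gamma(\beta_k)$, while $Z_{i(k)} g^M \cdot o = V_{k-1} \cdot o$ is the initial endpoint of $V_{k-1}\Gamma(\gamma_k)$. By the fellow-traveling established above, both lie within $d^{sym}$-distance $K := 2 E_0$ of points $p_{2k-1}, p_{2k}$ on $[o, Z_n o]$, and the order of these points along the geodesic matches the order of indices $k$. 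Taking $\epsilon := \epsilon'/2$ gives at least $\epsilon n$ such indices. The main obstacle I expect is promoting the convergence $\Prob(A_n) \to 1$ in Lemma \ref{lem:firstReduction} to genuine exponential decay in the presence of the deterministic insertion $g^M$; this should follow by importing the stronger pivoting estimate of Proposition \ref{prop:pivotClassProp} into the decomposition framework of Lemma \ref{lem:firstReduction}, since the insertions $g^M$ are placed \emph{between} Schottky blocks and do not interfere with the martingale-style count of pivotal times.
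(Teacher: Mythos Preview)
Your approach is essentially the paper's own: the paper's proof is the single sentence ``plug in $\mu' = \delta_g$ and combine with Proposition \ref{prop:BGIPWitness}'', and you have supplied the details of exactly that strategy (via Lemma \ref{lem:firstReduction} and Corollary \ref{cor:semiAlign}). One small correction: you do not need to choose $M$ large to force the $\tilde S$-compatibility; for \emph{any} fixed $v$ the Schottky property (2) already guarantees that all but one $\beta$ and all but one $\gamma$ satisfy $(\beta,\gamma,v)\in\tilde S$, and in Lemma \ref{lem:firstReduction} the extra Schottky blocks $\alpha_i,\delta_i$ absorb this, so $M$ is a free parameter (as in the application Proposition \ref{prop:OuterStability}) rather than something to be optimized.
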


The discussion so far was about securing the alignment. We now need to couple the alignment and (almost) additivity of the displacement along sample path. Thanks to Proposition \ref{prop:BGIPWitness}, we can generalize such a coupling to the setting of Convention \ref{conv:main} as well.

 \begin{lem}\label{lem:alignedGromov}
Let $(X, G, o)$ be as in Convention \ref{conv:main}, let $S$ be a large and fairly long $K_{0}$-Schottky set for $\mu$, and fix a $D_{0}$-pre-aligned sequence $(w_{i})_{i=0}^{n}$ in $G$ such that $w_{n}^{-1} w_{0}$ is a $D_{0}$-pre-aligned isometry. Given $(\beta_{i}, \gamma_{i}, v_{i}) \in \tilde{S}$ for $i=1, \ldots, n$, define $W_{0}:= w_{0}$ and \[
\begin{aligned}
V_{k} &:= W_{k} \Gamma(\beta_{k+1}) v_{k+1}, \,\, W_{k+1} := V_{k} \Gamma(\gamma_{k+1}) w_{k+1}& (k=0, \ldots, n-1), \\
(x_{2k-1}, x_{2k}) &:= \big(W_{k-1} o, \,\,V_{k-1} \Pi(\gamma_{k}) o\big) & (k=1, \ldots, n), \\
x_{0} &:= 0, \,\,x_{2n+1} := W_{n} o.&
\end{aligned}
\]
Let $\mu'$ be a probability measure on $G$ and let $(\beta_{i}, \gamma_{i}, v_{i})_{i=1}^{n}$ be i.i.d.s distributed according to $(\textrm{uniform measure on $S^{2}$}) \times \mu'$. Then the following hold:\begin{enumerate}
\item $d(x_{2l-1}, x_{2l})$ and $d(o, v_{l} o)$ differ by at most $2 \max \{d(o, \Pi(\alpha) o) : \alpha \in S\}$;
\item $\{d(x_{2l}, x_{2l+1}) : l = 0, \ldots, n\}$ are constant RVs;
\item for each $0 \le i \le j \le k \le 2n+ 1$, $(x_{i}, x_{k})_{x_{j}}$ is bounded by $E_{0}$;
\item for each $0 \le i \le j \le k \le i' \le j' \le k' \le 2n+1$, $(x_{i}, x_{k})_{x_{j}}$ and $(x_{i'}, x_{k'})_{x_{j'}}$ are independent;
\item the translation length $\tau(W_{n})$ and $d(x_{0}, x_{2n})$ differ by at most $E_{0}$.
\end{enumerate}
\end{lem}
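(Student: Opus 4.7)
The plan is to exploit the $D_0$-semi-aligned structure of the Schottky axes inserted along $(x_i)$. Under the pre-alignment assumptions on $(w_i)_{i=0}^n$ and on each $(\beta_i, \gamma_i, v_i) \in \tilde{S}$, the sequence
\[\big(o,\, W_0\Gamma(\beta_1),\, V_0\Gamma(\gamma_1),\, \ldots,\, W_{n-1}\Gamma(\beta_n),\, V_{n-1}\Gamma(\gamma_n),\, W_n o\big)\]
is $D_0$-semi-aligned, so by Corollary \ref{cor:semiAlign} the geodesic $[o, W_n o]$ contains subsegments, in order, that are $0.1E_0$-fellow traveling in $d^{sym}$ with each of these Schottky axes. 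In particular, every $x_{2k-1}$ and $x_{2k}$, being an endpoint of one of these axes, is $d^{sym}$-close to an identifiable point of $[o, W_n o]$. The same witness principle will apply to any subgeodesic $[x_i, x_k]$ by extracting the corresponding subsequence.

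Items (1) and (2) are direct algebraic identities. For (1), isometry invariance of $d$ yields $d(x_{2l-1}, x_{2l}) = d(o, \Pi(\beta_l) v_l \Pi(\gamma_l) o)$, and two applications of the triangle inequality bound the discrepancy with $d(o, v_l o)$ by $d(o, \Pi(\beta_l) o) + d(o, \Pi(\gamma_l) o) \le 2\max_{\alpha \in S} d(o, \Pi(\alpha) o)$. For (2), $d(x_{2l}, x_{2l+1}) = d(o, w_l o)$ (with analogous identities at $l=0$ and $l=n$), which is determined by the constant $w_l$'s alone.

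For item (3), fix $0 \le i \le j \le k \le 2n+1$. The subsequence of the ambient semi-aligned sequence spanning from $x_i$ to $x_k$ remains $D_0$-semi-aligned, so $[x_i, x_k]$ fellow-travels with each intermediate Schottky axis within $0.1E_0$. Since $x_j$ is either an endpoint of one of these intermediate axes or coincides with $x_i$ or $x_k$, we can exhibit $p \in [x_i, x_k]$ with $d^{sym}(p, x_j) \le 0.1E_0$; the triangle inequality then gives $(x_i, x_k)_{x_j} \le 0.1E_0 \le E_0$. For item (4), note that $d(x_i, x_j)$ is invariant under left-translation by the common prefix group element, and so depends only on the RVs $(\beta_l, \gamma_l, v_l)$ for $l$ in a range determined by $i$ and $j$; the same holds for $(x_i, x_k)_{x_j}$. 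For two disjoint triples $i \le j \le k \le i' \le j' \le k'$, the two Gromov products are functions of essentially disjoint blocks of these RVs, and the i.i.d.\ assumption on $(\beta_l, \gamma_l, v_l)$ delivers the independence.

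Finally, for item (5), the pre-alignment of $w_n^{-1}w_0$ closes up the iteration, so that the iterated sequence
\[\big(o,\, W_0\Gamma(\beta_1),\, \ldots,\, V_{n-1}\Gamma(\gamma_n),\, W_n W_0\Gamma(\beta_1),\, \ldots,\, W_n^{k-1} V_{n-1}\Gamma(\gamma_n),\, W_n^k o\big)\]
is $D_0$-semi-aligned for every $k \ge 1$. Applying Corollary \ref{cor:semiAlign} to this iterated sequence produces points $p_0, \ldots, p_{k-1}$ on $[o, W_n^k o]$ that are $0.1E_0$-close in $d^{sym}$ to $x_{2n}, W_n x_{2n}, \ldots, W_n^{k-1} x_{2n}$, respectively. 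Summing the resulting segment-wise estimates gives $d(o, W_n^k o) = k \cdot d(x_0, x_{2n}) + O(E_0)$, and dividing by $k$ and letting $k \to \infty$ returns $|\tau(W_n) - d(x_0, x_{2n})| \le E_0$. The main technical obstacle is verifying the iterated $D_0$-semi-alignment cleanly at the junctions between successive copies of $W_n$, for which the pre-alignment condition on the boundary isometry in Definition \ref{dfn:wForPivot} is precisely tailored; once this is in place, the rest of (5) reduces to the standard fellow-traveling bookkeeping.
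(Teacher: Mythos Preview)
Your proposal is correct and follows precisely the approach the paper relies on. The paper does not give an explicit proof of this lemma; it simply states that ``thanks to Proposition~\ref{prop:BGIPWitness}, we can generalize such a coupling to the setting of Convention~\ref{conv:main},'' deferring the details to the companion paper. Your argument fills in exactly those details: (1)--(2) are the expected algebraic identities, (3) is the fellow-traveling consequence of Corollary~\ref{cor:semiAlign} applied to the truncated semi-aligned sequence, (4) is the disjoint-index-set argument (and in fact the index ranges $\{l : \lceil (i+1)/2\rceil \le l \le \lfloor k/2\rfloor\}$ and $\{l : \lceil (i'+1)/2\rceil \le l \le \lfloor k'/2\rfloor\}$ are genuinely disjoint when $k\le i'$, so you can drop the hedge ``essentially''), and (5) is the standard iteration using the pre-alignment of the boundary isometry to close up the periodic sequence, exactly as in the proof of \cite[Claim~3.3]{choi2022random2} that the paper references.

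One minor point worth tightening in (3): the truncated sequence $(x_i, \kappa_{a}, \ldots, \kappa_{b}, x_k)$ is most cleanly seen to be $D_1$-aligned (rather than $D_0$-semi-aligned) by first applying Corollary~\ref{cor:semiAlign} to promote the ambient sequence to $D_1$-aligned, then observing that $x_i$, being an endpoint of some $\kappa_a$, projects into the $D_1$-neighborhood of the beginning of $\kappa_{a+1}$. This still feeds into Corollary~\ref{cor:semiAlign} at the $D_1$ level and yields the $0.1E_0$-fellow-traveling you need, so the conclusion $(x_i,x_k)_{x_j}\le E_0$ is unaffected.
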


Combining Corollary \ref{cor:pivotCorCombined} and Lemma \ref{lem:alignedGromov}, we obtained the following converse of CLT: 

\begin{prop}[{\cite[Proposition 4.16]{choi2022random2}}]\label{prop:CLTConverse}
Let $(X, G, o)$ be as in Convention \ref{conv:main} and let $(Z_{n})_{n \ge 0}$ be the random walk on $G$ generated by a non-elementary measure $\mu$ with infinite second moment. Then for any sequence $(c_{n})_{n}$ of real numbers,  neither of $\frac{1}{\sqrt{n}} (d(o, Z_{n} o) - c_{n})$ and $\frac{1}{\sqrt{n}} (\tau(Z_{n}) - c_{n})$ converges in law.
\end{prop}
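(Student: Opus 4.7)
The plan is a proof by contradiction that combines the pivoting decomposition of Corollary \ref{cor:pivotCorCombined} with a Gnedenko--Kolmogorov style converse to the central limit theorem, implemented via characteristic functions. Suppose, toward contradiction, that $(d(o, Z_n o) - c_n)/\sqrt{n}$ converges in law to some random variable $Y$. I would apply Corollary \ref{cor:pivotCorCombined} with $\mu' = \mu$ and with the parameter $N = N(n) \to \infty$ growing slowly with $n$ (accessible after a diagonal argument across the family of probability spaces produced by the corollary for each fixed $N$). This yields, on an event $A_n$ with $\Prob(A_n) \to 1$, a decomposition $Z_n = w_0 \Pi(\beta_1) v_1 \Pi(\gamma_1) w_1 \cdots w_{m(n)}$ with $m(n) \to \infty$, in which, conditionally on the equivalence class $\mathcal{E} \in \mathcal{P}_n$, the triples $(\beta_l, \gamma_l, v_l)$ are i.i.d.\ from $(\textrm{uniform on } S^2) \times \mu$ restricted to $\tilde{S}$.

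Lemma \ref{lem:alignedGromov} then expresses, on $A_n$,
\[
d(o, Z_n o) \;=\; \sum_{l=1}^{m(n)} X_l \;+\; R_n, \qquad X_l := d(o, v_l o),
\]
where $R_n = R_n' + R_n''$ with $R_n'$ measurable with respect to the $\sigma$-algebra generated by $\mathcal{P}_n$ and the $(\beta_l, \gamma_l)$'s (hence independent of the $X_l$'s, by parts (2) and (4) of the lemma), while $|R_n''| = O(m(n))$ collects the bounded errors from parts (1) and (3). Crucially, the $X_l$'s inherit infinite second moment from $\mu$: the restriction to $\tilde{S}$ re-weights $\mu$ by a factor $P_v = \Prob[(\beta, \gamma, v) \in \tilde{S}]$ that, for $v$ with $d(o, vo)$ large, is bounded below uniformly (alignment is a local/directional condition on $v$, not a magnitude one), so the heavy tail of $\mu$ survives the conditioning.

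The contradiction is then obtained from characteristic functions. Choose $N(n) = o(\sqrt{n})$, so that $R_n''/\sqrt{n} \to 0$. By the conditional independence of the $X_l$'s and $R_n'$, integrating out the $v_l$'s first,
\[
\phi_Y(\xi) \;=\; \lim_{n \to \infty} \E\!\left[e^{i\xi(R_n'-c_n)/\sqrt{n}}\right] \cdot \phi_X(\xi/\sqrt{n})^{m(n)} \;+\; o(1),
\]
and consequently $|\phi_Y(\xi)| \le \limsup_n |\phi_X(\xi/\sqrt{n})|^{m(n)}$. The standard Fourier-analytic estimate $1 - \mathrm{Re}\,\phi_X(u) \gtrsim \sigma^2(1/|u|)\, u^2$ for $|u|$ small, combined with $\sigma^2(t) := \E[X^2\,\mathbf{1}_{|X|\le t}] \to \infty$ (the infinite second moment hypothesis), gives $|\phi_X(\xi/\sqrt{n})|^{m(n)} \le \exp\!\bigl(-c\,m(n)\,\sigma^2(\sqrt{n})\,\xi^2/n\bigr)$, which tends to $0$ once $m(n)\,\sigma^2(\sqrt{n})/n \to \infty$; this can always be arranged by our choice of $N(n)$. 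Hence $\phi_Y(\xi) = 0$ for every $\xi \ne 0$, contradicting $\phi_Y(0) = 1$ and the continuity of $\phi_Y$. The translation length case is parallel, using Lemma \ref{lem:alignedGromov}(5) to replace $\tau(Z_n)$ by $d(x_0, x_{2m(n)})$ up to a bounded error, after which the same decomposition and characteristic-function argument apply verbatim.

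The main obstacles I anticipate are twofold. First, Corollary \ref{cor:pivotCorCombined} is stated for fixed $N$, so one must justify the diagonal passage to $N = N(n) \to \infty$, balancing the growth of $m(n)$ against the (a priori unknown) divergence rate of $\sigma^2(t)$ so that both $m(n)\sigma^2(\sqrt{n})/n \to \infty$ and $m(n) = o(\sqrt{n})$ hold. Second, the errors $e_l = d(x_{2l-1}, x_{2l}) - d(o, v_l o)$ from Lemma \ref{lem:alignedGromov}(1) genuinely depend on the $v_l$'s and therefore cannot be placed in the ``clean'' independent remainder $R_n'$; they must instead be absorbed into $R_n''$, where the linear-in-$m(n)$ bound together with $m(n) = o(\sqrt{n})$ keeps their contribution to the characteristic function asymptotically trivial.
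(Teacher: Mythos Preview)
Your high-level plan is aligned with the paper's: it too derives the converse by feeding the pivoting decomposition of Corollary~\ref{cor:pivotCorCombined} into Lemma~\ref{lem:alignedGromov} and then invoking an anti-concentration principle for i.i.d.\ sums with infinite second moment. The paper does not reprove the statement here; it cites \cite[Proposition~4.16]{choi2022random2}, whose input is the version of the pivoting corollary in which $m(n)$ grows \emph{linearly} in $n$ (this is exactly the distinction the paper flags after stating Corollary~\ref{cor:pivotCorCombined}).

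There is, however, a genuine gap in your characteristic-function implementation. You need simultaneously $m(n)=o(\sqrt{n})$ (to kill $R_n''/\sqrt{n}$) and $m(n)\,\sigma^2(\sqrt{n})/n\to\infty$ (to force $|\phi_X(\xi/\sqrt{n})|^{m(n)}\to 0$). These are compatible only if $\sigma^2(\sqrt{n})/\sqrt{n}\to\infty$, i.e.\ $\sigma^2(t)/t\to\infty$; but infinite second moment gives only $\sigma^2(t)\to\infty$, with no rate. For instance, if $\Prob(X>t)\asymp 1/(t^2\log t)$ then $\E[X^2]=\infty$ while $\sigma^2(t)\asymp\log\log t$, so $n/\sigma^2(\sqrt{n})\gg\sqrt{n}$ and no admissible $m(n)$ exists. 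Thus the claim ``this can always be arranged by our choice of $N(n)$'' is false as stated, and your Obstacle~1 is fatal rather than merely technical.

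The route taken in \cite{choi2022random2} avoids this by keeping $m(n)$ comparable to $n$, so that $m(n)/n$ does not vanish and the infinite-variance input does all the work; the price is that the $O(m(n))$ error is no longer $o(\sqrt{n})$, and one must exploit the finer independence structure recorded in Lemma~\ref{lem:alignedGromov}(4) (independence of Gromov products over disjoint index blocks) to prevent the bounded corrections from swamping the signal. Your decomposition $R_n=R_n'+R_n''$ with $R_n'$ independent of the $X_l$'s does not capture this: as you note in Obstacle~2, the Gromov-product errors depend on the $v_l$'s, and simply bounding them by $O(m(n))$ is too crude once $m(n)$ is linear.
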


The remaining part of \cite{choi2022random2} works verbatim, and we obtain the following results.
\begin{thm}\label{thm:qi} 
Let $(X, G, o)$ be as in Convention \ref{conv:main}, and $(Z_{n}^{(1)}, \ldots, Z_{n}^{(k)})_{n \ge 0}$ be $k$ independent random walks generated by a non-elementary measure $\mu$ on $G$. Then there exists $K>0$ such that \[
\Prob \left[ \langle Z_{n}^{(1)}, \ldots, Z_{n}^{(k)} \rangle \,\,\textrm{is q.i. embedded into a quasi-convex subset of $X$} \right] \ge 1- K e^{-n/K}.
\]
\end{thm}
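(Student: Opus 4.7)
The approach has three stages: produce uniform BGIP axes for each of the $k$ walks with high probability, establish a ping-pong/transversality configuration among these axes that is robust under the subgroup action, and then conclude via the alignment and concatenation machinery.

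First, apply Theorem \ref{thm:expBd} to each walk $(Z_n^{(i)})_{n \geq 0}$ individually and take a union bound over $i = 1, \ldots, k$: for a fixed $L \in (0, \lambda(\mu))$, outside an event of probability at most $k K_1 e^{-n/K_1}$, each $Z_n^{(i)}$ is a BGIP isometry with a uniform BGIP parameter $K$ and translation length at least $Ln$. The proof of Theorem \ref{thm:expBd} in fact produces a $K$-BGIP axis $\gamma_i$ for $Z_n^{(i)}$ as a concatenation of Schottky sub-sequences (via Lemma \ref{lem:BGIPConcat}), and one can arrange $\gamma_i$ to pass within a uniform distance of $o$.

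Second, I would show that the axes $\gamma_1, \ldots, \gamma_k$ are pairwise transverse in the sense of uniformly bounded projections, and that this transversality persists for all translates $w\gamma_i$ under the subgroup $\langle Z_n^{(1)}, \ldots, Z_n^{(k)}\rangle$. Concretely, the claim is that there exists a uniform $C>0$ such that $\diam\bigl(\pi_{\gamma_i^\epsilon}(\gamma_j^\delta)\bigr) \leq C$ for all $i \neq j$, $\epsilon,\delta \in \{\pm 1\}$, outside an event of probability $O(e^{-n/K})$. This is the probabilistic ping-pong input and is deduced from the pivoting construction of Proposition \ref{prop:pivotClassProp} together with Corollary \ref{cor:pivotingRW1} and Corollary \ref{cor:pivotingRW2} applied jointly to pairs of walks: because the $k$ walks are independent, the Schottky pivot sequences extracted along $\gamma_i$ and $\gamma_j$ are independent uniform samples from the Schottky set $S$, so the probability that a long subpath of $\gamma_i$ aligns with $\gamma_j$ is bounded by a product of coincidence probabilities, which is exponentially small. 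Via the alignment dictionary (Corollary \ref{cor:semiAlign}), bounded coincidence translates to bounded projection diameter.

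Third, once the ping-pong configuration is in place, any reduced word $w = (Z_n^{(i_1)})^{\epsilon_1} \cdots (Z_n^{(i_\ell)})^{\epsilon_\ell}$ generates a sequence of translated axes $\gamma_{i_1}^{\epsilon_1}, (Z_n^{(i_1)})^{\epsilon_1} \gamma_{i_2}^{\epsilon_2}, \ldots$ that is $D_0$-aligned by iterated application of Lemma \ref{lem:1segment} (using the transversality bounds as input). Proposition \ref{prop:BGIPWitness} then forces the geodesic $[o, wo]$ to contain subsegments fellow-travelling each translated axis, which gives a lower bound $d(o, wo) \geq c \ell n$ for a uniform $c>0$, and Lemma \ref{lem:BGIPConcat2} organizes these axes into a quasigeodesic through $o$ and $wo$. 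Combined with the trivial upper bound $d(o,wo) \leq \ell \cdot \max_i d(o, Z_n^{(i)} o)$, this yields a quasi-isometric embedding of the abstract free group $F_k$ (freeness is a by-product of the same ping-pong) equipped with its word metric into $X$ via the orbit map. The quasi-convex subset is $\bigcup_w w \cdot (\gamma_1 \cup \cdots \cup \gamma_k)$; quasi-convexity follows from the fact that geodesics between orbit points fellow-travel the aligned concatenations of translated axes by Proposition \ref{prop:BGIPWitness} and Corollary \ref{cor:coarseGeod}.

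The main obstacle is Step 2: controlling projections of \emph{entire} random axes $\gamma_i, \gamma_j$ (not merely of individual orbit points) uniformly in $n$, and in a form that survives composition with arbitrary elements of the generated subgroup. The pivoting technology is designed precisely for this, but must be run jointly over all $\binom{k}{2}$ pairs, and the conditional independence structure used to show exponential decay of coincidences must be verified carefully.
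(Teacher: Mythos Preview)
The paper does not supply an independent proof of this theorem; immediately before stating it, the text says ``the remaining part of \cite{choi2022random2} works verbatim, and we obtain the following results.'' So there is no argument in the present paper to compare against---only the claim that the proof from the companion paper transfers once the BGIP alignment machinery (Lemma~\ref{lem:1segment}, Proposition~\ref{prop:BGIPWitness}, Lemma~\ref{lem:BGIPConcat}, Proposition~\ref{prop:Schottky}, Corollary~\ref{cor:pivotingRW1}) has been established in the asymmetric setting. Your three-stage outline is consistent with that machinery and is almost certainly the shape of the argument being invoked.

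One point in Step~2 is misframed, however. You write that bounded mutual projection holds because ``the Schottky pivot sequences extracted along $\gamma_i$ and $\gamma_j$ are independent uniform samples from $S$, so the probability that a long subpath of $\gamma_i$ aligns with $\gamma_j$ is bounded by a product of coincidence probabilities.'' Distinct Schottky labels do not by themselves prevent fellow-travelling: the intervening words $w_l$ in the decomposition of $Z_n^{(i)}$ and $Z_n^{(j)}$ could conspire to make translates of different Schottky axes overlap. The actual mechanism is the one already packaged in Corollary~\ref{cor:pivotingRW1}. One conditions on the entire $j$-th walk, treats the points $(Z_n^{(j)})^{\pm 1}o$ (and $o$ itself) as the external point $x$, and then pivots on the $i$-th walk; Schottky property~(2) in Definition~\ref{dfn:Schottky} guarantees that all but one choice at each pivot makes $(x,\mathbf{Y}_{j(k)}(\omega^{(i)}))$ aligned, and iterating gives the exponential bound. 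Independence of the walks enters only to justify conditioning on walk $j$ while retaining full pivoting freedom in walk $i$, not through any label-matching combinatorics. With this correction, Lemma~\ref{lem:1segment} feeds Step~3 exactly as you describe, and Proposition~\ref{prop:BGIPWitness} together with Lemma~\ref{lem:BGIPConcat2} yields both the quasi-isometric embedding and the quasi-convexity of the union of translated axes.
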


\begin{thm}\label{thm:qiCount}
Let $G$ be a finitely generated group acting on an asymmetric metric space $X$ with at least two independent BGIP elements. Then for each $k > 0$, there exists a finite generating set $S$ of $G$ such that \[
\frac{\#\left\{(g_{1}, \ldots, g_{k}) \in \big(B_{S}(n) \big)^{k} :\begin{array}{c}  \langle g_{1}, \ldots, g_{k} \rangle \,\, \textrm{is q.i. embedded into} \\ \textrm{a quasi-convex subset of $X$} \end{array} \right\} } {\big(\# B_{S}(n)\big)^{k}}
\]
converges to 1 exponentially fast.
\end{thm}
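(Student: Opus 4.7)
The strategy is to upgrade the random-walk statement of Theorem~\ref{thm:qi} into a ball-counting statement, after engineering a generating set $S$ that interacts well with the BGIP structure. Starting from an arbitrary symmetric finite generating set $S_{0}$ of $G$, I would apply Proposition~\ref{prop:Schottky} to the uniform measure on $S_{0}$ to extract, for some large $M$, a fairly long $K_{0}$-Schottky set $\Sigma \subseteq S_{0}^{M}$ of large cardinality, and then set
\[
S := S_{0} \cup \{\Pi(\sigma)^{\pm 1} : \sigma \in \Sigma\}.
\]
This is a symmetric finite generating set of $G$ containing many pairs of independent BGIP isometries, so the uniform probability measure $\mu$ on $S$ is non-elementary. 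Theorem~\ref{thm:qi} then gives, for $k$ independent $\mu$-random walks,
\[
\Prob\bigl[\langle Z_{n}^{(1)}, \ldots, Z_{n}^{(k)}\rangle \text{ is q.i.\ embedded in a quasi-convex subset of } X\bigr] \ge 1 - K e^{-n/K}.
\]

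To convert this probabilistic bound into the desired counting bound, I would establish a comparison of the form
\[
\frac{\#\{(g_{1}, \ldots, g_{k}) \in B_{S}(n)^{k} : (g_{1},\ldots,g_{k}) \in E\}}{\#B_{S}(n)^{k}} \;\le\; C \cdot \Prob\bigl[(Z_{n}^{(1)}, \ldots, Z_{n}^{(k)}) \in E\bigr] + (\text{exponentially small error})
\]
for events $E$ of interest. The transfer proceeds entry by entry, and the key input is that, because $S$ contains independent Schottky products, the subsemigroup generated by $\{\Pi(\sigma) : \sigma \in \Sigma\}$ is free by the alignment Lemma~\ref{lem:1segment} and the Schottky property (Definition~\ref{dfn:Schottky}). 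Combining this with the pivotal-time construction of Proposition~\ref{prop:pivotClassProp} applied to the uniform measure on words in $S^{n}$, one sees that most length-$n$ words admit a linearly-dense set of Schottky pivotal positions, and two distinct pivot-alterations of such a word yield distinct group elements by Proposition~\ref{prop:BGIPWitness}. This multiplicity argument produces the entry-wise comparison above, and applying it coordinate by coordinate to the bad event from Theorem~\ref{thm:qi} yields the exponential decay.

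The main obstacle is the non-concentration estimate underlying the comparison: no single $g \in B_{S}(n)$ should be represented by an atypically large number of length-$n$ words in $S$. This is the analogue in our setting of Yang's growth-tightness theorem for groups with contracting elements, and the argument transplants to the BGIP setting essentially verbatim once one replaces contracting axes with BGIP axes. The essential ingredients — existence of Schottky sets (Proposition~\ref{prop:Schottky}), their structural properties (Definition~\ref{dfn:Schottky}), the alignment lemmas of Section~\ref{subsection:BGIPAlign}, and the witness Proposition~\ref{prop:BGIPWitness} — were all developed earlier in the paper in the asymmetric setting, so the remaining work is combinatorial bookkeeping to ensure that the exponentially small error terms do not exceed the comparison constant $C$ raised to the $k$-th power.
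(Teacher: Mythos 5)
Your proposed reduction of the counting statement to Theorem \ref{thm:qi} contains a genuine gap: the entry-wise comparison $\#\{(g_{1},\ldots,g_{k})\in B_{S}(n)^{k} : E\}/\#B_{S}(n)^{k} \le C\,\Prob[(Z_{n}^{(1)},\ldots,Z_{n}^{(k)})\in E] + (\text{exponentially small})$ is false in general, and your non-concentration estimate cannot repair it. The obstruction is not that some elements are represented by too many length-$n$ words; it is that the two measures concentrate in different regions of the group. The law of $Z_{n}$ concentrates on elements of word length about $\ell n$ with $\ell<1$ (and gives each such element exponentially small mass governed by the entropy), while the counting measure on $B_{S}(n)$ concentrates near the sphere of radius $n$ because growth is purely exponential. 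Already for a free group with $S$ a free basis, taking $E$ to be the set of elements of word length exactly $n$ gives a counting ratio bounded below by a positive constant, while $\Prob[Z_{n}\in E]$ decays exponentially; so the displayed inequality fails exactly for the kind of event you need it for (the bad set whose counting measure you are trying to bound could a priori live in the counting-typical, random-walk-atypical part of the ball). Growth-tightness \`a la Yang is indeed used in counting arguments, but there it feeds a direct estimate against the counting measure, not a transfer from a random walk law; random-walk genericity and counting genericity are not formally interchangeable anywhere in this series.

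The paper's own route does not pass through Theorem \ref{thm:qi}: it invokes the counting technique of \cite{choi2021generic} and \cite{choi2022random2}, which works with the counting measure on $B_{S}(n)$ directly. There one chooses the generating set $S$ depending on $k$ (this dependence is genuinely used, whereas your $S=S_{0}\cup\{\Pi(\sigma)^{\pm1}\}$ does not depend on $k$) so that fairly long Schottky sequences are available inside $S$-words, shows by an injection-plus-growth argument that elements of $B_{S}(n)$ admitting too few aligned Schottky subsegments are exponentially rare in the ball, and then runs the pivoting construction against the counting measure itself; Proposition \ref{prop:BGIPWitness}, Lemma \ref{lem:1segment} and Lemma \ref{lem:BGIPConcat2} then upgrade the resulting alignment to the q.i.-embedded, quasi-convex conclusion for all but exponentially few tuples. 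Your list of ingredients is the right one, but they must be applied to the counting measure directly rather than routed through a counting-versus-random-walk comparison, which is the step that fails.
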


\subsection{Proof of Theorem \ref{thm:mismatch}} \label{subsection:mismatch}

We first formulate a variation of Proposition \ref{prop:Schottky}.

\begin{lem}\label{lem:SchottkyVar}
Let $(X, G, o)$ be as in Convention \ref{conv:main} and let $\mu$ be a non-elementary probability measure on $G$. Then for each $N, L>0$, there exist $K = K(N)>0$, $n > L$ and a $K$-Schottky set $S$ of cardinality $N$ in $(\supp \mu)^{n}$ satisfying the following: for every $g \in G$, there exist $\alpha, \alpha' \in S$ such that 
\begin{equation}\label{eqn:SchottkyVar}
\Big(\Gamma(\beta), \Pi(\beta) \cdot \big(\Pi(\alpha) g \Pi(\alpha')\big)\cdot \Pi(\gamma) o\big)\,\, \textrm{and}\,\,\Big(\big(\Pi(\alpha) g \Pi(\alpha')\big)^{-1} o, \Gamma(\gamma)\Big)\,\,\textrm{are $K$-aligned} \quad (\forall \beta, \gamma \in S).
\end{equation}
\end{lem}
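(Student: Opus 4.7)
The plan is to strengthen Proposition \ref{prop:Schottky} by adding the sandwich property (\ref{eqn:SchottkyVar}). The key idea is that once we have a sufficiently large $K_0$-Schottky set, the sandwich property for each $g \in G$ is obtained by choosing $\alpha, \alpha'$ via item 2 of the Schottky definition (Definition \ref{dfn:Schottky}) and then chaining the local alignments using Lemma \ref{lem:1segment} together with Proposition \ref{prop:BGIPWitness}.

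First, I would apply Proposition \ref{prop:Schottky} to obtain a $K_0$-Schottky set $S \subseteq (\supp \mu)^n$ of cardinality $N$, with $n > L$ and $K_0 = K_0(N)$ large enough. If needed, enlarge the cardinality by a small fixed constant at the outset so that a bounded number of potentially bad choices can be excluded while still retaining $N$ elements. Given $g \in G$, I then choose $\alpha, \alpha' \in S$ as follows: by Schottky item 2 applied to $x = g o$, the set of $\alpha \in S$ for which $(\Gamma(\alpha), \Pi(\alpha) g o)$ fails to be $K_0$-aligned has size at most one, so such an $\alpha$ can be selected; symmetrically, applying Schottky item 2 to $x = g^{-1} o$ provides an $\alpha' \in S$ for which $(g^{-1} o, \Gamma(\alpha'))$ is $K_0$-aligned.

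Third, for arbitrary $\beta, \gamma \in S$, I would verify the two conclusions of (\ref{eqn:SchottkyVar}) by assembling the chain of Schottky axes
\[
\Big(\Gamma(\beta),\,\,\Pi(\beta)\Gamma(\alpha),\,\,\Pi(\beta)\Pi(\alpha) g\,\Gamma(\alpha'),\,\,\Pi(\beta)\Pi(\alpha) g\Pi(\alpha')\Gamma(\gamma)\Big).
\]
The first pair is $D_0$-aligned via Lemma \ref{lem:1segment} combined with Schottky item 3 (giving the endpoint matching at $\Pi(\beta) o$) and item 2 applied at the point $\Pi(\alpha) o$. The middle pair is $D_0$-aligned by a direct application of Lemma \ref{lem:1segment} to the local alignments secured through the choice of $\alpha, \alpha'$, after translating by $\Pi(\beta)\Pi(\alpha)$. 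The last pair is $D_0$-aligned by a symmetric argument. Once the full chain is $D_0$-aligned (by Observation \ref{obs:BGIPAlignRev}(1)), Proposition \ref{prop:BGIPWitness} ensures that the geodesic from $o$ to $\Pi(\beta)\Pi(\alpha) g\Pi(\alpha')\Pi(\gamma) o$ passes close to every axis in the chain and, in particular, close to the endpoint of $\Gamma(\beta)$, which yields condition (a). Condition (b) follows from reversing the chain and invoking Observation \ref{obs:BGIPAlignRev}(2).

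The main obstacle is handling the potentially bad $\beta$ and $\gamma$ introduced by the ``at most one bad element'' clause of Schottky item 2: since each item 2 application leaves a single element unaccounted for, one must verify uniformity across all of $S$. The resolution is twofold. Structurally, once $\alpha, \alpha'$ are chosen to align with $g$, the intermediate axes $\Pi(\beta)\Gamma(\alpha)$ and $\Pi(\alpha')\Gamma(\gamma)$ act as long buffers in the chain and, by Lemma \ref{lem:1segment}, force alignment at the outer ends regardless of the specific $\beta, \gamma$. Quantitatively, any residual bad elements can be absorbed by taking the base cardinality of $S$ slightly larger than $N$ and restricting to a subset of exactly $N$ elements that avoids the bad cases for our specific $g$, or by enlarging the constant $K$ in the conclusion to absorb the additive constants produced by the chaining.
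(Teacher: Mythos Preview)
Your overall strategy is the same as the paper's: build a $K_{0}$-Schottky set via Proposition~\ref{prop:Schottky}, choose $\alpha,\alpha'$ using item~(2) of Definition~\ref{dfn:Schottky}, assemble the four-axis chain
\[
\Big(\Gamma(\beta),\ \Pi(\beta)\Gamma(\alpha),\ \Pi(\beta)\Pi(\alpha)g\,\Gamma(\alpha'),\ \Pi(\beta)\Pi(\alpha)g\Pi(\alpha')\Gamma(\gamma)\Big),
\]
and finish with Lemma~\ref{lem:1segment} and Proposition~\ref{prop:BGIPWitness}. That part is correct.

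The gap is in your treatment of the ``bad $\beta$'' left over by Schottky item~(2). Your two proposed fixes do not work. Restricting $S$ to a subset of $N$ elements ``that avoids the bad cases for our specific $g$'' violates the quantifier order: $S$ must be fixed once and for all before $g$ is given, and the bad element depends on $\alpha$, which in turn depends on $g$. Enlarging $K$ does not help either, because for a genuinely bad $\beta$ the pair $(\Gamma(\beta),\Pi(\beta)\Gamma(\alpha))$ need not be $C$-aligned for any $C$; the failure is not an additive error to absorb.

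What the paper does, and what you are circling around without stating, is identify the bad element explicitly. Apply item~(2) with $x=\Pi(\alpha)o$. Since $\Pi(\alpha)o$ is the terminal point of $\Gamma(\alpha)$, the projection of $\Pi(\alpha)o$ onto $\Gamma(\alpha)$ is $\Pi(\alpha)o$ itself, which is at distance $d(o,\Pi(\alpha)o)\ge n/K_{0}-K_{0}>K_{0}$ from the initial point $o$; hence $(\Pi(\alpha)o,\Gamma(\alpha))$ is \emph{not} $K_{0}$-aligned. Since item~(2) allows at most one bad element, that bad element must be $\alpha$. Therefore $(\Gamma(\beta),\Pi(\beta)\Pi(\alpha)o)$ is $K_{0}$-aligned for every $\beta\in S\setminus\{\alpha\}$, and combining with the trivial alignment $(\Pi(\beta)o,\Pi(\beta)\Gamma(\alpha))$ via Lemma~\ref{lem:1segment} gives $D_{0}$-alignment of $(\Gamma(\beta),\Pi(\beta)\Gamma(\alpha))$ for all such $\beta$. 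The remaining case $\beta=\alpha$ is exactly Schottky item~(3). The symmetric argument handles the $\gamma$ side. Once you insert this observation, no enlargement of $S$ or of $K$ beyond $E(D_{0},K_{0})$ from Proposition~\ref{prop:BGIPWitness} is needed.
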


\begin{proof}
Let $N, L>0$ be given. Without loss of generality, we can assume $N > 10$. We take $K_{0} = K(N)$ be as in Proposition \ref{prop:Schottky}, $D_{0} = D(K_{0}, K_{0})$ be as in Lemma \ref{lem:1segment} and $K = E(D_{0}, K_{0})$, $L' = L(D_{0}, K_{0})$ be as in Proposition \ref{prop:BGIPWitness}. Note that $K$ only depends on $N$ and not on $L$.

Now, by Proposition \ref{prop:Schottky}, there exists a $K_{0}$-Schottky set $S$ of cardinality $N$ in $(\supp \mu)^{n}$ for some $n > L + L' + 2K_{0}^{2}$. We claim that $S$ satisfies the desired property. First, the inequality $K_{0} < D_{0} < K$ tells us that $S$ is automatically a $K$-Schottky set. It remains to check the condition in Display \ref{eqn:SchottkyVar}.

For this, let $g \in G$. Note that $\#S >10$. By the $K_{0}$-Schottky property (2) of $S$, there exist $\alpha' \in S$ such that $(g^{-1}o, \Gamma(\alpha'))$ is $K_{0}$-aligned. Fixing such an $\alpha'$, again the $K_{0}$-Schottky property (2) of $S$ guarantees the existence of $\alpha \in S$ such that $(\Gamma(\alpha), \Pi(\alpha) g o)$ is $K_{0}$-aligned. Then by Lemma \ref{lem:1segment}, $\big(\Gamma(\alpha), \Pi(\alpha)g \Gamma(\alpha') \big)$ is $D_{0}$-aligned.

Next, note that $(\Pi(\alpha)o, \Gamma(\alpha))$ is not $K_{0}$-aligned, as $d(o, \pi_{\Pi(\alpha) o} (\Gamma(\alpha))) = d(o, \Pi(\alpha) o) \ge n/K_{0} - K_{0} > K_{0}$. By Schottky property (2), we conclude that $(\Gamma(\beta), \Pi(\beta)\Pi(\alpha) o)$ is $K_{0}$-aligned for every $\beta \in S \setminus \{\alpha\}$, or equivalently, $(\Gamma(\beta), \Pi(\beta) \Pi(\alpha) o)$ is $K_{0}$-aligned. In this case, since $(\Pi(\beta) o, \Pi(\beta) \Gamma(\alpha) o)$ is $0$-aligned, we conclude that $(\Gamma(\beta), \Pi(\beta) \Gamma(\alpha))$ is $D_{0}$-aligned by Lemma \ref{lem:1segment}. Meanwhile, Schottky property (3) guarantees that $(\Gamma(\alpha), \Pi(\alpha) \Gamma(\alpha))$ is $K_{0}$-aligned as well.

For a similar reason, $(\Gamma(\alpha'), \Pi(\alpha') \Gamma(\gamma))$ is $D_{0}$-aligned for every $\gamma \in S$. In conclusion, the following sequence is $D_{0}$-aligned for every $\beta, \gamma \in S$: \[
\big( \Gamma(\beta), \, \Pi(\beta) \Gamma(\alpha), \, \Pi(\beta) \Pi(\alpha) g \Gamma(\alpha'), \,\Pi(\beta) \Pi(\alpha) g \Pi(\alpha') \Gamma(\gamma) \big).
\]
Also, the involved $K_{0}$-Schottky axes have domains longer than $L'$. We then apply Proposition \ref{prop:BGIPWitness} to conclude that \[
\big( \Pi(\beta)o,\, \Pi(\beta) \Pi(\alpha) g \Pi(\alpha') \Gamma(\gamma) \big), \quad \big( \Gamma(\beta), \, \Pi(\beta) \Pi(\alpha) g \Pi(\alpha') \Pi(\gamma) o \big)
\]
are $K$-aligned as desired.
\end{proof}

We are now ready to prove Theorem \ref{thm:mismatch}.

\begin{proof}
Let $\mu$ be a non-elementary and asymptotically asymmetric measure on $G$. We take $K_{0}= K(400)$ as in Lemma \ref{lem:SchottkyVar}. Lemma \ref{lem:SchottkyVar} guarantees that there exists a fairly long $K_{0}$-Schottky set $S \subseteq (\supp \mu)^{M_{0}}$ with cardinality at least 400, with the property that for every $g \in G$, there exist $\alpha, \alpha' \in S$ such that Display \ref{eqn:SchottkyVar} is satisfied. This choice of $S$ is fixed throughout. 

We now pick a large enough positive integer $n'$. Since $\mu$ is asymptotically asymmetric, we can take $\phi, \varphi \in \supp \mu^{\ast M'}$ for some $M'$ such that \[
\left[ \tau(\phi) - \tau(\phi^{-1}) \right] - \left[ \tau(\varphi) - \tau(\varphi^{-1}) \right] > 0.
\]
By replacing $\phi$ and $\varphi$ with their suitable powers, we may assume that \begin{equation}\label{eqn:thmAE1}
\big(d(o, \phi o)  -d(o, \phi^{-1} o)\big) - \big( d(o, \varphi o) - d(o, \varphi^{-1} o)\big) \ge (6E_{0} + 12K_{0} M_{0} + 2) n'.
\end{equation}

Now let $\alpha, \alpha' \in S$ be as in Display \ref{eqn:SchottkyVar} for $g = \phi$, and define $\phi_{new} = \Pi(\alpha) \phi \Pi(\alpha')$. Then  $\phi_{new}$ belongs to $(\supp \mu^{\ast (M' + 2M_{0})})$, and $(\beta, \gamma, \phi_{new}) \in \tilde{S}$ holds for all $\beta, \gamma \in S$. Note also that \begin{equation}\label{eqn:thmAE2} \begin{aligned}
d(o, \phi o) - d(o, \phi_{new} o) \le d(o, \Pi(\alpha) o) + d(o, \Pi(\alpha')) o) \le 2M_{0} K_{0},\\
d(\phi o, o) - d( \phi_{new} o, o) \le d(\Pi(\alpha) o, o) + d( \Pi(\alpha')) o, o) \le 2M_{0} K_{0}.
\end{aligned}
\end{equation}
and similarly $d(\phi o, o)$ and $d(\phi_{new} o, o)$ differ by $2M_{0} K_{0}$. We can similarly take $\varphi_{new} \in (\supp \mu^{\ast (M' + 2M_{0})})$ such that $(\beta, \gamma, \varphi_{new})\in \tilde{S}$ for all $\beta, \gamma \in S$, and so that \begin{equation}\label{eqn:thmAE3}
\big|d(o, \varphi o) - d(o, \varphi_{new} o) \big| \le 2M_{0}K_{0}, \quad \big|d( \varphi o, o) - d(\varphi_{new} o, o) \big| \le 2M_{0}K_{0}.
\end{equation}
For convenience, we introduce the notation \[
L_{1}^{\pm} := d(o, \phi_{new}^{\pm 1} o),\quad L_{2}^{\pm} := d(o, \varphi_{new}^{\pm 1} o)
\]
Combining Inequality \ref{eqn:thmAE1}, \ref{eqn:thmAE2} and \ref{eqn:thmAE3}, we conclude \begin{equation}\label{eqn:thmAE4}\begin{aligned}
\big(d(o, \phi_{new} o)  -d(o, \phi_{new}^{-1} o)\big) - \big( d(o, \varphi_{new} o) - d(o, \varphi_{new}^{-1} o)\big) &\ge(6E_{0} + 12K_{0} M_{0} + 2) n' - 8M_{0} K_{0}\\
&\ge (6E_{0} + 4K_{0} M_{0} + 2)n'.
\end{aligned}
\end{equation}

Let $\mu'$ be the measure assigning $1/2$ to each of $\phi_{new}$ and $\varphi_{new}$. Then $\mu'$ is dominated by a multiple of $\mu^{\ast (M'+2M_{0})}$. We now apply Corollary \ref{cor:pivotCorCombined}: for each $n$, we obtain an integer $m(n)$, a probability space $\Omega_{n}$ with $A_{n} \subseteq \Omega_{n}$, and a measurable partition $\mathcal{P}_{n}$ of $A_{n}$, and RVs \[
\begin{aligned}
Z_{n} &\in G, \\
\{w_{i} : i = 0, \ldots, m(n)\} &\in G^{m(n) + 1}, \\
\{v_{i} : i = 1, \ldots, m(n)\} &\in G^{m(n)}, \\
\{\beta_{i}, \gamma_{i} : i = 1, \ldots, m(n)\} &\in S^{2m(n)}
\end{aligned}
\]
such that the following hold: \begin{enumerate}
\item $\lim_{n \rightarrow +\infty} \Prob(A_{n}) = 1$ and $m(n) \in [n', 2n']$ eventually;
\item On $A_{n}$, $(w_{i})_{i=0}^{m(n)}$ is a $D_{0}$-pre-aligned sequence in $G$ and $w_{m(n)}^{-1} w_{0}$ is a $D_{0}$-pre-aligned isometry.
\item On each equivalence class $\mathcal{E} \in \mathcal{P}_{n}$, $(w_{i})_{i=0}^{m(n)}$ are constant and $(\beta_{i}, \gamma_{i}, v_{i})_{i=1}^{m(n)}$ are i.i.d.s distributed according to the measure $\left(\textrm{uniform measure on $S^{2}$}\right) \times \mu'$ conditioned on $\tilde{S}$.
\item $Z_{n}$ is distributed according to $\mu^{\ast n}$ on $\Omega_{n}$ and \[
Z_{n} = w_{0} \Pi(\beta_{1}) v_{1} \Pi(\gamma_{1}) w_{1} \cdots \Pi(\beta_{m(n)})v_{m(n)} \Pi(\gamma_{m(n)}) w_{m(n)}
\]holds on $A_{n}$.
\end{enumerate}

Recall that $(\beta, \gamma, g) \in \tilde{S}$ for any $\beta, \gamma \in S$ and $g \in \{\varphi_{new}, \phi_{new}\}$. Hence, the support of $(\textrm{uniform measure on $S^{2}$}) \times \mu'$ restricted to $\tilde{S}$ is $S^{2} \times \{\varphi_{new}, \phi_{new}\}$, and $(\beta_{i}, \gamma_{i}, v_{i})_{i=1}^{m(n)}$ has the law $\big(\textrm{uniform measure on $S^{2}$}\big) \times \mu'$. 

Now, let us pick an equivalence class $\mathcal{E} \in \mathcal{P}_{n}$. Conditioned on $\mathcal{E}$, Lemma \ref{lem:alignedGromov} guarantees the existence of points $o =: x_{0}, x_{1}, \ldots, x_{2m(n)}, x_{2m(n)+1} :=Z_{n} o$ such that the following hold: \begin{enumerate}
\item $d(x_{2l-1}, x_{2l})$ and $d(o, v_{l} o)$ differ by at most $2 K_{0} M_{0}$ for $l = 1, \ldots, m(n)$;
\item $\{d(x_{2l}, x_{2l+1}) : l = 0, \ldots, n\}$ are constant RVs;
\item for each $0 \le i \le j \le k \le 2n+ 1$, $(x_{i}, x_{k})_{x_{j}}$ is bounded by $E_{0}$;
\item the translation length $\tau(W_{n})$ and $d(x_{0}, x_{2n})$ differ by at most $E_{0}$.
\end{enumerate}

In view of Observation \ref{obs:BGIPReversal} and Observation \ref{obs:BGIPAlignRev}(2), we also have that: \begin{enumerate}
\item $d(x_{2l}, x_{2l-1})$ and $d( v_{l} o, o)$ differ by at most $2 K_{0} M_{0}$ for $l = 1, \ldots, m(n)$;
\item $\{d(x_{2l+1}, x_{2l}) : l = 0, \ldots, n\}$ are constant RVs;
\item for each $0 \le i \le j \le k \le 2n+ 1$, $(x_{k}, x_{i})_{x_{j}}$ is bounded by $E_{0}$;
\item the translation length $\tau(W_{n})$ and $d(x_{2n}, x_{0})$ differ by at most $E_{0}$.
\end{enumerate}

Now, for each $\w \in \mathcal{E}$ we define \[
\begin{aligned}
D^{+} &:= \sum_{l=0}^{n} d(x_{2l}, x_{2l+1}), &D^{-} &:= \sum_{l=0}^{n} d(x_{2l+1}, x_{2l}), \\
N(\w) &:= \# \{ i = 1, \ldots, m(n) : v_{i} = \phi_{new} \}.
\end{aligned}
\]
Note that $D^{+}$ and $D^{-}$ are constant on $\mathcal{E}$. Moreover, we have \[\begin{aligned}
 d\big(x_{0}(\w), x_{2m(n)} (\w) \big) &:= \sum_{l=1}^{2m(n)} d(x_{l-1}, x_{l}) + \sum_{l=1}^{2m(n)-1} d(x_{0}, x_{l+1})_{x_{l}}\\
&\in \left( D^{+} + \sum_{l=1}^{m(n)} d(x_{2l-1}, x_{2l}) - 2m(n) \cdot E_{0}, D^{+} + \sum_{l=1}^{m(n)} d(x_{2l-1}, x_{2l}) + 2m(n) \cdot E_{0} \right).
\end{aligned}
\]
Also, recall that $d(o, v_{l} o)$ and $d(x_{2l-1}, x_{2l})$ differ by at most $2K_{0} M_{0}$, and that  \[
\begin{aligned}
\sum_{l=1}^{m(n)} d(o, v_{l} o) = NL_{1}^{+} + (m(n) - N) L_{2}^{+} \\
\end{aligned}
\]
Finally, $d\big(x_{0}(\w), x_{2m(n)} (\w) \big)$ and $\tau(Z_{n}(\w))$ differ by at most $E_{0}$. Combining these, we conclude that \[
\left|  \tau(Z_{n}(\w))- \big(NL_{1}^{+} + (m(n) - N) L_{2}^{+} \big) \right| \le 2m(n) \cdot E_{0} + 2K_{0} M_{0}m(n) + E_{0}.
\]
Similarly, using the equality \[
\sum_{l=1}^{m(n)} d(v_{l} o, o) = NL_{1}^{-} + (m(n) - N) L_{2}^{-},
\]
we conclude that \[
\left|  \tau(Z_{n}^{-1}(\w))- \big(NL_{1}^{-} + (m(n) - N) L_{2}^{-} \big) \right| \le 2m(n) \cdot E_{0} + 2K_{0} M_{0}m(n) + E_{0}.
\]

This implies that $\tau(Z_{n}(\w)) - \tau(Z_{n}^{-1}(\w))$ falls in to the interval $I_{N}$ of width $(6E_{0} + 4K_{0}M_{0})n'$, centered at $c_{N} := N(L_{1}^{+} - L_{1}^{-}) + (m(n) - N) (L_{2}^{+} - L_{2}^{-})$. Note that \[
c_{N+1} - c_{N} = (L_{1}^{+} - L_{1}^{-}) - (L_{2}^{+} - L_{2}^{-}) > (6E_{0} + 4K_{0} M_{0} + 2)n'.
\]
Hence, the intervals $\{I_{N}\}_{N=0}^{m(n)}$ are disjoint and $2n'$-separated from each other. This implies that only at most one of them can intersect the interval $[-n', n']$. 

Meanwhile, note that \[\begin{aligned}
\max_{k} \Prob \left( N(\w) = k \, \big | \,\w \in \mathcal{E} \right) &= \max_{k} \Prob \left( \sum_{l=1}^{m(n)} B_{l}  = k\right) \quad \left( \begin{array}{c} \textrm{$\{B_{i}\}_{i=1}^{m(n)}$ are i.i.d. }\\ \textrm{Bernoulli RVs with mean $1/2$}\end{array}\right) \\
&= \max_{k}2^{-m(n)} \binom{ m(n)}{k} = O\left(1/\sqrt{m(n)}\right) = O\left(1/\sqrt{n'}\right).
\end{aligned}
\]
From this, we obtain \[
\Prob \left( |\tau(Z_{n}(\w)) - \tau(Z_{n}^{-1}(\w)) | \le n' \, \big| \, \w \in \mathcal{E} \right) = O\left(1/\sqrt{n'}\right).
\]
Since $A_{n}$ is partitioned with such an equivalence class $\mathcal{E}$ and takes up most of $\Omega_{n}$ asymptotically, we conclude that \[
\liminf_{n \rightarrow \infty} \Prob \left( |\tau(Z_{n}(\w)) - \tau(Z_{n}^{-1}(\w)) | \le n'  \right) = O\left(1/\sqrt{n'}\right).
\]
We now send $n'$ to infinity to conclude.
\end{proof}

\section{Outer space} \label{section:outer}

In this section, we collect facts about the outer automorphism group and Outer space. For detailed definitions and theories, we refer the readers to the general exposition by Vogtmann \cite{vogtmann2015outer} or individual papers, e.g. \cite{bestvina1992train}, \cite{francaviglia2011metric}, \cite{francaviglia2012isometry}, \cite{algom-kfir2012asymmetry}, \cite{algom-kfir2011strongly}, \cite{dahmani2018spectral},  \cite{dowdall2018hyperbolic} and \cite{kapovich2022random}.

Let $X$ be the Culler-Vogtmann Outer space $CV_{N}$ of rank $N\ge 3$, which is the space of unit-volume marked metric graphs with fundamental group $F_{N}$. In other words, a point $p \in CV_{N}$ corresponds to the homotopic class of a homotopy equivalence $h : R_{N} \rightarrow \Gamma$, where $R_{N}$ is a fixed rose with $N$ petals and $\Gamma$ is a unit-volume metric graph. The corresponding space without the volume normalization is called the unprojectivized Outer space $cv_{N}$, and there is a projectivization from $cv_{N}$ to $CV_{N}$ by dilation.

Outer space comes equipped with a canonical metric, the Lipschitz distance, which is defined as follows: for two markings $h_{1} : R_{N} \rightarrow \Gamma_{1}$ and $h_{2} : R_{N} \rightarrow \Gamma_{2}$, the distance from $\Gamma_{1}$ to $\Gamma_{2}$ is defined by \[
d_{CV}(\Gamma_{1}, \Gamma_{2}) := \inf \{ \log \operatorname{Lip}(f) : f \sim f_{2} \circ f_{1}^{-1} \},
\]
where $\operatorname{Lip}(f)$ is the (maximal) Lipschitz constant of $f$. \emph{We now make a convention that differs from the traditional one}. Namely, the outer automorphism group $\Out(F_{N})$ of rank $N$ acts on $CV_{N}$ by changing the basis of the marking \emph{with the inverses}: given $\phi \in \Out(F_{N})$ and $h : R_{N} \rightarrow \Gamma$ representing a point of $CV_{N}$, $\phi$ moves $h$ to $h \circ \phi^{-1} : F_{N} \stackrel{\phi^{-1}}{\rightarrow} F_{N} \stackrel{h}{\rightarrow} \Gamma$. This is a left action by isometries. We denote action by $X \ni h \mapsto \phi \cdot h \in X$.

It is known that the Lipschitz distance is asymmetric \cite{francaviglia2011metric} and not uniquely geodesic. However, distances among $\epsilon$-thick points (i.e., those with systole at least $\epsilon$) have the coarse symmetry: there exists a constant $C=C(\epsilon) < +\infty$ such that for any $\epsilon$-thick points $x$ and $y$, one has $d(x, y) \le Cd(y, x)$ \cite{algom-kfir2012asymmetry}. In particular, distances among the translates of the reference point $o$ by $\Out(F_{N})$ satisfy the coarse symmetry.

Just as Teichm{\"u}ller space $\T(\Sigma)$ is accompanied by the curve complex $\mathcal{C}(\Sigma)$ and the coarse projection $\pi^{\mathcal{C}} : \T(\Sigma) \rightarrow \mathcal{C}(\Sigma)$, $CV_{N}$ is accompanied by the complex of free factors $\mathcal{FF}_{N}$ and the coarse projection $\pi^{\mathcal{FF}} : CV_{N} \rightarrow \mathcal{FF}_{N}$. This projection is coarsely $\Out(F_{N})$-equivariant and coarsely Lipschitz. Moreover, geodesics in $CV_{N}$ projects to $K$-unparametrized bi-quasigeodesics for some uniform $K > 0$ \cite[Proposition 9.2]{bestvina2014hyperbolicity}. 

Outer space also accomodates lots of BGIP isometries. We say that an outer automorphism $\phi \in \Out(F_{N})$ is \emph{reducible} if there exists a free product decomposition $F_{N} = C_{1} \ast \cdots \ast C_{k} \ast C_{k+1}$, with $k \ge 1$ and $C_{i} \neq \{e\}$, such that $\phi$ permutes the conjugacy classes of $C_{1}, \ldots, C_{k}$. If not, we say that $\phi$ is \emph{irreducible}. We also say that $\phi$ is \emph{fully ireducible (or iwip)} if no power of $\phi$ is reducible, or equivalently, no power of $\phi$ preserves the conjugacy class of any proper free factor of $F_{N}$. We also say that $\phi$ is \emph{atoroidal (or hyperbolic)} if no power of $\phi$ fixes any nontrivial conjugacy class in $F_{N}$. When $\phi$ is fully irreducible, it is non-atoroidal if and only if it is \emph{geometric}, i.e., induced by a pseudo-Anosov $\varphi : \Sigma \rightarrow \Sigma$ on a compact surface $\Sigma$ with one boundary component, via an identification of $F_{N}$ with $\pi(\Sigma)$. Bestvina and Feighn proved in \cite{bestvina2014hyperbolicity} that $\phi \in \Out(F_{N})$ is fully irreducible if and only if it acts on $\mathcal{FF}_{N}$ loxodromically.

We say that a subgroup $G \le Out(F_{N})$ is \emph{non-elementary} if it acts on $\mathcal{FF}_{N}$ in a non-elementary way, or equivalently, contains two fully irreducibles with mutually distinct attracting/repelling trees. It is known that if $G \le Out(F_{N})$ does not fix any finite subset of $\mathcal{FF}_{N} \cup \partial \mathcal{FF}_{N}$, or equivalently, if it is not virtually cyclic nor virtually fixes the conjugacy class of a proper free factor of $F_{N}$, then $G$ is non-elementary \cite{horbez2016short}. Since $\pi^{\mathcal{FF}}$ is coarsely Lipschitz, the independence of two fully irreducibles in $\mathcal{FF}_{N}$ is lifted to the independence in $CV_{N}$. 

We refer the readers to \cite{bestvina1992train}, \cite{algom-kfir2010lipschitz}, \cite{bestvina2014hyperbolicity} and \cite{algom-kfir2019stable} for the precise definition of a train-track representative of an outer automorphism. Roughly speaking, a train-track representative of $\phi$ is a self-map $f : \Gamma \rightarrow \Gamma$ in the free homotopy class of $\phi$ on a simplicial graph $\Gamma$ that sends vertices to vertices, restricts to an immersion on each edge of $\Gamma$, and sends edges to immersed segments after iterations. It is due to Bestvina and Handel \cite{bestvina1992train} that every irreducible outer automorphism admits a train-track representative, although it may not be unique.

Given such a structure, one can endow $\Gamma$ with a metric such that $f$ stretches each edge of $\Gamma$ by the same constant $\lambda > 1$, which is called the \emph{expansion factor} of $f$. This expansion factor is uniquely determined by the choice of $\phi$ and does not depend on the choice of $f$. Moreover, in view of Skora's interpretation of Stallings fold decompositions, one obtains a continuous path on $cv_{N}$ from $\Gamma$ to $\Gamma \circ \phi$ by folding a single illegal turn at each time (cf. \cite{algom-kfir2019stable}). This descends to a geodesic segment of length $\log \lambda$ (after a reparametrization) and the concatenation of its translates by powers of $\phi$ becomes a bi-infinite, $\phi$-periodic geodesic. We call this a \emph{(optimal) folding axis} of $\phi$. Algom-Kfir observed the following: 

\begin{thm}[\cite{algom-kfir2011strongly}]\label{thm:algomKfir}
Folding axes of fully irreducible outer automorphisms are strongly contracting.
\end{thm}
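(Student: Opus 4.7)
The plan is to leverage the action of $\phi$ on the free factor complex $\mathcal{FF}_N$, which Bestvina--Feighn proved to be Gromov hyperbolic and on which every fully irreducible outer automorphism acts loxodromically. Under the coarse projection $\pi^{\mathcal{FF}} : CV_N \to \mathcal{FF}_N$, which is coarsely Lipschitz, coarsely $\Out(F_N)$-equivariant, and sends $CV_N$-geodesics to unparametrized bi-quasigeodesics (as cited in the excerpt), the folding axis $A$ of $\phi$ maps to a $\phi$-invariant quasi-axis $\alpha \subset \mathcal{FF}_N$. Because $\mathcal{FF}_N$ is hyperbolic, the closest-point projection onto a quasi-axis there is uniformly strongly contracting, so the task reduces to transferring this contracting behavior back to the asymmetric Lipschitz metric on $CV_N$.

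The central technical step is a pullback lemma: whenever a $CV_N$-geodesic $\gamma$ stays outside a sufficiently large Lipschitz neighborhood of $A$, its image $\pi^{\mathcal{FF}}(\gamma)$ stays outside a correspondingly large neighborhood of $\alpha$ in $\mathcal{FF}_N$. For this I would use the loxodromic dynamics of $\phi$ together with its equivariance with the $\Out(F_N)$-action on $CV_N$: after translating by a suitable power of $\phi$, a candidate closest point on $A$ lies in a fixed fundamental domain, and on the orbit of the basepoint the Lipschitz metric is coarsely symmetric (Algom-Kfir), so one may apply Bestvina--Feighn subfactor projection machinery to relate Lipschitz distance to $A$ and combinatorial distance to $\alpha$. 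Given the lemma, if $\gamma$ is far from $A$ then $\pi^{\mathcal{FF}}(\gamma)$ is far from $\alpha$, strong contraction in the hyperbolic space $\mathcal{FF}_N$ gives a uniform bound on $\diam(\pi_\alpha(\pi^{\mathcal{FF}}(\gamma)))$, and coarse Lipschitzness of $\pi^{\mathcal{FF}}$ restricted to $A$ (combined with bounded cancellation to ensure that closest-point projection in $CV_N$ is compatible up to bounded error with closest-point projection in $\mathcal{FF}_N$) produces a uniform bound on $\diam(\pi_A(\gamma))$.

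I expect the main obstacle to be the pullback lemma itself, since $\pi^{\mathcal{FF}}$ is massively non-injective and collapses thin regions of $CV_N$, so ``far from $A$ in $CV_N$'' does not automatically translate to ``far from $\alpha$ in $\mathcal{FF}_N$''. Controlling preimages of neighborhoods of $\alpha$ requires fine information about fold paths and the train-track representative of $\phi$, such as bounded cancellation or the Bestvina--Feighn--Handel machinery. An alternative, more hands-on route would follow Algom-Kfir's original strategy: use the attracting tree $T_+$ of $\phi$ together with the length function $\ell_{T_+}$, which decays exponentially along $A$ at rate $\log \lambda$ where $\lambda$ is the expansion factor, plus bounded backtracking estimates, to bound projections directly without invoking $\mathcal{FF}_N$; the cost is a considerably more technical analysis of the dynamics of the train-track map on the universal cover, but the benefit is avoiding the transfer step entirely.
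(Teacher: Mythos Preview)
The paper does not prove this theorem at all: Theorem~\ref{thm:algomKfir} is simply quoted from Algom-Kfir's paper \cite{algom-kfir2011strongly} as background, and the author immediately remarks that strong contraction is \emph{not} what is needed here (BGIP is, and the author ``does not know a way to promote the latter to the former''). So there is no proof in the paper to compare your proposal against. What the paper does prove is the distinct statement Theorem~\ref{thm:iwipBGIP}, and that proof (via Dowdall--Taylor and Dahmani--Horbez) superficially resembles your primary strategy in that it routes through $\mathcal{FF}_N$, but it establishes BGIP for the orbit $\{\varphi^i o\}$, not strong contraction for a folding axis.

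Regarding the content of your proposal: the pullback lemma you flag as the main obstacle is not just hard, it is false as stated. A point in $CV_N$ can be arbitrarily far from the folding axis $A$ in the Lipschitz metric while its $\pi^{\mathcal{FF}}$-image sits right on $\alpha$ --- take any thin graph obtained by shrinking an edge along $A$. Your suggested fix (translate by a power of $\phi$ and use coarse symmetry on the orbit) does not address this, because the problematic points are not on the orbit of $o$; they are thin points where the coarse symmetry constant blows up. Algom-Kfir's actual argument, which you mention as the alternative route, does not go through $\mathcal{FF}_N$ at all: it works directly in $CV_N$ using the train-track structure, legality of paths, and the Whitehead-type bounded cancellation estimates to control how optimal maps to points on $A$ behave. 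That is the approach that succeeds, and it is not a minor variant of the $\mathcal{FF}_N$ strategy but a genuinely different argument.
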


Meanwhile, we need BGIP instead of the strongly contracting property in our setting, and the author does not know a way to promote the latter to the former. Meanwhile, I. Kapovich, Maher, Pfaff and Taylor observed the following version of BGIP in Outer space. This requires the notion of greedy folding path, whose accurate definition can be found in \cite{francaviglia2011metric}, \cite{bestvina2014hyperbolicity} and \cite{dahmani2018spectral}. In short, a greedy folding path $\gamma : I \rightarrow cv_{N}$ is obtained by folding every illegal turns at each time with speed 1, where the illegal turn structures at different forward times are identical and define a well-defined illegal turn structure. This descends to a geodesic on $CV_{N}$, and we have the following theorem: 

\begin{thm}[{\cite[Theorem 7.8]{kapovich2022random}}] \label{thm:kapovich}
Let $\phi \in \Out(F_{N})$ be a fully irreducible outer automorphism. Suppose that $\gamma$ is a bi-infinite, $\phi$-periodic greedy folding path. Then there exist $C>0$ such that the following holds.

Let $x, y \in X$ be points such that $d^{sym}(\pi_{\gamma}(x), \pi_{\gamma}(y)) \ge C$, and satisfy $d^{sym}(\pi_{\gamma}(x)) = \gamma(t_{1})$, $ d^{sym}(\pi_{\gamma}(y)) = \gamma(t_{2})$ for some $t_{1} < t_{2}$. Then any geodesic $[x, y]$ between them contains a subsegment $[z_{1}, z_{2}]$ such that \[
d^{sym}(z_{1}, \pi_{\gamma}(x)) <C,\quad d^{sym}(z_{2}, \pi_{\gamma}(y)) < C.
\]
\end{thm}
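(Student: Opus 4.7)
The plan is to reduce Theorem \ref{thm:iwipBGIP} to Theorem \ref{thm:kapovich} by passing through a $\phi$-periodic greedy folding axis, and then transferring BGIP from that axis to the orbit $\{\phi^n o\}_{n \in \Z}$. Fix $\phi \in \Out(F_N)$ fully irreducible with expansion factor $\lambda > 1$. By Bestvina-Handel, $\phi$ has a train-track representative, and the simultaneous identification of all illegal turns yields a bi-infinite, $\phi$-periodic greedy folding path $\gamma : \R \to CV_N$ satisfying $\phi \cdot \gamma(t) = \gamma(t + \log \lambda)$.

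First I would verify that $\gamma$ itself satisfies BGIP in the sense of Definition \ref{dfn:boundedGeod}. Nonemptiness of $\pi_\gamma$ is immediate from properness of the parameterization. For the bounded geodesic image condition, let $C$ be the constant from Theorem \ref{thm:kapovich} and pick any $K \gg C$. Given a geodesic $\eta : I \to CV_N$ with $d^{sym}(\eta, \gamma) > K$, suppose for contradiction that $\diam(\pi_\gamma(\eta)) > K$. Then one can pick $x, y \in \eta$ whose $\gamma$-projections satisfy $d^{sym}(\pi_\gamma(x), \pi_\gamma(y)) \ge C$, swapping the roles of $x$ and $y$ if necessary to respect the $\gamma$-parameter ordering required in Theorem \ref{thm:kapovich}. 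Since $\eta|_{[x,y]}$ is a geodesic, Theorem \ref{thm:kapovich} provides a point $z_1$ on it with $d^{sym}(z_1, \pi_\gamma(x)) < C$, which contradicts $d^{sym}(\eta, \gamma) > K > C$. Hence $\gamma$ is a $K$-BGIP subset for some uniform $K = K(\phi)$.

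Finally I would transfer BGIP from $\gamma$ to the orbit $O := \{\phi^n o\}_{n \in \Z}$. By $\phi$-equivariance, $d^{sym}(\phi^n o, \gamma) = d^{sym}(o, \gamma) =: R$ for every $n$, and conversely each point of $\gamma$ lies within $R + \log \lambda$ of some orbit point, so $d_H(O, \gamma)$ is finite. Since $\phi$ and $\phi^{-1}$ are both fully irreducible (hence loxodromic on $\mathcal{FF}_N$), both $\tau(\phi)$ and $\tau(\phi^{-1})$ are strictly positive, and combined with $\phi$-equivariance and the triangle inequality this shows $O$ is a bi-quasigeodesic indexed by $\Z$. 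For the BGIP condition, the $d^{sym}$-Hausdorff closeness of $O$ and $\gamma$ implies that $\pi_O$ and $\pi_\gamma$ differ by a uniform additive constant, so the image-diameter bound transfers after enlarging $K$. The main obstacle will be keeping the asymmetry under control in this last transfer step: Definition \ref{dfn:boundedGeod} uses the symmetrized neighborhood $d^{sym}$ but a directed projection, so one must combine the coarse Lipschitz property of projections (Corollary \ref{cor:coarseLipschitz}) with the coarse symmetry enjoyed on $\Out(F_N) \cdot o$ to bound the discrepancy between the two projections uniformly. Once these pieces are in place, the orbit is a bi-quasigeodesic with BGIP, completing the proof.
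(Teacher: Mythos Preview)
Your proposal is not a proof of Theorem~\ref{thm:kapovich} at all: that theorem is quoted from \cite{kapovich2022random} and carries no proof in the paper. What you have actually written is an attempted proof of Theorem~\ref{thm:iwipBGIP}, using Theorem~\ref{thm:kapovich} as a black box. I will compare it to the paper's proof of Theorem~\ref{thm:iwipBGIP}.

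There are two genuine gaps. First, you assume that every fully irreducible $\phi$ admits a bi-infinite $\phi$-periodic greedy folding path. The paper explicitly flags that this is not known (``It seems not shown that all fully irreducibles have such a line''), and this is precisely why the paper does not invoke Theorem~\ref{thm:kapovich} directly. Instead it works with two (not necessarily periodic) greedy folding lines $\gamma^{+}$ and $\gamma^{-}$ converging to the attracting and repelling trees, and combines Dowdall--Taylor's results on $\mathcal{FF}_{N}$ with the Dahmani--Horbez $(B,D)$-contraction property to show that $\pi_{\gamma^{+}}$, $\pi_{\gamma^{-}}$ and $\pi_{\{\varphi^{i}o\}}$ are all coarsely equivalent.

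Second, even granting a periodic greedy folding line $\gamma$, your ``swap $x$ and $y$ if necessary'' step fails in the asymmetric setting. Theorem~\ref{thm:kapovich} is uni-directional: it requires $\pi_\gamma(x) = \gamma(t_1)$, $\pi_\gamma(y) = \gamma(t_2)$ with $t_1 < t_2$, and then constrains geodesics \emph{from $x$ to $y$}. If the projection of your geodesic $\eta$ moves backward along $\gamma$, relabelling forces you to apply the theorem to a geodesic from the later $\eta$-point to the earlier one; but the subsegment of $\eta$ between them is a geodesic only in the opposite direction, and its reversal need not be a geodesic (Remark~\ref{rem:asymGeod}). Taking an arbitrary geodesic $[y,x]$ instead would tell you nothing about $\eta$. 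This asymmetry is exactly why the paper keeps both $\gamma^{+}$ and $\gamma^{-}$ in play: forward progress of the projection is handled by the contraction of $\gamma^{+}$, backward progress by that of $\gamma^{-}$.
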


This uni-directional version of BGIP is designed for outer automorphisms that have an invariant(=periodic) greedy folding line. It seems not shown that all fully irreducibles have such a line. (The author thanks Sam Taylor for pointing this out.)  Nonetheless, by adapting Dowdall-Taylor's idea and Kapovich-Maher-Pfaff-Taylor's proof of Theorem \ref{thm:kapovich}, we can obtain the following result. This proof was kindly informed by Sam Taylor.

\iwipBGIP*

\begin{proof}
Before we begin, we recall the following facts regarding a geodesic $\delta$-hyperbolic space $Y$.
\begin{enumerate}
\item (Morse property) A $K$-quasigeodesic and a geodesic with the same endpoints are within Hausdorff distance $K_{2} = K_{2}(K, \delta)$.
\item The closest point projections of a point $y \in Y$ onto a $K$-quasigeodesic and a geodesic on $Y$ with the same endpoints are within distance $K_{3} = K_{3}(K, \delta)$.
\item If the projections of $x, y \in Y$ onto  a $K$-quasigeodesic $\gamma$ contain $\gamma(s)$ and $\gamma(t)$, respectively, and if $d(\gamma(s), \gamma(t)) > K_{4} = K_{4}(K, \delta)$, then $[x, y]$ and $[x, \gamma(s)] \cup \gamma|_{[s, t]}\cup [\gamma(t), y]$ are within Hausdoff distance $K_{4}$.
\item If two $K$-quasigeodesics $\gamma$, $\gamma'$ are within Hausdorff distant $K$ and the distance between starting points is at most $K$, then $\gamma'$ crosses $\gamma$ up to a constant $K_{5} = K_{5}(K, \delta)$, i.e., $\gamma$ and $\gamma' \circ \rho$ are $K_{5}$-fellow traveling for some non-decreasing reparametrization $\rho$.
\end{enumerate}

Let $o \in CV_{N}$ be an arbitrary basepoint. Let $T^{+}$, $T^{-}$ be the attracting and repelling trees of $\varphi$, respectively. There exist optimal greedy folding lines $\gamma^{\pm} : \mathbb{R} \rightarrow CV_{N}$ such that \begin{equation}\label{eqn:greedyFold}
\lim_{t \rightarrow +\infty} \gamma^{\pm}(t) = T^{\pm}, \quad \lim_{t\rightarrow -\infty} \gamma^{\pm}(t) = T^{\mp}
\end{equation}
(\cite{bestvina2015boundary}, Lemma 6.7 and Lemma 7.3). 
Since $\{\varphi_{i} o\}_{i}$ is a quasigeodesic whose endpoints agree with $\gamma^{+}$, Theorem 4.1 of \cite{dowdall2018hyperbolic} asserts that $d_{H}(\{\varphi^{i} o\}_{i}, \gamma^{+}) < K_{1}$ and $\pi^{\mathcal{FF}}(\gamma^{+})$ is a $K_{1}$-quasigeodesic for some $K_{1}$. Similarly, by comparing $\{\varphi_{-i} o\}_{i}$ and $\gamma^{-}$, we deduce that $d_{H}(\{\varphi^{i} o\}_{i}, \gamma^{-})<K_{1}$ and $\pi^{\mathcal{FF}}(\gamma^{-})$ is a $K_{1}$-quasigeodesic. Also, $\gamma^{\pm}$ are uniformly thick.

Recall our notation: $\pi_{\gamma^{\pm}}$ denotes the nearest point projection onto $\gamma^{\pm}$. For now, we denote by $\pi_{\gamma^{+}}^{sym}(x)$ the nearest point projection of $x \in CV_{N}$ onto $\gamma^{+}$ \emph{with respect to $d^{sym}$}. Similarly, we denote by $\pi_{\gamma^{-}}^{sym}(\cdot)$ the $d^{sym}$-nearest point projection onto $\gamma^{-}$.

Let us now take $x_{i}^{+} \in \pi_{\gamma^{+}}^{sym}(\varphi^{i} o)$ and $x_{i}^{-} \in \pi_{\gamma^{-}}^{sym}(\varphi^{i} o)$ for each $i$. We recall the following result of Dahmani and Horbez (\cite[Proposition 5.17, Corollary 5.22]{dahmani2018spectral}; see also Section 7 of \cite{kapovich2022random}): there exist $B, D>0$ such that $\gamma^{\pm}$ are $(B, D)$-contracting at $x_{i}^{\pm}$'s (with a suitable crossing constant $\kappa$). In other words, a geodesic $\eta$ on $CV_{N}$ has the projection $\pi^{\mathcal{FF}} \circ \eta$ that $\kappa$-crosses up the $B$-long subsegment of $\pi^{\mathcal{FF}} \gamma^{\pm}$ that begins from $\pi^{\mathcal{FF}}(x_{i}^{\pm})$, then $\eta$ has a point $p$ such that $d(p, x_{i}^{\pm}) \le D$. Since $\gamma^{\pm}$ are thick, $d(x_{i}^{\pm}, p)$ is (uniformly) proportional to $d(p, x_{i}^{\pm})$ and $\eta$ intersects a uniform $d^{sym}$-neighborhood of $\gamma^{\pm}$ in such a case.

We now observe that $\pi^{\mathcal{FF}} \pi_{\gamma^{+}}, \pi^{\mathcal{FF}}\pi_{\gamma^{-}}$ and $\pi_{\pi^{\mathcal{FF}}(\{\varphi^{i} o\}_{i})} \circ \pi^{\mathcal{FF}}$ are coarsely equivalent. First, \cite[Lemma 4.11]{dowdall2018hyperbolic} asserts that $\pi_{\gamma^{\pm}}$ and $Pr_{\gamma^{\pm}}$ are equivalent, where $Pr$ stands for the Bestvina-Feighn left projection. Then \cite[Lemma 4.2]{dowdall2018hyperbolic} asserts that $\pi^{\mathcal{FF}} Pr_{\gamma^{\pm}}$ and $\pi_{\pi^{\mathcal{FF}}(\gamma^{\pm})} \circ \pi^{\mathcal{FF}} $ are equivalent. These are then equivalent to $\pi_{\pi^{\mathcal{FF}}(\{\varphi^{i} o\}_{i})} \circ \pi^{\mathcal{FF}}$, since $\pi^{\mathcal{FF}}(\gamma^{\pm})$ and $\pi^{\mathcal{FF}}(\{\varphi^{i} o\}_{i})$ are equivalent quasi-geodesics on the Gromov hyperbolic space $\mathcal{FF}$.

We now lift these projections: we claim that $\pi_{\gamma^{+}}$, $\pi_{\gamma^{-}}$ and $\pi_{\{\varphi^{i} o\}_{i}}$ are equivalent. First, suppose that $\pi_{\gamma^{+}}(x)$ and $\pi_{\gamma^{-}}(x)$ are far from each other for some $x \in X$. Since $\gamma^{+}$, $\gamma^{-}$, $\{\varphi^{i}o\}_{i}$ are close to each other, we may take $\varphi^{i} o$ and $\varphi^{j} o$ near $\pi_{\gamma^{+}}(x)$ and $\pi_{\gamma^{-}}(x)$, respectively, and conclude that $|i-j|$ is large. This implies that $\pi^{\mathcal{FF}}(\varphi^{i} o)$ and $\pi^{\mathcal{FF}}(\varphi^{j} o)$ are also far from each other (since $\varphi$ is loxodromic on $CV_{N}$), and consequently $\pi^{\mathcal{FF}}(\pi_{\gamma^{+}}(x))$, $\pi^{\mathcal{FF}}(\pi_{\gamma^{-}}(x))$ are far from each other. ($\ast$) Since  $\pi^{\mathcal{FF}}\pi_{\gamma^{+}}$ and $\pi^{\mathcal{FF}}\pi_{\gamma^{-}}$ are equivalent, this cannot happen. For this reason, $\pi_{\gamma^{+}}$ and $\pi_{\gamma^{-}}$ are equivalent. 

Now suppose that $\pi_{\{\varphi^{i} o\}_{i}}(x)$ and $\pi_{\gamma^{\pm}}(x)$ are far from each other for some $x \in X$. We take $\varphi^{j} o \in \pi_{\{\varphi^{i} o\}_{i}}(x)$ and $\varphi^{j'} o$ near $\pi_{\gamma^{\pm}}(x)$ and conclude that $|j' - j|$ is large. If $j \gg j'$, then $\pi^{\mathcal{FF}}([x, \varphi^{j} o])$ is a quasigeodesic whose endpoints project onto $\pi^{\mathcal{FF}}(\{\varphi^{i} o\}_{i})$ near $\pi^{\mathcal{FF}} \varphi^{j'}o$ and $\pi^{\mathcal{FF}} \varphi^{j} o$, respectively. Since $j' - j$ is large enough, this quasigeodesic crosses up long enough subsegments of $\pi^{\mathcal{FF}}(\{\varphi^{i} o\}_{i})$ and $\pi^{\mathcal{FF}}(\gamma^{+})$ that begin at $\pi^{\mathcal{FF}}(\varphi^{j'} o)$ and $\pi^{\mathcal{FF}}(x_{j'})$, respectively. Using the $(B, D)$-contraction at $x_{j'}^{+}$ of $\gamma^{+}$, we conclude that $[x, \varphi^{j} o]$ contains a point $p$ nearby $x_{j'}^{+}$, which makes $d(x, \varphi^{j'} o)$ shorter than $d(x, \varphi^{j} o)$ and leads to a contradiction. Similar contradiction occurs when $j' \gg j$,  due to the contracting property of $\gamma^{-}$ at $x_{i}^{-}$'s. Hence, $\pi_{\{\varphi^{i} o\}_{i}}(x)$ and $\pi_{\gamma^{\pm}}(x)$ are equivalent.

Now, if a geodesic $\eta$ on $CV_{N}$ has a large nearest point projection on $\{\varphi^{i} o\}_{i}$, then it also has large projections on $\gamma^{\pm}$. This leads to large $\pi^{\mathcal{FF}}(\pi_{\gamma^{\pm}}(\eta))$, because $\pi^{\mathcal{FF}}$ restricted on $\gamma^{\pm}$ is a QI-embedding (again due to \cite[Theorem 4.1]{dowdall2018hyperbolic}). When $\pi^{\mathcal{FF}}(\pi_{\gamma^{\pm}}(\eta))$ progresses in the forward direction with respect to $\{\varphi_{i} o\}_{i}$, then we employ the contracting property of $\gamma^{+}$ to conclude that $d^{sym}(\eta, \gamma^{+})$ is uniformly bounded. If it progresses in the backward direction, then we employ the contracting property of $\gamma^{-}$ to conclude.
\end{proof}

We now explain more details on the classification of fully irreducible outer automorphisms. Coulbois and Hilion classified fully irreducibles into the following mutually distinct categories in \cite{coulbois2012botany}: \begin{enumerate}
\item \emph{geometric} automorphisms that have geometric attracting and repelling trees,
\item \emph{parageometric} automorphisms that have geometric attracting tree and non-geometric attracting tree,
\item inverses of parageometric automorphisms that have geometric repelling tree and non-geometric attracting tree, and 
\item \emph{pseudo-Levitt} automorphisms that have non-geometric attracting and repelling trees.
\end{enumerate}
Sometimes, automorphisms of category (3) and (4) are together called \emph{ageometric} automorphisms. 

In Theorem \ref{thm:mismatch} we are concerned with fully irreducibles whose expansion factors differ from that of their inverses. Examples of such automorphisms are given in \cite{handel2007parageometric}: every parageometric fully irreducible automorphism has expansion factor larger than the expansion factor of its inverse. Meanwhile, every geometric fully irreducible automorphism and its inverse have the same expansion factor, due to an analogous fact for pseudo-Anosov mapping classes. For pseudo-Levitt automorphisms, both situations can happen (\cite{handel2007parageometric}, \cite{coulbois2012botany}).

As mentioned before, fully irreducibles in $\Out(F_{2})$ are always geometric. In contrast, \cite{jager2008free} provides an example of parageometric automorphism for each $N \ge 3$. Hence, given an admissible probability measure $\mu$ on $\Out(F_{N})$ for $N\ge 3$, there exists $n > 0$ such that $\supp \mu^{\ast n}$ contains a parageometric automorphism $\varphi$, while there exists $m > 0$ such that $\supp \mu^{\ast m}$ contains identity. Then $\supp \mu^{\ast mn}$ contains $\varphi^{m}$ and $id$, where \[
\tau(\varphi^{m}) - \tau(\varphi^{-m}) > 0 = \tau(id) - \tau(id).
\]
For this reason, every admissible probability measure on $\Out(F_{N})$ is asymptotically asymmetric.

When $N \ge 3$, if a non-elementary random walk $(Z_{n})_{n>0}$ on $\Out(F_{N})$ has bounded support that generates a semigroup containing a principal fully irreducible and an inverse of a principal fully irreducible, then the probability that $Z_{n}$ is pseudo-Levitt tends to 1 as $n \rightarrow \infty$ \cite[Theorem A]{kapovich2022random}. Hence, the above property of parageometric automorphisms does not imply that a generic automorphism and its inverse have different expansion factors.

\section{Limit laws for Outer space}\label{section:limitOuter}

By applying the limit laws in Section \ref{section:limit} to the outer automorphism group (with Theorem \ref{thm:iwipBGIP} in hand), we obtain the following results. We first recover the escape to infinity of random walks on $\Out(F_{N})$ due to Horbez \cite{horbez2016horo}, and weaken the moment condition the SLLN for translation length in \cite[Theorem 1.4]{dahmani2018spectral}.

\begin{thm}\label{thm:OuterLLN}
Let $(X, d)$ be the Culler-Vogtmann Outer space $CV_{N}$ of rank $N \ge 3$ with the Lipschitz metric, and let $(Z_{n})_{n}$ be the random walk generated by a non-elementary probability measure $\mu$ on $\Out(F_{N})$. Then there exists a strictly positive quantity $\lambda(\mu) \in (0, +\infty]$, called the \emph{drift} of $\mu$, such that  \[
\lambda(\mu) := \lim_{n \rightarrow \infty} \frac{1}{n} d(o, Z_{n} o) \quad \textrm{almost surely.}
\]
Moreover, for each $0 < L < \lambda(\mu)$, there exists $K > 0$ such that \[
\Prob \Big( \textrm{$Z_{n}$ has BGIP and $\tau(Z_{n}) \ge Ln$} \Big) \ge 1 - K e^{-n/K}.
\]
\end{thm}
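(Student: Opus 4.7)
The plan is to verify that $(CV_N, \Out(F_N), o)$ fits Convention \ref{conv:main} under the hypothesis that $\mu$ is non-elementary on $\Out(F_N)$, and then to invoke the previously established general results: Theorem \ref{thm:pivotingRWSLLN2} will give the strictly positive drift $\lambda(\mu)$ together with the almost sure convergence $d(o,Z_n o)/n \to \lambda(\mu)$, while Theorem \ref{thm:expBd} will furnish, for each $0<L<\lambda(\mu)$, the exponential lower bound on $\Prob\bigl(Z_n \text{ has BGIP and } \tau(Z_n) \ge L n\bigr)$.

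For Convention \ref{conv:main}, I would first note that $CV_N$ equipped with the Lipschitz metric is a geodesic (possibly asymmetric) metric space satisfying the local symmetry of Definition \ref{dfn:metric} (valid on thick subsets, which cover $CV_N$ locally), and that $\Out(F_N)$ is countable and acts by isometries. The substantive input is the existence of two independent BGIP isometries inside $\llangle\supp\mu\rrangle$. From the discussion in Section \ref{section:outer}, non-elementarity of $\mu$ on $\Out(F_N)$ --- in the sense that $\llangle\supp\mu\rrangle$ does not virtually fix a point of $\mathcal{FF}_N \cup \partial\mathcal{FF}_N$ --- produces fully irreducibles $\phi_1,\phi_2 \in \llangle\supp\mu\rrangle$ with pairwise distinct attracting/repelling trees. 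Theorem \ref{thm:iwipBGIP} upgrades each $\phi_i$ to a BGIP isometry of $CV_N$. Independence in the sense of Definition \ref{dfn:indep} then follows by projecting orbits down to $\mathcal{FF}_N$ via the coarse Lipschitz map $\pi^{\mathcal{FF}}$: the images are quasigeodesic axes with four distinct endpoints in the Gromov-hyperbolic space $\mathcal{FF}_N$, hence of unbounded Hausdorff distance; coarse Lipschitzness of $\pi^{\mathcal{FF}}$ then lifts this to the statement that the two orbits in $CV_N$ themselves have unbounded Hausdorff distance, as required.

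With Convention \ref{conv:main} in hand, both conclusions of the theorem are direct specializations of Theorem \ref{thm:pivotingRWSLLN2} and Theorem \ref{thm:expBd}, respectively. There is no genuine obstacle: the only point worth double-checking --- and the one the preceding paragraph is devoted to --- is the compatibility between the $\mathcal{FF}_N$-based notion of non-elementarity that is customary for subgroups of $\Out(F_N)$ and the BGIP-based notion of Convention \ref{conv:main}. Once Theorem \ref{thm:iwipBGIP} is available, this compatibility is essentially automatic, and the proof reduces to a citation of the general theory.
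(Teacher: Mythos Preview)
Your proposal is correct and matches the paper's approach exactly. The paper does not give a separate proof of this theorem: it is stated as a direct specialization of the general results in Section~\ref{section:limit} (specifically Theorem~\ref{thm:pivotingRWSLLN2} and Theorem~\ref{thm:expBd}) once Theorem~\ref{thm:iwipBGIP} has established that fully irreducibles are BGIP isometries, and your verification that Convention~\ref{conv:main} holds via the coarse Lipschitz projection to $\mathcal{FF}_{N}$ mirrors the discussion in Section~\ref{section:outer}.
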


Next, we recover Horbez's CLT on $\Out(F_{N})$ \cite{horbez2018clt} and establish its converse: 

\begin{thm}\label{thm:OuterCLTStrong}
Let $(X, d)$ be the Culler-Vogtmann Outer space $CV_{N}$ of rank $N \ge 3$ with the Lipschitz metric, and let $(Z_{n})_{n}$ be the random walk generated by a non-elementary probability measure $\mu$ on $\Out(F_{N})$. \begin{enumerate}
\item Suppose that $\mu$ has finite second moment.  Then the following limit (called the \emph{asymptotic variance of $\mu$}) exists: \[
\sigma^{2} (\mu) := \lim_{n \rightarrow \infty} \frac{1}{n} Var[d(o, Z_{n} o)],
\]
and the random variables $\frac{1}{\sqrt{n}} [d(o, Z_{n}o) - \lambda(\mu) n]$ and $\frac{1}{\sqrt{n}} [\tau(Z_{n}) - \lambda(\mu) n]$ converge in law to the Gaussian law $\mathcal{N}(0, \sigma(\mu))$ with zero mean and variance $\sigma^{2}(\mu)$. Furthermore, we have  \[
\limsup_{n \rightarrow \infty} \frac{d(o, Z_{n} o) - \lambda(\mu) n}{\sqrt{2n \log \log n}} =
\limsup_{n \rightarrow \infty} \frac{\tau(Z_{n})- \lambda(\mu) n}{\sqrt{2n \log \log n}} = \sigma(\mu).
\] 
\item Suppose that $\mu$ has infinite second moment. Then neither $(\frac{1}{\sqrt{n}} d(o, Z_{n}o) - c_{n})_{n>0}$ nor $(\frac{1}{\sqrt{n}} \tau(Z_{n}) - c_{n})_{n > 0}$ converge in law for any choice of the sequence $(c_{n})_{n>0}$.
\end{enumerate}
\end{thm}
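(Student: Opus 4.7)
The plan is to verify that Outer space $CV_N$ together with a non-elementary probability measure $\mu$ on $\Out(F_N)$ falls under Convention \ref{conv:main}, and then to invoke the general limit laws from Section \ref{section:limit}. Since $\mu$ is non-elementary, $\llangle \supp \mu \rrangle$ contains two fully irreducibles $\phi,\psi\in \Out(F_N)$ with mutually distinct attracting/repelling trees; by Theorem \ref{thm:iwipBGIP} both have BGIP on $CV_N$, and since the coarsely Lipschitz, coarsely $\Out(F_N)$-equivariant projection $\pi^{\mathcal{FF}}:CV_N\to\mathcal{FF}_N$ sends these two loxodromics to independent quasi-axes in the Gromov hyperbolic $\mathcal{FF}_N$, their orbits in $CV_N$ have unbounded Hausdorff distance. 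Thus $\mu$ is non-elementary in the BGIP sense of Section \ref{subsect:RW}, and we may recall from Section \ref{section:outer} that the Lipschitz metric satisfies coarse symmetry on $\Out(F_N)\cdot o$.

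For part (1), I would apply Corollary \ref{cor:CLT}, whose conclusion already matches part (1) verbatim: it provides the existence of the asymptotic variance $\sigma^2(\mu)$, the CLT for both $d(o,Z_n o)$ and $\tau(Z_n)$ to the common Gaussian $\mathcal{N}(0,\sigma(\mu)^2)$, and the LIL for $\tau(Z_n)$. The corresponding LIL statement for $d(o,Z_n o)$ is Theorem \ref{thm:LILStrong}. The mechanism behind Corollary \ref{cor:CLT} is the combination of Theorem \ref{thm:CLTStrong} (CLT for displacement) and Theorem \ref{thm:discrepancy}, which at $p=2$ gives $\tau(Z_n)-d(o,Z_n o) = o(\sqrt{n})$ almost surely, so that $\frac{1}{\sqrt n}(\tau(Z_n)-\lambda n)$ and $\frac{1}{\sqrt n}(d(o,Z_n o)-\lambda n)$ share the same a.s. fluctuations up to $o(1)$. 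Note that only the second moment of $\mu$ itself is required by each of Theorem \ref{thm:CLTStrong}, Theorem \ref{thm:discrepancy} at $p=2$, Theorem \ref{thm:LILStrong}, and Corollary \ref{cor:CLT}, so no appeal to the moment of $\check\mu$ is needed.

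For part (2), I would invoke Proposition \ref{prop:CLTConverse} directly in the Convention \ref{conv:main} setting already established in the first paragraph. This yields the non-convergence in law of $\frac{1}{\sqrt n}(d(o,Z_n o)-c_n)$ and $\frac{1}{\sqrt n}(\tau(Z_n)-c_n)$ for any deterministic sequence $(c_n)$ whenever $\mu$ has infinite second moment. The only non-routine ingredient in the overall argument is the check that two fully irreducibles with distinct attracting/repelling trees yield independent BGIP isometries in the sense of Definition \ref{dfn:indep}; everything else is a mechanical translation of the general framework of Sections \ref{subsection:limit1}--\ref{subsection:limit2} to the Outer space setting via Theorem \ref{thm:iwipBGIP}.
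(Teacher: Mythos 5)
Your proposal is correct and follows essentially the same route as the paper, which deduces Theorem \ref{thm:OuterCLTStrong} by verifying Convention \ref{conv:main} for $CV_N$ (BGIP of fully irreducibles via Theorem \ref{thm:iwipBGIP}, independence lifted from $\mathcal{FF}_N$ through $\pi^{\mathcal{FF}}$) and then citing Theorem \ref{thm:CLTStrong}, Theorem \ref{thm:LILStrong}, Corollary \ref{cor:CLT}, and Proposition \ref{prop:CLTConverse}. Your observation that only the second moment of $\mu$ (not $\check\mu$) is needed is consistent with those statements, and in any case the paper notes that the coarse symmetry of the Lipschitz metric on the orbit $\Out(F_N)\cdot o$ makes the moments of $\mu$ and $\check\mu$ equivalent here.
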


Next, we discuss the geodesic tracking of random walks, which is a generalization of \cite[Theorem 7.2]{kaimanovich2000hyp}, \cite[Theorem 5]{tiozzo2015sublinear} and \cite[Theorem 1.3]{maher2018random} to Outer space.

\begin{thm}\label{thm:OuterTracking }
Let $(X, d)$ be the Culler-Vogtmann Outer space $CV_{N}$ of rank $N \ge 3$ with the Lipschitz metric, and let $(Z_{n})_{n}$ be the random walk generated by a non-elementary probability measure $\mu$ on $\Out(F_{N})$. \begin{enumerate}
\item Suppose that  $\mu$ has finite $p$-th moment for some $p > 0$. Then for almost every sample path $(Z_{n}(\w))_{n \ge 0}$, there exists a quasigeodesic $\gamma$ such that \[
\lim_{n \rightarrow\infty} \frac{1}{n^{1/2p}} d^{sym}(Z_{n} o, \gamma) = 0.
\]
\item Suppose that $\mu$ and $\check{\mu}$ have finite exponential moment. Then there exists $K<+\infty$ such that for almost every sample path $(Z_{n}(\w)_{n\ge 0}$, there exists a quasigeodesic $\gamma$ satisfying \[
\lim_{n \rightarrow \infty}  \frac{1}{\log n} d^{sym}(Z_{n} o, \gamma) < K.
\]
\end{enumerate}
\end{thm}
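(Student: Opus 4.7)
The plan is to reduce this theorem directly to Theorem \ref{thm:tracking}, the analogous statement in the general BGIP framework. First, I would verify that $(CV_N, \Out(F_N), o)$ satisfies Convention \ref{conv:main}: Theorem \ref{thm:iwipBGIP} provides that every fully irreducible outer automorphism in $\Out(F_N)$ acts on $CV_N$ with BGIP. A non-elementary probability measure $\mu$ has semigroup $\llangle \supp \mu \rrangle$ containing two fully irreducibles with mutually distinct attracting/repelling trees. Since $\pi^{\mathcal{FF}}$ is coarsely Lipschitz and coarsely $\Out(F_N)$-equivariant, the divergence of their orbits in $\mathcal{FF}_N$ lifts to unbounded Hausdorff distance between their orbits in $CV_N$, yielding two independent BGIP isometries in the sense of Definition \ref{dfn:indep}.

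Second, I would invoke the coarse symmetry of the Lipschitz metric on the orbit $\Out(F_N) \cdot o$ recorded in Section \ref{section:outer}: there is a uniform constant $C > 0$ such that $d(x, y) \le C \, d(y, x)$ for all $x, y \in \Out(F_N) \cdot o$ (Algom-Kfir, using that orbit points are uniformly thick). Consequently,
\[
\E_{\check{\mu}}\bigl[d(o, go)^p\bigr] = \sum_g d(go, o)^p \, \mu(g) \le C^p \sum_g d(o, go)^p \, \mu(g) = C^p \, \E_\mu\bigl[d(o, go)^p\bigr],
\]
so finite $p$-th moment of $\mu$ is equivalent to finite $p$-th moment of $\check{\mu}$; the same inequality with $d^p$ replaced by $e^{d/K}$ gives the analogous equivalence for the exponential moment. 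Thus both cases of Theorem \ref{thm:tracking} apply verbatim, and the $o(n^{1/2p})$-sublinear and $O(\log n)$-tracking conclusions transfer from the general setting to Outer space.

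The main anticipated obstacle is essentially bookkeeping rather than conceptual: making sure the two notions of \emph{non-elementary} (a probability measure on $\Out(F_N)$ vs. a semigroup of isometries containing two independent BGIP elements as in Definition \ref{dfn:nonElt}) coincide once specialized to Outer space, and in particular that Definition \ref{dfn:indep} (which is phrased in terms of $d^{sym}$) is indeed satisfied by the two fully irreducibles extracted from $\llangle \supp \mu \rrangle$. This is handled by the discussion in Section \ref{section:outer}, which identifies the BGIP isometries of $CV_N$ arising from fully irreducibles, records the equivariance and Lipschitz properties of $\pi^{\mathcal{FF}}$, and ensures that independence in $\mathcal{FF}_N$ lifts to independence in $CV_N$ with respect to the symmetrized metric.
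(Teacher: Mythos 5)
Your proposal is correct and is essentially the paper's own route: the paper proves this theorem by specializing Theorem \ref{thm:tracking} to $CV_N$, using Theorem \ref{thm:iwipBGIP} together with the lifting of independence through the coarsely Lipschitz, equivariant projection $\pi^{\mathcal{FF}}$ to verify Convention \ref{conv:main}, and the coarse symmetry of the Lipschitz metric on the orbit $\Out(F_N)\cdot o$ to pass from the finite $p$-th moment of $\mu$ to that of $\check{\mu}$, exactly as you argue.
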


We also discuss the large deviation principle for the RVs $\{d(o, Z_{n} o)/n\}_{n >0}$, by combining the strategy of \cite{boulanger2022large} and \cite{gouezel2022exponential}.

\begin{thm}\label{thm:OuterLDPStrong}
Let $(X, d)$ be the Culler-Vogtmann Outer space $CV_{N}$ of rank $N \ge 3$ with the Lipschitz metric, and let $(Z_{n})_{n}$ be the random walk generated by a non-elementary probability measure $\mu$ on $\Out(F_{N})$. Let $\lambda(\mu) = \lim_{n} \frac{1}{n} \E[d(o, Z_{n} o)]$ be the drift of $\mu$. Then for each $0 < L <\lambda(\mu)$, the probability $\Prob(d(o, Z_{n} o) \le Ln)$ decays exponentially as $n$ tends to infinity.
\end{thm}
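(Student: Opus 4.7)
The plan is to deduce Theorem \ref{thm:OuterLDPStrong} as a direct specialization of the general large deviation result Theorem \ref{thm:LDPStrong}. The only real work is to verify that $(CV_{N}, \Out(F_{N}), o)$ together with $\mu$ satisfies the hypotheses of Convention \ref{conv:main} and the non-elementarity assumption in Definition \ref{dfn:nonElt}; the conclusion then transfers verbatim, and the drift appearing in Theorem \ref{thm:OuterLDPStrong} agrees with the drift $\lambda(\mu)$ produced by Theorem \ref{thm:pivotingRWSLLN2}.

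To verify Convention \ref{conv:main}, I would observe that $CV_{N}$ is a geodesic (asymmetric) metric space and $\Out(F_{N})$ is a countable group of isometries. For $N \ge 3$, $\Out(F_{N})$ contains two fully irreducibles with pairwise distinct attracting/repelling trees in $\partial \mathcal{FF}_{N}$ (a standard ping-pong fact on the hyperbolic complex $\mathcal{FF}_{N}$). Theorem \ref{thm:iwipBGIP} promotes each such fully irreducible to a BGIP isometry of $CV_{N}$, and independence in $\mathcal{FF}_{N}$ lifts to independence in $CV_{N}$ as recorded in Section \ref{section:outer}, because the coarse projection $\pi^{\mathcal{FF}}$ is coarsely Lipschitz. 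The very same argument, now applied inside the semigroup $\llangle \supp \mu \rrangle$, shows that any $\mu$ that is non-elementary on $\Out(F_{N})$ in the sense of Section \ref{section:outer} is non-elementary in the sense of Definition \ref{dfn:nonElt}: its support semigroup contains two independent fully irreducibles, hence two independent BGIP isometries of $CV_{N}$.

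With both hypotheses confirmed, Theorem \ref{thm:LDPStrong} applies and yields exactly the conclusion of Theorem \ref{thm:OuterLDPStrong}. No step is truly an obstacle at this stage; the conceptual effort sits in Theorem \ref{thm:iwipBGIP} (establishing BGIP of fully irreducibles in the genuinely asymmetric setting of $CV_{N}$, which required adapting Dowdall--Taylor and Kapovich--Maher--Pfaff--Taylor's contraction estimates to handle both forward and backward progress along the axes) and in Theorem \ref{thm:LDPStrong} (the pivotal time construction from Section \ref{section:limit} that produces exponential decay in the general BGIP setting), both of which are already in hand.
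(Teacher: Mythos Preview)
Your proposal is correct and matches the paper's approach exactly: Section \ref{section:limitOuter} presents Theorem \ref{thm:OuterLDPStrong} (and the surrounding results) simply as a specialization of the general Theorem \ref{thm:LDPStrong} to $(CV_{N}, \Out(F_{N}))$, after invoking Theorem \ref{thm:iwipBGIP} and the remarks in Section \ref{section:outer} to verify Convention \ref{conv:main} and the non-elementarity of $\mu$. No additional argument is given or needed.
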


\begin{thm} \label{cor:OuterLDPMod}
Let $(X, d)$ be the Culler-Vogtmann Outer space $CV_{N}$ of rank $N \ge 3$ with the Lipschitz metric, and let $(Z_{n})_{n}$ be the random walk generated by a non-elementary probability measure $\mu$ on $\Out(F_{N})$. Then there exists a proper convex function $I : \mathbb{R} \rightarrow [0, +\infty]$, vanishing only at the drift $\lambda(\mu)$, such that  \[\begin{aligned}
- \inf_{x \in \operatorname{int}(E)} I(x) &\le \liminf_{n \rightarrow \infty} \frac{1}{n} \log \Prob\left( \frac{1}{n}d(id, Z_{n}) \in E\right), \\
 -\inf_{x \in \bar{E}} I(x) & \ge\limsup_{n \rightarrow \infty} \frac{1}{n} \log \Prob\left(\frac{1}{n} d(id, Z_{n}) \in E\right) 
\end{aligned}
\]
holds for every measurable set $E \subseteq \mathbb{R}$.
\end{thm}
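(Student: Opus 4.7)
The plan is to reduce the claim directly to the abstract large deviation principle of Corollary \ref{cor:LDPMod}, which was established in the general BGIP framework of Section \ref{section:limit}. The only real work is to verify that the triple $(CV_N, \Out(F_N), o)$ together with $\mu$ fits Convention \ref{conv:main}; once this is done, the rate function $I$ is produced by the abstract theorem and both inequalities follow automatically.

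First I would check Convention \ref{conv:main}. Culler--Vogtmann Outer space $CV_N$ with the Lipschitz distance is a geodesic, possibly asymmetric, metric space, and $\Out(F_N)$ is a countable group acting on it by isometries. By Theorem \ref{thm:iwipBGIP}, for $N \ge 3$ every fully irreducible outer automorphism has BGIP with respect to closest-point projection. Since $\mu$ is non-elementary, the semigroup $\llangle \supp \mu \rrangle$ contains two fully irreducibles $\phi, \psi$ with pairwise distinct attracting and repelling trees in $\partial \mathcal{FF}_N$, so their $\mathcal{FF}_N$-orbits are quasi-axes at unbounded Hausdorff distance. Because $\pi^{\mathcal{FF}}\colon CV_N \to \mathcal{FF}_N$ is coarsely Lipschitz, the lifted orbits $\{\phi^n o\}_{n\in\Z}$ and $\{\psi^n o\}_{n\in\Z}$ remain at unbounded $d^{sym}$-distance, giving two independent BGIP isometries in $\llangle \supp \mu \rrangle$. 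Thus $\mu$ is non-elementary in the sense required by our general theory.

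With the hypotheses verified, Corollary \ref{cor:LDPMod} applies verbatim and delivers a proper convex rate function $I : \R \to [0, +\infty]$ vanishing only at $\lambda(\mu)$ satisfying both the open-set lower bound and the closed-set upper bound stated in the theorem. The exponential decay of $\Prob(d(o, Z_n o) \le Ln)$ recorded in Theorem \ref{thm:OuterLDPStrong} is subsumed by the upper bound applied to closed sets in $(-\infty, L]$; the full LDP is a packaging of the pivotal-time construction from Section 6 of \cite{choi2022random1}, whose three ingredients (existence of large fairly long Schottky sets via Proposition \ref{prop:Schottky}, the alignment lemma Lemma \ref{lem:1segment}, and the witness proposition Proposition \ref{prop:BGIPWitness}) are all available in the BGIP setting. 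The only genuine obstacle in this program, namely BGIP of fully irreducibles in Outer space, has already been resolved by Theorem \ref{thm:iwipBGIP}; nothing further specific to $\Out(F_N)$ is required.
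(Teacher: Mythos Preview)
Your proposal is correct and follows exactly the paper's approach: the paper states this theorem in Section~\ref{section:limitOuter} as a direct specialization of Corollary~\ref{cor:LDPMod} to Outer space, with the only substantive input being Theorem~\ref{thm:iwipBGIP} (BGIP of fully irreducibles) and the lifting of independence from $\mathcal{FF}_N$ to $CV_N$ via the coarsely Lipschitz projection $\pi^{\mathcal{FF}}$. You have spelled out the verification of Convention~\ref{conv:main} more explicitly than the paper does, but the argument is the same.
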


\begin{thm}\label{thm:OuterDiscrepancy}
Let $(X, d)$ be the Culler-Vogtmann Outer space $CV_{N}$ of rank $N \ge 3$ with the Lipschitz metric, and let $(Z_{n})_{n}$ be the random walk generated by a non-elementary probability measure $\mu$ on $\Out(F_{N})$. If $\mu$ has finite $p$-th moment for some $p > 0$, then \[
\lim_{n \rightarrow \infty} \frac{1}{n^{1/2p}} [d(o, Z_{n} o)- \tau(Z_{n}) ]= 0 \quad \textrm{a.s.}
\]
Furthermore, if $\mu$ has finite first moment, then there exists $K>0$ such that \[
\lim_{n \rightarrow \infty} \frac{1}{\log n} [d(o, Z_{n} o)- \tau(Z_{n}) ]< K \quad \textrm{a.s.}
\]
\end{thm}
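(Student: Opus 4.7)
The plan is to derive both displays by combining Theorem \ref{thm:iwipBGIP}, which places $(CV_{N}, \Out(F_{N}))$ inside the framework of Convention \ref{conv:main}, with the symmetric-moment strengthening of Theorem \ref{thm:discrepancy} that is flagged in the excerpt right after its proof. The bridge between the two is the coarse symmetry of the Lipschitz metric on the $\Out(F_{N})$-orbit of $o$.

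First, I would verify that a non-elementary measure $\mu$ on $\Out(F_{N})$ is non-elementary in the sense of Definition \ref{dfn:nonElt}. As recalled in Section \ref{section:outer}, a non-elementary subgroup of $\Out(F_{N})$ contains two fully irreducible elements with pairwise distinct attracting and repelling trees, and independence in $CV_{N}$ follows from independence in $\mathcal{FF}_{N}$ because the coarse projection $\pi^{\mathcal{FF}}$ is coarsely Lipschitz and coarsely equivariant. Theorem \ref{thm:iwipBGIP} promotes these two elements to two independent BGIP isometries, so Convention \ref{conv:main} is satisfied and the general limit laws of Section \ref{section:limit} apply.

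Second, I would use the coarse symmetry of the Lipschitz metric on $\Out(F_{N}) \cdot o$ recalled in Section \ref{section:outer}: there exists $C = C(o) < \infty$ such that $d(x, y) \le C\, d(y, x)$ for all $x, y \in \Out(F_{N}) \cdot o$. Consequently
\[
\E_{\check{\mu}}\big[d(o, go)^{p}\big] \;=\; \sum_{g} d(go, o)^{p}\, \mu(g) \;\le\; C^{p}\, \E_{\mu}\big[d(o, go)^{p}\big],
\]
so $\check{\mu}$ has finite $p$-th moment whenever $\mu$ does. This upgrades the single-sided hypothesis of the theorem into the symmetric hypothesis under which the excerpt asserts that the proof of \cite[Theorem B]{choi2022random1} carries over after replacing $d(\cdot, \cdot)$ by $d^{sym}(\cdot, \cdot)$, giving $o(n^{1/2p})$-tracking of translation length by displacement in general and logarithmic tracking when $p = 1$. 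Invoking that strengthened version yields the two displays of the theorem.

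The main technical point, more than an obstacle, is the equivalence of the $p$-th moments of $\mu$ and $\check{\mu}$ via coarse symmetry on the orbit, since the Lipschitz metric is genuinely asymmetric away from thick parts; once this is in hand, the conclusion is a direct specialization of the general limit laws to $(CV_{N}, \Out(F_{N}))$.
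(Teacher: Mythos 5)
Your proposal is correct and follows the paper's (implicit) argument: Section 5 obtains this theorem by applying the general discrepancy result to $(CV_N,\Out(F_N))$, with Theorem \ref{thm:iwipBGIP} plus independence lifted from $\mathcal{FF}_N$ verifying Convention \ref{conv:main}, and the coarse symmetry of the Lipschitz metric on the orbit upgrading finite $p$-th moment of $\mu$ to that of $\check{\mu}$, which is exactly what the strengthened ($d^{sym}$-based) version of Theorem \ref{thm:discrepancy} needs to give the $o(n^{1/2p})$ and logarithmic bounds.
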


The following is an effective version of \cite[Theorem 1.4]{taylor2016random} regarding genericity of q.i. embedded subgroups of $\Out(F_{N})$.

\begin{thm}\label{thm:qiOuter} 
Let $(X, d)$ be the Culler-Vogtmann Outer space $CV_{N}$ of rank $N \ge 3$ with the Lipschitz metric and let  $(Z_{n}^{(1)}, \ldots, Z_{n}^{(k)})_{n \ge 0}$ be $k$ independent random walks generated by a non-elementary measure $\mu$ on $\Out(F_{N})$. Then there exists $K>0$ such that \[
\Prob \left[ \langle Z_{n}^{(1)}, \ldots, Z_{n}^{(k)} \rangle \,\,\textrm{is q.i. embedded into a quasi-convex subset of $X$} \right] \ge 1- K e^{-n/K}.
\]
\end{thm}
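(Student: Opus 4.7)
The plan is to verify that the Outer space setting $(CV_N, \Out(F_N), o)$ satisfies Convention \ref{conv:main} and then invoke Theorem \ref{thm:qi} directly. Indeed, as recalled in Section \ref{section:outer}, $CV_N$ equipped with the Lipschitz metric is a geodesic asymmetric metric space, and $\Out(F_N)$ is a countable group acting by isometries.

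Next I would check the non-elementarity condition. By the definition of non-elementary measure in the Outer space setting, $\llangle \supp \mu \rrangle$ contains two fully irreducible outer automorphisms $\phi_1, \phi_2$ whose attracting and repelling trees are mutually distinct. By Theorem \ref{thm:iwipBGIP}, each $\phi_i$ has BGIP with respect to the closest point projection on $CV_N$. To see that $\phi_1$ and $\phi_2$ are independent in the sense of Definition \ref{dfn:indep}, note that since $\mathcal{FF}_N$ is Gromov hyperbolic and $\phi_i$ acts loxodromically on $\mathcal{FF}_N$ (by Bestvina-Feighn) with distinct fixed points on $\partial \mathcal{FF}_N$, the map $(n,m) \mapsto d_{\mathcal{FF}}(\pi^{\mathcal{FF}}(\phi_1^n o), \pi^{\mathcal{FF}}(\phi_2^m o))$ is proper. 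Because $\pi^{\mathcal{FF}}$ is coarsely Lipschitz, this propriety lifts to the map $(n,m) \mapsto d^{sym}(\phi_1^n o, \phi_2^m o)$ on $CV_N$, so that $\phi_1$ and $\phi_2$ are independent BGIP isometries of $CV_N$. Hence $(CV_N, \Out(F_N), o)$ satisfies Convention \ref{conv:main}, and moreover $\mu$ is a non-elementary measure in the sense of that convention.

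With these verifications in place, Theorem \ref{thm:qi} applies verbatim and yields the desired exponential estimate: there exists $K > 0$ such that
\[
\Prob \left[ \langle Z_{n}^{(1)}, \ldots, Z_{n}^{(k)} \rangle \,\,\textrm{is q.i.\ embedded into a quasi-convex subset of $X$} \right] \ge 1- K e^{-n/K}.
\]

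No genuine obstacle arises in this argument; the theorem is a direct translation of the abstract result in Section \ref{section:limit} to the concrete setting of Outer space. The only substantive content is Theorem \ref{thm:iwipBGIP}, which was already the key technical input that makes the abstract BGIP machinery applicable to $\Out(F_N)$, together with the standard observation relating non-elementarity on $\mathcal{FF}_N$ to independence in $CV_N$.
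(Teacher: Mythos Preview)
Your proposal is correct and follows exactly the paper's approach: the paper states Theorem~\ref{thm:qiOuter} (and all the other results of Section~\ref{section:limitOuter}) as direct applications of the corresponding abstract results in Section~\ref{section:limit} once Convention~\ref{conv:main} is verified for $(CV_N, \Out(F_N))$, with Theorem~\ref{thm:iwipBGIP} supplying the BGIP and the coarse Lipschitz projection $\pi^{\mathcal{FF}}$ supplying independence. Your write-up simply makes these verifications explicit.
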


By using the technique in \cite{choi2021generic}, \cite{choi2022random2}, we obtain an analogue in the counting problem.

\begin{thm}\label{thm:qiCountOuter}
Let $(X, d)$ be the Culler-Vogtmann Outer space $CV_{N}$ of rank $N \ge 3$ with the Lipschitz metric.  Then for each $k > 0$, there exists a finite generating set $S$ of $\Out(F_{N})$ such that \[
\frac{\#\left\{(g_{1}, \ldots, g_{k}) \in \big(B_{S}(n) \big)^{k} :\begin{array}{c}  \langle g_{1}, \ldots, g_{k} \rangle \,\, \textrm{is q.i. embedded into} \\ \textrm{a quasi-convex subset of $X$} \end{array} \right\} } {\big(\# B_{S}(n)\big)^{k}}
\]
converges to 1 exponentially fast.
\end{thm}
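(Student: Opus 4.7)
The plan is to obtain Theorem \ref{thm:qiCountOuter} as a direct specialization of the general counting theorem, Theorem \ref{thm:qiCount}, once the hypotheses of that theorem are verified for the pair $(CV_{N}, \Out(F_{N}))$. Since Theorem \ref{thm:qiCount} already handles any finitely generated group acting on an asymmetric metric space with at least two independent BGIP isometries, the only task is to check that $\Out(F_{N})$ acting on $CV_{N}$ fits this framework.

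First I would record that $\Out(F_{N})$ is finitely generated (classical) and acts by isometries on $CV_{N}$ with its Lipschitz metric (standard). The substantive point is the existence of two independent BGIP elements. By Theorem \ref{thm:iwipBGIP}, every fully irreducible $\phi \in \Out(F_{N})$ has BGIP with respect to the closest point projection on $CV_{N}$. For $N \ge 3$, a standard ping-pong argument on the Gromov-hyperbolic free factor complex $\mathcal{FF}_{N}$, on which $\Out(F_{N})$ acts with loxodromic elements (Bestvina--Feighn), produces pairs of fully irreducibles $\phi, \psi$ whose attracting/repelling trees are pairwise distinct, hence loxodromic on $\mathcal{FF}_{N}$ with disjoint fixed points in $\partial \mathcal{FF}_{N}$. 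Since the coarse projection $\pi^{\mathcal{FF}} : CV_{N} \rightarrow \mathcal{FF}_{N}$ is coarsely Lipschitz and $\Out(F_{N})$-equivariant, the symmetrized distance $d^{sym}(\phi^{n} o, \psi^{m} o)$ is proper in $(n, m)$, so the orbits $\{\phi^{n} o\}_{n}$ and $\{\psi^{m} o\}_{m}$ are independent in the sense of Definition \ref{dfn:indep}.

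With the hypotheses of Theorem \ref{thm:qiCount} in hand, I would apply that theorem to $G = \Out(F_{N})$ and $X = CV_{N}$, yielding a finite generating set $S$ of $\Out(F_{N})$ for which the proportion of $k$-tuples $(g_{1}, \ldots, g_{k}) \in B_{S}(n)^{k}$ whose generated subgroup is q.i.\ embedded into a quasi-convex subset of $CV_{N}$ tends to $1$ exponentially fast. This is exactly the conclusion of Theorem \ref{thm:qiCountOuter}.

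The main obstacle is effectively absent at this stage: the conceptual work has been done upstream, namely (i) establishing BGIP for all fully irreducibles in Theorem \ref{thm:iwipBGIP}, which unlocks the use of Schottky/pivoting techniques on Outer space, and (ii) developing the Schottky machinery and counting argument in the asymmetric BGIP setting, culminating in Theorem \ref{thm:qiCount}. The only mildly delicate point here is producing two \emph{verifiably} independent fully irreducibles, but as indicated this is routine via ping-pong on $\mathcal{FF}_{N}$ together with the coarse Lipschitz property of $\pi^{\mathcal{FF}}$.
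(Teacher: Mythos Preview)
Your proposal is correct and matches the paper's approach: the paper states Theorem \ref{thm:qiCountOuter} without a separate proof, deriving it (along with the other limit laws in Section \ref{section:limitOuter}) directly from the general Theorem \ref{thm:qiCount} once Theorem \ref{thm:iwipBGIP} supplies the required independent BGIP isometries for $\Out(F_{N})$ acting on $CV_{N}$. Your verification of the hypotheses via fully irreducibles and the coarse Lipschitz projection to $\mathcal{FF}_{N}$ is exactly what the paper's Section \ref{section:outer} sets up.
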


Since any fully irreducible has BGIP, we can apply Lemma \ref{lem:coarseEquiv} to have the following version of Proposition \ref{prop:stability}:

\begin{prop}\label{prop:OuterStability}
Let $(X, d)$ be the Culler-Vogtmann Outer space $CV_{N}$ of rank $N \ge 3$ with the Lipschitz metric and let $(Z_{n})_{n>0}$ be the random walk generated by a non-elementary probability measure $\mu$ on $\Out(F_{N})$. Then for each fully irreducible $g$ generated by the support of $\mu$, there exists $K>0$ such that the following holds. 

For each positive integer $M$, there exists $\epsilon>0$ such that the following holds outside a set of exponentially decaying probability: for a random path $(Z_{n})_{n>0}$, there exist $0 <i(1) < \ldots < i(\epsilon n) < n$ and there exist disjoint subsegments $\eta_{1}, \ldots, \eta_{\epsilon n}$ of $[o, Z_{n} o]$, from closest to farthest from $o$, such that $\eta_{k}$ and $[Z_{i(k)}, Z_{i(k)} g^{M} o]$ are $K$-fellow traveling for $k=1, \ldots, \epsilon n$.
\end{prop}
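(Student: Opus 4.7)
The plan is to combine Proposition \ref{prop:stability} with the BGIP of fully irreducible automorphisms (Theorem \ref{thm:iwipBGIP}) and the projection lemma \ref{lem:coarseEquiv}. By Theorem \ref{thm:iwipBGIP}, the orbit $\{g^n o\}_{n \in \Z}$ is a $K_0$-BGIP axis for some $K_0 = K_0(g)$, and by Lemma \ref{lem:BGIPRestriction} the finite sub-orbit $\alpha_M := (o, go, \ldots, g^M o)$ is a $K_1$-BGIP axis with $K_1 = K_1(K_0)$ independent of $M$. Moreover, Corollary \ref{cor:coarseGeod} gives a constant $K_2 = K_2(K_1)$ such that $\alpha_M$ and the geodesic $[o, g^M o]$ are $K_2$-coarsely equivalent and $K_2$-fellow travel after reparametrization. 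Note $g^M \in \llangle \supp \mu \rrangle$.

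Next, I would apply Proposition \ref{prop:stability} with the isometry $g^M$. Outside a set of exponentially decaying probability, this yields $\epsilon > 0$, indices $0 < i(1) < \ldots < i(\epsilon n) < n$, and points $p_1, \ldots, p_{2\epsilon n}$ on $[o, Z_n o]$ arranged in order, such that $p_{2k-1}$ is $K_3$-$d^{sym}$-close to $Z_{i(k)} o$ and $p_{2k}$ is $K_3$-$d^{sym}$-close to $Z_{i(k)} g^M o$, where $K_3 = K_3(\mu)$. Let $\kappa_k := Z_{i(k)} \alpha_M$, which is a $K_1$-BGIP axis beginning at $Z_{i(k)} o$ and ending at $Z_{i(k)} g^M o$. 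Because the closest point projection onto $\kappa_k$ is coarsely Lipschitz (Corollary \ref{cor:coarseLipschitz}), $\pi_{\kappa_k}(p_{2k-1})$ and $\pi_{\kappa_k}(p_{2k})$ are, up to a constant depending only on $K_1$ and $K_3$, the starting and ending points of $\kappa_k$. Therefore, for $M$ large enough that $d(o, g^M o)$ exceeds the BGIP threshold of Lemma \ref{lem:coarseEquiv}, the projection $\pi_{\kappa_k}([o, Z_n o])$ has large diameter and Lemma \ref{lem:coarseEquiv} produces a subsegment $\eta_k \subset [o, Z_n o]$ that is $C$-coarsely equivalent to $\pi_{\kappa_k}([o, Z_n o])$, hence $C'$-coarsely equivalent to $\kappa_k$ itself, where $C, C'$ depend only on $K_1, K_3$.

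I would then upgrade coarse equivalence to fellow-traveling: since both $\eta_k$ and $[Z_{i(k)} o, Z_{i(k)} g^M o]$ are geodesics with endpoints uniformly close to those of $\kappa_k$, Lemma \ref{lem:coarseGeod} provides a $K_4$-quasi-isometry reparametrization making them $K$-fellow traveling, where $K = K(K_1, K_2, K_3)$ is independent of both $M$ and $n$.

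The remaining issue is disjointness of the $\eta_k$'s. Each $\eta_k$ lies between $p_{2k-1}$ and $p_{2k}$ on $[o, Z_n o]$ up to an additive error bounded in terms of $K$, and the $p_j$'s are themselves in order along $[o, Z_n o]$; by thinning the index sequence (selecting every other $i(k)$ if needed, and absorbing the factor of $2$ into $\epsilon$), we can take $M$ large enough that the error bound is dominated by the separation between $p_{2k}$ and $p_{2k+1}$, yielding genuine disjointness. The main obstacle throughout is that the metric is asymmetric, so every closeness estimate must be interpreted in $d^{sym}$ and every alignment must be derived through the BGIP toolkit of Section \ref{subsect:BGIPProperty} (in particular, Corollary \ref{cor:coarseLipschitz} and Corollary \ref{cor:coarseGeod}), which is precisely why Theorem \ref{thm:iwipBGIP} was established in the strong form it takes.
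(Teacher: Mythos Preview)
Your approach is essentially identical to the paper's, which simply states that the result follows from Proposition~\ref{prop:stability} together with the BGIP of fully irreducibles (Theorem~\ref{thm:iwipBGIP}) and Lemma~\ref{lem:coarseEquiv}; you have merely spelled out the details. One small quantifier slip: in the disjointness paragraph you write ``we can take $M$ large enough,'' but $M$ is given and $K$ must be chosen before $M$---the correct fix is the one you already hint at, namely thinning the index set by a factor depending on $K$ and $M$ and absorbing that factor into $\epsilon$ (which is allowed to depend on $M$).
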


In \cite{kapovich2022random}, the author observed the following stability of triangular fully irreducible automorphisms. For the definition of principal and triangular fully irreducibles, refer to \cite[Section 5.3]{kapovich2022random}; for us, it it sufficient to know that principal and triangular fully irreducibles are ageometric.

\begin{lem}[{\cite[Corollary 5.14]{kapovich2022random}}] \label{lem:stableLine}
Let $N \ge 3$. 
Let $\varphi$ be a fully irreducible outer automorphism in $\Out(F_{N})$ with the lone axis $\gamma$ on $CV_{N}$ (this is satisfied when $\varphi$ is a principal fully irreducible). Then for each $\epsilon > 0$ there exists $R \ge 1$ such that, if $\psi$ is a fully irreducible automorphism with an axis $\gamma'$ such that some $R$-long subsegments of $\gamma$ and $\gamma'$ are $\epsilon$-fellow traveling, then $\psi$ is a triangular fully irreducible.
\end{lem}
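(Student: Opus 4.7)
The plan is to exploit the combinatorial rigidity of a lone axis. By the Mosher--Pfaff characterization, $\gamma$ is not merely the unique periodic folding line of $\varphi$: it is combinatorially rigid, in the sense that along $\gamma$ the illegal turn structure is uniquely prescribed at each point, and the sequence of open simplices of $CV_{N}$ traversed by $\gamma$ forms a canonical periodic pattern determined by a rotationless train-track representative with no periodic Nielsen paths. Any axis that closely parallels $\gamma$ for long enough must locally reproduce this pattern.

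First I would fix $\epsilon_{0}>0$ small enough that an $\epsilon_{0}$-$d^{sym}$-neighborhood of any point on $\gamma$ is contained in a uniformly bounded number of closed simplices of $CV_{N}$; this is possible because $\gamma$ is uniformly thick and the underlying simplicial complex is locally finite near thick points. Assuming $\epsilon \le \epsilon_{0}$, if $\gamma'$ is an axis of $\psi$ whose $R$-long subsegment $\epsilon$-fellow travels $\gamma$, then the simplicial address of each point on that subsegment of $\gamma'$ agrees, up to a bounded combinatorial error, with the address of the corresponding point of $\gamma$. Choosing $R$ larger than one period of $\gamma$ (measured both in the Lipschitz and in the simplicial sense) forces $\gamma'$ to reproduce the fold sequence of $\gamma$ over at least a full fundamental domain.

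A full fundamental domain of the lone axis determines, up to a finite ambiguity, the marked graph, the illegal-turn structure, and the local Whitehead graphs at the vertices -- these are precisely the data that define \emph{triangular} in \cite{kapovich2022random}. Once $\psi$'s axis shares this local data for long enough, the train-track representative of $\psi$ must itself be built out of the same triangular fold moves (possibly after a cyclic rotation and rescaling of the periodic pattern), so $\psi$ is a triangular fully irreducible as claimed.

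The main obstacle will be the combinatorial-rigidity step: translating the metric condition of $\epsilon$-fellow traveling (in the asymmetric Lipschitz metric) into a genuine agreement of fold sequences. The Lipschitz metric is only coarsely compatible with the simplicial structure of $CV_{N}$, so this requires using uniform thickness of $\gamma$ (to recover local symmetry of the metric), a compactness argument over thick unit-volume marked graphs to get uniform control on how Stallings folds evolve along neighboring folding lines, and, at the level of coarse geometry, the BGIP of fully irreducibles (Theorem \ref{thm:iwipBGIP}) to keep $\gamma'$ trapped in a controlled tube around $\gamma$ throughout the matched subsegment.
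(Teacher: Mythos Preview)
The paper does not supply its own proof of this lemma: it is quoted verbatim from \cite[Corollary 5.14]{kapovich2022random} and used as a black box in the subsequent Theorem \ref{thm:ageom}. So there is no in-paper argument to compare your proposal against.

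That said, your sketch has a genuine gap in the final step. You argue that once $\gamma'$ reproduces the fold sequence of $\gamma$ over a full fundamental domain of $\varphi$, ``the train-track representative of $\psi$ must itself be built out of the same triangular fold moves.'' This does not follow: the period of $\psi$ along $\gamma'$ may be vastly longer than $R$, so the matched segment need not contain even one fundamental domain of $\psi$, and outside that segment the fold sequence of $\gamma'$ is unconstrained. What actually makes the argument in \cite{kapovich2022random} work is that the \emph{ideal Whitehead graph} of a fully irreducible can be read off from the local Whitehead graphs at \emph{any single point} of its axis (not from a full period of the fold sequence). Fellow-traveling a lone axis for long enough forces the local Whitehead graphs of $\gamma'$ at some point to coincide with those of $\gamma$; since $\varphi$ has a lone axis its ideal Whitehead graph is a union of triangles, and this then transfers to $\psi$. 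Your simplicial-rigidity and thickness considerations are relevant for that local matching step, but the conclusion ``$\psi$ is triangular'' comes from pointwise data, not from matching an entire periodic pattern.
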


By combining Lemma \ref{lem:stableLine} with Proposition \ref{prop:OuterStability}, we recover version of Kapovich-Maher-Pfaff-Taylor's result with weaker moment condition. 

\begin{thm}[cf. {\cite[Theorem A]{kapovich2022random}}] \label{thm:ageom}
Let $N \ge 3$ and let $\mu$ be a non-elementary probability measure on $\Out(F_{N})$ such that the subsemigroup $\llangle \supp \mu \rrangle$ generated by the support of $\mu$ contains the inverse of a principal fully irreducible automorphism. Then outside a set of exponentially decaying probability, $Z_{n}$ is an ageometric triangular fully irreducible outer automorphism.
\end{thm}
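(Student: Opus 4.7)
The plan is to combine the stability of geometric features along sample paths with the rigidity of lone axes of principal fully irreducibles. Fix a principal fully irreducible $\varphi\in\Out(F_N)$ with $\varphi^{-1}\in\llangle\supp\mu\rrangle$ and let $\gamma$ denote its lone axis. Let $K_0$ be the fellow-traveling constant provided by Proposition \ref{prop:OuterStability} applied to $g=\varphi^{-1}$, and apply Lemma \ref{lem:stableLine} with tolerance $K_0$ to obtain $R=R(K_0)$ such that any fully irreducible $\psi$ with an axis $K_0$-fellow traveling an $R$-long subsegment of $\gamma$ is triangular fully irreducible. Since the paper records that triangularity forces ageometricity, it suffices to show that, outside a set of exponentially decaying probability, $Z_n$ is fully irreducible and possesses an axis $K_0$-fellow traveling an $R$-long subsegment of a translate of $\gamma$.

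Choose $M$ so that $d(o,\varphi^{-M}o)>10R$ and combine Proposition \ref{prop:OuterStability} with the exponential bound of Theorem \ref{thm:expBd}. Outside a set of exponentially decaying probability, $Z_n$ has BGIP with $\tau(Z_n)\ge Ln$ for some fixed $0<L<\lambda(\mu)$, and the geodesic $[o,Z_no]$ contains long disjoint subsegments $K_0$-fellow traveling translates of the folding line $[o,\varphi^{-M}o]$. Because $Z_n$ has BGIP, concatenating $Z_n$-translates of $[o,Z_no]$ (via Lemma \ref{lem:BGIPConcat} and Corollary \ref{cor:coarseGeod}) produces a quasigeodesic at bounded Hausdorff distance from the $Z_n$-orbit, and this invariant quasi-axis of $Z_n$ inherits long $K_0$-fellow traveling with a translate of a folding axis of $\varphi^{-1}$. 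Since the projection $\pi^{\mathcal{FF}}$ of a folding axis of a fully irreducible is a quasigeodesic, the periodic image $\pi^{\mathcal{FF}}(\{Z_n^ko\}_{k\in\Z})$ makes definite progress per period, so $Z_n$ acts loxodromically on $\mathcal{FF}_N$ and is fully irreducible by Bestvina--Feighn.

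The concluding step is to convert fellow traveling with the folding axis of $\varphi^{-1}$ into fellow traveling with $\gamma$ in order to invoke Lemma \ref{lem:stableLine}, and this is where I expect the principal obstacle. On the asymmetric $CV_N$ a folding axis of $\varphi^{-1}$ is not a reparametrization of $\gamma$; nonetheless, both its reversal and $\gamma$ are $\langle\varphi\rangle$-invariant BGIP quasi-axes of $\varphi$, and by coarse uniqueness of BGIP axes of a given BGIP isometry (a consequence of Corollary \ref{cor:coarseGeod} iterated over fundamental domains) these two objects lie within bounded $d^{sym}$-Hausdorff distance. Reversing the orientation of the $Z_n$-quasi-axis obtained above (Observation \ref{obs:BGIPReversal}) therefore yields an axis of $Z_n$ that fellow travels an arbitrarily long piece of $\gamma$ with a constant only slightly larger than $K_0$; after mildly inflating $R$ and $M$ to absorb this slack, Lemma \ref{lem:stableLine} applies and gives triangularity, hence ageometricity, completing the proof.
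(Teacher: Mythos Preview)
Your overall strategy---feed $g=\varphi^{-1}$ into Proposition~\ref{prop:OuterStability}, use Theorem~\ref{thm:expBd} to get BGIP and full irreducibility, and then invoke Lemma~\ref{lem:stableLine}---is exactly what the paper has in mind; the paper's own proof is nothing more than the sentence ``combine Lemma~\ref{lem:stableLine} with Proposition~\ref{prop:OuterStability}.''

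Where you go astray is in the ``principal obstacle'' you flag and in your resolution of it. The obstacle is illusory: recall from Section~\ref{section:outer} that the paper adopts the nonstandard convention $\phi\cdot h := h\circ\phi^{-1}$. Under this convention the action of $g=\varphi^{-1}$ on $CV_N$ coincides with the \emph{traditional} action of $\varphi$, so $g$ translates points \emph{forward} along the lone axis $\gamma$ of $\varphi$, and $[o,g^Mo]$ already fellow travels a forward subsegment of (a translate of) $\gamma$. This is precisely why the hypothesis is phrased as $\varphi^{-1}\in\llangle\supp\mu\rrangle$ rather than $\varphi\in\llangle\supp\mu\rrangle$; no reversal is needed. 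Your reversal step is not just unnecessary but actually wrong: reversing the $Z_n$-quasi-axis produces a path along which $Z_n^{-1}$, not $Z_n$, translates forward, so plugging it into Lemma~\ref{lem:stableLine} would at best certify that $Z_n^{-1}$ is triangular---and triangularity is not preserved under inversion.

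One smaller point worth tightening: the ``axis'' in Lemma~\ref{lem:stableLine} is a train-track folding axis, not an arbitrary BGIP quasi-axis. Once $Z_n$ is known to be fully irreducible, the proof of Theorem~\ref{thm:iwipBGIP} (via \cite[Theorem~4.1]{dowdall2018hyperbolic}) places any folding axis of $Z_n$ within bounded $d^{sym}$-Hausdorff distance of the orbit $\{Z_n^ko\}_k$, so the fellow-traveling transfers and Lemma~\ref{lem:stableLine} applies as stated.
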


Finally, Theorem \ref{thm:mismatch} reads as follows:

\begin{thm}\label{thm:OuterMismatch}
Let $(X, d)$ be the Culler-Vogtmann Outer space $CV_{N}$ of rank $N \ge 3$ with the Lipschitz metric and  let $(Z_{n})_{n>0}$ be the random walk generated by a non-elementary, asympotically asymmetric probability measure $\mu$ on $\Out(F_{N})$ (for example, $\mu$ is an  admissible probability measure). Then for any $K>0$, we have \[
\lim_{n \rightarrow \infty} \Prob \Big( \w :\big| \tau(Z_{n}) - \tau(Z_{n}^{-1}) \big| < K \Big) = 0.
\]
\end{thm}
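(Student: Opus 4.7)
The plan is to derive Theorem \ref{thm:OuterMismatch} as a direct application of the abstract Theorem \ref{thm:mismatch} to the pair $(X, G) = (CV_N, \Out(F_N))$, so the entire task reduces to verifying Convention \ref{conv:main} in this setting and, for the parenthetical admissible case, showing that admissibility implies asymptotic asymmetry.

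First I would verify Convention \ref{conv:main}. The Lipschitz metric on $CV_N$ is a (possibly asymmetric) geodesic metric, and $\Out(F_N)$ is a countable group of isometries. Non-elementarity of $\mu$ yields two fully irreducibles $\phi_1, \phi_2 \in \llangle \supp \mu \rrangle$ whose attracting and repelling trees in $\partial \mathcal{FF}_N$ are all distinct. By Theorem \ref{thm:iwipBGIP} each $\phi_i$ acts on $CV_N$ as a BGIP isometry. Independence of their orbits in $CV_N$ reduces to independence in the Gromov hyperbolic space $\mathcal{FF}_N$: the projection $\pi^{\mathcal{FF}} : CV_N \to \mathcal{FF}_N$ is coarsely $\Out(F_N)$-equivariant and coarsely Lipschitz, so properness of $(n,m) \mapsto d^{sym}(\phi_1^n o, \phi_2^m o)$ follows from properness of the analogous map in $\mathcal{FF}_N$, which holds because $\phi_1, \phi_2$ act as loxodromic isometries with four distinct endpoints on $\partial \mathcal{FF}_N$. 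Thus Convention \ref{conv:main} is satisfied, and Theorem \ref{thm:mismatch} immediately gives the conclusion for any non-elementary, asymptotically asymmetric $\mu$.

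For the parenthetical assertion, I would follow the discussion in Section \ref{section:outer}. By \cite{jager2008free}, for each $N \ge 3$ there exists a parageometric fully irreducible $\varphi \in \Out(F_N)$, and by \cite{handel2007parageometric} one has $\tau(\varphi) > \tau(\varphi^{-1})$. Admissibility of $\mu$ means $\llangle \supp \mu \rrangle = \Out(F_N)$, so there exist $n, n' > 0$ with $\varphi \in \supp \mu^{\ast n}$ and $\varphi^{-1} \in \supp \mu^{\ast n'}$; setting $m = n+n'$ we obtain $\mathrm{id} = \varphi \varphi^{-1} \in \supp \mu^{\ast m}$. Consequently $\varphi^{m}$ and $\mathrm{id}$ both lie in $\supp \mu^{\ast nm}$, and
\[
\tau(\varphi^{m}) - \tau(\varphi^{-m}) = m \bigl(\tau(\varphi) - \tau(\varphi^{-1})\bigr) > 0 = \tau(\mathrm{id}) - \tau(\mathrm{id}^{-1}),
\]
so $\mu$ is asymptotically asymmetric, reducing this case to the one already handled. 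No substantive obstacle remains: the real content, namely the BGIP of iwips (Theorem \ref{thm:iwipBGIP}) together with the pivoting and binomial-concentration argument of Theorem \ref{thm:mismatch}, has already been carried out, and the only mild subtlety above is lifting independence from $\mathcal{FF}_N$ to $CV_N$, which is handled by the coarse equivariance and coarse Lipschitzness of $\pi^{\mathcal{FF}}$.
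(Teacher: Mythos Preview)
Your proposal is correct and follows essentially the same approach as the paper: Theorem \ref{thm:OuterMismatch} is stated without a separate proof in Section \ref{section:limitOuter} precisely because it is the specialization of Theorem \ref{thm:mismatch} to $(CV_N, \Out(F_N))$, with Convention \ref{conv:main} verified via Theorem \ref{thm:iwipBGIP} and the coarse Lipschitz projection to $\mathcal{FF}_N$ (Section \ref{section:outer}), and with the admissible-implies-asymptotically-asymmetric argument carried out exactly as you describe using parageometric automorphisms from \cite{jager2008free} and \cite{handel2007parageometric}.
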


\medskip
\bibliographystyle{alpha}
\bibliography{Combined_v3}

\end{document}